	\tikzset{
		labl/.style={anchor=south, rotate=90, inner sep=.5mm}
	}
	\tikzset{
		close/.style={outer sep=-2pt}}
\def\thm@space@setup{%
	\thm@preskip=2pt plus 1pt minus 2pt
	\thm@postskip=\thm@preskip 
}
\xpatchcmd{\proof}{\topsep6\p@\@plus6\p@\relax}{}{}{}
\theoremstyle{definition}
\newtheorem{theorem}{Theorem}[section]
\theoremstyle{definition}
\newtheorem{caseA}[theorem]{Case(A)}
\newtheorem{caseB}[theorem]{Case(B)}
\newtheorem{warning}[theorem]{Warning}
\newtheorem{alternative}[theorem]{Alternative}
\newtheorem{corollary}[theorem]{Corollary}
\newtheorem{lemma}[theorem]{Lemma}
\newtheorem{proposition}[theorem]{Proposition}
\newtheorem{definition}[theorem]{Definition}
\newtheorem{remark}[theorem]{Remark}
\newtheorem{fact}[theorem]{Fact}
\newtheorem{claim}[theorem]{Claim}
\newtheorem{setup}[theorem]{Set Up}
\newtheorem{step}[theorem]{Step}
\newtheorem{hypothesis}[theorem]{Hypothesis}
\newtheorem{construction}[theorem]{Construction}
\newtheorem{variant}[theorem]{Variant}
\newtheorem{notation/defn}[theorem]{Notation/Definition}
\newtheorem{observation/defn}[theorem]{Observation/Definition}
\newtheorem{setup/defn}[theorem]{Set Up/Definition}
\newtheorem{setup/notation}[theorem]{Set Up/Notation}
\newtheorem{definition/fact}[theorem]{Definition/Fact}
\newtheorem{summary/defn}[theorem]{Summary/Definition}
\newtheorem{fact/prop}[theorem]{Fact/Proposition}
\newtheorem{fact/defn}[theorem]{Fact/Definition}
\newtheorem{s-induction}[theorem]{Sub-Induction}
\newtheorem{sub-claim}[theorem]{Sub-claim}
\newtheorem{start indu}[theorem]{Start of the Induction}
\newtheorem{indu hypo}[theorem]{Inductive Hypothesis}
\newtheorem{indu/def}[theorem]{Induction Defining 
	$F_s^\bullet$ from $F_{s-1}^\bullet$}
\newtheorem{conclusion}[theorem]{Conclusion}
\newtheorem{partial finish}[theorem]{Partial Finish}
\newcommand\restr[2]{{
		\kern-\nulldelimiterspace 
		#1 
		\vphantom{|} 
		\,\big|_{#2} 
	}}
\newcommand{\quoziente}[2]{
	{\raisebox{.1em}{$#1$}\left/\raisebox{-.3em}{$#2$}\right.}
}
\newcommand\quotient[2]{
	\mathchoice
	{ \displaystyle
		{\raise1pt\hbox{$#1$}
			\big/\lower1pt\hbox{$#2$}
		}
	}
	{ \textstyle
		#1/#2}
	{ \scriptstyle
		#1/#2
	}
	{ \scriptscriptstyle  
	\text{{$#1$}$\diagup$\lower3pt\hbox{$#2$}
	}
	}
}
\newcommand{\ie}{\emph{i.e. }}
\newcommand{\eg}{\emph{e.g. }}
\newcommand{\op}{\emph{op.cit. }}
\newcommand{\cfr}{\emph{cf. }}
\newcommand{\wloss}{without loss of generality }
\renewcommand{\th}{\text{th}}
\newcommand{\wti}[1]{\widetilde{#1}}
\newcommand{\what}[1]{\widehat{#1}}
\newcommand{\zar}{\text{Zar}}
\newcommand{\et}{\text{ét}}
\newcommand{\usc}{u.s.c.$\,$}
\newcommand{\lqs}{\leqslant}
\newcommand{\gqs}{\geqslant}
\newcommand{\Z}{\mathbb{Z}}
\newcommand{\Q}{\mathbb{Q}}
\newcommand{\C}{\mathbb{C}}
\newcommand{\Zz}{\Q_{\gqs 0}}
\newcommand{\LbV}{\LL_{-b}(V_{d})}
\newcommand{\LB}{H}
\newcommand{\OO}{\mathscr{O}}
\newcommand{\II}{\mathscr{I}}
\newcommand{\hO}{\what{\OO}}
\newcommand{\JJ}{J}
\newcommand{\DIF}{\mathrm{diff}}
\newcommand{\mm}{\mathfrak{m}}
\newcommand{\pri}{\mathfrak{p}}
\newcommand{\MM}{\quotient{\mm}{\mm^2}}
\newcommand{\G}{\mathbb{G}}
\renewcommand{\v}{\mathrm{V}}
\newcommand{\UU}{\mathfrak{U}}
\newcommand{\VV}{{V}}
\newcommand{\A}{\mathbb{A}}
\newcommand{\AAk}{\A_{k}^{N+1}}
\newcommand{\AL}{A_k}
\newcommand{\minusO}{\setminus \{{\underline{0}\}}}
\newcommand{\PP}{\mathbb{P}}
\renewcommand{\P}{\PP_{k}(\ua)}
\newcommand{\rP}{\mathrm{P}_k}
\newcommand{\sP}{\PP_k(\ua')}
\newcommand{\RP}{\mathrm{P}}
\newcommand{\QQ}{\mathcal{Q}}
\newcommand{\Dl}{\Delta}
\newcommand{\HH}{\mathrm{H}}
\newcommand{\hh}{\mathrm{h}}
\newcommand{\xx}{x}
\newcommand{\yy}{y}
\newcommand{\XX}{\mathcal{X}}
\newcommand{\ZZ}{Z}
\newcommand{\YY}{\mathcal{Y}}
\newcommand{\V}{V}
\newcommand{\CU}{\mathcal{U}}
\newcommand{\cg}{\mathscr{I}}
\newcommand{\CP}{\mathscr{P}}
\newcommand{\GP}{\mathfrak{P}}
\newcommand{\exe}{\mathscr{E}}
\newcommand{\cexe}{\mathcal{E}}
\newcommand{\buA}{\mathcal{A}}
\newcommand{\buW}{\mathcal{W}}
\newcommand{\cC}{\mathcal{C}}
\newcommand{\x}{\alpha}
\newcommand{\y}{\beta}
\newcommand{\ff}{f}
\newcommand{\FF}{\varDelta}
\newcommand{\GG}{G}
\newcommand{\si}{\sigma}
\newcommand{\ux}{\underline{x}}
\newcommand{\uX}{\underline{X}}
\newcommand{\uz}{\underline{z}}
\newcommand{\uxi}{\underline{\xi}}
\newcommand{\ueps}{\underline{\epsilon}}
\newcommand{\ue}{\underline{\varepsilon}}
\newcommand{\uEE}{\underline{G}}
\newcommand{\uF}{\underline{F}}
\newcommand{\uf}{\underline{f}}
\newcommand{\ueitop}{\ue_{i}^{\text{top}}}
\newcommand{\uettop}{\ue_{t}^{\text{top}}}
\newcommand{\ur}{\underline{r}}
\newcommand{\ub}{\underline{\ab}}
\newcommand{\ab}{a}
\newcommand{\invUB}{\INV_{\quotient{\UU}{\BB}}}
\newcommand{\inv}{\mathrm{inv}}
\newcommand{\INV}{\textsc{inv}}
\newcommand{\ui}{\underline{i}}
\newcommand{\uq}{\underline{q}}
\renewcommand{\l}{\lambda}
\newcommand{\BB}{B}
\newcommand{\bb}{b}
\newcommand{\ua}{\underline{a}}
\newcommand{\ug}{\underline{g}}
\newcommand{\wt}{\mathbf{wt}}
\newcommand{\wtx}{\mathbf{wt}_{X}}
\newcommand{\wtxi}{\mathbf{wt}_{\xi}}
\newcommand{\ssym}{\mathrm{\underline{S} y\underline{m}}}
\newcommand{\gr}{\mathrm{gr}}
\newcommand{\defc}{\epsilon}
\newcommand{\T}{\mathrm{T}}
\newcommand{\LL}{\mathrm{L}}
\newcommand{\De}{\mathrm{D}}
\newcommand{\pr}{\mathrm{pr}}
\newcommand{\id}{\mathrm{id}}
\newcommand{\mult}{\mathrm{mult}}
\newcommand{\spec}{\mathrm{Spec}\,}
\newcommand{\abs}[1]{|#1|}
\newcommand{\Gl}{\mathrm{GL}}
\newcommand{\Sym}{\mathrm{Sym}}
		\title{{Very fast, very functorial, and very easy} \\ { resolution of singularities} }
		\author{Michael McQuillan \\ \small{with the collaboration of} \\Gianluca Marzo}
\titlespacing\section{0pt}{8pt plus 1pt minus 6pt}{0pt plus 2pt minus 2pt}
\begin{document}
\setlength{\parskip}{3pt}
\setlength{\parindent}{0pt}
\setlength{\abovedisplayskip}{1pt plus 2pt minus 1pt}
\setlength{\belowdisplayskip}{2pt plus 2pt minus 1pt}
\setlength\cftsecnumwidth{30pt}
	\maketitle
	\begin{abstract}
	\noindent The main theorem, \ref{T1}, is the existence for excellent Deligne-Mumford champ of characteristic zero of a resolution functor independent of the resolution process itself. Perceived wisdom was that this was impossible, but the counterexamples overlooked the possibility of using weighted blow ups. The fundamental local calculations take place in complete local rings, and are elementary in nature, while being self contained and wholly independent of Hironaka's methods and all derivatives thereof, \ie existing technology. Nevertheless Abramovich, Temkin, and Wlodarczyk, \cite{DanTW}, have varied existing technology to obtain an even shorter proof of principalisation, \ref{DanT1}, in the geometric case. Excellent patching is more technical than varieties over a field, and whence easier geometric arguments are pointed out when they exist.
\end{abstract}

\section{Introduction}
It is a known fact that resolution of singularities, already in characteristic zero, cannot be achieved in a way that is both étale local and independent of the resolution process itself while blowing up in smooth centres. More precisely one would like in the category of reduced 
excellent algebraic spaces (all Henselian local rings excellent and some, whence any, scheme like cover Noetherian J2, or equivalently admitting an excellent atlas.
In particular an excellent algebraic space in this sense which is also a scheme is only quasi-excellent in standard parlance.
Nevertheless since the catenary condition is close to meaningless for algebraic spaces - it's only interpretation is that every global irreducible component is everywhere étale locally equidimensional, \cite[7.8.4 (iii)]{egaIV1}, and we won't use this - it will be systematically eschewed globally while étale locally it is tautologically true),
a modification functor 
\begin{tikzcd}[cramped, sep=small]
U \ar[r, mapsto] &M(U)
\end{tikzcd}
and an invariant $\inv(U) \in \Gamma_{\gqs 0}$ in a (preferably discrete) ordered group such that\\
	(M.1 bad) 
	\begin{tikzcd}[cramped, sep=small]
	M(U) \ar[r] &U
	\end{tikzcd}
	is a blow up in a smooth centre. \\
	(M.2) 
	$U = M(U)$ iff $U$ is regular.\\
	(M.3) 
	$M$ commutes with étale base change 
	\begin{tikzcd}[cramped, sep=small]
	U' \ar[r] &U,
	\end{tikzcd}
	\ie	$M(U') = M(U) \times_{U} U'$
	whenever $U, \; U'$ are connected and $\inv(U') = \inv(U)$.\\
	(M.4) 
	For any
	\begin{tikzcd}[cramped, sep=small]
	U' \ar[r] 	&U
	\end{tikzcd}
	étale, $\inv(U') \leq \inv(U)$.\\
	(M.5) 
	$\inv\bigl(M(U) \bigr) < \inv( U )$.

The impossibility of this is shown by the following example, \cfr \cite[pg. 142]{kolres},
\begin{equation}\label{G1}
	\begin{tikzcd}
	U: x^2 + y^2 + (zt)^2 = 0 \ar[r, hook] 
		&\A_K^4
	\end{tikzcd}
\end{equation}
wherein the singular locus is the union of the two lines,
\[
L_1: x=y=z=0 \;\text{ \& }\; L_2: x=y=t=0 .
\]
On the other hand if 
\begin{tikzcd}[cramped, sep=small]
	M(U) 	\ar[r] 	&U
\end{tikzcd} 
were to exist then by (M.1 bad), (M.2) \& (M.3) it must be a blow up in a smooth centre contained in the singular locus, so the only possibilities are $L_1, L_2$ or their intersection, \ie the origin. Now the latter operation leaves \eqref{G1} unchanged where the proper transform of either line meets the exceptional divisor, while a choice amongst $L_1,\,L_2$ is inadmissible because the process must respect, (M.3), the symmetry 
$z \xleftrightarrow{\;\;\;\;} t$, and that's without even addressing the issue that \eqref{G1} might only be valid after Henselisation, so that globally the $L_i$ could be branches of the same curve.

The traditional get out from this difficulty is to change the problem, \eg the argument of the modification functor becomes not just varieties but varieties with marked divisor, so, in particular, blowing up \eqref{G1} in the origin creates a marked divisor and amongst the new singular lines one of them is marked. 
The point of view of this article is, however, to change the paradigm and adapt the modification to the problem, so that (M.1 bad) is replaced by,

	(M.1 new)
	\begin{tikzcd}[cramped, sep=small]
	M(U) \ar[r] &U
	\end{tikzcd}
	is a smoothed weighted blow up in a regular centre.

This operation is defined in \cite[I.iv.3]{mp1}, and will not be repeated here. It should, however, be noted that $M(U)$ is by definition, \op, a (Deligne-Mumford) champ, albeit if we were to work with varieties over $\C$ the $2$-category of orbifolds would be adequate for our current purposes, and in any case the $2$-category of champs/orbifolds is just a categorical subterfuge which allows us to work with quotient singularities while doing linear algebra. With this in mind, the paradigm shift works, \ie

\begin{theorem}\label{T1}[\ref{WP2 prop}] 
	In the $2$-category of reduced excellent Deligne-Mumford champ 
	(defined exactly as above for spaces so \emph{inter alia} with no separation condition) 
	there is a modification functor 
	\begin{tikzcd}[cramped, sep=small]
	U \ar[r] &M(U),
	\end{tikzcd} 
	\ref{ESD1 sum/d}, satisfying (M.1 new), (M.2), (M.3), (M.4), (M.5), albeit $\inv$ takes values in 
	$
	\Q_{\gqs 0}^{\infty} = \varinjlim \Q_{\gqs 0}^{N}.
	$
	Nevertheless, the invariant has self-bounding denominators, \ref{MD1 defn}.
\end{theorem}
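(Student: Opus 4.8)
\medskip
\noindent\textbf{Proof strategy.} The plan is to distil \ref{T1} to a computation in complete local rings --- the content of the body of the paper --- and then to patch over an excellent atlas; the properties (M.1 new), (M.2), (M.3), (M.4), (M.5) and the termination that upgrades a single modification $M$ into a resolution all descend from local statements. First I would attach to a point $x$ of a reduced excellent Deligne--Mumford champ $U$ an invariant $\inv_x(U) \in \Q_{\gqs 0}^{\infty}$, read off by the coefficient-ideal and maximal-contact machinery from a Cohen presentation $\hO_{U,x} = \quotient{R}{I}$ with $R$ regular complete local, and establish: (i) independence of the presentation; (ii) $\inv_x(U)$ equals the bottom value iff $\hO_{U,x}$ is regular; (iii) $x \mapsto \inv_x(U)$ is upper semi-continuous with constructible image, so the locus $Z \subseteq U$ on which it is maximal is closed; (iv) compatibility with étale maps. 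Here (iv) is (M.4), while (ii) and (iii) yield (M.2).

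Next, the same local computation singles out on $Z$ a canonical \emph{regular weighted centre}; by (iv) the local centres glue to a global one, and I would define $M(U)$ to be its smoothed weighted blow up, \cite[I.iv.3]{mp1}, which is again a reduced excellent Deligne--Mumford champ. This is (M.1 new). Property (M.3) then follows by combining (iv) with the naturality of the blow-up construction: for connected $U,\,U'$ with $\inv(U') = \inv(U)$ the centres, and hence the modifications, match up under étale base change; and the non-trivial direction of (M.2) is the non-regular case of (ii).

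The crux is (M.5), $\inv\bigl(M(U)\bigr) < \inv(U)$, which is again local on $M(U)$: a chart point not over $Z$ is untouched, so its invariant is $\lqs \max_U\inv$, a value attained only over $Z$, whereas at a point over $Z$ one computes $\hO$ in a chart of the weighted blow up and checks that the standard transform of the coefficient ideal has strictly smaller weighted order --- exactly the passage whose correctness was engineered by the choice of centre. At this stage the self-bounding-denominator property, \ref{MD1 defn}, is indispensable: the denominators occurring in $\inv$ at a point of $M(U)$ are bounded in terms of the numerators of $\inv$ at the image point, so across the iterates $U,\,M(U),\,M(M(U)),\dots$ the values of $\inv$ lie in one well-ordered subset of $\Q_{\gqs 0}^{\infty}$; the strictly decreasing chain therefore terminates, and by (M.2) the terminal champ is regular, which is \ref{WP2 prop}. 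Globalisation is then formal: the functor is defined chart-by-chart over an excellent atlas, (M.3) glues the charts, and a weighted blow up of an excellent champ is excellent so the construction iterates --- no separation or catenary hypothesis intervenes, since every assertion is checked étale-locally.

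The hard part will be this third step: to arrange that \emph{one and the same} invariant both strictly drops under the smoothed weighted blow up and has self-bounding denominators, rather than paying for termination with a lexicographically padded auxiliary invariant. The excellent patching is technically heavier than the finite-type-over-a-field case but, as the abstract indicates, essentially routine once the local statements are in place.
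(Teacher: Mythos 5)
Your skeleton reproduces the architecture of the paper's proof (pointwise invariant from a Cohen presentation as in \ref{WFD4}, a canonical regular weighted centre on the maximal locus, étale gluing into \ref{ESD1 sum/d}, strict decrease plus self-bounding denominators giving termination in \ref{WP2 prop}), but two of the steps you treat as available are exactly where the paper's content lies, and as written they are gaps. First, you propose to read the invariant and the weighted centre off ``the coefficient-ideal and maximal-contact machinery''. That machinery, as it stands, produces smooth centres and is functorial only after enlarging the category by a marked divisor --- precisely the failure mode of (M.1 bad) discussed around \eqref{G1}; it does not hand you a weighted centre whose smoothed weighted blow up drops a divisor-free invariant. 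The paper replaces it by its own construction: the iterated weighted filtrations $F_s^{\bullet}(I)$ of \S\ref{sec II INV}, the sub-induction \ref{s-induction} with its dichotomy \ref{case A}/\ref{case B}, and the decrease statement \ref{prop INV gdown}/\ref{WF3 fact}. Your closing admission that the ``hard part'' is to make one invariant simultaneously drop and have self-bounding denominators concedes that this core is not supplied, so the proposal assumes rather than proves the substance of (M.5) and of the choice of centre.

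Second, you pass from the invariant and centre defined in $\hO_{U,x}$ to a global centre by saying the local centres ``glue by étale compatibility''. But when case \ref{case B} occurs \emph{ad infinitum}, \ref{ad infinitum}, the filtration is only the $\mm$-adic limit \ref{F/P madic}, so a priori the centre lives in the completion and not in $\OO_{U,x}$ or even its strict Henselisation; descending it is not formal and is exactly where excellence intervenes. The paper does this via the relative invariant of \S V, Dade-style upper semi-continuity for excellent rings (\ref{EEF1 fact}, \ref{EEF2 fact}, \ref{EEC3 claim}), and faithfully flat descent along $\spec\what{A}\rightarrow\spec A$ using the groupoid argument of \ref{EE1 fact} and \ref{EEF3 fact} (where J-2 and the regularity of the completion map are used quantitatively, not as routine bookkeeping). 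Likewise your step (iii), upper semi-continuity with closed maximal locus, is immediate only in the geometric case (\ref{EEA1}); for general excellent rings it is a theorem, not an observation. So the proposal is a correct outline of \emph{what} must be proved, but the two pillars --- the construction of the weighted invariant/centre with (M.2)--(M.5), and the algebraisation of the formal centre over an excellent base --- are missing rather than merely deferred.
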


Here
self bounding denominators is a certain technical condition, \ref{MD1 defn}, which has all the effects, \ref{MF1 fact}, of defining the invariant in $\Z_{\gqs 0}$ while allowing us to define the invariant and perform various construction, \eg \ref{s-induction}, where they naturally occur, \ie $\Q_{\gqs 0}$.
More substantially \ref{T1} is the global manifestation of some much more basic local algebra. Specifically for $I$ an ideal of a $m$-dimensional regular local ring, $A$, of characteristic zero, with maximal ideal $\mm$ we construct an invariant, \S \ref{sec II INV}, 
$\inv_{A}(I) $ with self bounding denominators in $\Q_{\gqs 0}^{2m}$ ordered lexicographically. 

Better there is a yoga for constructing $\inv$ that makes the resolution process more widely applicable to more difficult problems such as vector field singularities, which, essentially views the resolution process as a diagram chase, and manifest itself as follows,

(Y.1) Generically most thing are regular, a.k.a. $I=\mathcal{O}$, so $\inv=\underline{0}$ and there is nothing to do.

(Y.2) If (Y.1) didn't happen then generically most things have an isolated singularity at the closed point, and after a single blow up in the same the multiplicity will decrease,

(Y.3) If (Y.2) didn't happen then there is proper sub-space of the tangent space where the multiplicity did not decrease and its annihilator in $\MM$ gives us the start of a filtration of $A$ which depends only on $I$.

(Y.4) Construct inductively, \ref{start INDUCT} - \ref{indu hypo}, a sequence of filtrations, $F_s^{\bullet}(I)$, according to the dichotomy,

\begin{caseA}
	Something generic happens, case (A), \ref{case A}, then $ s \xmapsto{\;\;\;} s+1$;
\end{caseA}

\begin{caseB}
	Nothing generic happens, case (B), \ref{case B}, then at worst, $F^{\bullet}_{s}(I)$ converges $\mm$-adically.
\end{caseB}

Proceeding in this way leads to the key,
\begin{fact}\label{FA1}[\cfr V.b]
	There is an invariant, $\inv_{A}(I) \in \Q_{\gqs 0}^{2m}$, of regular $m$- dimensional characteristic zero local rings and their ideals with self bounding denominators such that if $\mathfrak{U}$ is the completion of its spectrum at the closed point, then there is a smoothed weighted blow up $\rho: \wti{\mathfrak{U}}
	\xrightarrow{\;\;\;\;} \mathfrak{U}$ such that at every closed point of $\mathfrak{U}$ the invariant strictly decreases provided $I \neq A$.
\end{fact}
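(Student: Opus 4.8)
The plan is to read $\inv_A(I)$ off the inductively constructed filtrations $F_s^{\bullet}(I)$ of (Y.4), and to take for $\rho$ the smoothed weighted blow up of \cite[I.iv.3]{mp1} along the regular weighted centre those filtrations single out; the substance then becomes a coordinate-by-coordinate comparison of $\inv$, before and after $\rho$, at each closed point $\rho$ creates. To define the invariant: if $I = A$ we are in case (Y.1), so put $\inv_A(I) = \underline{0}$ and $\rho = \id$, and there is nothing to prove; otherwise run the dichotomy of \ref{case A}, \ref{case B}. Each pass through Case(A) strictly lowers a discrete datum --- the dimension of the regular ambient carrying the residual problem, equivalently the remaining flag length --- so after at most $m$ passes one is forced into Case(B), where $F_s^{\bullet}(I)$ converges $\mm$-adically; its symbol algebra being finitely generated over the complete regular ring $\widehat{A}$, the limit cuts out a regular centre $Z \subseteq \mathfrak{U} = \spec \widehat{A}$ through the closed point, carrying prescribed weights along a regular system of parameters. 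Recording the (order, weight) pair produced at each pass, padded with zeros, yields $\inv_A(I) \in \Q_{\gqs 0}^{2m}$ under the lexicographic order, and the self-bounding-denominator requirement \ref{MD1 defn} holds because each weight is a ratio whose denominator is bounded by the orders already recorded.

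With $\rho : \wti{\mathfrak{U}} \to \mathfrak{U}$ the smoothed weighted blow up along $Z$, one obtains a reduced Deligne-Mumford champ, regular on the chart where the centre is the full $\mm$ and covered elsewhere by weighted affine spaces over the residue field $k = A/\mm$. Since $\rho$ is proper and $\mathfrak{U}$ is complete local, every closed point of $\wti{\mathfrak{U}}$ lies over the closed point of $\mathfrak{U}$, hence inside the exceptional fibre $E = \rho^{-1}(\mm)$, a quotient of a weighted projective space of dimension $\codim Z - 1$ over $k$; so it suffices to bound $\inv$ at each closed point of $E$.

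At a closed point $y \in E$ I would pass to $B = \widehat{\OO}_{\wti{\mathfrak{U}}, y}$ --- regular of dimension $m$, the champ structure absorbing the quotient singularity --- with $\wti{I} \subseteq B$ the transform of $I$ (the weak transform, with the exceptional power stripped off), and compare $\inv_B(\wti{I})$ with $\inv_A(I)$ lexicographically. At the closed point picked out by the leading datum of $F_{\bullet}(I)$, \ie the order and directrix information of (Y.2)--(Y.3) against which the weighted blow up was tuned, one finds that either the order of $\wti{I}$ strictly falls or it is unchanged while the directrix strictly grows, so in either case the first non-zero pair of $\inv_B(\wti{I})$ drops; this is the weighted counterpart, legitimate in characteristic zero via the tangent-space argument of (Y.3), of ``the multiplicity falls after blowing up its locus''. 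At every other closed point of $E$, lying on a proper stratum of the exceptional locus, I would instead invoke the monotonicity of the local invariant under the localisations and étale maps relating the charts of $\rho$ --- the (M.4)-flavoured inequality of \S\ref{sec II INV} --- together with the chart transition formulae, to conclude that $\inv_B(\wti{I})$ is dominated by its value at that distinguished point, hence again strictly below $\inv_A(I)$. Combining the two cases, $\inv$ decreases lexicographically at every closed point of $\wti{\mathfrak{U}}$, and the self-bounding-denominator bookkeeping survives because $\rho$ scales denominators only by the bounded weights.

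The hard part will be the leading-pair estimate above: one must show that the regular weighted centre to which the Case(A)/Case(B) construction converges is simultaneously (a) regular --- the delicate point being that it is the $\mm$-adic limit of the filtration, not any finite truncation, that is smooth --- and (b) sharp for $\inv$, so that the weighted blow up along it genuinely pushes the leading pair down rather than merely permuting it; this is precisely where characteristic zero and the exact design of $F_s^{\bullet}(I)$ enter. A secondary difficulty is the remaining closed points of $E$, where one needs the monotonicity of \S\ref{sec II INV} to be strong enough to avoid recomputing the whole invariant point by point.
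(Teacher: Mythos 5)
Your overall skeleton (read $\inv_A(I)$ off the inductively constructed filtrations, blow up the regular weighted centre they define, observe that all closed points of $\wti{\mathfrak{U}}$ lie in the exceptional divisor, and check the decrease there) is the paper's skeleton, but the step that carries all the weight is missing, and the mechanism you propose for it would not work. You verify the drop only at a single ``distinguished'' closed point tuned to the leading datum of the filtration, and then claim every other closed point of the exceptional locus is \emph{dominated} by that one via an (M.4)-flavoured monotonicity and chart transitions. No such domination holds: the invariant of the transform genuinely varies over the exceptional divisor (the multiplicity typically persists at many points while some later pair $(g_i,\ell_i)$ moves), and semicontinuity-type statements, even once available, only say that the locus where the invariant is large is closed -- they do not bound its value elsewhere by its value at your chosen point, nor do they convert a strict drop at one point into a strict drop everywhere. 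What the paper actually proves is a pointwise statement at \emph{every} geometric point of the exceptional weighted projective champ, namely \ref{prop INV gdown}: since $\restr{\wti{I}}{\exe}$ is the ideal generated by the initial forms $V$, and $V$ satisfies the non-degeneracy condition \ref{Hypo H2} (no negative-weight vector fields annihilate it -- this is exactly what the Case (A)/(B) construction guarantees), an induction on the number of blocks shows that if the first $2s$ entries do not decrease then the local coordinate change is forced to be \emph{global} (\ref{claim 2inv}, resting on \ref{cor L}.(ii)), which contradicts non-degeneracy unless a later $g_{\si-1}$ or $g_{\si}$ drops, as read off the Newton polyhedron. That entire argument -- the real content of the ``leading-pair estimate'' you flag as the hard part -- is absent from your proposal.

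Two secondary points. First, you compare $\inv_B(\wti I)$ directly at points of $\wti{\mathfrak U}$, but the computation one can actually do is on the exceptional divisor, so one still needs $\inv_{\exe}(\restr{\wti I}{\exe})(x)\gqs\inv_{\wti{\mathfrak U}}(\wti I)(x)$; the odd entries are easy but the even entries require the comparison lemma \ref{lem sub-champ}.(ii-bis)--(iii-bis), which you do not address. Second, your description of the induction is off: Case (A) increments $s$ and the process may simply terminate with $\ell_s=0$ (one is not ``forced into Case (B) after at most $m$ passes''), and regularity of the limiting centre under \ref{ad infinitum} is not the delicate issue you suggest -- the blocks are Cauchy sequences of regular parameters, so the limit is again part of a regular system of parameters (\ref{F/P madic}); the delicacy is entirely in the decrease, i.e.\ in \ref{prop INV gdown} and \ref{WF3 fact}.
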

Rather plainly at this point the only remaining issue is whether the weighted centre defining $\rho$ is well defined in $A$, or even just its strict Henselisation $A^{\hh}$, rather than its completion, $\what{A}$.
It is, however, a genuine issue since
both \cite[7.9.3]{egaIV1} and its proof are valid exactly as stated even on allowing resolutions either by algebraic spaces or Deligne-Mumford champs, \ie excellent Henselian local rings (which is the same as quasi-excellent and Henselian) are a necessary condition for resolution of singularities. It is therefore pleasing to observe that (quasi-) excellence is just what's needed to establish 	
\begin{proposition}\label{P1}[\cfr \ref{WA4 alternative}]
	If the centre in \ref{FA1} is of dimension 0 or, $A$ is an excellent regular local ring then, \ref{EE1 fact}, the (canonically defined) smoothed weighted blow up of \ref{WF3 fact} is the completion in the exceptional divisor of a smoothed weighted blow up of $\spec A$.
	Similarly if $A$ is an excellent reduced local ring, $\mathfrak{V}$ its completion in the closed point, and 
	$\rho: \wti{\mathfrak{V}}\rightarrow\mathfrak{V}$
	the proper transform of $\mathfrak{V}$ along \ref{FA1} after a choice of an embedding of $\mathfrak{V}\hookrightarrow \UU$ in a smooth formal scheme, \ref{WFD4}, then there is a smoothed weighted blow up, \ref{EEF3 fact}, of $\spec A$ whose completion in the exceptional divisor is $\rho$.
\end{proposition}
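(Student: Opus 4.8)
The plan is to reduce the whole statement to one descent problem: that the weighted centre of $\rho$ is already defined over $A$, rather than merely over $\widehat A$. By \cite[I.iv.3]{mp1} a smoothed weighted blow up is determined by its weighted centre — a regular closed subscheme cut out by part of a regular system of parameters together with a weight vector, equivalently the attached Rees algebra — and taking the completion along the exceptional divisor commutes with this datum, since $\Proj$ of the Rees algebra commutes with the flat base change $A\to\widehat A$ and the exceptional divisor is unchanged; this is the substance of \ref{EE1 fact}. The weight vector is literally the tuple read off from $\inv_A(I)$, hence insensitive to completion. So everything reduces to producing an ideal $J\subseteq A$, generated by part of a regular system of parameters, with $J\widehat A$ the ideal of the canonical centre of \ref{WF3 fact}. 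If that centre has dimension $0$ it is the closed point $V(\mm\widehat A)$ and $J=\mm$ works, so no hypothesis beyond regularity of $A$ is needed; that settles the first alternative.

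\emph{The regular excellent case.} I would run the construction of \S\ref{sec II INV} over $A$ directly: the filtrations $F_s^\bullet(I)$ are filtrations of $A$, assembled by the finitary operations of \ref{case A} (colon ideals, saturations, etc.), all of which commute with $A\to\widehat A$, so every finite stage agrees with its counterpart over $\widehat A$. Two things can still push the centre out of $A$: the ``straightening'' used to present the eventual regular centre (a formal implicit–function step), and, in Case(B) of \ref{FA1}, the passage to the $\mm$-adic limit $\varinjlim_s F_s^\bullet$. For the first I would pass to the strict Henselisation $A^{\hh}$ and replace the formal step by Hensel's lemma; for the second I would use that $A$ excellent makes $A\to\widehat A$ regular, so $A^{\hh}$ is an excellent Henselian local ring and hence has the Artin approximation property (Popescu): the canonical centre $\widehat J$ is a finitely generated ideal of $\widehat A$ which, once the Case(B) convergence has happened, is characterised among regular subschemes by a finite list of algebraic conditions relating it to the $F_s^\bullet$ — granting this, approximation yields a solution over $A^{\hh}$ which, by the uniqueness built into the canonical construction, base–changes back to $\widehat J$. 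Thus $\widehat J$ is extended from $A^{\hh}$; and since the only open neighbourhood of the closed point of a local ring is the whole spectrum, any connected étale neighbourhood $A'$ carrying the descended ideal has $\spec A'\to\spec A$ a surjective étale cover, the descent datum on $\spec(A'\otimes_A A')$ being supplied by the étale functoriality of the construction (both projections pull it back to \emph{the} canonical centre). Étale descent then produces $J\subseteq A$, and \ref{EE1 fact} finishes; cf. \ref{WA4 alternative}.

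\emph{The reduced case.} Here I would reduce to the regular one. Using \ref{WFD4}, take the chosen embedding $\mathfrak V\hookrightarrow\mathfrak U$ to be the completion of a closed immersion $\spec A\hookrightarrow\spec R$ with $R$ an excellent regular local ring and $\mathfrak a:=\ker(R\to A)$, so $\widehat A=\widehat R/\mathfrak a\widehat R$ and $\mathfrak U=\spec\widehat R$; then \ref{FA1} applied to $(\widehat R,\mathfrak a\widehat R)$ is what produces $\rho$ on $\mathfrak U$ and, by proper transform, $\rho:\widetilde{\mathfrak V}\to\mathfrak V$. By the regular case the weighted centre of $\rho$ descends to a smoothed weighted blow up $\pi:\widetilde{\spec R}\to\spec R$; take for the sought modification of $\spec A$ the proper transform of $V(\mathfrak a)=\spec A$ along $\pi$. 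Proper transform commutes with the flat base change $R\to\widehat R$, so completing $\pi$ and this proper transform along the exceptional divisor returns $\rho$ and $\rho:\widetilde{\mathfrak V}\to\mathfrak V$ respectively; independence of the auxiliary presentation is part of \ref{WFD4}, and the whole package is \ref{EEF3 fact}.

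\emph{The main obstacle.} The genuinely non-formal point is exactly the Case(B) passage to the $\mm$-adic limit: a priori the limit centre could be ``transcendental'' over $A$ — a regular curve such as $y=e^{x}$ in $\widehat A=k[[x,y]]$ never descends to $k[x,y]_{(x,y)}$ — and what excludes this is precisely quasi–excellence, via the regularity of $A\to\widehat A$ and Popescu's approximation theorem. Making it rigorous requires (i) showing that the canonical centre, however built, is cut out by a \emph{finitely presented} datum tied coherently to the construction run over $A$, so that the approximation machinery has something to act on, and (ii) arranging the étale descent to land on $\spec A$ rather than on an étale neighbourhood, which is innocuous only because $A$ is local. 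That quasi–excellence is also \emph{necessary} for any such descent is \cite[7.9.3]{egaIV1}, so the hypothesis cannot be relaxed.
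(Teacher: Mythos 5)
Your proposal correctly isolates the crux (descending the Case(B), \ref{case B}, limit centre from $\what{A}$ to $A$), but it does not resolve it, and the route you choose differs from the paper's in a way that leaves the key step unproven. You invoke Popescu/Artin approximation over $A^{\hh}$, which requires the canonical centre to be the solution of a \emph{finitely presented} algebraic system, together with a rigidity statement guaranteeing that an exact solution congruent to $\what{J}$ modulo a power of $\mm$ actually completes to $\what{J}$; you explicitly write ``granting this'' and later list it as obstacle (i), so the central point is assumed, not proved. Nothing in \S\ref{sec II INV} presents the limit filtration of \ref{F/P madic} in such finite terms -- it is an $\mm$-adic limit of coordinate changes characterised by conditions (F.0)--(F.4) on graded algebras and weighted vector fields -- and the paper never takes this route. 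Instead \ref{EE1 fact} uses that excellence makes $A\rightarrow\what{A}$ a \emph{regular} faithfully flat map, applies the relative-invariant machinery (\ref{EF1 lemma}, \ref{EC1 cor}, \ref{dim1 fact}) to show the two pullbacks of $F^{\bullet}(\what{I})$ to $\what{A}\otimes_A\what{A}$ agree wherever the invariant is maximal, and concludes by faithfully flat descent \cite[VIII.1.1]{sga1}; no approximation, no Henselisation. Your subsidiary étale-descent step has the same lacuna in miniature: the identification of the two pullbacks on $A'\otimes_A A'$ at non-closed points is exactly what the upper semi-continuity and regular-base-change results of \S V are for, and ``étale functoriality'' of an object produced by approximation (hence not yet known to be the canonical one) does not supply it.

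The reduced case has a second, independent gap: you assume the embedding $\mathfrak{V}\hookrightarrow\UU$ can be taken to be the completion of a closed immersion $\spec A\hookrightarrow\spec R$ with $R$ an excellent \emph{regular local} ring. No such algebraic embedding is available in the stated generality -- an excellent reduced local ring need not be a quotient of a regular local ring -- and both the statement and \ref{WFD4} are deliberately phrased with only the Cohen-type embedding of the \emph{completion} into a smooth formal scheme. The paper's \ref{EEF3 fact} therefore does not descend a centre inside an ambient regular scheme; it descends the formal modification $\what{\YY}\rightarrow\what{Y}$ itself along the faithfully flat regular map $\what{Y}\rightarrow\spec\OO_{Y,y}$, using the presentation-independence of \eqref{W13 eq}, the invariant $\inv^{\sharp}$, and again \cite[VIII.1.1]{sga1}. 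Your dimension-$0$ observation ($J=\mm$) is fine, but as written the proposal proves neither the regular nor the reduced half of \ref{P1}.
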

Needless to say convergence is (much) easier when everything is of finite type over a field, and whence alternatives \ref{WA4 alternative}, resp. \ref{EEA2 alternative}, are offered to the more general \ref{EE1 fact}, resp. \ref{EEF3 fact}. Similarly to go from convergence to \ref{T1} one needs the upper semi-continuity of the invariant which is an attractive consequence, \ref{EEA1}, of the properties peculiar to the diagonal in the geometric case. Otherwise, \ref{EEF2 fact} \& \ref{EEA1}, this adopts Dade's proof of the \usc of the multiplicity in his un-published 1960 Princeton thesis (of which Villamayor's summary, \cite[6.1.3]{vill1}, was invaluable) and leads to the wholly natural  intervention of the (global) J-2 condition. It is also important not to  lose sight of the wood for trees, and 
in particular the critical principalisation statement
which in the geometric case has been obtained simultaneously, \cite{DanTW},
by Abramovich, Temkin, and Wlodarczyk, 

\begin{theorem}\label{DanT1}[\ref{WP1 prop}] 
There is a modification functor from the 2-category whose objects,
$(U,\mathcal{I})$, are ideals on regular excellent Deligne-Mumford
champs whose value 
\begin{equation}\label{dan1}
M_{(U,\mathcal{I})}\,=\, (\widetilde{U}, \widetilde{\mathcal{I}})
\end{equation}
is the proper (rather than total) transform 
$\wti{\mathcal{I}}$
on a smoothed weighted blow up 
$\wti{U}\rightarrow U$, 
satisfying (in the obvious change of notation) (M.1 new), (M.2),  (M.4), (M.5), 
while (M.3) becomes,
$M_{(U,\mathcal{I})}=0$ iff $\mathcal{I}=\mathcal{O}_U$,
and, again,
$\inv$ takes values in $\Q_{\gqs 0}^{\infty} = \varinjlim \Q_{\gqs 0}^{N}$
with  self-bounding denominators.
\end{theorem}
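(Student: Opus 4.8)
The plan is to manufacture $M$ from the local algebra of \S\ref{sec II INV} in the same way \ref{T1} is manufactured, only now feeding in the ideal $\mathcal{I}$ itself rather than the ideal of a local embedding. Fix a regular excellent Deligne--Mumford champ $U$ of characteristic zero and an ideal $\mathcal{I}\subseteq\mathcal{O}_U$. At every schematic point $x$ of $U$ the completed local ring $\widehat{\mathcal{O}}_{U,x}$ is a regular local ring of characteristic zero, so the construction of \S\ref{sec II INV} attaches to it the invariant $\inv_x:=\inv_{\widehat{\mathcal{O}}_{U,x}}(\mathcal{I}_x)\in\Q_{\gqs 0}^{2\dim_x U}$ with self-bounding denominators, \ref{MD1 defn}. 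Since that construction depends only on the pair (ambient regular ring, ideal) and is compatible with the maps of completed local rings induced by étale morphisms, the $\inv_x$ patch to a single $\inv(U,\mathcal{I})\in\Q_{\gqs 0}^{\infty}$ which is a formal --- a fortiori étale --- local invariant of the pair; this is already (M.4), and $\mathcal{I}=\mathcal{O}_U$ is precisely the pair with $\inv\equiv\underline{0}$, which is the substitute for (M.3) and, read trivially, for (M.2).

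Next I pass from this formal picture to an actual centre on $U$. Set $c:=\max\inv$ and $\Sigma:=\{x\in U:\inv_x=c\}$. By the upper semi-continuity of $\inv$ --- \ref{EEA1} by the diagonal argument in the geometric case, and \ref{EEF2 fact} together with Dade's argument, whence the global J-2 hypothesis, in general --- the locus $\Sigma$ is closed in $U$. At a point of $\Sigma$, \ref{FA1} produces a weighted centre in $\widehat{\mathcal{O}}_{U,x}$ whose associated smoothed weighted blow up makes $\inv$ strictly decrease at every closed point above it; by \ref{P1}, $U$ being excellent and regular, this weighted centre is already defined on $\spec\mathcal{O}_{U,x}$, indeed on an étale neighbourhood, and the functoriality of the local construction under étale base change glues these local data into one regular weighted centre $Z\subseteq U$ supported on $\Sigma$. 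One then \emph{defines}
\[
M_{(U,\mathcal{I})}\;=\;(\widetilde U,\widetilde{\mathcal{I}}),
\]
with $\widetilde U\to U$ the smoothed weighted blow up of $U$ in $Z$, \cite[I.iv.3]{mp1}, and $\widetilde{\mathcal{I}}$ the proper --- not total --- transform of $\mathcal{I}$. Property (M.1 new) holds by construction, and $M$ is $2$-functorial in $(U,\mathcal{I})$ because each ingredient ($\inv$, $\Sigma$, $Z$, the weighted blow up, the proper transform) is.

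It remains to check the (M.2)/(M.3)-substitute and (M.5). If $\mathcal{I}=\mathcal{O}_U$ then $\inv\equiv\underline{0}$, $Z=\varnothing$ and $M$ is the identity; conversely if $\mathcal{I}\neq\mathcal{O}_U$ then $c>\underline{0}$, $Z\neq\varnothing$, and $\widetilde U\to U$ is non-trivial --- this is the (M.3)-substitute. For (M.5) I must show $\max_{\widetilde U}\inv<c$. At closed points of $\widetilde U$ this is exactly the content of \ref{FA1}; to promote it to arbitrary points, note that $\widetilde U$ is again excellent (of finite type over the base field in the geometric case), so a non-empty closed subset of it contains a closed point, and at that point upper semi-continuity would force $\inv\gqs c$, contradicting \ref{FA1}. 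Since $\inv$ takes values in $\Q_{\gqs 0}^{\infty}$ ordered lexicographically and has self-bounding denominators, \ref{MF1 fact}, the attained values $c$ are well-ordered, so iterating $M$ terminates with $\widetilde{\mathcal{I}}=\mathcal{O}$ --- the actual point of principalisation, though not part of the bare assertion of \ref{DanT1}.

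The main obstacle is not the formal construction of $\inv$ or of the weighted centre, which is the elementary local algebra of \S\ref{sec II INV} and \ref{FA1}, but the two descent steps that globalise it: the upper semi-continuity of $\inv$ (\ref{EEA1}, \ref{EEF2 fact}), where excellence and the global J-2 condition are genuinely needed, and the descent of the formally defined weighted centre to a regular centre on $U$ itself, i.e. \ref{P1}. These are precisely where, as the abstract warns, excellent patching is more technical than for varieties over a field; everything else is the bookkeeping hidden in ``the obvious change of notation.''
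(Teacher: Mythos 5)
Your outline follows the paper's own architecture — local invariant from \S\ref{sec II INV}, upper semi-continuity, descent of the formal centre, étale patching via the atlas groupoid, termination via self-bounding denominators — so the approach is essentially the same as the one carried out in \ref{WFD3 f/d} through \ref{WP1 prop}. There is, however, one concrete slip that matters.

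You set $\inv_x:=\inv_{\widehat{\mathcal{O}}_{U,x}}(\mathcal{I}_x)\in\Q_{\gqs 0}^{2\dim_x U}$ and then assert that these ``patch to a single $\inv(U,\mathcal{I})$'' which is upper semi-continuous. This is not true for the raw invariant: the target $\Q_{\gqs 0}^{2\dim_x U}$ varies with $x$, and \ref{EF1 lemma}.(ii) shows explicitly that the even entries jump by the codimension $\defc$ when one passes from a non-closed point to a closed specialisation. The facts you invoke, \ref{EEA1} and \ref{EEF2 fact}, are statements about the \emph{corrected} invariant $\inv^{!}$ of \ref{EED1 def}, whose value at $x$ is $\inv_{\mathcal{O}_{U,x}}(\mathcal{I}_x)+\DIF(\defc_x)$; only after this dimension correction does one get a function $U\to\Q_{\gqs 0}^{2m}$ ($m=\dim U$) that is upper semi-continuous in the Zariski topology. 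Your subsequent steps --- closedness of $\Sigma$, propagating (M.5) from closed points to all points of $\widetilde U$ --- are exactly the places where this correction is load-bearing, so as written the argument has a gap precisely where it appeals to upper semi-continuity. The fix is simply to work with $\inv^{!}$ throughout; everything else in your outline (the well-orderedness via \ref{MF1 fact}, the descent of the centre via \ref{P1}/\ref{EE1 fact}, the proper transform bookkeeping) then goes through and agrees with the paper's proof of \ref{WP1 prop}.
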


which is equivalent to the more pleasing assertion that there is a fully étale
local modification functor, \ref{WFD3 f/d}, by smoothed weighted 
blow ups which resolves any rational map. 
In any case, the paper is organised as follows,

	\textbf{\S II.} This contains some linear algebra about weighted projective spaces (technically champs because we want them to be regular) which describes the manifestation of item (Y.3) above in the generality necessary for the distinctions between generic and non-generic phenomena in item (Y.4).
	
	\textbf{\S III.} 
	This is the inductive definition of the invariant as outlined in (Y.1)-(Y.4). The key step is the sub-induction \ref{s-induction} whose illustration by way of its Newton polyhedron, figure \ref{fig:NP} of page \pageref{fig:NP}, should facilitate its understanding.
	
	\textbf{\S IV.}
	Calculates the invariant for ideals on weighted projective champs. It is the proof that the invariant goes down on blowing up in its weighted centre.
	
	\textbf{\S V.}
	This begins to address  the aforesaid convergence issues, and related questions such as upper semi-continuity of the invariant by calculating it in a suitably general, \ref{WD1 obs/defn}, relative setting. 
	It does convergence and \usc out of the box in the geometric case of $A/K$ essentially of finite type over a field of characteristic zero on completing
	2 copies of $\spec A$ in the diagonal, while more generally, \cfr \ref{WC2 constru}, systematically working with formal champs sidesteps thorny issues like the diagonal is an embedding iff the champ is a (separated) algebraic space.
	
	\textbf{\S VI.}  
	Is the final assembly of the preceeding into a modification functor, \ref{WP1 prop}, for the (weak) principalisation (a.k.a. resolution of rational maps \ref{EER1 rmk}) of ideals on excellent regular champs. Unlike the preceeding sections it assumes
	a working familiarity with the rudiments of algebraic champs and is much less elementary than \S \ref{sec I WPS} - \ref{sec III INV} wherein any intervention of champs does not go much beyond linear algebra of graded rings.
	
	\textbf{\S VII.}
	Pushes things into a resolution functor, \ref{ESD1 sum/d}, for excellent champs. The geometric case is easy \ref{EEA1} \& \ref{EEA2 alternative} for a geometric reason, \cfr the summary of \S V above, and otherwise it's an exercise in appreciating Grothendieck's excellent definition.

Talks about the paper have been given at U.C.S.D., N.Y.U., and Imperial, but, the one that really generated interest was given at Valencia (in homage to the university's founder and his nephew, and in no way related to the Celtic game) in February, 2019, from which news came to Oberwolfach the subsequent week. Amongst the participants there, Dan Abramovich, Michael Temkin and Jaroslaw Wlodarczyk provided demonstrable proof that they were writing, and have now written, \cite{DanTW}, an algorithm satisfying (M.1 new), (M.2), (M.3) \& (M.4).
Similarly, credit must also go to Daniel Panazzolo who although he did not participate in the preparation of this manuscript introduced the majority of the key ideas in \cite{PP}. Indeed, the only one he was missing was the functoriality yoga, \cfr (Y.1)-(Y.4), which first appeared in \cite{mp1}.

\section{Weighted Projective Champs}\label{sec I WPS}	
\begin{setup/defn}\label{setup WPS}	
	Throughout this section, $k$ is a ring of characteristic $0$, and $\AL := \AAk \minusO $. For $n \lqs N$, let 
	$\ua = (\ua_{0},\ua_{1},...,\ua_{n}) 
	\in \mathbb{Z}_{>0}^{N+1}$ with each 
	$\ua_{i} = \bigl(a_{i},..., a_{i} \bigr) \in \mathbb{Z}_{>0}^{c_{i}}$,	$c_i \gqs 1$  and 
	$N + 1 = c_0 +...+ c_n$. We denote the coordinates of $\AAk$ by $\xx_{ij}$  for 
	$ 0 \lqs i \lqs n$ and 
	$1 \lqs j \lqs c_i$, and we will call the set of variables with the same weight $a_i$, \ie 
	$\{\xx_{i\,1},...,\xx_{i\,c_i}\}$, a \emph{block}, or a 
	\emph{block of weight $a_i$}, and often abbreviate it by $X_i$, similarly, consistent with this decomposition, we will abbreviate monomials 
	$\prod \xx_{ij}^{e_{ij}}$ by $X_{i}^{E_i}$, where $\abs{E_i} = \sum_{j} e_{ij}$ (\ie the degree of the monomial in the relevant block);
	while $X_i = 0$ means $x_{ij}=0, \; \forall \, 1 \lqs j \lqs c_i$.
\end{setup/defn}

\begin{definition}\label{def wps}
	The weighted projective champ
	$\P:=\PP(\ua_{0},\ua_{1},...,\ua_{n})$
	 is defined to be the classifying champ $\left[\quotient{A_k}{\G_{m}}\right]$ of the action 
	\begin{equation}\label{eq WPS groupoid}
	\begin{tikzcd}[row sep=3pt, column sep=18pt]
				\mathbb{G}_m \times \AL
				\ar[r, shift left, "\lambda"]
				\ar[r, shift right, "\id"']
					&\AL ,\;
		(\lambda^{a_0}X_0,...,\lambda^{a_n}X_n) 		
			&(X_0,...,X_n)
			\ar[r, mapsto, "\id"]
			\ar[l, mapsto, "\lambda"']
				&(X_0,...,X_n)	
	\end{tikzcd}
\end{equation}

	 on which the tautological bundle $\OO_{\P}(1)$ corresponds to the character:
	\begin{equation}\label{eq WPS groupoid.1}
	\begin{tikzcd}
	\G_{m} \ar[r]
	&\G_{m} : \lambda \ar[r, mapsto]
	& \lambda^{-1}.
	\end{tikzcd}
	\end{equation}
	In particular, functions on $\AAk$ are naturally graded by the action, and we denote the grading of a $\G_{m}$-homogeneous equivariant function by $\wt$, \ie
	\begin{equation}
		\wt(X_i) = a_i, \quad \wt(X_i^{E_i}) = a_i \abs{E_i}
	\end{equation}
\end{definition}
Finally if $\ur = \bigl( \ur_0, ..., \ur_n \bigr) \in \Q_{> 0}^{N+1}$, $\ur_i := (r_i, ..., r_i ) \in \Q_{>0}^{c_i}$, and $\ua \in \Z_{>0}^{N+1}$ is the unique integer tuple parallel to $\ur$ without common factors we define
\begin{equation}\label{M1 eq}
	\rP (\ur) := \PP_k(\ua)
\end{equation}
to which we add the hypothesis specific to our situation \ie 
\begin{hypothesis}\label{HP}  
	Suppose $a_0 < a_1 < ... < a_n$ and let $\V_d$ be a $k$-submodule of $\HH^0\bigl(\P, \OO_{\P}(d)\bigr),$ $ d \gqs 0$, such that if 
	$\sP = \PP_k(\ua_{1},...,\ua_{n})$ 
	is the sub-weighted projective champ defined by the 
	block of variables $X_0 = 0$ of weight $a_0$ and $\V'_d$ is 
	the image of $\V_d$ in 
	$\HH^0\bigl(\sP, \OO_{\sP}(d)\bigr)$ then for 
	all quotients 
	\begin{tikzcd}[cramped, sep=small]
		k \ar[r, two heads]	&k',
	\end{tikzcd}
	 $-b <0$ and $\partial \in 
	\mathrm{H}^0\bigl( 
					\PP_{k'}(\ua'), \T_{\PP_{k'}(\ua')}(-b) 
				\bigr)$
	\begin{equation}\label{HP eq}
	\partial (\ff') = 0, \; \forall \, \ff' \in \V'_d \otimes_{k} k'
			\, \Longleftrightarrow \,
	\partial = 0.
	\end{equation}
\end{hypothesis}
In the presence of such a supposition we have,

\begin{lemma}\label{lem L} 
	Let be everything as in \ref{setup WPS}-\ref{HP}, and for $-b < 0$ a strictly negative integer define
	\begin{equation}\label{CBF1 eq}
		\LbV :=	\bigl\lbrace \, 
					\partial \in \HH^{0} 	\bigl(
											\P, \, \T_{\P}(-b) 
											\bigr) 
					\; \big| \; \partial(\V_d)= 0 \, 
				\bigr\rbrace 
	\end{equation}
	the sub-module of global weighted vector fields of weight $-b$ which vanish on $\V_d$. Then
	If $b \neq a_0$, $\LbV=0$, otherwise there is a natural injective map,
	\begin{equation}\label{eq LL}
		\begin{tikzcd}[column sep=23pt]
			\LL_{-a_0}(V_d) 	\ar[r, hook] 
					&\HH^{0}	\bigl(
								 \P,\OO_{\P}(\ua_{0})
								 \bigr) ^\vee 
					:=\HH^{0}	\bigl( 
								\P, \OO_{\P}(a_0)^{\oplus c_0}
								\bigr)^\vee.
			\end{tikzcd}
		\end{equation}
	Better still if for every quotient 
	\begin{tikzcd}[cramped, sep=small]
	k \ar[r, two heads]	&k',
	\end{tikzcd}
	\begin{equation}\label{CBDF2 eq}
		\LL_{-a_0} \otimes_{k} k' = 
				\left\lbrace \,
					\partial \in \HH^{0} 
									\bigl(
										\PP_{k'}(\ua), \, \T_{\PP_{k'}(\ua) (-b)}
									\bigr) 
					\, \big| \,
					\partial	\bigl(
									V_d \otimes_{k} k' 
								\bigr)	=0
					\, \right\rbrace
	\end{equation}
	then \eqref{eq LL} remains an injection on tensoring with $k'$.
\end{lemma}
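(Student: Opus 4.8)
The plan is to translate the statement, via the Euler sequence, into a computation with honest polynomial vector fields, and then to extract the $X_0$-block by an $X_0$-adic filtration that feeds \ref{HP} back in at infinitely many weights. First I would install the derivation model: twisting the Euler sequence $0 \to \OO_\P \to \bigoplus_{i,j}\OO_\P(a_i) \to \T_\P \to 0$ by $\OO_\P(-b)$ and using $\HH^0(\P,\OO_\P(-b)) = 0 = \HH^1(\P,\OO_\P(-b))$ for $b>0$ (the $\HH^1$ vanishing being automatic once $N \geq 2$; the cases $N \leq 1$ are degenerate and treated by hand), one identifies $\HH^0(\P,\T_\P(-b))$ with the module of weight-$(-b)$ polynomial vector fields $\partial = \sum_{i,j}g_{ij}\partial_{x_{ij}}$ with each $g_{ij}$ of weighted degree $a_i - b$. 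Write $\partial = \partial_0 + \partial_{\geq 1}$, where $\partial_0 = \sum_{j=1}^{c_0}g_{0j}\partial_{x_{0j}}$ collects the derivations along the lowest block $X_0$.

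The crux is the claim: \emph{if $\partial = \partial_{\geq 1}$ has no $X_0$-derivations and $\partial(V_d) = 0$, then $\partial = 0$}. Here I would grade everything by $X_0$-degree: writing $g_{ij} = \sum_{e}\sum_{|E_0| = e}X_0^{E_0}h_{ij,E_0}$ with $h_{ij,E_0} \in k[X_1,\dots,X_n]$ of weighted degree $a_i - b - ea_0$, the degree-$e$ part of $\partial$ is $\partial^{(e)} = \sum_{|E_0| = e}X_0^{E_0}D_{E_0}$, where each $D_{E_0} := \sum_{i \geq 1,\,j}h_{ij,E_0}\partial_{x_{ij}}$ is a weight-$(-b - ea_0)$ vector field on $\sP$. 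Since $\partial^{(e)}$ raises $X_0$-degree by exactly $e$, and $f^{(0)} := f|_{X_0 = 0}$ runs over $V'_d$ as $f$ runs over $V_d$, an induction on $e$ gives: once $\partial^{(0)} = \dots = \partial^{(e-1)} = 0$, the $X_0$-degree-$e$ part of the identity $\partial(f) = 0$ reads $\sum_{|E_0| = e}X_0^{E_0}D_{E_0}(f^{(0)}) = 0$, hence $D_{E_0}$ kills $V'_d$ for every $E_0$, hence $D_{E_0} = 0$ by \ref{HP} with $-b$ replaced by $-b - ea_0$; and since this weight is negative there are no Euler multiples, so the $h_{ij,E_0}$, and thus $\partial^{(e)}$, vanish. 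Only finitely many $e$ occur, so $\partial = 0$.

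Granting the crux, the two displayed assertions follow. If $b \neq a_0$, then $g_{0j}$ has weighted degree $a_0 - b$, which is either negative (for $b > a_0$) or in the empty range $(0,a_0)$ of attainable weighted degrees (for $0 < b < a_0$, since $a_0$ is the minimal weight); either way $g_{0j} = 0$, so $\partial = \partial_{\geq 1}$ and the crux gives $\LL_{-b}(V_d) = 0$. If $b = a_0$, then $g_{0j} \in k$, and I take $\partial \mapsto (\partial(x_{01}),\dots,\partial(x_{0c_0})) = (g_{0j})_j$; by strictness $\HH^0(\P,\OO_\P(a_0)) = \bigoplus_j k\,x_{0j}$, so this realises \eqref{eq LL} with values in the summand $\HH^0(\P,\OO_\P(a_0))^\vee$ of the stated $\HH^0(\P,\OO_\P(\ua_0))^\vee$, and its kernel is exactly the $\partial$ with $\partial = \partial_{\geq 1}$, which the crux sends to $0$; hence injectivity. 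For the base-change clause, \ref{HP} passes to every quotient of $k'$ (such a quotient being a quotient of $k$), so the whole argument runs over $k'$ and yields an injection $\LL_{-a_0}^{k'}(V_d \otimes_k k') \hookrightarrow \HH^0(\PP_{k'}(\ua),\OO_{\PP_{k'}(\ua)}(\ua_0))^\vee$; by \eqref{CBDF2 eq} the source is $\LL_{-a_0}(V_d) \otimes_k k'$, and by naturality of the whole construction in $k$ this injection is $\eqref{eq LL}\otimes_k k'$, as wanted.

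I expect the main obstacle to be the crux: one has to see that the $X_0$-adic grading converts the single vanishing $\partial(V_d) = 0$ on $\P$ into the cascade $D_{E_0}(V'_d) = 0$ on $\sP$ at the weights $-b,\,-b - a_0,\,-b - 2a_0,\dots$, so that \ref{HP} becomes repeatedly available, and the bookkeeping — that $\partial^{(e)}$ shifts $X_0$-degree by exactly $e$ while the term surviving at stage $e$ involves only $f^{(0)} \in V'_d$ — is where care is needed. A lesser technical point is the clean identification in the first step: the cohomological vanishings for the Euler sequence, and the degenerate low-$N$ cases where $\P$ or $\sP$ is a point or curve.
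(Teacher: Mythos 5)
Your proposal is correct and is essentially the paper's own proof: the Euler sequence twisted by $\OO_{\P}(-b)$ to model fields by polynomial derivations, the decomposition by $X_0$-degree so that the lowest-degree piece annihilates $V'_d$ on $\sP$ and \ref{HP} applies at the shifted weights $-b-ea_0$ (with the observation that Euler multiples cannot occur in negative weight), the weight count $\HH^{0}\bigl(\P,\OO_{\P}(\ua_0-b)\bigr)=0$ for $0<b\neq a_0$, evaluation on the $X_0$-block for $b=a_0$, and base change via stability of \ref{HP} under quotients — this is exactly the paper's \eqref{eq SS} plus claim \ref{claim L}. The one place you are thinner than the paper is the "degenerate low-$N$ case" you defer: for $\dim\P=1$ and $b=a_0+a_1$ one has $\HH^{1}\bigl(\P,\OO_{\P}(-b)\bigr)\neq 0$ while $\HH^{0}\bigl(\P,\T_{\P}(-b)\bigr)\cong\HH^{0}\bigl(\P,\OO_{\P}\bigr)\neq 0$, and this nonzero field has no polynomial representative, so the derivation model genuinely misses it and "treated by hand" conceals an actual (if short) argument; the paper disposes of it by noting that for $\De\in\LL_{-a_1-a_0}(V_d)$ one has $X_0\De\in\LL_{-a_1}(V_d)$, which vanishes by the already-established case $b=a_1\neq a_0$, whence $\De=0$.
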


\begin{proof}
	Without loss of generality $\dim \P >0$, so from the Euler  Sequence,
	\begin{equation}
		\begin{tikzcd}[column sep=0.5cm]
		0 
		\ar[r] 
				&\OO_{\P} \ar[r] 
						&\coprod_{i=0}^n \OO_{\P}(\ua_i) 
							\bigl(
								:= \OO_{\P}(a_i)^{\oplus c_i} 
							\bigr)
						\ar[r] 
								&\T_{\P}  
								\ar[r] 
										&0,
		\end{tikzcd}
	\end{equation}
	tensored by $\OO_{\P}(-b)$ there is an isomorphism in co-homology
	\begin{equation}\label{eq SS}
	\begin{tikzcd}
	 \coprod_{i=0}^{n} 
		\HH^0	\bigl( 
					\P, \OO_{\P}(\underline{a}_i -b)
				\bigr) 
		\ar[r, "\sim"] 
			&\HH^0	\bigl( 
						\P, \T_{\P}(-b)
					\bigl)
	\end{tikzcd}
	\end{equation}
	unless $\dim \P = 1$ and $b = a_0 + a_1$. Indeed this is trivial for $\dim \P > 1$ by the analogue of Serre's explicit calculation for weighted projective champ, \cite[I.c.3]{mcqSS}, while by, \op, for $\dim \P = 1$, $\OO_{\P}(-b)$ has non-trivial $\hh^1$ if and only if $b \gqs a_0 + a_1$ and 
	\begin{tikzcd}[cramped, sep=small]
		\T_{\P} 	\ar[r, "\sim"] 	&\OO_{\P}(a_0 + a_1),
	\end{tikzcd}
	so again \eqref{eq SS} follows unless $b = a_0 + a_1$. 
	To avoid this fastidious exception observe that it only occurs if $ \dim \P= 1$,  $\P= \PP(a_0,a_1)$ is defined by blocks $X_0, X_1$ of rank $1$ and, by hypothesis, weights $a_0 < a_1$. As such if $\De$ were an element of $\LL_{-a_{1} -a_{0}}(V_d)$ then $X_{0} \De \in \LL_{-a_{1}}(V_d)$, and $a_1 \neq a_0$, so \ref{lem L} for $b=a_{1}$ implies the same for $b= a_1 + a_0$. Thus without loss of generality $b \neq a_1 + a_0$ if $\dim \P = 1$.	
	
	Now suppose $b \neq 0 $ and 
	$\HH^{0}\bigl(
				\P, \OO_{\P}(\underline{a}_0-b)
			\bigr) 
	\neq 0,$ 
	then $a_0 >b>0 $ which is equivalent to $a_0 > a_0 - b > 0$. 
	However, for any $ e > 0$, \cite[I.c.3]{mcqSS},
	\begin{equation}\label{eq SS H0}
		\HH^{0}	\bigl(
					\P, \OO_{\P}(e)
				\bigr)  
		= \coprod_{\abs{E_0} a_0+ ...+ \abs{E_n} a_n = e} 
				k \cdot X_0^{E_0} \cdots X_n^{E_n}
	\end{equation}
	so $\HH^{0}	\bigl(
					\P, \OO_{\P}(e)
				\bigr) 
		\neq 0$ 
	implies 
	$e = \abs{E_0} a_0 + ...+  \abs{E_n} a_n \gqs a_0$. 
	Thus
	$\HH^{0}	\bigl(
				\P, \OO_{\P}(\underline{a}_0 - b)
				\bigr) 
	=0$
	for $a_0 > b > 0$ so by \eqref{eq SS} both items in \ref{lem L} will follow from the more general:
	\begin{claim} \label{claim L}
		Let $b > 0 $ (so $b=a_0$ is allowed) and $\mathrm{pr}$ the projection, 
		\[
		\begin{tikzcd}
		\HH^0	\bigl( 
					\P, \OO_{\P}(\underline{a}_0 -b)
				\bigr)
				& \HH^0 \bigl( 
							\P, \T_{\P}(-b) 
						\bigr),
				\ar[l, "\pr"'] 
		\end{tikzcd} 
		\]
		afforded by \eqref{eq SS},
		then the submodule $\LL_{-b}^0 := \lbrace \partial \in 
		\LL_{-b} \; | \; \mathrm{pr}(\partial)=0 \rbrace \subset \LL_{-b}$ consists only 
		of the null derivation. 
	\end{claim}

	\begin{proof}
		In order to emphasise their role say, by way of notation, that  $\{\yy_1, ..., \yy_{c_0}\}$ is the  (since any other is obtained via the action of $\Gl_k(c_0)$) block $Y:= X_0$ of weight $a_0$, and $\{\xx_{i\bullet}\}$, $i > 0$, are blocks $X_i$ of weight 		$a_i > a_0$.
		Then $\partial \in \HH^{0}(\P, \T_{\P}(-b))$ can be written as
		\begin{equation}\label{eq D1}
		\partial = 
			\sum\nolimits_{I} Y^I \partial_I, 
				\quad \text{ with }	
					\wt(\partial_I)= -b -|I|a_0 <0.
		\end{equation}
		where by hypothesis $\partial \mapsto 0$ in 
		$\HH^{0}	\bigl( 
			\P, \OO_{\P}(\underline{a}_0-b)
					\bigr)$, 
		thus by \eqref{eq SS} we have, 
		\[
		\partial \in \coprod\nolimits_{i\gqs 1} 
			\HH^{0}	\bigl(
						\P,\OO_{\P}(\underline{a_i}-b)
					\bigr)
		\]
		and each $\partial_I$ may be naturally identified to an element of 
		$\HH^{0}	\bigl(
					\sP,\T_{\sP}(-b-\abs{I} a_0)
					\bigr)$ 
		via the Euler sequence and $\G_m$-equivariance.
		Now, suppose  $\partial \in \LL_{-b}^0$ different from $0$ and let 
		\[
		i_0 = \min \lbrace \, |I| \; | \; \partial_I \neq 0 
		\text{ for some } |I| \text{ as in \eqref{eq D1}}\, \rbrace .
		\]
Similarly, we can  (again, wholly canonically because of the $\mathbb{G}_m$-equivariance) write each 
		$\ff \in \V_d $ as
		\begin{equation}\label{eq D2}
		\begin{split}
			\ff  = \ff' + \sum\nolimits_{\abs{J}>0} \ff_J Y^J,
			\quad
			\wt(\ff') = \wt(\ff_J Y^J) = 
			 a_0\abs{J} + \wt(\ff_J) = d, \;
			\end{split}
			\end{equation}
		where $\ff'$ and $\ff_J$ are non-zero $\G_{m}$-homogeneous polynomials in the variables $X_1,..., X_n$ ($\ff'$ may be identified with its image in 
		$V'_d \subseteq \HH^0
		\bigl( \sP, \OO_{\sP}(d) \bigr)$) so,  by hypothesis, $\partial(\ff) = 0$ and on the other hand
		\begin{equation}\label{eq D3}
		\partial (\ff) = 
			\sum\nolimits_{|I|=i_0} 	\Bigl( 
								Y^I\partial_I(\ff')  \quad + 
			\sum\nolimits_{J} Y^{I+J}\partial_I(\ff_J) 
							\Bigr),
		\end{equation}
		where $Y^{I+J}\partial_I(\ff_J)$ consists of monomials where $Y$ is of degree $> i_0$, therefore $\partial(\ff)=0$ only if $\sum_{|I|=i_0}  Y^I\partial_I(\ff') = 0$. 
		However, on identifying (as ever via the $\G_m$- equivariance) $V'_d$ with a subspace of $\V_d$,  $\partial_I(\ff')$ depends only on the blocks $X_{\gqs 1}$, so $\partial_I(\ff') = 0$ for all $|I|=i_0$, which, by \ref{HP}, implies the absurdity $\partial_I = 0$. 
		\end{proof}
	This certainly implies \ref{lem L} when $b\neq a_0$, while for $b=a_0$ we have
	\begin{equation*}
		\begin{tikzcd}[sep=25pt]
		\HH^{0} \bigl( 
					\P, \T_{\P}(-a_0) 
				\bigr) 
		\ar[r, "\pr"]
			&\HH^{0}\bigl( 
						\P, \OO(\ua_0 - a_0) 
					\bigr) 
			\ar[r, "\sim"]
				& \HH^{0}	\bigl( 
								\P , \OO(a_0) 
							\bigr)^{\vee}
		\end{tikzcd}
	\end{equation*}
	so in this case the claim is exactly \eqref{eq LL}.
	Better since by construction the hypothesis \ref{HP} is stable
	under base change to an arbitrary quotient of $k$, our initial conclusions are too, so \eqref{eq LL} is an injection on tensoring as soon as the definition of $\LL_{-a_0}$ enjoys the stability under base change in \eqref{CBDF2 eq}
\qedhere \end{proof}

To profit from the lemma, let us introduce,
\begin{notation/defn}\label{not/def ssym}
	Let $W := W_0 \amalg ... \amalg W_n$ 
	be a $k$-module with a $\G_{m}$-action such that $\G_{m}$ acts on $W_{i}$ by the character $\lambda^{b_i}$, $b_i \in \Z,$ for $ 0 \lqs i \lqs n$, then for $q \in \Z$, $\ssym^q(W)$ is the subspace of the symmetric algebra $\Sym(W)$ where $\G_{m}$ acts by the character $\lambda^q$. Similarly, given blocks $X_i$, $n \gqs i \gqs 0$, as in \ref{setup WPS}, with a slight abuse of notation, we define 
	\[
	\ssym^q (X_0 \amalg ... \amalg X_n) := 
		\coprod_{|E_0|a_0+ ...+ |E_n|a_n = e} 
		k \cdot X_0^{E_0} \cdots X_n^{E_n} 
	\] 
	so, in this notation \eqref{eq SS H0}
	is 
	$\HH^{0}\bigl(
		\P, \OO_{\P}(e)
		\bigr) = 
		\ssym^e(X_{0} \amalg ... \amalg X_{n})$. 
	Finally, as in \eqref{M1 eq}, if the weights $r_{0}, ..., r_{n} \in \Q_{>0}$ were any rationals and $(a_{0}, ..., a_{n} ) = D(r_{0}, ..., r_{n})$ the unique parallel tuple of positive integers without common factors, we define for $q \in \Q_{\gqs 0}$
	\begin{equation}\label{M1bis eq}
		\ssym^{q} \big( X_{0} \amalg ... \amalg X_{n} \big) := 
				\coprod_{a_{0}\abs{E_{0}} + ... a_{n}\abs{E_{n}} = Dq} 
				k \cdot X_{0}^{E_{0}} \cdots X_{n}^{E_{n}} .
	\end{equation} 
	\end{notation/defn}

In any case to apply the lemma, observe that,
\begin{equation}
	\LL := \coprod\nolimits_{b > 0} \LL_{-b} = \LL_{-a_0}
\end{equation}
is plainly a Lie algebra wherein by \eqref{eq LL} the bracket is even trivial; thus

\begin{corollary}\label{cor L}
	Again let everything be as in \ref{setup WPS}-\ref{HP} and suppose further that \eqref{eq LL} is an isomorphism onto a trivial (\ie admitting a basis) free $k$-module. As such there is a block $Z$ associated to 
	the annihilator of $\LL$, \ie 
	\begin{equation}\label{eq ZL}
	\bigcap\nolimits_{\partial \in \LL}
		\ker(\partial) 		\subset 
		\HH^{0} 			\bigl( 
								\P,\OO_{\P} (\ua_0) 
							\bigr), 
	\text{ and,}
	\end{equation}
		(i) there are \emph{ blocks}, \ie weighted projective coordinates $X_{1},...,X_{n}$, of weight $a_1,...,a_n$, generating a space of functions, $X$, such that 
		\[
		\V_d \subset \ssym^d \bigl(X \amalg \ZZ \bigr) := \ssym \bigl( X_0 \amalg ... \amalg X_n \amalg Z \bigr).
		\]
		
		(ii) If $\widetilde{X}_i, \; 1 \lqs i \lqs n$ is a 
		system of coordinates with $\wt(\wti{X}_i)=a_i$, 
		which generates a space of functions $\widetilde{X}$, and
		$\widetilde{\ZZ} \subseteq 
		\HH^{0}	\bigl(
				\P,\OO_{\P}(a_0)	
				\bigr)$ 
		such that (i) holds i.e. 
		$\V_d \subset \ssym^d \big(\wti{X} \amalg \wti{\ZZ} \big)$, then the $k$-module generated by $\wti{\ZZ}$  $\supset \ZZ$.
		
		(iii) 
		If $\widetilde{X}_i, \; 1 \lqs i \lqs n,\,Z$ is any other system of coordinate such that 
		\ref{cor L}.(i) holds, then $\widetilde{X}_{i}=\widetilde{X}_{i}(X,Z),\, 1 \lqs i \lqs n$, \ie 
		 unused coordinates are not involved.
	\end{corollary}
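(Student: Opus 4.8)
The plan is to read everything off the structure already extracted in \ref{lem L}--\ref{claim L}. Write $A:=k[x_{ij}]$ for the polynomial ring graded by $\wt$, so that $\HH^{0}\bigl(\P,\OO_{\P}(e)\bigr)$ is its weight-$e$ piece and, $a_0$ being the least weight, $\HH^{0}\bigl(\P,\OO_{\P}(a_0)\bigr)$ is precisely the block $X_0$. One may assume $\dim\P>0$ (otherwise $\LL=0$ and the three assertions are immediate). By hypothesis $\LL=\LL_{-a_0}(\V_d)$ is a free $k$-module with a basis $\partial_1,\dots,\partial_\rho$; these commute (the bracket is trivial, \eqref{eq LL}); each is homogeneous of weight $-a_0$, hence locally nilpotent as $A_0=k$; every homogeneous weight-$(-a_0)$ derivation of $A$ already lies in $\HH^{0}\bigl(\P,\T_{\P}(-a_0)\bigr)$ --- there being no Euler relation in negative weight, so \eqref{eq SS} is the full identification --- with $\pr$ returning its $\partial/\partial x_{0j}$-coefficients; in particular \ref{claim L} says a $\partial\in\LL$ vanishes as soon as it kills $X_0$. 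The ``trivial free module'' hypothesis lets me choose $y_1,\dots,y_\rho\in X_0$ dual to the $\partial_t$, so $\partial_t(y_u)=\delta_{tu}$, with $X_0=\ZZ\oplus\bigoplus_u k y_u$ and $\ZZ=\bigcap_t\ker\bigl(\partial_t|_{X_0}\bigr)$ the annihilator block \eqref{eq ZL}.

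For (i) I would integrate the $\partial_t$ by Dixmier maps $\pi_t(g):=\sum_{j\ge0}\frac{(-y_t)^{j}}{j!}\partial_t^{\,j}(g)$: since $\partial_t$ is locally nilpotent with slice $y_t$, each $\pi_t$ is a graded $k$-algebra homomorphism onto $\ker\partial_t$, the identity on it and killing $y_t$, and $\partial_s\pi_t=\pi_t\partial_s$ for $s\ne t$ because $\partial_s$ commutes with $\partial_t$ and $\partial_s(y_t)=0$. Hence, for each higher coordinate $x_{ij}$ ($i\ge1$), $\wti x_{ij}:=(\pi_\rho\circ\dots\circ\pi_1)(x_{ij})\in\bigcap_t\ker\partial_t=:A'$ is homogeneous of weight $a_i$; and since each $\pi_t$ alters its argument only by an element of the ideal $(y_1,\dots,y_\rho)$ whose remaining factor has strictly smaller weight, passing from $\{x_{ij}\}_{i\ge1}\cup\{y_u\}\cup(\ZZ\text{-basis})$ to $\{\wti x_{ij}\}_{i\ge1}\cup\{y_u\}\cup(\ZZ\text{-basis})$ is weight-triangular, hence an automorphism of $A$. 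In these coordinates $\partial_t=\partial/\partial y_t$, so $A'=k[\wti x_{ij},\ZZ]$ by characteristic zero; as $\V_d\subseteq A'$ this is $\V_d\subseteq\ssym^{d}\bigl(X\amalg\ZZ\bigr)$ with $X:=\langle\wti x_{ij}:i\ge1\rangle$, and moreover $k[X,\ZZ]=A'=\bigcap_{\partial\in\LL}\ker\partial$ --- this choice of $X$ I retain for (iii).

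For (ii) and (iii) the key is that any homogeneous weight-$(-a_0)$ derivation of $A$ killing $\V_d$ lies in $\LL$ (by the above it is in $\HH^{0}(\P,\T_{\P}(-a_0))$, and kills $\V_d$ by assumption), and vanishes as soon as it also kills $X_0$. For (ii): given $\wti X_i$ of weight $a_i$ and a direct summand $\wti\ZZ\subseteq X_0$ forming with them a coordinate system, with $\V_d\subseteq\ssym^{d}(\wti X\amalg\wti\ZZ)$, complete $\wti\ZZ$ by weight-$a_0$ coordinates $\wti y_u$; each $\partial/\partial\wti y_u$ kills $\V_d$ (no $\wti y_u$ occurs), hence lies in $\LL$, so $\ZZ=\bigcap_{\partial\in\LL}\ker\partial\subseteq\bigcap_u\ker(\partial/\partial\wti y_u)$, and intersecting with $X_0$ (whose elements are linear in $\wti\ZZ\cup\{\wti y_u\}$) forces $\ZZ\subseteq\langle\wti\ZZ\rangle$. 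For (iii): with $\wti X_i$ ($i\ge1$), the canonical $\ZZ$ and a complement $y'_u$ forming a coordinate system and $\V_d\subseteq\ssym^{d}(\wti X\amalg\ZZ)$, again $\partial/\partial y'_u\in\LL$; for any $\partial\in\LL$, since $\partial(\ZZ)=0$, its expansion in these coordinates is $\partial=\sum_u\partial(y'_u)\,\partial/\partial y'_u+\partial^{\mathrm{hi}}$ with $\partial^{\mathrm{hi}}=\sum_{i\ge1,j}\partial(\wti X_{ij})\,\partial/\partial\wti X_{ij}$, so $\partial^{\mathrm{hi}}=\partial-\sum_u\partial(y'_u)\,\partial/\partial y'_u$ is again in $\LL$ and kills every weight-$a_0$ coordinate, hence $X_0$; by \ref{claim L}, $\partial^{\mathrm{hi}}=0$. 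Thus $\partial(\wti X_{ij})=0$ for all $\partial\in\LL$, i.e. $\wti X_{ij}\in\bigcap_{\partial\in\LL}\ker\partial=k[X,\ZZ]$, which is the assertion $\wti X_i=\wti X_i(X,\ZZ)$.

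The real work is all in (i) --- confirming that the iterated Dixmier map is a genuine weight-triangular change of coordinates and that $\bigcap_t\ker\partial_t$ is precisely a graded polynomial ring with generators in the weights $a_0$ (with multiplicity $c_0-\rho$) and $a_1,\dots,a_n$; equivalently, that the unipotent $\G_a^{\rho}$ generated by the $\partial_t$ admits a global slice. Granted this, (ii) and (iii) are short deductions from \ref{claim L} and the Euler-sequence splitting \eqref{eq SS}. The one recurring technicality is that the weight-$a_0$ submodules $\ZZ$, $\langle y_u\rangle$, $\langle\wti\ZZ\rangle$ be free direct summands, which is exactly the force of the ``trivial free module'' hypothesis and is vacuous over a field.
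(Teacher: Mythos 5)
Your proposal is correct, and for parts (i) and (ii) it is essentially the paper's own argument in different packaging: the iterated Dixmier/slice homomorphisms $\pi_t(g)=\sum_{j}\tfrac{(-y_t)^j}{j!}\partial_t^{\,j}(g)$ are exactly the closed form of the paper's weight-increasing induction \eqref{eq P3}--\eqref{eq P4}, which straightens the commuting, homogeneous, locally nilpotent weight $-a_0$ fields one at a time by solving $\l_{ij}+\partial_{\yy_1}\GG_{ij}=0$; both routes produce the same $\G_m$-equivariant, weight-triangular change of coordinates, the same identification of $\bigcap_{\partial\in\LL}\ker\partial$ with the graded polynomial algebra on $X\amalg \ZZ$, and both read the trivial-free-module hypothesis the same way to extract the dual elements $y_u$ (your closing caveat about $\ZZ$, $\langle y_u\rangle$, $\langle\wti{\ZZ}\rangle$ being free summands is precisely the paper's implicit use of it), while your (ii) is the paper's annihilator comparison almost verbatim. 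Where you genuinely diverge is (iii): the paper normalises $\wti{X}_i\equiv X_i$ modulo $(Y,Z)$, expands as in \eqref{eq WA1}, isolates the minimal $Y$-order $\y$, and reaches a contradiction via the weight $-\y a_0$ field $\sum_i\lambda_{E_i}\partial_{X_i}$ which kills $\V_d$ but has zero image under \eqref{eq LL}; you instead expand an arbitrary $\partial\in\LL$ in the coordinate system $\{\wti{X},\ZZ,y'\}$, observe that each $\partial/\partial y'_u$ lies in $\LL$ (it kills $\ssym(\wti{X}\amalg \ZZ)\supseteq\V_d$) and that the coefficients $\partial(y'_u)$ are constants, so the difference $\partial^{\mathrm{hi}}$ is again in $\LL$, annihilates the whole weight-$a_0$ piece, and vanishes by \ref{claim L}; hence every $\partial\in\LL$ kills $\wti{X}_{ij}$, \ie $\wti{X}_{ij}\in\bigcap_{\partial\in\LL}\ker\partial=k[X,\ZZ]$. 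This is shorter and dispenses with the lowest-order bookkeeping, at the cost of invoking the normal form from (i); the paper's version works directly in the original coordinates and its explicit control of the discrepancy $\eta_i$ is the template reused for the convergence bookkeeping later (\cfr \eqref{eq mad1}). Both versions hinge on the same key input, namely the injectivity \eqref{eq LL} via \ref{claim L}, so I regard your (iii) as a valid, and slightly cleaner, alternative.
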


\begin{proof} 
	Item \ref{cor L}.(i) is trivial if $\LL_{-a_0}=0$, so 
	suppose the image of \eqref{eq LL} is non-zero, and profit from the fact that the the image is a trivial $k$-module to choose $0 \neq \partial \in \LL_{-a_{0}}$ along with coordinates $Z,\,y_{1}$ where the former is a basis of
	\[
	\ker \partial 
	\subset 
	\HH^{0}		\bigl(
				\P, \OO_{\P}(\underline{a}_0)
				\bigr)
	\]
	(thus empty if $c_{0}$ and the dimension of $\LL_{-a_0}$ are $1$), and $\partial \yy_1 =1$. 
	Again, let $X_i$, for $1 \lqs i \lqs n$, be the blocks of weight strictly greater than $a_0$; so 
	$ Z,\, \{ y_1 \},\, X_i= \{ \xx_{i \bullet} \},$ 
	$ \; 1 \lqs i \lqs n$ is a basis for everything and in these of coordinates $\partial$ takes the form 
	\begin{equation}\label{eq Par1}
		\partial = 	\frac{\partial}{\partial \yy_1 } + 
				\sum_{i=1}^n	\biggl( \, 
								\sum_{j=1}^{c_i} 
									\l_{ij} \, 
									\frac{\partial}{\partial \xx_{ij}}
								\, \biggr), 
		\qquad \wt(\xx_{ij}) = a_{i} > a_0,
	\end{equation}
	where $\wt(\l_{ij}) - \wt(\xx_{ij}) = - a_0$, so 
	$\wt(\l_{ij}) = \wt(\xx_{ij}) - a_0 < \wt(\xx_{ij})$, thus 
	\begin{equation}\label{eq lam ij}
	\l_{ij} = \l_{ij}(Z, \yy_1, X_{<i}), \quad \text{where} \; \wt(X_{<i}) < a_i, 
	\end{equation}
	\ie $\l_{ij}$ only depends on variables of weight strictly less than $a_i$. 	
	To simplify the notation we'll write $\partial_{\yy_1}$, resp. $\partial_{\xx_{ij}}$, for $\frac{\partial}{\partial \yy_1}$, resp. $\frac{\partial}{\partial \xx_{ij}}$, and employ the summation convention so that \eqref{eq Par1} becomes:
	\begin{equation}\label{eq P1}
	\partial = \partial_{\yy_1} + \l_{ij}\,\partial_{\xx_{ij}} \;.
	\end{equation}
	
	By increasing induction on $\wt(X_i)$ we will eliminate everything from \eqref{eq P1}, except ${\partial_{\yy_1}}$, by way of a global change of weighted projective coordinates. The starting point is $a_{i-1} = a_0$ which is a minor abuse of notation, but it is certainly true, so by induction we have
	\begin{equation}\label{eq P2}	
	\partial = 
	\partial_{\yy_1} + 
	\l_{hj}\,\partial_{\xx_{hj}}, 
			\quad \wt(\xx_{hj}) \gqs a_{i}.
	\end{equation}
	Thus in weight $a_{i}$ we aim for a global change of coordinates of the form
	\begin{equation}\label{eq P3}	
	\xx_{ij} \mapsto \xx_{ij} + \GG_{ij}(Z, \yy_1, X_{<i}), 
	\quad
	\wt(X_{< i}) < a_i =  \wt(\GG_{ij}) 
	\end{equation}
	and otherwise do nothing for weights strictly greater than $a_{i}$. 
	Consequently we need to solve 
	$ \partial \big(\xx_{ij} + \GG_{ij} \big) = 0$, \ie
	\begin{equation}\label{eq P4}	
	  \partial \big(\xx_{ij} + \GG_{ij} \big) =
	\l_{ij} + \partial_{\yy_1} \, \GG_{ij}	= 0,
	\end{equation}
	which is trivially solvable on any ring of characteristic 0 by \eqref{eq lam ij} with
	$ \wt( \GG_{ij}) = \wt(\l_{ij}) - \wt(\partial_{\yy_1})  = a_i.$
	As such for our given $\partial$ we have a system of coordinates
	$ \{\,Z, \yy_1, X_i \,\}$ such that $\partial = \partial_{\yy_1}$ and, of course, any other $D \in \LL_{-a_0}$ can be expressed in this basis as
	\begin{equation} \label{eq P5}	
	D = \nu \, \partial_{Z} + \mu \, \partial_{\yy_1}  + \l_i\, \partial_{X_i},
	\end{equation}
	with $ \mu, \nu, \in k $ but not $\l_i$ if $\l_i \neq 0,$ where  $\l_i \partial_{X_i} := \sum_{j=1}^{c_i} \l_{ij} \partial_{\xx_{ij}}$. 
	By \eqref{eq LL} if $D$ is linearly independent of $\partial_{\yy_1}$, replacing $D$ by $D - \mu\, \partial_{\yy_1}$, $\mu=0$ and some $\nu\, \partial_{Z} \neq 0$. Further, from $[\partial, D]=0$, $D$ is canonically a derivation of the algebra $k[Z , X_1, ..., X_n]$, which in turn inherits a $\mathbb{G}_m-$action.
	Consequently we may repeat the first step for $D$ and $\ker D$ to get coordinates $\yy_1, \yy_2, X_i$, $ n \gqs i > 0$, and $Z$, which, now, is a block of coordinates of $\ker\partial \cap \ker D$, in which
	\begin{equation}\label{eq P6}
	\partial = \partial_{\yy_1}, \quad D = \partial_{\yy_2}.
	\end{equation}
	and whence, by induction we arrive at a $\mathbb{G}_m$-equivariant system of coordinates $Z$,  $Y= \{\yy_1,...,\yy_{\ell}\}$, $X_i, \, n \gqs i > 0$
	  with the properties
	\begin{equation}\label{Cor 1234}
	\begin{split}
		(1) \quad &\partial_Y = \{\partial_{\yy_1},...,\partial_{\yy_\ell}\} 
		\text{ is a basis of } 
		\LL_{-a_0},\,  
		Y=\{ \yy_1, ..., \yy_\ell \} \text{ its dual basis;} \\
		(2) \quad &Z \text{ is a basis of ann}(\LL) \text{ in } \HH^{0} (P, \OO_{\P}(a_0)), \cfr \text{ \eqref{eq ZL}};\\
		(3) \quad &X_i, \; 1 \lqs i \lqs n, \text{ are the other coordinates;}\\
		(4) \quad & \text{ $\V_d$ is a submodule of weight $d a_0$ of the } \mathbb{G}_m \text{-algebra } k[Z,X_{\gqs 1}];
		\end{split}
		\end{equation}
	which complete the proof of part \ref{cor L}.(i).
	
	In regard to part \ref{cor L}.(ii), under the hypothesis of \op, $\LL$ contains a subspace of fields, $M$, whose annihilator under the natural map of \eqref{eq LL} is generated by $\wti{Z}$, while the annihilator of $\LL$ is generated by $Z$, so from $M \subseteq \LL$ we get $Z$ is contained in the $k$-module generated by $\wti{Z}$. 

	Finally as to part (iii), by definition the $\wti{X}_i$'s and the $X_i$'s, $ 1 \lqs i \lqs n $, modulo 
	$\HH^{0}	\bigl(
				\P,\OO_{\P}(a_0)
				\bigr),$ 
	are systems of weighted projective coordinates of the sub-weighted projective champ $\sP = \PP(a_1,...,a_n)$, \cfr \ref{HP}, so without loss of generality (\ie after replacing say the $X_i$'s by a weighted automorphism of themselves) with $Y, \, Z$ as in \eqref{Cor 1234} 
	\begin{equation}
		\widetilde{X}_i = X_i \mod (Y,Z).
	\end{equation} 
	and we have:
	\begin{equation}\label{eq WA1}	
	\widetilde{X}_i = 
		X_i + 
		\widetilde{X}_i(Z, X_{\lqs i}) + 
			\sum\nolimits_{\abs{E}=\x_i}
		Y^{E}
		\lambda_{E}(Z, X_{\lqs i}) + 
	\bigl(
		\text{higher order in $Y'$\text{s}}
	\bigr),
	\end{equation}
	where $\wt(Y^{E} \lambda_{E})= \abs{E} a_0 + \wt(\lambda_{E}) = a_i$, and by definition $\x_i$ is of minimal weight amongst the monomials in $Y$. As such, we may, in light of our goal, 
	\ref{cor L}.(iii), \wloss replace  $X_{i} + \widetilde{X}_{i}(Z, X_{\lqs i})$ by $X_{i}$  (which is an automorphism because it is so modulo $Z$)  so that for $\y = \min_{i} \{ \x_i \}$ \eqref{eq WA1} is 
	\begin{equation}\label{eq WA2}
	\begin{split}
	\widetilde{X}_i 
		= X_{i} + 
			\sum\nolimits_{\abs{E_i}=\y } 
				Y^{E_i} \lambda_{E_i}(Z, \uX)  
	+
	\bigl( 
		\text{order $\gqs \y+1$ in  $Y'$\text{s}}
	\bigr) 
		=: X_i + \eta_{i},
	\end{split}
	\end{equation}
	and by hypothesis every $\ff = \ff(Z, \uX) \in \V_d$ can be written as $\varphi_\ff(Z,\widetilde{\uX})$, where 
	$\uX := (X_1,...,X_n)$
		and 
		$\wti{\uX} = \uX + \underline{\eta}$, \eqref{eq WA2}. However from
	\begin{equation}\label{eq WA3}
	\varphi_\ff(Z, \uX + \underline{\eta}) = \ff(Z, \uX)
	\end{equation}
	we must have $\varphi_\ff= \ff$ and whence
	\begin{equation}\label{eq WA4}
		\begin{split}
		\ff(Z, \uX) =&\, \ff(Z, \uX + \underline{\eta}) \\
					=&\, \ff(Z, \uX) + \eta_{i} (\partial_{X_i} \, \ff )(Z, \uX) 
		+
		\bigl( 
			\text{order $\gqs \y+1 $ in  $Y'$ }
		\bigr) 
		\\
					=&\, \ff(Z, \uX) + 
					\sum\nolimits_{\abs{E_i}=\y} 
						Y^{E_i} \lambda_{E_i} 
						(\partial_{X_i}\,\ff) 	+
		\bigl(		
			\text{order $\gqs \y+1 $ in  $Y'$ }
		\bigr)
	\end{split}
	\end{equation}
	Thus for every $E_i$ with $\abs{E_i}=\y$ the term $\sum_i \lambda_{E_i} (\partial_{X_i}\ff)$ must be  equal to $0$ and, as we have said, $\abs{E_i} a_0 + \wt(\lambda_{E_i}) = \wt(X_i)$, so the operator 
	$\lambda_{E_i} \partial_{X_i}$ has weight
	\begin{equation}\label{eq WA5}
	\wt(\lambda_{E_i} \partial_{X_i})= 
	\wt(\lambda_{E_i}) + \wt(\partial_{X_i}) = 
	a_i - \abs{E_i} a_0 - a_i = -\abs{E_i}a_0 < 0
	\end{equation}
	and it vanishes on all of $\V_d$, so it belongs to $\LL_{-a_0}$. Therefore by lemma \ref{lem L} its image under \eqref{eq LL} in $\HH(\P,\OO_{\P}(a_0))^\vee$ is non-zero, which is nonsense since
	$\lambda_{E_i} \partial_{X_i}$ has value $0$ on both the $Y$'s and the $Z$'s.
	\qedhere \end{proof}
\section{The Invariant}\label{sec II INV}
\setcounter{equation}{0}
We are going to define an invariant of rings and their ideals which is most naturally expressed in an appropriate number of copies of $\Q_{\gqs 0}$ with the lexicographic ordering. On the other hand this is not a discrete group, so to avoid fastidious statements about denominators we introduce,

\begin{definition}\label{MD1 defn}
	Let $N \in \Z_{\gqs 0}$; $\Q^{N+1}$ ordered lexicographically; and
	$\pr_i$, resp. $\pr_{\lqs i}$, the projection onto the $i^{\th}$ factor, resp. first $i$ factors, $1 \lqs i \lqs N$, then a function
	\begin{tikzcd}[cramped, sep=small]
		f: E 	\ar[r]		&\Q_{\gqs 0}^{N+1}
	\end{tikzcd} 
	is said to have self bounding denominators 
	if,
	
		(i)		
		\begin{tikzcd}[cramped, sep=small]
		f ^{*}\pr_1 : E	\ar[r]	&\Q_{\gqs 0}
		\end{tikzcd}
		takes values in $\Z_{\gqs 0}$.
		
		(ii)		
		If $N \gqs 1$, then for all $1 \lqs i \lqs N$ there are increasing (in the lexicographic order) functions
		\begin{tikzcd}[cramped, sep=small]
			D_{i}: \Q_{\gqs 0}^{i}		\ar[r] 		&\Z_{\gqs 0}
		\end{tikzcd} 
		such that,
		\begin{equation}\label{M2 eq}
			\big( 
				f^{*} \pr_{\lqs i}^{*} D_i
			\big)
			f^{*}\pr_{i+1} \in \Z_{\gqs 0} .
		\end{equation}
\end{definition}

The utility of the definition results from,

\begin{fact}\label{MF1 fact}
	Let everything be as in \ref{MD1 defn} with 
	\begin{tikzcd}[cramped, sep=small]
		f:E		\ar[r]		&\Q_{\gqs 0}^{N+1}
	\end{tikzcd}
	a function enjoying self bounding denominators, and define a function
	\begin{tikzcd}[cramped, sep=small]
		F:E		\ar[r]		&Z_{\gqs 0}^{N+1}
	\end{tikzcd}
	whose first projection is that of $f$ while its $(i+1)^{\th}$ projection is \eqref{M2 eq} for $1 \lqs i \lqs N$, then in the lexicographic order,
	\[
			f(x) \lqs f(y) 
				\quad \xLeftrightarrow[\null]{\qquad} \quad
			F(x) \lqs F(y).
	\]
\end{fact}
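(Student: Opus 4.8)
The plan is to verify that $x \mapsto F(x)$ is order-preserving for the two lexicographic orders and then deduce that it is order-reflecting as well, the latter being automatic once one notes that the lexicographic order on $\Q^{N+1}$ is total. The structural observation that makes this work is that, for $1 \lqs i \lqs N+1$, the coordinate $F_i := F^{*}\pr_i$ is a function of $f_1 := f^{*}\pr_1, \dots, f_i := f^{*}\pr_i$ alone: indeed $F_1 = f_1$, and $F_{i+1} = \big(D_i\circ(f_1,\dots,f_i)\big)\cdot f_{i+1}$ by \eqref{M2 eq}. Hence whenever $f_1(x) = f_1(y), \dots, f_i(x) = f_i(y)$ we get $F_1(x) = F_1(y), \dots, F_i(x) = F_i(y)$, and also the equality of scalars $D_i(f_1(x),\dots,f_i(x)) = D_i(f_1(y),\dots,f_i(y))$. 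The one genuine input beyond this bookkeeping is that each $D_i \gqs 1$ on the arguments at which it is evaluated (equivalently, as intended, $D_i$ is valued in $\Z_{>0}$); this is harmless and genuinely needed, since with $D_1 \equiv 0$, $f(x) = (0,0)$, $f(y) = (0,1)$ the asserted equivalence fails.

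For the order-preserving step: if $f(x) = f(y)$ then $F(x) = F(y)$ by the observation above. If $f(x) < f(y)$, let $j$ be the least index with $f_j(x) \neq f_j(y)$, so $f_j(x) < f_j(y)$ and $f_i(x) = f_i(y)$ for $i < j$; then $F_i(x) = F_i(y)$ for $i < j$, while at $i = j$ either $j = 1$ and $F_1 = f_1$ gives $F_1(x) < F_1(y)$, or $j > 1$ and $F_j = d\cdot f_j$ for the common positive integer $d = D_{j-1}(f_1(x),\dots,f_{j-1}(x)) = D_{j-1}(f_1(y),\dots,f_{j-1}(y))$, again giving $F_j(x) < F_j(y)$. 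Either way $F(x) < F(y)$. Combining the two cases, $f(x) \lqs f(y) \Rightarrow F(x) \lqs F(y)$.

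For the converse: if $F(x) \lqs F(y)$ while $f(x) \lqs f(y)$ failed, then by totality $f(y) < f(x)$, so $F(y) < F(x)$ by the previous paragraph, contradicting $F(x) \lqs F(y)$ (totality again). Hence $F(x) \lqs F(y) \Rightarrow f(x) \lqs f(y)$, and the two implications are exactly the claimed equivalence. I expect the only real point to watch is the positivity of the $D_i$ just flagged; the rest is the routine unwinding of the lexicographic order, and, worth noting, the monotonicity of the $D_i$ plays no role in this particular statement (it is used later, when the $D_i$ are actually produced).
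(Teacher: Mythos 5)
Your argument is correct and is essentially the paper's own, modulo presentation: the paper runs a short induction on $N$, reducing (``without loss of generality'') to the case where the first $N$ coordinates of $f$ agree and the last one differs, then observes that $\bigl(f^{*}\pr_{\lqs N}^{*}D_{N}\bigr)$ is the same at $x$ and $y$, so comparing $\pr_{N+1}f$ and comparing $\pr_{N+1}F$ are equivalent. You instead locate the least index $j$ of disagreement directly, which is the same reduction unpacked. One thing you flag that the paper leaves tacit is the positivity of the $D_i$: as stated in \ref{MD1 defn} the $D_i$ are only required to land in $\Z_{\gqs 0}$, and the last step of the paper's proof (the ``iff'' between $\pr_{N+1}f(x)\lqs\pr_{N+1}f(y)$ and $\pr_{N+1}F(x)\lqs\pr_{N+1}F(y)$) silently uses that the common value of $D_N$ at $x$ and $y$ is nonzero; your $D_1\equiv 0$ example shows this hypothesis is genuinely needed. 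It is harmless in practice --- wherever the paper constructs $D_i$ (e.g.\ as a factorial in \ref{MF2}) they are automatically $\gqs 1$ --- but your explicit remark is a real, if minor, sharpening of the exposition. Your closing remark that monotonicity of the $D_i$ is not used in this particular lemma is also accurate.
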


\begin{proof}
	Manifestly 
	\ref{MF1 fact}
	is true if $N=0$, so suppose $N \gqs 1$ and $f(x) < f(y)$, then without loss of generality, 
	$	\pr_{\lqs N} f(x) =  	\pr_{\lqs N} f(y) $
	but 
	$	\pr_{ N + 1 } f(x) <  	\pr_{ N + 1} f(y) .$	
	Consequently, $\big(	f^{*} \pr_{\lqs N}^{*} D_{N}	\big)$
	is the same at $x$ and $y$, so:
	$
		\pr_{ N + 1 }f(x) \lqs \pr_{ N + 1 }f(y) 
	$ 
	iff
	$
		\pr_{ N + 1 }F(x) \lqs \pr_{ N + 1 }F(y).
	$
\qedhere \end{proof}

\begin{setup/notation}\label{setup INDU}
	$A$ is a regular local ring of dimension $m$, with residue field $k$ of characteristic $0$, and $\mm$ its maximal ideal. We will employ,
\end{setup/notation}

\begin{definition}\label{def WF}
	A regular weighted filtration (or simply a weighted filtration or even just filtration if there is no danger of confusion) on a ring $A$, is the filtration, $F^{\bullet}$, associated to a system of coordinates (\ie modulo $\mm^2$ affords a basis of $\MM$) 
	$\{\, x_1 \, ,..., \, x_m \,\}$ and non-negative numbers, 
	$r_1, ... , r_m$, by the ideals, 
	\begin{equation}\label{eq def WF}
	F^p A = 
	\lbrace \, 
		x_{1}^{e_1} \cdot ... \cdot 
		x_{m}^{e_m}
		\; \big| \; 
		r_1		\,e_1 +...+ 
		r_m		\,e_m \gqs p
	\, \rbrace,
		\quad p \in \Q_{>0}.
	\end{equation}
\end{definition}
 In addition, since in the string of rationals $(r_1, ... , r_m) \in \Q^{m}_{\gqs 0}$, repetitions are allowed, we define
\begin{definition}\label{def blocks}
	A block of coordinates, $X$, is a set which may be extended to a system of coordinates and, which is maximal amongst such sets with the same weight. 
	In particular any weighted filtration can always be expressed in terms of a system of blocks $X_0,...,X_s$, $s <m$, where each $X_i$ has the same weight and $X_0 \amalg ... \amalg X_s$ is a system of coordinates of $A$.
\end{definition}

For $I$ an ideal of $A$ we will define inductively a weighted filtration $F^\bullet(I)$ which only depends on the pairs $(A,I)$ together with 
\begin{equation}\label{eq invIA}
\inv(I) = \inv_A(I) \in \Q_{\gqs 0}^{2m}
\end{equation}
where $\Zz^{2m}$ is endowed with the lexicographic ordering.
At each step $s \gqs 0$ of the induction we will, actually, define two successive entries of $\inv(I)$, $(g_s,\ell_s)$, beginning with

\begin{start indu}\label{start INDUCT}
Let $A$ be as in \ref{setup INDU}, and $I \lhd A$ an ideal, 
then the multiplicity of $I$, is  
\[ \mult(I) := 
				\begin{cases}
					\max \{ \alpha \in \Zz \, \big| \, I \subseteq \mm^{\alpha} \}, 
						&I \neq 0 
				\\
					\infty \;,
						&I =0
				\end{cases}
\]
As such if $\mult(I) = d \in \Z_{> 0}$,
\[
\begin{tikzcd}
	\V_d := I \mod \bigl(\mm^{d+1} \bigr) \ar[r, hook] 			 
			&\Sym^d\bigl(\MM\bigr),
\end{tikzcd}
\]
and we apply lemma \ref{lem L} to 
\begin{equation}
\begin{tikzcd}
\V_d \ar[r, hook]
	&\HH^{0} \bigl( \mathbb{P}(\MM), \,\OO_{\PP(\MM)}(d)\bigr)
\end{tikzcd}
\end{equation}
with $\ell_0(I) := \dim \LL_{-1}(V_d)$, in notation of \eqref{CBF1 eq}. Then by corollary \ref{cor L}.(i) there is a unique minimal subspace $Z = Z(I) \subseteq \MM$ of dimension $c_0:= m-\ell_0$ such that $\V_d \subseteq \Sym^d (\,Z\,)$. We therefore start the induction by way of:

		(S.0)
	The first two entries of $\inv(I)$ are equal to $\bigl(\mult(I), \ell_0(I) \bigr)$.
	
	(S.1)
	If either of these entries of the invariant are zero, then so are all the subsequent ones, and the process terminates.
	
	(S.2)
	The weighted filtration $F_0^\bullet(I)$ is the weighted filtration in which each $x_i$ has weight $1$, \ie the powers of the maximal ideal $\mm^{\bullet}$.
	
	(S.3)
	Under the hypothesis of (S.1), the definition of $F^{\bullet}(I)$ also terminates, $F^{\bullet}(I) = F_{0}^{\bullet}(I)$.
	
	(S.4)
	The first block, $X_0$, of cardinality $c_0$ is a choice of basis of $Z$.
\end{start indu}

\begin{indu hypo}\label{indu hypo}
For $s \gqs 1$, there is a (weighted) filtration $F_{s-1}^{\bullet}(I)$ depending only on $I$ (and for this reason we will write just $F_{s-1}^{\bullet}$ if there is no danger of confusion) defined by blocks of coordinates $X_{s-1}^{0},...,X_{s-1}^{s-1},$ respectively $Y$ of cardinality $c_0, c_1, ..., c_{s-1},$ respectively $\ell_{s-1}$, where, for  $0 \lqs i < s-1$,
\begin{equation}\label{eq m-c=ell}
\ell_i := m - (c_0 + ... + c_i) \; 	\text{ or equivalently }
\ell_{i+1} := \ell_i - c_{i+1}, \;
\end{equation}
and rationals weights $g_{s-1}^{0} > g_{s-1}^{1}> ... > g_{s-1}^{s-1} \in \Q_{>0}^{s},\; g_{s-1}^{i} \gqs 1$ such that:
	
	(F.0)
	If $Y$ is any block completing $X_{s-1}^{0},...,X_{s-1}^{s-1}$ to a system of coordinates then $1=\wt(Y) \lqs g_{s-1}^{s-1} $.
	
	(F.1)
	$ I \subseteq F^{d g_{s-1}^{0}}_{s-1}$.
	
	(F.2)
	For 
	$V_{s-1}^{d a_{s-1}^0}:=I \mod F^{>d a_{s-1}^0}_{s-1},$ 
	$V_{s-1}^{d a_{s-1}^0} \subseteq \ssym^{d a_{s-1}^0} 
	\big( X_{s-1}^{0}\amalg ... \amalg X_{s-1}^{s-1}\big)$, \cfr \ref{not/def ssym}. 
	
	(F.3)
	There are no vector fields of negative weight on the 
	 $\rP(\ug)$, \cfr \eqref{M1 eq}, associated to the graded algebra
	\begin{equation}\label{eq grA}
		\gr_{s-1}A = 
		\coprod\nolimits_{q \gqs 0} 
		\quoziente{F_{s-1}^{q}}{F_{s-1}^{q+1}}
	\end{equation}
	 leaving $V_{s-1}^{d a_{s-1}^0}$ invariant.
	
	(F.4)
	There are strictly positive integers 
	$d^{t}_i$, $0 \lqs i \lqs t \lqs s-1$, $d^{0}_{0}=d$ as in \ref{start INDUCT}, such that the weights $g_t^i$ are derived from $g_t \in \Q_{>0}$ according to the following rules:
	if given $g_{t}$, we define
	$g_t^i = g_{i+1}...g_{t}, \; g_{t}^{t} = 1$, then
	\begin{equation}\label{eq gg1}
	\begin{split}
					g_{0}^{0} = 
					g_0 						
						&= 1 			\\
					g_1^0  d^0_0 - 
					(g_1^0 d^1_0)			
						&= d^1_1 		\\
			 		g_2^0 d^0_0 - 
			 		(g_2^0 d^2_0 + g_2^1 d^2_1)	
			 			&=d^2_2			\\
					\vdots \qquad \qquad \qquad 
						&\quad \vdots  	\\
					g_{s-1}^{0} d^{0}_{0} 	
						- \big( 
							g_{s-1}^{0} d^{s-1}_{0}		+ 
							g_{s-1}^{1} d^{s-1}_{1}		+ ... + 
							g_{s-1}^{s-2} d^{s-1}_{s-2}	\big)	
								&= d^{s-1}_{s-1}, \;
	\end{split}
	\end{equation}

	\begin{equation}\label{eq gg2}
		\begin{split}
		\text{and, }
			g_{t}^{0} d^{t+1}_{0}		+ 
			g_{t}^{1} d^{t+1}_{1}		+ ... + 
			g_{t}^{t-1} d^{t+1}_{t-1}	+	
			d^{t+1}_{t} 				+ 
			d^{t+1}_{t+1}				\;
				> g_{t}^{0} d^{0}_{0}&		
				, \\
			\text{for every }			\, 
				0 \lqs t \lqs s-2& 		\, .
		\end{split}
	\end{equation}
	Notice that by \eqref{eq gg1} \& \eqref{eq gg2}, $g_t > 1$ for every $1 \lqs t \lqs s-1$.
	
	(F.5)
	The function $\ug = (d, g_1, ..., g_{s-1})$
	of rings and their ideals has self bounding denominators, \ref{MD1 defn}. 
	
\end{indu hypo}

\begin{indu/def}\label{indu/def INV Fs}
The induction is divided as follows:
\end{indu/def}

\begin{step}\label{step1} 
	If $c_0 + ... + c_{s-1}= m$, or equivalently, by \ref{eq m-c=ell}, if $\ell_{s-1}=0$, then stop and define 
	$F^p(I):= F^p_{s-1}(I)$,  
	together with the invariant:
	\begin{equation}\label{EQS1}
		\inv{(I)} :=
			\begin{cases}
				(\,d, \,\ell_0, \underline{0} \,) 			\;, 	
				& s=1; 
			\\
				(\,d, \, 	\ell_0,  
					\, 	g_1, 
					\, 	\ell_1			,..., 
					\, 	g_{s-1}, 
						\ell_{s-1}=0,
					\,	\underline{0}		\,)\;,	
				& s\gqs 2,
			\end{cases}
	\end{equation}
	wherein $\ell_{s-1}$ and the last $2(m-s)$ entries are equal to $0$.
\end{step}

\begin{step}\label{step2} 
	Otherwise $m - (c_0 + ... + c_{s-1}) = \ell_{s-1} > 0$, and define for $H \in \Q_{>1}$ a set 
	$ \Lambda_H := \{(\x_0,...,\x_{s-1},\y)\} \subseteq \Z_{\gqs 0}^{s+1}$ by the rules: 

	(R.1)
	$ H \cdot 
	\big( 
	g_{s-1}^0		\,\x_0 + ... +  
	g_{s-1}^{s-1} 	\,\x_{s-1}
	\big) + 
	\y \gqs 
	H \cdot 
	\bigl( 
	g_{s-1}^0		\,d 
	\bigr); $
	
	(R.2) 
	$g_{s-1}^0		\,\x_0 + ... +  
	g_{s-1}^{s-1} 	\,\x_{s-1} 
	< 
	g_{s-1}^0 		\, d$. 

Now observe that by (R.2) the possibilities for $(\x_i)$ are finite, so if (R.1) is an actual equality for some $H$ then the denominator of $H$ is bounded. It therefore makes sense to introduce
\begin{fact/defn}\label{f/d theta}
	The discrete set of sub-inductive parameters $\Theta_{s-1}(I,A)$, contained in $\Q_{>1}$, is the subset of $H \in \Q_{>1}$ where equality occurs in (R.1) for some tuple of integers satisfying (R.2), and its predecessor $h = h(H)$ is the minimum of $\Theta_{s-1} \cap \Q_{<H}$ or $1$ if $H$ is already the minimum of $\Theta_{s-1}$.
	\end{fact/defn}
Better still, observe,
\begin{fact}\label{MF2}
	Let $\ug = \ug(I,A)$ be as in \ref{indu hypo}.(F.5), and 
	$h_s = h_s(I,A)$ any function taking values in the set $\Theta_{s-1}(I,A)$ of sub-inductive parameters in \ref{f/d theta}, then $\ug \times h_s$ is a function of rings and their ideals with self bounding denominators.
\end{fact}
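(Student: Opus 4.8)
The plan is to peel Definition \ref{MD1 defn} apart and reduce to one new component function. Write $\ug = \ug(I,A) = (d,g_1,\dots,g_{s-1}) \in \Q_{\gqs 0}^{s}$; by \ref{indu hypo}.(F.5) this has self bounding denominators, so $d = \pr_1(\ug) \in \Z_{\gqs 0}$ and there are increasing $D_j \colon \Q_{\gqs 0}^{j} \to \Z_{\gqs 0}$, $1 \lqs j \lqs s-1$, realising \eqref{M2 eq} for $\ug$. Since $h_s \in \Q_{>1} \subset \Q_{\gqs 0}$, the function $\ug \times h_s$ sends $(I,A)$ to $(d,g_1,\dots,g_{s-1},h_s) \in \Q_{\gqs 0}^{s+1}$, its first coordinate is still $d \in \Z_{\gqs 0}$, and for $1 \lqs i \lqs s-1$ its $i^{\th}$ and $(i+1)^{\th}$ coordinates are those of $\ug$, so the same $D_1,\dots,D_{s-1}$ witness \eqref{M2 eq} for these $i$. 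Hence it remains only to produce an increasing $D_s \colon \Q_{\gqs 0}^{s} \to \Z_{\gqs 0}$ with $D_s(\ug)\,h_s \in \Z_{\gqs 0}$ for all $(I,A)$.

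Next I would make $h_s$ explicit. Since $h_s(I,A) \in \Theta_{s-1}(I,A)$, equality holds in (R.1) of \ref{step2} for a tuple $(\x_0,\dots,\x_{s-1},\y) \in \Z_{\gqs 0}^{s+1}$ obeying (R.2); solving for $H = h_s$ yields
\[
	h_s = \frac{\y}{D}, \qquad D := g_{s-1}^{0}\,d - \bigl( g_{s-1}^{0}\x_0 + g_{s-1}^{1}\x_1 + \dots + g_{s-1}^{s-1}\x_{s-1} \bigr),
\]
the weights $g_{s-1}^{i} = g_{i+1}\cdots g_{s-1}$ (so $g_{s-1}^{s-1}=1$) being fixed by $\ug$ via \ref{indu hypo}.(F.4), and (R.2) reading exactly $0 < D \lqs g_{s-1}^{0}d$. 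The only data here not already inside $\ug$ are the integers $\x_i,\y$, and, as remarked just above \ref{f/d theta}, (R.2) confines $(\x_i)$ to a finite set.

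Then I would clear denominators. With $\delta(\ug) := \prod_{j=1}^{s-1} D_j\bigl(\pr_{\lqs j}(\ug)\bigr) \in \Z_{\gqs 0}$, property \eqref{M2 eq} for $\ug$ forces $\delta(\ug)\,g_j \in \Z$ for all $j$, hence $\delta(\ug)$ kills the denominator of every $g_{s-1}^{i}$, of the $\Z$-combination $D$, and of $g_{s-1}^{0}d$. So $m := \delta(\ug)\,D$ is a positive integer with $m \lqs \delta(\ug)\,g_{s-1}^{0}d =: N(\ug) \in \Z_{>0}$, and writing $h_s = \delta(\ug)\y / m$ displays $h_s$ as a ratio of integers with reduced denominator dividing $m$, hence dividing $N(\ug)!$. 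Thus $D_s(\ug) := N(\ug)!$ (rounding up off the realisable locus, harmlessly) settles the integrality.

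The point that needs care, and what I expect to be the real obstacle, is that $D_s$ must moreover be \emph{increasing in the lexicographic order}. Here $\delta$ is increasing, being a product of the non-negative increasing functions $D_j \circ \pr_{\lqs j}$, and so are $d = \pr_1$ and $N \mapsto N!$; but $g_{s-1}^{0} = \pr_2\cdots\pr_s$ is \emph{not} lexicographically monotone — a lexicographically smaller tuple may have a larger second coordinate — so $N(\ug)!$ as written will not do. The remedy is to dominate $g_{s-1}^{0}d$ (equivalently, the bounded tuple $(\x_i)$ of the second step) by a genuinely increasing function of $\ug$: granting a bound $g_{s-1}^{0} \lqs \Phi(d)$ with $\Phi$ increasing — precisely the kind of control the inductive construction supplies through \ref{indu hypo}.(F.1)--(F.4) — the choice $D_s(\ug) := \bigl(\delta(\ug)\,\Phi(d)\,d\bigr)!$ is increasing in the lexicographic order and still clears $h_s$ by the third step. (Were only coordinatewise monotonicity asked, $N(\ug)!$ would already do, products and factorials of coordinatewise increasing non-negative functions being coordinatewise increasing.) Granting that bookkeeping, the substance is the denominator estimate of the third step and the rest is unwinding \ref{MD1 defn}.
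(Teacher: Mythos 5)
Your first three steps are exactly the paper's proof: solving the equality in \ref{step2}.(R.1) gives $h_s=\y/D$ with $0<D\lqs g_{s-1}^{0}d$ by (R.2); multiplying by the product of the earlier denominator bounds makes $\delta(\ug)D$ a positive integer bounded by $\delta(\ug)\,g_{s-1}^{0}\,d$, which is the paper's quantity \eqref{EQQM3}; and its factorial therefore clears the denominator of every element of $\Theta_{s-1}$. Up to that point the proposal is correct and coincides in substance with the paper, which concludes precisely with ``$D_s$ the factorial of \eqref{EQQM3} will do''.

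The final paragraph, however, rests on a false premise. There is no bound $g_{s-1}^{0}\lqs\Phi(d)$: already $I=(x^{2}+y^{N}+z^{M})$ has $d=2$ but $g_{1}=N/2$, so the weights are in no way controlled by the multiplicity, and \ref{indu hypo}.(F.1)--(F.4) supply no such control. Moreover the defect you are trying to repair cannot be repaired under the literal reading of \ref{MD1 defn}: for $\ug=(2,N/2)$ the set $\Theta_{1}$ of \ref{f/d theta} contains $(N+1)/N$ (take $\x_0=\x_1=0$, $\y=N+1$; this is even the value the algorithm itself produces for $x^{2}+y^{N}+z^{N+1}$), so any admissible $D_{2}$ must be divisible by $N$ at $(2,N/2)$, while $(2,N/2)<(3,4/3)$ lexicographically for every $N$ and $(3,4/3)$ is itself attained (e.g. by $x^{3}+y^{4}+z^{12}$); hence no lexicographically increasing $D_{2}$ with the required property can exist, by your route or any other. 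The resolution is that the monotonicity clause of \ref{MD1 defn} has to be read weakly — coordinatewise monotonicity, or mere positivity, is what is available, and it is all that is ever used: the proof of \ref{MF1 fact} and the termination arguments need only that $D_i$ is a positive integer-valued function of the first $i$ coordinates. Under that reading your own parenthetical remark already closes the argument with $D_s=N(\ug)!$, exactly as the paper does; the step invoking $\Phi$ should be deleted rather than ``granted''.
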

\begin{proof}
	By the definition of $h_s$ there are non-negative integers $\x_i$ and a positive integer $\y$ such that \ref{step2}.(R.1) is an equality. In addition there are 
	\begin{tikzcd}[cramped, sep=small]
		D_i : \Q_{\gqs 0}^{i}		\ar[r]		&Z_{\gqs 0},
	\end{tikzcd}
	$0 \lqs i \lqs s-1$ self bounding the denominators of $\ug$ in the sense of \ref{MD1 defn}. Consequently we must have,
	\[
		\bigl(	D_0 \cdots D_{s-1}	\bigr)(\ug) \y = h_s N
	\] 
	where $N \in \Z_{> 0}$ is an integer no greater than
	\begin{equation}\label{EQQM3}
		d g_{s-1}^{0} \bigl(	D_0 \cdots D_{s-1}	\bigr)(\ug)
	\end{equation}
	so $D_s$ the factorial of \eqref{EQQM3} will do.
\qedhere \end{proof}

Having cleared any scruples about denominators, consider the following,

\begin{s-induction}[$H \in \Theta_{s-1}$]\label{s-induction}
	For $h = h(H)$ the predecessor of $H$, and $h_{s-1}^{i} = h \cdot g_{s-1}^i$, $0 \lqs i \lqs s-1$, there is a weighted filtration $F_{s-1}^\bullet(h)$	depending only on $I$, in which all of \ref{indu hypo}.(F.0)-(F.3) hold but with $h_{s-1}^i$ instead of $g_{s-1}^i$.
	\end{s-induction}
Plainly the sub-induction \ref{s-induction} begins with $F_{s-1}^\bullet(1) = F_{s-1}^\bullet$, while
by corollary \ref{cor L}.(iii) each block 
$X_{s-1}^{i}$, $0 \lqs {i} \lqs s-1$, is (up to a weighted projective transformation in the $X_{s-1}^{t},\;  0 \lqs {t} < {i} \lqs s-1$) well defined modulo $F_{s-1}^{h_{s-1}^{i}}(h)$. 	
As such if $\widetilde{X}_{s-1}^{i}$ and $\widehat{X}_{s-1}^{i}$ are any two liftings of the $i$-th block to $A$, then 
\begin{equation}\label{eq W-D}
	\widetilde{X}_{s-1}^{i} = \widehat{X}_{s-1}^{i} \quad \mod F_{s-1}^{>h_{s-1}^{i}}(h)
	\end{equation}
and we assert that for $H$ as in \ref{s-induction},
\begin{lemma}\label{lem W-D}
	If $\widetilde{X}_{s-1}^{i}, \; 0 \lqs {i} \lqs s-1$, is a lifting of the blocks from $\gr_{s-1}^{(h)}A$ (\cfr \ref{eq grA}), and $\widetilde{X}_{s-1}^s$ some choice of completing this to a system of coordinates, then the new filtration, ${F}_{s-1}^\bullet(H)$ say, defined by the weights
	\begin{equation}\label{eq W-D pesi}
	\begin{split}
		&\wt_{H}(\widetilde{X}_{s-1}^{i}) 
			= H\cdot g_{s-1}^{i}, \text{  for } \; 
				0 \lqs {i} \lqs s-1, \\
		&\wt_{H}(\widetilde{X}_{s-1}^s) = 1\, ,
	\end{split}
	\end{equation}
	does not depend on the aforesaid choices. 
	\end{lemma}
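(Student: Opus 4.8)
The plan is to show that the filtration $F_{s-1}^\bullet(H)$ defined by the weights in \eqref{eq W-D pesi} is unaffected either by the choice of liftings $\widetilde X_{s-1}^i$ of the blocks or by the choice of the completing block $\widetilde X_{s-1}^s$. First I would dispose of the completing block: since all of $\widetilde X_{s-1}^0,\dots,\widetilde X_{s-1}^{s-1}$ together with \emph{any} completion form a system of coordinates, and the completing variables all receive weight $1$, two different completions differ by an element of the $k$-span of $(\widetilde X_{s-1}^{\lqs s-1})$ plus higher-weight terms; because weight $1$ is the \emph{smallest} weight appearing (all $g_{s-1}^i \gqs 1$ by \ref{indu hypo}.(F.0)), such a change of variables is filtration-preserving. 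So the content is the independence from the lifting of the blocks $\widetilde X_{s-1}^i$ for $0 \lqs i \lqs s-1$.

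Next I would set up the comparison of two liftings $\widetilde X_{s-1}^i$ and $\widehat X_{s-1}^i$. By the key estimate \eqref{eq W-D}, coming from \cref{cor L}.(iii) applied to the sub-inductive filtration $F_{s-1}^\bullet(h)$, we have $\widetilde X_{s-1}^i = \widehat X_{s-1}^i \bmod F_{s-1}^{>h_{s-1}^i}(h)$, where $h_{s-1}^i = h\cdot g_{s-1}^i$ and $h = h(H) < H$ is the predecessor. The crux is to translate an ambiguity measured in the \emph{old} filtration $F_{s-1}^\bullet(h)$ (with weights $h_{s-1}^i$) into one that is negligible in the \emph{new} filtration $F_{s-1}^\bullet(H)$ (with weights $H g_{s-1}^i$ on the blocks and $1$ on the completion). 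The point is that the discrepancy $\delta_i := \widetilde X_{s-1}^i - \widehat X_{s-1}^i$ lies in $F_{s-1}^{>h g_{s-1}^i}(h)$, hence is a sum of monomials in the old blocks of $h$-weight strictly exceeding $h g_{s-1}^i$; rescaling the block weights from $h g_{s-1}^\bullet$ up to $H g_{s-1}^\bullet$ and assigning weight $1$ to the completion multiplies those strict inequalities appropriately, so that $\delta_i \in F_{s-1}^{> H g_{s-1}^i}(H)$ as well — here one uses that $H$ was chosen in $\Theta_{s-1}$, i.e. $H > h$, so the strict inequality is preserved under the rescaling and no monomial crosses below the threshold $H g_{s-1}^i$. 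Consequently the change of coordinates $\widehat X_{s-1}^i \mapsto \widehat X_{s-1}^i + \delta_i$ (extended by the identity on the completing variables) is an automorphism of $A$ that respects $F_{s-1}^\bullet(H)$ degree by degree, whence $F_{s-1}^\bullet(H)$ computed from the $\widetilde X$'s equals that computed from the $\widehat X$'s.

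The main obstacle I expect is the bookkeeping in the previous paragraph: carefully verifying that \emph{every} monomial in a block-decomposition of $\delta_i$ — which a priori may involve the completing block $\widetilde X_{s-1}^s$ and arbitrary products of the $\widetilde X_{s-1}^t$ — still lands strictly above the new threshold $H g_{s-1}^i$, and in particular that the jump from $h$ to $H$ (not merely to the next element of $\Theta_{s-1}$ but possibly skipping ahead) does not create a monomial of new-weight exactly $H g_{s-1}^i$ that would destroy the strictness. This is where the combinatorics of $\Lambda_H$, \ref{step2}.(R.1)--(R.2), and the self-bounding-denominator property \ref{MF2} enter: they guarantee that the only rationals $H$ at which an equality in (R.1) can occur are exactly the elements of $\Theta_{s-1}$, so for the specific $H$ under consideration the relevant linear forms on the exponent lattice do not hit the boundary, and strict inequalities propagate. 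Once that is in hand, the statement follows formally, and I would close by remarking that $F_{s-1}^\bullet(H)$ visibly satisfies \ref{indu hypo}.(F.0)--(F.3) with $H g_{s-1}^i$ in place of $g_{s-1}^i$, since those properties are invariant under the filtered automorphism just constructed and hold for one, hence any, choice of liftings.
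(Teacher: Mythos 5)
Your reduction is the right one (the completing block is harmless because its weight $1$ is minimal, and the whole content is the discrepancy $\delta_i=\widetilde{X}_{s-1}^{i}-\widehat{X}_{s-1}^{i}\in F_{s-1}^{>hg_{s-1}^{i}}(h)$ coming from \eqref{eq W-D}), but the mechanism you give for the key step does not work. First, there is no ``rescaling'' from the $h$-weights to the $H$-weights: the completing variables carry weight $1$ in \emph{both} filtrations, so the inequality $h\bigl(g_{s-1}^{0}\x_0+\dots+g_{s-1}^{s-1}\x_{s-1}\bigr)+\y> h\,g_{s-1}^{i}$ does not scale to the corresponding inequality at $H$. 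A pure monomial $Y^{\y}$ with $h\,g_{s-1}^{i}<\y<H\,g_{s-1}^{i}$ is a legitimate element of $F_{s-1}^{>hg_{s-1}^{i}}(h)$ and falls below your new threshold, so the statement you need cannot follow from the membership \eqref{eq W-D} by weight bookkeeping alone. Second, your appeal to ``the relevant linear forms do not hit the boundary at $H$'' is backwards: $H$ lies in $\Theta_{s-1}$ precisely because equality in \ref{step2}.(R.1) \emph{does} occur there. What actually rescues the argument in the paper is (a) the arithmetic identity \eqref{eq W2} coming from \ref{indu hypo}.(F.4), which, after shifting the exponent tuple by $(d_0^{i},\dots,d_{i-1}^{i},d_i^{i}-1)$, converts the threshold $h\,g_{s-1}^{i}$ into the threshold $h\,g_{s-1}^{0}d_0^{0}$ that defines $\Theta_{s-1}$, and (b) the fact that no element of $\Theta_{s-1}$ lies strictly between $h=h(H)$ and $H$, \ref{f/d theta}, which upgrades the strict inequality \eqref{eq W4} at $h$ to the non-strict inequality \eqref{eq W5} at $H$; subtracting the shift back gives \eqref{eq W6}. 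Neither ingredient appears in your argument, and they are exactly the content of the claim \ref{claim W-D}.

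A smaller but related point: you insist on the strict containment $\delta_i\in F_{s-1}^{>Hg_{s-1}^{i}}(H)$. That is both unnecessary and in general false at $H\in\Theta_{s-1}$ (equality $\y=H\,g_{s-1}^{i}$ can occur, e.g.\ for the pure-$Y$ monomials above --- this is precisely how new terms enter the graded piece at the next sub-inductive stage). The non-strict conclusion $\wt_{H}(\delta_i)\gqs H\,g_{s-1}^{i}$ of \ref{claim W-D} already suffices: the substitution $\widehat{X}_{s-1}^{i}\mapsto\widehat{X}_{s-1}^{i}+\delta_i$ then sends each generator of $F_{s-1}^{p}(H)$ into $F_{s-1}^{p}(H)$, and by symmetry the two filtrations coincide, which is all the lemma asserts.
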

\begin{proof}
	To this end, by \eqref{eq W-D}, it is sufficient to prove 
	\begin{claim}\label{claim W-D}
		$\ff 	\in 	F_{s-1}^{>\,h g_{s-1}^{i}}(h)
			\Longrightarrow 
		\wt_{H}(\ff) \gqs H \cdot g_{s-1}^{i}$, 
		\ie $\ff \in {F}_{s-1}^{H g_{s-1}^{i}}(H)$.
		\end{claim}
	\begin{proof}
		By hypothesis $\ff$ is contained in the ideal, $F_{s-1}^{>\,h g_{s-1}^{i}}(h)$, generated by monomials with total degrees $\x_{i}$, resp. $\y$, for the blocks $X_{s-1}^{i}$, $ 0 \lqs {i} \lqs s-1$, resp. $X_{s-1}^{s}$, such that:
		\begin{equation}\label{eq W1}
			h \cdot \bigl(	g_{s-1}^0		\,	
							\x_0 			+ ... +  
			 				g_{s-1}^{s-1}	\,	
			 				\x_{s-1} 
			 		\bigr) 
			 		+ \y					\,	>\,  
			  		h \cdot g_{s-1}^{i} ;
		\end{equation}
		while from the definition of the integers $d^{i}_i$, 
		\ref{indu hypo}-(F.4),
		\begin{equation}\label{eq W2}
			g_{i}^0		\,d^{i}_0 + 
			g_{i}^1		\,d^{i}_1 + ... +
			g_{i}^{{i}-1}	\,d^{i}_{{i}-1} + 
			(d^{i}_{i} -1) = 
			g_{i}^0 		\,d^0_0 -1
		\end{equation}
		so multiplying this by $g_{s-1}^{i}$
		we get
		\begin{equation}\label{eq W3}
			g_{s-1}^0 		\,d^{i}_{0}		+ 
			g_{s-1}^1		\,d^{i}_{1} 		+ ... +
			g_{s-1}^{{i}-1}	\,d^{i}_{{i}-1} 	+ 
			g_{s-1}^{i}		\,(d^{i}_{i} -1) 	=
			g_{s-1}^0 		\,d^0_0 		- 
			g_{s-1}^{i}
		\end{equation}
		then multiplying \eqref{eq W3} by $h$ and adding it to \eqref{eq W1} gives:
		\begin{equation}\label{eq W4}
		\begin{split}
		h \cdot \Big(	
			g_{s-1}^0 		\,(\x_0 		+ d^{i}_0		) 	+ ... +
			g_{s-1}^{{i}-1} 	\,(\x_{{i}-1} 	+ d^{i}_{{i}-1}	)	+
			g_{s-1}^{i} 		\,(\x_{i} 		+ d^{i}_{i} -1 	) 	\; +
			\\
			g_{s-1}^{{i}+1}	\,\x_{{i}+1} + ... +  
			g_{s-1}^{s-1} 	\,\x_{s-1} 
			\Big) +  
			\y > 
		h \cdot 
			g_{s-1}^0 \, d^{0}_0
			\end{split}
		\end{equation}  
		so from the definition of $h = h(H)$, \ref{f/d theta},
		\begin{equation}\label{eq W5}
		\begin{split}
		H \cdot 
		\Big( 
		g_{s-1}^0		\,(\x_0 		+ d^{i}_0			) + ... +
		g_{s-1}^{{i}-1}	\,(\x_{{i}-1} 	+ d^{i}_{{i}-1}		) + 
		g_{s-1}^{i}		\,(\x_{i} 		+ d^{i}_{i} 	-1	) +
		\\  
		g_{s-1}^{{i}+1}	\,\x_{{i}+1} 	+ ... +  
		g_{s-1}^{s-1} 	\,\x_{s-1}
		\Big) 							+
		\y \gqs  
		H \cdot 
		g_{s-1}^0 		\,d_{0}^0 	.
		\end{split}
		\end{equation}
		Now multiply \eqref{eq W3} by $H$ and subtract from \ref{eq W5}
		to get
		\begin{equation}\label{eq W6}
		H \cdot 
		g_{s-1}^0		\,\x_0  		+ ... + 
		H \cdot 
		g_{s-1}^{s-1} 	\,\x_{s-1} 	+ 
		\y \; \gqs \; 
		H \cdot 
		g_{s-1}^{i} \,	,
		\end{equation}
		wherein the left hand side is the monomial's weight in the new $H$-filtration.
		\qedhere \end{proof}
	Which in turn complete the poof of \ref{lem W-D} .
	\qedhere \end{proof}

Now in the new filtration ${F}_{(s-1)}^\bullet(H)$, \ie the filtration obtained from 
$F_{(s-1)}^\bullet(h)$ of \eqref{eq W-D pesi} (and unambiguously by \ref{lem W-D}), define
\begin{equation}\label{eq VF}
{V}_{s-1}^{d}(H) := I \mod {F}_{s-1}^{> H g_{s-1}^0\,d}(H),
\end{equation}
then one of the following must occur,
\begin{caseA}\label{case A}
	$\LL	\bigl(
	V_{s-1}^{d}(H)
	\bigr)$ 
	(\cfr \ref{lem L}) does not have maximal dimension, \ie 
	\[
	\dim \LL	\bigl(
	V_{s-1}^{d}(H)
	\bigr) = 
	\ell_s 		< 
	m_s := m - (c_0 + ... + c_{s-1}).
	\] 
	Then by corollary \ref{cor L} applied to 
	$\rP	\bigl(	
	H a_{s-1}^0,..., H a_{s-1}^{s-1}, \underline{1}
	\bigr)$, 
	\eqref{M1 eq}, there is a filtration satisfying 
	(F.1)-(F.4) of \ref{indu hypo} but with blocks $X_s^i,\; 0 \lqs i < s$,  respectively $X_{s}^s$, liftings of the 
	blocks $X_i$, respectively $Z$,  
	\ie the annihilator of 
	$\LL	\bigl(
	V_{s-1}^{d}(H)
	\bigr)$ 
	in corollary \ref{cor L}, and 
	$c_s = m_s - \ell_s$ 
	while $g_{s+1}=H$ in \ref{indu hypo}.(F.4), \ie
	$g_{s}^i= H \cdot g_{s-1}^i$ with $g_{s}^{s} = 1$.
\end{caseA}

\begin{figure}[!ht]
	\begin{flushleft}
		\begin{tikzpicture}[scale=0.8, domain=-1:3.5]
		\draw [lightgray, fill=lightgray] 
				(0,5) -- (4,2.8) -- (4,5);
		\draw [lightgray,fill=lightgray] 
				(4,2.8) -- (10,2) -- (10,3);
		\draw [lightgray, fill=lightgray] 
				(0,5) rectangle (10,6);
		\draw [lightgray, fill=lightgray] 
				(4,2.8) rectangle (10,5);
		\node at (6.5, 4) 
		{$\Lambda_H$};
		\draw[ultra thick, ->]
				(-0.2,0) -- (10.5,0) ;	
		\node at (10.3, -0.55) 
				{$Y$}; 
		\draw[ultra thick, ->] 
				(0,0) -- (0,6.5)
				node[above left] {$\underline{X}$};
		\node at (0,6.9) {=};
		\node at (1.8,6.95) {$\bigl(X_0,...,X_{s-1}\bigr)$};
		\draw[black, dashed]		
				(0,5) -- (5,0)		
				node[below] 	
				{$1$}; 
		\draw[black, dashed] 	
				(0,5) -- (6.5,0) 		
				node[below right] 	
				{$h(H)$};
		\draw[black] 
				(4,2.8) -- (8.5,0)			
				node[below right] 	
				{$H$};
		\tikzstyle{EdgeStyle}=[bend left]
		\draw[->, thick, black]
				(0.5,-1.6)	
				node[right] 
				{$
					\begin{array}{l}
					\text{Start of the induction:}\\
					g_{s-1}^0 \x_{0} 			+ ... + 
					g_{s-1}^{s-1} \x_{s-1}	 	+ \y
					\gqs d^{0}_{0} g_{s-1}^0
					\end{array}
				$}  	
				to [in=165, out=100] (2,2.8);
		\draw[->, thick, black]
				(4, 7)	
				node [right] 
				{ 
					$\begin{array}{l}	
				\text{The previous stage in}
				\\ 
				\text{the sub-induction
				\ref{s-induction}}
				\end{array}	$
				} 
				to (3,2.9);
		\draw[->, thick, black]
		(12,1) 
		node[right]
		{$
			\begin{array}{l}
			\text{If no weighted change of} 
		\\
			\text{coordinates removes the }
		\\
			\text{edge } e(H), \text{ \ref{case A}, stop,} 
		\\
			\text{and otherwise, \ref{case B}, }
		\\
			\text{continue.}
			\end{array}
		$}
		to (5.5, 2);
		\draw[->, thick, black]
		(12,6) 
		node[right]
		{$
			\begin{array}{l}
			\text{Upper limit,} 
			\\
				g_{s-1}^0 \x_{0} 			\,+ \,.\,.\,.\, + 
			g_{s-1}^{s-1} \x_{s-1}
				= d^{0}_{0} g_{s-1}^0
			\end{array}
		$}
		to (9,5);
		\draw[very thick] 
				(0,5) 
				edge node[above]
				{$e(H)$} (4,2.8) ;
		\draw[very thick, dashed] 
				(4,2.8) -- (10,2);
		\draw
				(0,5) -- (10.2,5);
		\draw [fill] 
				(0, 5) 	circle [radius=.1] 	
		node[above left] 
				{\null};
		\draw [fill] 
				(4, 2.8) 	
				circle [radius=.1];
		\end{tikzpicture}
		\caption{Newton Polyhedron for Sub-Induction \ref{s-induction}.
		\label{fig:NP}
}
	\end{flushleft}
\end{figure}
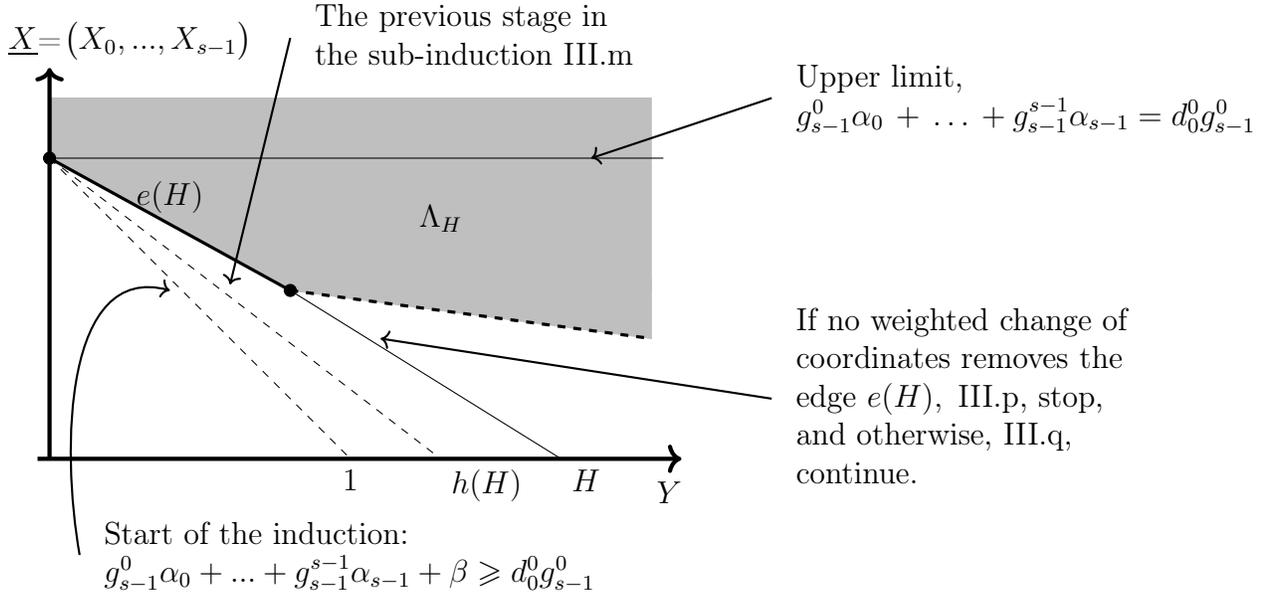

\begin{caseB}\label{case B}
	$\LL	\bigl(
			V_{s-1}^{d}(H)
			\bigr)$ 
	has maximal dimension, so its annihilator in corollary 
	\ref{cor L}, $\ZZ$, is the empty set. Nevertheless, \op 
	still applies to give new liftings, $X_s^i,\; 0 \lqs i \lqs s-1$, of the blocks $X_i$ (of \op 
	applied to $\rP(H a_{s-1}^0,..., H a_{s-1}^{s-1}, \underline{1})$), such that the sub-inductive hypothesis \ref{s-induction} is now valid for the successor of $H$ in $\Theta_{s-1}$.
	\end{caseB}
\end{step}

\begin{partial finish}\label{step3}
	In case (A) ,\ref{case A}, the sub-induction \ref{s-induction} has terminated, 
	and we have found our new filtration $F_{s}^\bullet$, to wit 
	$F_{s-1}^\bullet(H)$, so that the induction now continues in $s$.
	
	Otherwise in case (B), \ref{case B}, we either eventually fall into case (A), \ref{case A}, and, again, terminate the sub-induction, \ref{s-induction}, or we repeat case (B), 
	\ref{case B}, \emph{ad infinitum}. 
	Suppose, therefore, 	
	\begin{hypothesis}\label{ad infinitum}
	 Case \ref{case B} occurs \emph{ad infinitum}.
	\end{hypothesis}
	Such repetition is indexed by the possible $h$ in $\Theta_{s-1}$ of \ref{f/d theta} and we continue to denote by $H$ its successor.
	Our aim is to calculate the coordinates $X_{s-1}^{i}(H)$ of \ref{cor L}.(i) (whose liftings will be, again, the blocks $X_{s-1}^{i}(H)$) and, because we are in case (B), \ref{case B}, the relationship with the old coordinates $X_{s-1}^{i}(h)$ is given by:
\begin{equation} \label{eq mad1}
	X_{s-1}^{i}(H) - X_{s-1}^{i}(h) \; \in 
		\ssym^{hg_{s-1}^i} 
				\bigl( 
					X(h) \amalg 
					\HH^{0} 
					\big( 
						\P		,\,
						\OO_{\P}(\underline{1})
					\big)
				\bigr),
\end{equation}
where $X(h)$ is the space of function generated by $X_{s-1}^{i}(h),$ for every 
$ 0 \lqs i \lqs s-1,$  
and
$\P = \rP \big(
h\underline{g}_{s-1}^{0}		, ...,\, 
h\underline{g}_{s-1}^{s-1} 	,\, 
\underline{1}
\big)$, \cfr \eqref{M1 eq} \& \eqref{M1bis eq}. Now without loss of generality we have equality modulo  
$\HH^{0}
\bigl( 
	\P	,\,	\OO_{\P}(\underline{1})	
\bigr)$, 
\ie the projection of $X_{s-1}^{i}(H) - X_{s-1}^{i}(h)$ onto $\Sym^\bullet{\bigl(X(h)\bigr)}$ is always zero,
thus, $X_{s-1}^{i}(H) - X_{s-1}^i(h)$ is a combination of monomials 
\begin{equation} \label{eq mad2}
	X(h)^{E} \cdot Y^{Q}		\,,
\end{equation}

where 	$X(h)^{E}= \prod_{i} X_i(h)^{E_i}$, 
respectively $Y^{Q} = Y_{1}^{q_{1}} ... Y_{c_{s}}^{q_{c_{s}}}$, 
coming from 
$X(h)$ alone, respectively $\HH^{0} \big(\P,\, \OO_{\P}(\underline{1})\big)$ alone, and by construction
\begin{equation}\label{eq mad3}
	{h}\,g_{s-1}^i = {h}\,\wt(E) + Q,
\end{equation}
where 
$
	\wt(E) = 
	g_{s-1}^0 		\abs{E_0} + ... + 
	g_{s-1}^{s-1} 	\abs{E_{s-1}}	,
$  
$
	Q= q_1 + ... + q_{\ell_{s-1}}
$. 
Therefore, 
$Q= {h} \, ( g_{s-1}^i - \wt(E) )$ while 
$e :=  \min_{E} \{g_{s-1}^i - \wt(E) > 0 \}$ is attained since the weights of the 
${F}_{s-1}^\bullet$ filtration are a discrete set. 
Thus $Q \gqs {h}\,e$ and the right hand side of \eqref{eq mad3} tends to infinity.
Consequently the $X_{s-1}^{i}(h)$ are a Cauchy sequence in the 
$\mm$-adic topology, so if \ref{ad infinitum} were to occur,
\begin{fact/prop}\label{F/P madic}
	The filtrations $F^\bullet_{s-1}(h)$, $h \in \Theta_{s-1}$ converges $\mm$-adically as $h \rightarrow \infty$ to a filtration $F^\bullet(I)$ determined uniquely by $I$ consisting of blocks $X_{s-1}^i$ of weights $\wt(X_{s-1}^i)=g_{s-1}^i$ and cardinality $c_i$, where $m_s = m - (c_0 + ... + c_{s-1}) > 0$. 
	\end{fact/prop}
\end{partial finish}
\begin{conclusion}\label{step4}  
	Should the sub-induction, \ref{s-induction}, eventually not terminate. \ie, \ref{ad infinitum}, then we arrive to a filtration $F^\bullet(I)$ of the completion $\what{A}$ of $A$ in $\mm$ (depending only on $I$) with blocks $X_{s-1}^i$ of cardinality $c_0,...,c_{s-1}$ together with weights $g^0 > ... > g^{s-1}$, satisfying (F.1)-(F.4) of \ref{indu hypo} and we define:
	\begin{equation}
	\inv(I) = \big(d, \ell_0, g_1, \ell_1, ..., g_{s-1}, \ell_{s-1}, \underline{0} \big) \in \Q_{\gqs 0}^{2m}
	\end{equation}
	wherein the last block $\underline{0}$ has length $2(m-s)$. Otherwise, case (A), \ref{case A}, applies for all $s$ and the invariant is eventually defined by \eqref{EQS1}.
\end{conclusion}

Finally it is appropriate to explicitly observe the behaviour under regular maps beginning with,

\begin{fact}\label{WF0 fact}
	The formation of the invariant is étale local, in fact better for
	$\what{A}$ the completion of our regular local ring $A$ of 
	\ref{setup INDU}, and $\what{I}:= I \otimes_{A} \what{A}$ we have,
	
		
		(i)
		$\inv_{A}(I\,) = \inv_{\what{A}}(\what{I}\,)$;
		
		(ii)
		If $F^{\bullet}(I)$, resp. $F^{\bullet}(\what{I})$, is the filtration whether of $A$ or $\what{A}$ resulting whether from the termination of the induction, 
		\ref{indu/def INV Fs}, 
		or the sub-induction,
		\ref{s-induction},
		running
		\emph{ad infinitum}, 
		\ref{ad infinitum}, then
		\begin{equation}\label{eq WF0 F}
			F^{\bullet}(\what{I}) = 
			\begin{cases}
			\quad F^{\bullet}(I) \,, 
				& \text{ should \ref{ad infinitum} occur,}\\
			F^{\bullet}(I) \otimes_{A} \what{A}\, ,
				& \text{ otherwise.}
			\end{cases}
		\end{equation}

\end{fact}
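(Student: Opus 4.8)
The plan is to establish (i) and (ii) together by an induction running in lockstep with the inductive definition \ref{start INDUCT}--\ref{step4}, the guiding principle being that \emph{every} ingredient of that construction---the multiplicity $\mult$, the subspace $\V_d$, the distinguished space $Z$, the integers $\ell_i,c_i,d_i^t$, the weights $g_s^i$, the sub-inductive parameter sets $\Theta_{s-1}$, and, crucially, the Case(A)/Case(B) alternative of \ref{step2}---is manufactured out of the residue field $k$, the graded rings $\gr_F A$ of the weighted filtrations that occur, and the images of $I$ in $\mm$-primary quotients of $A$, each of which is untouched by $\otimes_A\what A$. Note $\what A$ is again a regular local ring of dimension $m$ with residue field $k$, so \ref{setup INDU} applies verbatim to $(\what A,\what I)$.

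The lemma I would isolate first is a piece of routine commutative algebra: if $F^\bullet$ is a weighted filtration on $A$ as in \ref{def WF}---finitely many blocks, all weights $\gqs 1$---then a monomial of $\mm$-degree $\delta$ has $F$-weight between $\delta$ and $(\max_i r_i)\delta$, so that
\[
\mm^{q}\subseteq F^{q}A\subseteq\mm^{\lceil q/\max_i r_i\rceil},
\]
and hence the $F$-adic and the $\mm$-adic topologies on $A$ coincide. It follows that $\what A$ is also the $F$-adic completion, that $F^{p}A\cdot\what A$ is the closure of $F^{p}A$, that $\gr_F A\to\gr_F\what A$ is an isomorphism, that every $F^{>p}A$ is $\mm$-primary whence $\what A/F^{>p}\what A=A/F^{>p}A$, and therefore that $(\what I+F^{>p}\what A)/F^{>p}\what A=(I+F^{>p}A)/F^{>p}A$ as subspaces of $\gr_F A=\gr_F\what A$. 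Applied to the $\mm$-adic filtration this already gives $\mult(\what I)=\mult(I)$ and $\V_d=\what{\V}_d$ inside $\Sym^d(\MM)$, hence, by \ref{lem L} and \ref{cor L}, the same $\ell_0$, the same $Z$, and the same first block $X_0$: this is the base case (S.0)--(S.4).

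For the inductive step one fixes over $A$ the blocks $X_{s-1}^0,\dots,X_{s-1}^{s-1}$ and the weights of \ref{indu hypo} and pushes the blocks forward along $A\hookrightarrow\what A$; by the lemma the weighted filtration they cut out on $\what A$ is exactly $F_{s-1}^\bullet(I)\otimes_A\what A$, and $V_{s-1}^{d a_{s-1}^0}(\what I)=V_{s-1}^{d a_{s-1}^0}(I)$ as subspaces of the relevant $\ssym^\bullet$ over $k$. Thus \ref{lem L} and \ref{cor L} see literally the same data at stage $s$; the sets $\Theta_{s-1}$ of \ref{f/d theta} coincide; each sub-inductive filtration $F_{s-1}^\bullet(h)$ of \ref{s-induction} for $\what A$ is the base change of the one for $A$ (the ambiguity of the liftings being swallowed modulo $F_{s-1}^{>h g_{s-1}^i}(h)$ exactly as in \eqref{eq W-D}, and, by \ref{lem W-D}, irrelevant to the filtration itself); and so the Case(A)/Case(B) alternative of \ref{step2} is decided identically for $I$ and for $\what I$ at every stage. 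If Case(A) eventually intervenes, both constructions halt at the same $s$, one gets $F^\bullet(\what I)=F^\bullet(I)\otimes_A\what A$, and the invariants \eqref{EQS1} agree; if instead \ref{ad infinitum} holds, the $\mm$-adically Cauchy sequences of liftings over $A$ and over $\what A$ share, by the uniqueness clause of \ref{F/P madic}, the same limit filtration on $\what A$, so $F^\bullet(\what I)=F^\bullet(I)$ on the nose, once more with equal invariants. Either way \eqref{eq WF0 F} and (i) follow; the \emph{étale local} assertion is then a corollary, since for $A\to A'$ étale local the induced map $\what A\to\what{A'}$ becomes an isomorphism once the (finite and, as $\car=0$, separable) residue extension $k\to k'$ is performed, while that extension, being flat over the field $k$, lets all the kernels, annihilators and graded pieces above---in particular $\LL$, $Z$, and $\V_d$---commute with $\otimes_k k'$, so nothing moves.

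As for difficulty, there is no single hard estimate here, only the bookkeeping of the induction. The point that needs genuine care is that base change to $\what A$ and running the algorithm really do commute---\ie that liftings of blocks can be chosen compatibly on the two sides, or, more cleanly, that \ref{lem W-D}, \ref{cor L}.(iii) and \ref{F/P madic} already render the construction manifestly independent of those choices. The only other subtlety is to keep the two clauses of \eqref{eq WF0 F} distinct, since in the \ref{ad infinitum} regime the output filtration is produced on $\what A$ in the first place, whereas otherwise it is produced on $A$ and only afterwards completed.
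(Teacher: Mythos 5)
Your proof is correct and is, at bottom, the paper's own argument, merely spelled out: the paper's entire proof is the two containments $\mm^{N}\subset F_{s-1}^{N}$ and $F_{s-1}^{p}\subset\mm^{p/g_{s-1}^{0}}$, plus the remark that the \emph{ad infinitum} case is $\mm$-adic by construction; your isolated lemma is precisely this pair of containments, and the lockstep induction you perform is what the paper compresses into the single word ``trivial.'' The only point where you are slightly looser than you should be is the final étale-local corollary, where the residue extension $k\hookrightarrow k'$ is a field extension rather than a quotient of $k$, so the base-change stability in \ref{HP} and \ref{lem L} does not apply verbatim and the commutation of $\LL$, $Z$, $\V_d$ with $\otimes_k k'$ really rests on flatness of field extensions together with characteristic zero; but the paper itself leaves this step implicit, so this is not a gap relative to what you were asked to reproduce.
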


\begin{proof}
	In the situation of the inductive hypothesis 
	\ref{indu hypo},
	\[
	\mm^N \subset F_{s-1}^{N} \quad \text{and} \quad  F_{s-1}^{p} \subset \mm^{\quotient{p}{g_{s-1}^{0}}}\,,
	\]
	so if 
	\ref{ad infinitum}
	 never occurs everything is determined modulo a sufficiently large power of the maximal ideal, and both items (i) \& (ii) are trivial. Otherwise if \ref{ad infinitum} occurs then the conclusion 
	 \ref{step4}
	 and the reasons for it 
	 \eqref{eq mad2}-\eqref{eq mad3} 
	 are $\mm$-adic by definition, so this is trivial too.
\qedhere \end{proof}

In the same vein we may prepare for replacing étale by regular via,

\begin{lemma}\label{EF1 lemma}
	Suppose $B = A	\llbracket 
						z_1, ..., z_{\defc}
					\rrbracket$  is a formal power series ring over $A$; $J$ the pull-back of $I$ to $A$ with
					$\what{A}, \what{B}, \what{I}, \what{J}$ their completions in the maximal ideal of $A$, then:
					
		(i)
		The odd entries of $\inv_B(J)$ and $\inv_A(I)$ agree.
		
		(ii)
		Even entries where the invariant is zero agree, and otherwise the difference $\inv_B(J) - \inv_A(I)$ at an even entry is $\defc$.
		
		(iii)
		The filtrations \eqref{eq WF0 F} are related by, $F^{\bullet}(\what{J})=F^{\bullet}(\what{A}) \otimes_{\what{A}} \what{B}$.
\end{lemma}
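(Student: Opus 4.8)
The plan is to run the inductive construction of \ref{start INDUCT}--\ref{indu/def INV Fs} in parallel for $(A,I)$ and $(B,J)$, with the new variables $z_1,\dots,z_{\defc}$ playing the role of a weight-$1$ ``spectator'' block that is only ever adjoined to the complementary block. First reduce to the complete case: by \ref{WF0 fact}.(i), $\inv_{A}(I)=\inv_{\what{A}}(\what{I})$ and $\inv_{B}(J)=\inv_{\what{B}}(\what{J})$, while $\what{B}=\what{A}\llbracket z_1,\dots,z_{\defc}\rrbracket$ and $\what{J}=\what{I}\what{B}$; so assume $A$ and $B$ complete and set aside the trivial cases $I=A$ and $I=0$. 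The structural observation is that, for any weighted filtration $F^{\bullet}$ of $A$ whose extension to $B$ gives the $z_i$ weight $1$, one has $\gr_{F^{\bullet}}B=(\gr_{F^{\bullet}}A)[z_1,\dots,z_{\defc}]$ with the $z_i$ in degree $1$, and the ideal of leading forms of $\what{J}$ is that of $\what{I}$ extended \emph{freely} by the $z_i$; in particular $\mult_{B}(J)=\mult_{A}(I)=d$, and each of the spaces of leading forms occurring in the construction ($\V_d$ in \ref{start INDUCT}, $V_{s-1}^{da_{s-1}^{0}}$ in \ref{indu hypo}, $V_{s-1}^{d}(H)$ in \ref{case A}) is literally the same subspace, involving none of the $z_i$, whether computed over $A$ or over $B$. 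The inductive claim, run over the stages $s$, is then that $F_{s-1}^{\bullet}(J)$ is defined by the same blocks $X_{s-1}^{0},\dots,X_{s-1}^{s-1}$ and the same weights $g_1,\dots,g_{s-1}$ as $F_{s-1}^{\bullet}(I)$, with complementary block $Y^{(B)}=Y^{(A)}\amalg\{z_1,\dots,z_{\defc}\}$ of weight $1$; hence $c_i^{(B)}=c_i^{(A)}$ and $\ell_i^{(B)}=\ell_i^{(A)}+\defc$. Granting this, part (iii) is immediate --- the stabilised, or $\mm$-adic limit, filtration of $\what{J}$ has exactly the blocks of $F^{\bullet}(\what{A})$ together with the weight-$1$ block $z_1,\dots,z_{\defc}$, which is precisely the $(z)$-filtered tensor product $F^{\bullet}(\what{A})\otimes_{\what{A}}\what{B}$ --- and (i), (ii) follow by comparing the explicit forms \eqref{EQS1} and \ref{step4} for the two, whose odd entries $d,g_1,\dots$ agree and whose active even entries differ by $\defc$ (the trailing zeros agreeing with each other).

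To propagate the inductive claim, the one point needing care is the behaviour of the Lie algebra $\LL$ of \ref{lem L}. At any stage at which the $(A,I)$-construction is still running (so Step~\ref{step2} applies and $\ell_{s-1}^{(A)}>0$), the champ $\rP^{(A)}$ of that construction has least weight $1$, realised by $Y^{(A)}$, and the champ $\rP^{(B)}$ is $\rP^{(A)}$ with the extra weight-$1$ block $z_1,\dots,z_{\defc}$ adjoined. I claim that a weight-$(-1)$ vector field $\partial$ on $\rP^{(B)}$ killing $V^{(B)}$ has the form $\partial=\sigma_0+\sum_k \mu_k\,\partial_{z_k}$ with $\mu_k\in k$ and $\sigma_0$ a weight-$(-1)$ vector field on $\rP^{(A)}$ killing $V^{(A)}$: expanding $\partial$ in the coordinate derivations and each coefficient as a polynomial in the $z_i$, only derivations in the $X$-blocks (of weight $>1$) can acquire a coefficient divisible by some $z_i$, so the $z_i$-divisible part of $\partial$ consists of terms $z^{E}\partial^{(E)}$ with $|E|\gqs 1$ in which $\partial^{(E)}$ is a weight-$(-1-|E|)$ vector field on $\rP^{(A)}$; since $V^{(B)}=V^{(A)}$ involves no $z_i$, the equation $\partial(V^{(B)})=0$ separates by degree in the $z_i$ and forces $\partial^{(E)}\in\LL_{-1-|E|}(V^{(A)})$, which vanishes by \ref{lem L} as $1+|E|$ is not the least weight of $\rP^{(A)}$. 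Hence $\LL_{-1}(V^{(B)})=\LL_{-1}(V^{(A)})\oplus\langle\partial_{z_1},\dots,\partial_{z_{\defc}}\rangle$, so $\dim\LL^{(B)}=\ell_s^{(A)}+\defc$ while $m_s^{(B)}=m_s^{(A)}+\defc$; therefore the dichotomy of case (A), \ref{case A}, and case (B), \ref{case B}, is triggered at exactly the same sub-inductive parameter $H$ for $(B,J)$ as for $(A,I)$ --- whence the same next weight $g_s$, and the same combinatorially defined set $\Theta$ at the next stage --- and the hypothesis of \ref{cor L} transfers, the image of \eqref{eq LL} for $(B,J)$ being that for $(A,I)$ together with the duals of the $z_i$. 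In case (A) the annihilator for $(B,J)$ is $Z^{(A)}\cap\bigcap_k\ker\partial_{z_k}=Z^{(A)}$, giving the same new block $X_s^{s}$ and $c_s^{(B)}=c_s^{(A)}$; in case (B) the $(A,I)$-liftings already satisfy the requirement for $(B,J)$, so the Cauchy sequence of \eqref{eq mad3} is the pulled-back one. This closes the induction over all stages at which $(A,I)$ has not terminated.

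It remains to treat termination. If the $(A,I)$-construction runs case (B) \emph{ad infinitum}, \ref{ad infinitum}, then $\dim\LL^{(A)}=m_s^{(A)}$, so $\dim\LL^{(B)}=m_s^{(B)}$ and $(B,J)$ is likewise in case (B) forever, its limit filtration being that of $\what{A}$ with the $z_i$ (of weight $1$) adjoined. If instead $(A,I)$ halts by Step~\ref{step1} at the stage $s$ at which $\ell_{s-1}^{(A)}=0$, then there $(B,J)$ has $\ell_{s-1}^{(B)}=\defc>0$ and so is governed by Step~\ref{step2}; the only coordinates not yet absorbed into an $X$-block are $z_1,\dots,z_{\defc}$, of weight $1$, and the same vector-field decomposition applies to $V_{s-1}^{d}(H)^{(B)}$, the only cross terms $z^{E}\partial^{(E)}$ that could survive being those with $1+|E|$ equal to the least weight of the $z_i$-free champ, for which $\partial^{(E)}$ lies in an $\LL$-space that vanishes by item (F.3) of \ref{indu hypo} at that stage --- a hypothesis the $(A,I)$-construction does reach before Step~\ref{step1} is invoked, and one that is unchanged under the common (hence champ-preserving) rescaling of all the $X$-block weights by $H$, which is legitimate here precisely because at this stage $(A,I)$ has no weight-$1$ complementary block. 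Thus $\LL^{(B)}=\langle\partial_{z_1},\dots,\partial_{z_{\defc}}\rangle$ has the maximal dimension $\defc=m_s^{(B)}$, so case (B) applies and repeats forever with the pulled-back liftings constantly valid, and $F^{\bullet}(\what{J})$ consists of the blocks of $F^{\bullet}(\what{A})$ together with $z_1,\dots,z_{\defc}$ of weight $1$, \ie $F^{\bullet}(\what{J})=F^{\bullet}(\what{A})\otimes_{\what{A}}\what{B}$; and \ref{step4} then gives $\inv_{B}(J)=(d,\ell_0^{(A)}+\defc,g_1,\ell_1^{(A)}+\defc,\dots,\defc,\underline{0})$, which is exactly (i) and (ii) --- the case $s=1$, where $(A,I)$ is already trivial via \ref{start INDUCT}, included. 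The main obstacle is the vector-field decomposition, \ie making sure the cross terms $z^{E}\partial^{(E)}$ genuinely vanish; this rests on the vanishing $\LL_{-c}(V^{(A)})=0$ for $c$ not the least weight, \ref{lem L}, together with the instance of (F.3) already established for $(A,I)$. A secondary nuisance is the bookkeeping around the $\defc$ ``phantom'' case (B) steps that $(B,J)$ performs after $(A,I)$ has halted, none of which introduces a new weight.
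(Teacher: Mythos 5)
Your proof is correct and follows the same inductive-in-$s$ strategy as the paper's: establish the graded-ring relation $\gr_{s-1}B = \gr_{s-1}A\otimes_k k[z_1,\dots,z_\defc]$ and the Lie-algebra relation $\LL_B = \LL_A\amalg\coprod_k k\,\partial_{z_k}$ by induction, observe that the case (A)/(B) dichotomy triggers at the same sub-inductive parameter $H$ for both pairs, and handle convergence in case (B). You simply supply substantially more detail than the paper's three-sentence proof, notably the explicit vector-field decomposition justifying the Lie-algebra relation and the careful tracking of the ``phantom'' case-(B) steps that $(B,J)$ continues to perform after $(A,I)$ has halted at Step \ref{step1}, which the paper compresses into a single clause.

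One slip in the restatement of (iii): the $z_i$ acquire weight $0$, not $1$, in the limiting filtration $F^\bullet(\what{J})$. During the sub-induction they do sit in the complementary block, which (F.0) fixes at weight $1$ while the $X$-blocks carry weight $H g_{s-1}^i$; but the $\mm$-adic limit of \ref{F/P madic} normalises the $X$-blocks back to the unscaled weights $g_{s-1}^i$, so the ratio of the complementary-block weight to the $X$-block weights tends to $0$ and the complementary block drops to weight $0$ in the limit. It is precisely this that makes $F^\bullet(\what{J}) = F^\bullet(\what{I})\otimes_{\what{A}}\what{B}$ the \emph{ideal extension} rather than a convolution filtration, and is consistent with \ref{WF3 fact}, where the centre $X^0=\dots=X^{s-1}=0$ of the associated smoothed weighted blow up must be the locus of the complementary block. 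Your inductive derivation in fact produces the correct filtration (the phantom case-(B) coordinate changes are trivial because $V$ is $z$-free, so the limit really is the $\mm_A$-adic pull-back); only the final description mislabels the weight.
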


\begin{proof}
	By induction in the parameter $s$, we assert that  the relation between the graded rings $\gr_{s-1}A, \; \gr_{s-1}B$ of \eqref{eq grA} is,
	\begin{equation}\label{e1 eq}
		\gr_{s-1}B = \gr_{s-1}A \otimes_{k} k [z_1,...,z_\defc]
	\end{equation}
	while in the sub-induction \ref{s-induction}, the maximal contact spaces 
	$\LL_{B}\bigl(V_{s-1}^d(H)\bigr)$, resp. $\LL_{A}\bigl(V_{s-1}^d(H)\bigr)$ are related by,
	\begin{equation}\label{e2 eq}
		\begin{split}
			\LL_{B} \bigl(V_{s-1}^d(H)\bigr) 
				&=	
			\LL_{A} \bigl(V_{s-1}^d(H)\bigr) 
				\amalg
			k \otimes_{A} \mathrm{Der}_A (B) \\
				&=
			\LL_{A} \bigl(V_{s-1}^d(H)\bigr)
				\coprod\limits_{1 \lqs j \lqs \defc} k \, \frac{\partial}{\partial z_j}
		\end{split}
	\end{equation}
	Indeed for $s=1$, \eqref{e1 eq} is obvious, while for any $s \gqs 1$, \eqref{e1 eq} $\xRightarrow{\;\;\;}$ \eqref{e2 eq} since the $\tfrac{\partial}{\partial z_j}$ always vanish on generators of $I$ so the right hand side of \eqref{e2 eq} is always contained in the left, while modulo the $\tfrac{\partial}{\partial z_j}$ they are plainly equal. Consequently in case A of the sub-induction, \ref{case A}, \eqref{e2 eq} implies \eqref{e1 eq} for $s$, while in case B, \ref{case B}, the convergence is actually modulo the pull-back of the maximal ideal of $A$, equivalently the filtration is pulled back from $\what{A}$.
\qedhere \end{proof}

\section{The Invariant on Weighted Projective Champ}\label{sec III INV}
\setcounter{equation}{0}
\begin{setup}\label{setup INV}
	Let $\P=\PP(\ub^0,...,\ub^s)$ be a $(m-1)$-dimensional weighted projective champ, with blocks of coordinates $X_0,...,X_s$ of weights 
	$\ab^0 > ... > \ab^s$ and cardinality $c_0,...,c_s$ over a field $k$ of characteristic zero.
	Suppose further that $d \in \Z_{> 0}$ and 
	$V \subset \HH^{0}\bigl(\P,\OO_{\P}(d\ab^0)\bigr)$ is a space of sections such that:
	\begin{hypothesis}\label{Hypo H2}
		If for every $s \gqs i > 0$, $\RP_i \hookrightarrow \P$ is the weighted projective sub-champ defined by $X_i=...=X_s=0$, with for 
		convenience of notation $\RP_{s+1}=\P$, then
		\begin{equation}
			\LL_{i}(V):= \coprod\nolimits_{-b < 0} 
			\Big\lbrace 
				\partial \in \HH^{0} (\RP_i, \T_{\RP_i}(-b)) \, \Big| \, \partial(V_{i})=0
			\Big\rbrace = 0
		\end{equation}
		where $V_{i}$ is the image of $V$ in 
		$\HH^{0}\big( \RP_i, \OO_{\RP_i}(d\ab^0)\big)$.
	\end{hypothesis}
\end{setup}
	
Now for consistency with \ref{f/d theta} and 
\ref{indu hypo}.(F.4), define $g_i := \quotient{\ab^{i-1}}{\ab^{i}}$, $ 1 \lqs i \lqs s$, and $\ell_{i}= m - (c_0 +... + c_i)$ then we assert,

\begin{proposition}\label{prop INV gdown}
	If $I$ is the sheaf of ideals generated by $V$, under the non-degeneracy condition \ref{Hypo H2}, then for every geometric point $p$ of $\P$ the value of the invariant $\inv_{\P}(I)(p)$ at the stalk $I_p$ is \emph{strictly} less than
	\begin{equation}\label{eq INV gdown}
		\big( \,d, \ell_0, g_1, \ell_1, ..., g_{s},  \ell_{s}, \underline{0} \,\big) .
	\end{equation}
	More precisely, if 
	$\inv_{\P}(I)(p) = 
		\big(	
			\mult_I(p)	, 
			\ell_0(p)	, 
			g_0(p)		,...,
			\ell_{s}	,
			\underline{0} \,
		\big)$ 
	with $\ell_i{(p)} = m - (c_{0}(p) + ... + c_{i}(p))$ and $ 0 \lqs \si \lqs s$ is maximal such that $X_{\si}(p) \neq 0$, (\ie there is some $1 \lqs i \lqs c_{\si}$, for which $x_{\si i}(p) \neq 0$) then:
	
		(i)
		If $\si = 0$ the multiplicity of $I$ at $p$ is strictly less than $d$, unless $d=0$.
		
		(ii)
		If $ \si > 0$ with, for immediate notational convenience, $g_0=d$ and all of $g_{i}(p) \gqs g_{i}$, $c_i(p) \lqs c_i$, for any $ 0 \lqs i \lqs \si -2$ then $g_i(p)=g_i$ and $c_i(p)=c_i$ for all $0 \lqs i \lqs \si -2$.
	
		(iii)
		If (ii) holds and $g_{\si-1}(p) \gqs g_{\si-1}$, $c_{\si -1 }(p) \lqs c_{\si -1}$, then $g_{\si-1}(p) = g_{\si-1}$, $c_{\si -1 }(p)=c_{\si -1}$, $c_{\si} \gqs 2$, and $g_{\si}(p) < g_{\si}$; so in particular if $c_{\si} = 1$ then $g_{\si-1}(p) < g_{\si-1}$, \ie $g_{\si-1}(p)$ goes down.
	\end{proposition}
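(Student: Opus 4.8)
The plan is to localise at $p$ and to compare the construction of \S\ref{sec II INV} for the pair $(\OO_{\P,p},I_p)$ with the tuple \eqref{eq INV gdown}, entry by entry. With $\si$ as in the statement I would pass to the chart of $\P$ on which $x_{\si 1}$ is invertible and, after a weighted automorphism of each block and — inside this affine chart, where the grading has disappeared — a translation in the blocks $X_0,\dots,X_{\si-1}$, present $\OO_{\P,p}$, equivalently by \ref{WF0 fact} its completion, as a regular local ring whose parameters split into the \emph{full} blocks $X_0,\dots,X_{\si-1}$ carrying the original weights, the \emph{truncated} block $X_\si\setminus\{x_{\si 1}\}$, and the \emph{low weight} blocks $X_{\si+1},\dots,X_s$, the last two groups vanishing at $p$. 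Then $I_p$ is generated by the dehomogenisations $\tilde f$ of the $f\in V$, and, $f$ having weight $d\ab^0$, each $\tilde f$ visibly respects the filtration of $\OO_{\P,p}$ in which $\wt(x_{ij})=\ab^i$; this is the filtration one matches against the $F^\bullet(I_p)$ produced by the construction.

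For part (i), $\si=0$: after $\Gl(c_0)$ one has $p=[1:0:\dots:0]$, and the claim is $\mult(I_p)<d$ (the case $d=0$, $I=\OO_\P$, being excluded). The image of $V$ in $H^0(\RP_1,\OO_{\RP_1}(d\ab^0))=\Sym^d X_0$ is nonzero — for $c_0\gqs 2$ this is forced by $\LL_1(V)=0$ in \ref{Hypo H2}, while for $c_0=1$ it belongs to the standing context ($V$ being the module of leading forms of a multiplicity-$d$ ideal) — so some $f\in V$ has a nonzero pure $X_0$-monomial $c\,x_{01}^d$, whence $\tilde f$ is a unit and $I_p=\OO_{\P,p}$. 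Contrapositively, if $\mult(I_p)\gqs d$ then unwinding $\tilde f\in\mm_p^d$ for every $f$ shows that $\partial_{x_{01}}$ annihilates the pure $X_0$-part of every $f\in V$, i.e.\ $\partial_{x_{01}}\in\LL_1(V)$, against \ref{Hypo H2}. In either case the first entry of $\inv_\P(I)(p)$ is $<d$, which settles the inequality.

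For parts (ii)--(iii), $\si\gqs1$: run the construction of \S\ref{sec II INV} for $(\OO_{\P,p},I_p)$ and induct on the step $i$, $0\lqs i\lqs\si-1$. Granted the a priori inequalities $g_i(p)\gqs g_i$, $c_i(p)\lqs c_i$ assumed in the statement, I claim $F^\bullet_i(I_p)$ is forced to coincide with the filtration induced on $\OO_{\P,p}$ by the weighted structure of $\P$, with the blocks $X_0,\dots,X_i$ and the weights occurring in \eqref{eq INV gdown}, so that $c_i(p)=c_i$, $g_i(p)=g_i$, $\ell_i(p)=\ell_i$; the engine is \ref{cor L} together with \ref{lem L}, since \ref{Hypo H2} makes the contact Lie algebra $\LL$ of $V$ on each $\RP_j$ vanish, whereupon the functoriality of the construction (\ref{WF0 fact}, \ref{MF2}) and the sub-induction \ref{s-induction} pin the relevant $\LL$ to its expected dimension while \eqref{eq gg1}--\eqref{eq gg2} pin the weights. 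This is (ii), and one further step gives the level-$(\si-1)$ equalities of (iii) whenever its a priori bounds hold. At step $\si-1$, however, the block $X_\si$ would enter, and $x_{\si 1}$ is now a unit of $\OO_{\P,p}$: the putative contribution of $X_\si$ to the filtration collapses, since an element of $I$ of weight $\gqs d\ab^0$ whose leading form involves $x_{\si 1}$ dehomogenises to strictly smaller weight, so the Newton polyhedron edge $e(H)$ of figure \ref{fig:NP} that would certify $g_\si(p)=g_\si$ cannot survive; tracing \ref{s-induction} with this relation, the least $H\in\Theta_{\si-1}$ forcing Case (A), \ref{case A}, lies strictly below $g_\si$, i.e.\ $g_\si(p)<g_\si$, and this scenario requires $c_\si\gqs2$, whereas if $c_\si=1$ the lone extra coordinate $x_{\si 1}$ cannot be absorbed without first corrupting the level-$(\si-1)$ data and one gets $g_{\si-1}(p)<g_{\si-1}$ instead. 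Either way $\inv_\P(I)(p)$ lies strictly below \eqref{eq INV gdown} lexicographically. The step I expect to be the main obstacle is precisely this last one: keeping honest track, through the Newton polyhedron of \ref{s-induction}, of how the unit $x_{\si 1}$ displaces the discrete parameter set $\Theta_{\si-1}$ and the contact spaces attached to it, and separating cleanly the cases $c_\si=1$ and $c_\si\gqs2$.
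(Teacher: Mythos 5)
Your part (i) is essentially the paper's own argument: after \ref{lem sub-champ} and a $\Gl(c_0)$ move the paper also places $p=[1:0:\dots:0]$ and observes that non-decrease of the multiplicity would force a negative-weight derivation ($\partial_{x_{01}}$, equivalently $Z\subset\langle x_{02},\dots\rangle$) annihilating $V$, against \ref{Hypo H2}. Note though that your positive claim is false as stated: a nonzero image of $V$ in $\Sym^d X_0$ gives some nonzero pure $X_0$-monomial, not the particular monomial $x_{01}^d$ (e.g. $V_1$ spanned by $x_{01}^{d-1}x_{02}$), so $\tilde f$ need not be a unit; only your contrapositive sentence is correct, and it is the one that is needed.

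For (ii)--(iii) there is a genuine gap. You assert that, granted the a priori bounds, the filtration $F^\bullet_i(I_p)$ produced by the algorithm of \S\ref{sec II INV} ``is forced to coincide with the filtration induced on $\OO_{\P,p}$ by the weighted structure of $\P$'', and then read the conclusion off the Newton polyhedron in the restricted global coordinates. That coincidence is exactly what has to be proved, and it is not a formal consequence of \ref{lem L}, \ref{cor L} and functoriality: the sub-induction \ref{s-induction} run at $p$ performs coordinate changes $\uxi_i=\ux_i+\ueps_i$ in the dehomogenised local ring, and a priori these perturbations need not be restrictions of weighted-homogeneous coordinate changes of $\P$; until one knows they are, the Newton polyhedron in the coordinates $\restr{X_{ij}}{U},\restr{Z}{U}$ does not compute $\inv_{\P}(I)(p)$ (cf. \ref{Warn 2bis}: at intermediate stages the local data can even look bigger). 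The paper's proof spends its entire effort on this point: \ref{claim 1inv} (the first $2s$ entries cannot increase), and above all \ref{claim 2inv}, which globalises the $\ue_i$ by extracting from their top-weight parts a homogeneous negative-weight vector field $\De$, \eqref{eq 5}--\eqref{eq 6}, killing it modulo $Y$ via \ref{Hypo H2} together with \ref{cor L}.(ii), and then iterating a global coordinate change whose integer invariant is bounded by $\ab^0$, so the process terminates. None of this is in your proposal, and the step you flag as the main obstacle (how the unit $x_{\si 1}$ displaces $\Theta_{\si-1}$) only becomes meaningful once that globalisation is available. Two smaller points: the paper first reduces to $\si=s$ via \ref{lem sub-champ}, whereas you keep $X_{\si+1},\dots,X_s$ as local parameters, where they would carry weights below that of the free directions — you need an analogue of \ref{lem sub-champ} to discard them; and your final dichotomy ($c_\si=1$ forces $g_{\si-1}(p)<g_{\si-1}$, $c_\si\gqs 2$ forces $g_\si(p)<g_\si$) agrees with the paper's conclusion but is only justified after the invariant at $p$ is legitimately read from that Newton polyhedron.
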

Observe that we can immediately reduce to $\si = s$ since,
\begin{lemma}\label{lem sub-champ}
	Let $\QQ$ be a sub-champ of $\P$ containing the geometric point $p$ and such that \ref{prop INV gdown}.(i) holds, for $\restr{I}{\QQ}$, while denoting by a superscript $\QQ$ the values of the blocks associated to the invariant of $\restr{I}{\QQ}$ calculated at $p$, items (ii) \& (iii) of \op hold, albeit, in the modified form:
		
		(ii-bis)
		If $\si > 0$, 
			$g_i(p) \gqs g_i$, 
			$c_i^{\QQ}(p) \lqs c_i$,
		for any $ 0 \lqs i \lqs \si -2$, then
			$g_i(p) = g_i$, 
			$c_i^{\QQ}(p) = c_i$,
			for any $ 0 \lqs i \lqs \si -2$. 
		
		(iii-bis)
		If (ii-bis) holds and 
		$g_{\si-1}(p) \gqs g_{\si-1}$, 
		$c_{\si -1 }^{\QQ}(p) \lqs c_{\si -1}$, then 
		$g_{\si-1}(p) = g_{\si-1}$, 
		$c_{\si -1 }^{\QQ}(p)=c_{\si -1}$, 
		$c_{\si} \gqs 2$, and 
		$g_{\si}(p) < g_{\si}$; 
		so in particular if $c_{\si} = 1$ then 
		$g_{\si-1}(p) < g_{\si-1}$.
\end{lemma}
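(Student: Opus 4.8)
The plan is to discard the blocks $X_{>\si}$ --- which, $\si$ being maximal with $X_{\si}(p)\neq 0$, all vanish at $p$ and carry the smallest weights --- and to compare $\inv_{\P}(I)$ at $p$ with $\inv_{\QQ}(I|_{\QQ})$ at $p$, showing that the discarded directions are inert for all but the terminal block of the invariant.

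First I would reduce to $\QQ=\RP_{\si+1}=\PP(\ub^0,\dots,\ub^{\si})$, the sub-champ cut out by $X_{\si+1}=\dots=X_s=0$: since $X_{\si}(p)\neq 0$ the point $p$ lies in $\RP_{\si+1}$ and there meets its last block, which is the index normalisation we are after, and by \ref{lem L}--\ref{cor L} the non-degeneracy \ref{Hypo H2} restricts to $\RP_{\si+1}$. A general $\QQ$ satisfying the (for $\si>0$ automatically verified) hypothesis (i) sits near $p$ between $\RP_{\si+1}$ and $\P$, and is treated by running the comparison below in two stages; so I concentrate on $\QQ=\RP_{\si+1}$.

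The heart of the matter is to compare, step by step through the induction of \S\ref{sec III INV}, the filtration $F^{\bullet}(I)$ at $p$ with the filtration $F^{\bullet}(I|_{\QQ})$ at $p$, in the manner of the proof of \ref{EF1 lemma}. Locally $\OO_{\P,p}\cong\OO_{\QQ,p}\llbracket z_1,\dots,z_{\defc}\rrbracket$ with the $z_j$ lifts of the coordinates making up $X_{\si+1},\dots,X_s$ and $\defc=c_{\si+1}+\dots+c_s$; the $\partial/\partial z_j$ have negative weight and annihilate $I$, hence lie in $\LL$ at every stage, while \ref{Hypo H2} together with \ref{cor L}.(iii) (``unused coordinates are not involved'') rules out any further new maximal-contact direction entering. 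One deduces inductively that $\gr_{s-1}\OO_{\P,p}=\gr_{s-1}\OO_{\QQ,p}\otimes_k k[z_1,\dots,z_{\defc}]$ at $p$, that the blocks of $F^{\bullet}(I)$ and $F^{\bullet}(I|_{\QQ})$ coincide for indices $\lqs\si-1$ --- whence $c_i(p)=c_i^{\QQ}(p)$ and $g_i(p)$ agree there --- and that the $\defc$ extra directions, having the smallest weights, surface only in the terminal block; by \ref{EF1 lemma} the multiplicity and all the weights $g_i(p)$ of $\inv_{\P}(I)(p)$ and $\inv_{\QQ}(I|_{\QQ})(p)$ agree throughout.

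To conclude, the expected data $g_i,c_i$ for $0\lqs i\lqs\si$ attached to $\RP_{\si+1}$ in \ref{setup INV} are exactly the first $\si{+}1$ of those attached to $\P$, so (ii-bis), (iii-bis) for $(\QQ,I|_{\QQ})$ are just (ii), (iii) of \ref{prop INV gdown} read off $\inv_{\QQ}(I|_{\QQ})(p)$, and by the previous paragraph they continue to hold with $c_i(p)$ in place of $c_i^{\QQ}(p)$; together with (i) this is the assertion, and the promised reduction of \ref{prop INV gdown} to the case $\si=s$ follows, applying that case to $(\RP_{\si+1},I|_{\RP_{\si+1}})$ to supply (i) there. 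The main obstacle is precisely the inductive comparison: showing that through every step of the construction of the invariant at $p$ the small-weight blocks $X_{>\si}$ behave as the pure formal parameters of \ref{EF1 lemma}, so that the codimension $\defc$ stays confined to the terminal block and blocks $0,\dots,\si-1$ are untouched --- a careful but unenlightening bookkeeping on \ref{cor L}.(iii), \ref{Hypo H2} and the weight inequalities \ref{indu hypo}.(F.4).
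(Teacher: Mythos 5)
There is a genuine gap: the heart of your argument --- that locally at $p$ one has $\OO_{\P,p}\cong\OO_{\QQ,p}\llbracket z_1,\dots,z_{\defc}\rrbracket$ with the $\partial/\partial z_j$ annihilating $I$, so that an \ref{EF1 lemma}-type comparison forces the multiplicity, the weights $g_i(p)$ and the blocks of index $\lqs\si-1$ of $\inv_{\P}(I)(p)$ and of $\inv_{\QQ}(\restr{I}{\QQ})(p)$ to coincide --- is false. The ideal $I$ is generated by $V$, whose elements in general involve the discarded variables $X_{>\si}$, so near $p$ the ideal $I$ is not the pull-back of $\restr{I}{\QQ}$ along a formal coordinate projection, and the fields $\partial/\partial z_j$ do not kill it (were they to annihilate $V$ they would be negative-weight fields of exactly the kind \ref{Hypo H2} excludes). \ref{EF1 lemma} concerns the pull-back of an ideal under a power-series extension, not the restriction of an ideal to a sub-champ, and the two invariants need not agree: restriction to $\QQ$ can raise the multiplicity and change every subsequent entry. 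The same confusion surfaces in your closing paragraph, where (ii-bis)/(iii-bis) are ``read off'' from (ii)/(iii): the lemma's content is the converse implication, from statements about the restricted data $c_i^{\QQ}(p)$ to statements about the ambient $c_i(p)$, which is what permits the reduction to $\si=s$.

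What is actually true, and is all the paper uses, is a one-sided comparison: $c_0$ (resp.\ $c_0(p)$, $c_0^{\QQ}(p)$) is the minimal number of coordinates needed to describe the ideal modulo the $(d+1)$-st power of the maximal ideal, so whenever the multiplicity of the restriction coincides with the ambient one, $c_0^{\QQ}(p)\lqs c_0(p)$; likewise for $1\lqs r\lqs\si$ the presence of a non-zero gradient $g_r$ records the need, via \ref{cor L}, for a new block of coordinates, and needing one after restricting to $\QQ$ means one was already needed in $\P$. From this monotonicity the hypothesis $c_i(p)\lqs c_i$ of \ref{prop INV gdown}.(ii) gives $c_i^{\QQ}(p)\lqs c_i(p)\lqs c_i$, the bis-statement then yields $c_i^{\QQ}(p)=c_i$, and the sandwich forces $c_i(p)=c_i$; item (iii) follows in the same way. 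Your proposal never isolates this inequality, and substituting for it a (false) equality of invariants does not salvage the argument.
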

\begin{proof}
	For the multiplicity $d=g_0$ this is clear, while $c_0$ is the minimum number of coordinates required to describe the ideal modulo $\mm^{d+1}(p)$, so its ambient value $c_0(p)$ is always at least that, $c_0^\QQ(p)$, of a subspace whenever the multiplicity of the intersection coincides. Consequently if
	\begin{equation}\label{G1 eq}
	c_0 \gqs c_0(p) 
	\text{ and } 
	\bigl( \, 
		c_0 \gqs c_0^\QQ(p) \Longrightarrow c_0^\QQ(p) = c_0 
			\, \bigr) 
	\text{ then } 
	c_0(p) = c_0. 
	\end{equation}
	Similarly the presence of a non-zero gradient $g_r$, $ 1 \lqs r \lqs \si$ reflects the necessity, or otherwise, \ref{cor L}, of a new block of coordinates to describe the leading monomials in generators of the ideal, so if one needs a block after intersecting with a sub-widget one certainly needed it before hand, and should this occur $c_i^\QQ(p) = c_i$ will imply $c_i(p)=c_i$ exactly as in \eqref{G1 eq}.
\qedhere \end{proof}

In particular, therefore, after \ref{lem sub-champ}, and the definition of $\si$ it is sufficient to prove \ref{prop INV gdown} on the subspace $X_{\si+1}=...=X_{s}=0$, so \wloss $\si =s$.

\begin{proof}[Proof of Proposition \ref{prop INV gdown}.]
	We proceed by induction on the number of blocks, $s$, starting from $s=\si=0$. In this case by the action of $\mathrm{PGL}_{c_0}$
	we may, without loss of generality suppose $p$ is the point $[1:0:...:0] \in \PP_k^{m-1}$, in some basis $ \{x_1,...,x_m\}$.
	Consequently if the multiplicity does not go down $Z$ of \ref{cor L} is contained in the subspace generated by $x_2,...,x_m$ which contradicts the definition of $\ell_{0}$ (\ie 0 under the present hypothesis) in \ref{start INDUCT} unless $d$ were already 0.
	
	Supposing, therefore, that $\si = s >0$ let us adjust the notation accordingly by denoting the final block $X_s$ as $Y$
	which in turn is a basis of $\HH^{0}(\P,\OO_{\P}(\ab^s))$, which we write as $Y = \{y \} \cup Z$ where
	\begin{equation}
		y(p)=1, \qquad z(p)=0, \; \forall \, z \in Z .
	\end{equation}
%
	In particular, therefore, we have an étale neighbourhood $U$ of $p$ obtaine    
by slicing the groupoid
	$
	R:=\mathbb{G}_m\times\A^m\minusO 
	\rightrightarrows\A^m\minusO
	$
	along the transversal $y=1$, and we write the coordinate functions on $U$ afforded by the elements of the blocks $X_i$ as $x_{ij}+p_{ij}$, $0\lqs i\lqs s-1$, $1\lqs j\lqs c_i$, {\it i.e.}
	\begin{equation}
		U \ni p = \prod_{i= 0}^{s-1} \underline{p_t} \times 1 \times \underline 0, 
			\quad \text{where } \underline{p_t} = p_{t1} \times ... \times p_{tc_t}.
	\end{equation}
	In this notation the correspondence between a global section, 
	$f(X_0, ..., X_{s-1}, Y) $ in 
	$\ssym^{d\ab^0} \bigl(
						X_0 \amalg ... X_{s-1} \amalg Y
					\bigr) = 
	\HH^{0} \bigl( 
				\P, \OO_{\P}(d\ab^0) 
			\bigr)$ and the associated function is simply
	\begin{equation}
	\begin{split}
		f \xmapsto{\quad}
			 f(	x_{ij}+p_{ij},\,	1,\,	\underline{z} ) 
			\in \Gamma(U, \OO_{\P}), \quad
		\text{for $ 0 \lqs i \lqs s-1$ and $ 1 \lqs j \lqs c_{i}$.}
	\end{split}
	\end{equation}
	Furthermore, and needless to say, $U$ is an affine space with origin $p$ via,
	\begin{equation}
	\bigg( 
		\prod_{i=0}^{s-1} \,\prod_{j=1}^{c_{s-1}} x_{ij} \bigg)	
	\times \underline{z} =  
	\begin{tikzcd}
		U				\ar[r]
			&\A^{m-1} .
	\end{tikzcd}
	\end{equation}
	so it makes perfect sense to talk about the maximal degree in the blocks of functions $\ux_{t} := \{ x_{t i} \, \big| \, 1 \lqs i \lqs c_{t} \}$, 
	$ 0 \lqs t \lqs s-1$. With this in mind we assert,
	\begin{claim}\label{claim 1inv}
		The initial $2s$-part of the invariant 
		$(g_0, \ell_0, g_1, \ell_1, ..., g_{s-1},\ell_{s-1})$ cannot increase.
		\end{claim}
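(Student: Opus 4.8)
The plan is to prove Claim \ref{claim 1inv} by induction on the block index, reading off each successive pair $(g_t, \ell_t)$ of the invariant $\inv_{\P}(I)(p)$ from the recursive construction in \S\ref{sec II INV} applied to the local ring $\OO_{U,p}$, and comparing it step-by-step with the ``ambient'' data $(g_t, \ell_t)$ coming from the weights $\ab^t$ of $\P$. Since $p$ is the origin of the affine space $U$ and the generators of $I_p$ are the dehomogenised sections $f(x_{ij}+p_{ij}, 1, \uz)$, the key observation is that the weighted filtration $F_0^\bullet = \mm^\bullet$ at $p$ already sees the blocks $X_0, \ldots, X_{s-1}$ together with $Z$ (the part of $Y$ vanishing at $p$) as candidate coordinates, and that the non-degeneracy hypothesis \ref{Hypo H2}, restricted to the successive sub-champs $\RP_i$, is exactly what is needed to force the filtrations $F_{t-1}^\bullet(H)$ produced by the sub-induction \ref{s-induction} to stabilise at the ambient weights rather than at larger ones.

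First I would set up the induction: for $t=0$, $\mult(I_p) \lqs d$ and $\ell_0(p) \lqs \ell_0 = m-1-(c_0+\dots+c_s)+\dots$ — more precisely, since the leading form of each generator lies in $\ssym^{d\ab^0}(X_0 \amalg \dots \amalg X_{s-1} \amalg Y)$, the minimal subspace $Z(I_p) \subseteq \MM$ of \ref{start INDUCT} is contained in the span of $\{x_{ij}\} \cup Z$, whence $c_0(p) \gqs c_0$ and $\ell_0(p) \lqs \ell_0$ whenever $\mult(I_p)=d$; if $\mult(I_p)<d$ we are already done (and the remaining entries are irrelevant to ``cannot increase'' because the invariant is smaller lexicographically). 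For the inductive step, assuming $(g_0,\ell_0,\dots,g_{t-1},\ell_{t-1})$ of $\inv_{\P}(I)(p)$ agrees with the ambient values, I would identify the graded ring $\gr_{t-1}\OO_{U,p}$ with (a localisation/completion of) the coordinate ring of $\RP(\ug_{t-1})$, note that by hypothesis \ref{Hypo H2} for the sub-champ $\RP_{t}$ there are no negative-weight vector fields killing $V_{t}$, so that the set $\LL$ of \ref{lem L} computed for $V_{t-1}^d(H)$ has the ``expected'' dimension, forcing case (A) of \ref{case A} at precisely the value $H = g_t$ dictated by the ratio $\ab^{t-1}/\ab^t$, and hence $g_t(p) \lqs g_t$, $c_t(p) \gqs c_t$, i.e. $\ell_t(p) \lqs \ell_t$. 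The Newton-polyhedron picture, figure \ref{fig:NP}, is the guide: the edge $e(H)$ of the polyhedron of $I_p$ is constrained by the ambient monomials, so the first $H$ at which case (A) is forced cannot exceed $g_t$.

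The bookkeeping uses \ref{MF1 fact} to pass between the $\Q_{\gqs 0}$-valued invariant and its integralised version, so that ``cannot increase'' is an honest lexicographic statement; and it uses \ref{lem sub-champ} (already invoked) to reduce to $\si = s$, which is what makes the chain of sub-champs $\RP_0 \subset \RP_1 \subset \dots \subset \RP_{s} = \P$ — and with it the full strength of \ref{Hypo H2} — available at every stage of the induction. One must also check that the passage from $\P$ to the étale slice $U$ does not disturb the invariant: this is immediate from \ref{WF0 fact} (the invariant is étale local) together with \ref{EF1 lemma} if one needs to absorb the extra coordinates introduced by the slicing, though here the slice $y=1$ is a genuine étale neighbourhood so \ref{WF0 fact} alone suffices.

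\textbf{The main obstacle} I anticipate is the inductive step at the transition from case (B) to case (A): one must argue that the liftings $X_{t-1}^i(h)$ produced by the sub-induction, which a priori only converge $\mm$-adically, cannot ``drift'' to coordinates that would make the invariant of $I_p$ strictly larger than the ambient value — equivalently, that hypothesis \ref{Hypo H2} propagates through the Cauchy sequence of \eqref{eq mad2}--\eqref{eq mad3} to exclude \ref{ad infinitum} before reaching weight $g_t$, or, if \ref{ad infinitum} does occur, that the limiting filtration still has $g_t(p) \lqs g_t$. This is exactly the delicate interplay between the discreteness of the $F_{t-1}^\bullet$-weights, the finiteness coming from (R.2) of \ref{step2}, and the non-degeneracy \ref{Hypo H2}, and I expect it to require a careful re-examination of \ref{cor L}.(iii) applied to the ambient coordinates $X_i$ versus the intrinsically-defined $X_{t-1}^i(h)$.
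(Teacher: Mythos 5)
Your overall strategy---inducting on the block index $t$, comparing the locally computed $(g_t,\ell_t)$ against the ambient weights of $\P$, and using the non-degeneracy hypothesis \ref{Hypo H2} together with \ref{cor L} to pin down the contact space---is the same as the paper's, and your reduction to $\si = s$ via \ref{lem sub-champ} is also what the paper does. However, there are two concrete gaps.

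First, the base case is argued in the wrong direction. You assert that the leading form lying in $\ssym^{d\ab^0}(X_0 \amalg \cdots \amalg Y)$ makes $Z(I_p)$ \emph{contained} in the span of $\{x_{ij}\}\cup Z$, and deduce $c_0(p) \gqs c_0$. But containment of $Z(I_p)$ in a linear space only bounds $c_0(p)=\dim Z(I_p)$ \emph{above}, not below. What the paper actually does is restrict $I$ to the linear slice $\A^{c_0}$ obtained by setting $\ux_i = 0$ for $i\gqs 1$ and $\uz = 0$: if the multiplicity there is still $d$, then that restriction at $p$ is precisely the restriction of $V$ dehomogenised at the origin, and the non-degeneracy \ref{Hypo H2} forces its contact block to be all of $c_0$; the comparison $c_0(p) \gqs c_0^{\QQ}(p)$ of \ref{lem sub-champ} (more precisely \eqref{G1 eq}) then lifts this to the ambient value.

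Second, and more seriously, the inductive step lacks the one observation that actually makes the argument close. You rightly want to show that case (A), \ref{case A}, triggers at $H = \quotient{\ab^t}{\ab^{t+1}}$ or before, and you flag the possibility of ``drift'' or even \ref{ad infinitum} in the sub-induction as the delicate point, hoping \ref{Hypo H2} ``propagates through the Cauchy sequence.'' The paper instead dispatches this at a stroke: for any $H < \quotient{\ab^t}{\ab^{t+1}}$, a case (B) perturbation of $\ux_i$, $0\lqs i\lqs t$, enters with weight $H\cdot(\quotient{\ab^i}{\ab^t}) < \quotient{\ab^i}{\ab^{t+1}}$, so it leaves every monomial $\ux_0^{D_0}\cdots\ux_{t+1}^{D_{t+1}}$ with $\ab^0|D_0| + \cdots + \ab^{t+1}|D_{t+1}| = \ab^0 d$ untouched. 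Consequently, once one has run through all $H < \quotient{\ab^t}{\ab^{t+1}}$, the ideal modulo $\ux_i$, $i>t+1$, is still exactly the restriction of $V$ obtained by setting $X_i=0$ for $i > t+1$, to which \ref{cor L} applies to give a non-maximal $\LL$ of dimension $\ell_{t+1}$ --- forcing case (A) and simultaneously ruling out \ref{ad infinitum} before weight $\quotient{\ab^t}{\ab^{t+1}}$. Without that monomial-preservation remark your inductive step does not actually exclude the drift you worry about, so the proposal is incomplete as stated.
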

	\begin{proof}
		By induction in $s$. The starting point of the multiplicity $d = g_0$ is particular. 
		Modulo the local functions $x_{ij},\, i \gqs 1,\, \underline{z}$, at $p$ we have an 
		affine space $\A^{c_0}$ on which the multiplicity is at most the degree in the block of 
		functions $\ux_0$ which is at most the degree in global block $X_0$, \ie $d$. 
		Furthermore were this bound to be achieved on $U$ then the restriction $I$ to 
		$\A^{c_0}$ at $p$ is, under the isomorphism afforded by:
		\begin{tikzcd}[cramped, sep=small]
			X_{\bullet j} \ar[r, mapsto]
				& \ux_{\bullet j},
		\end{tikzcd}
		exactly the ideal generated under, 
		\begin{equation}
			\begin{tikzcd}[sep=2cm]
			\Gamma(\A^m \minusO) = \Gamma(\A^{m}) 
			\ar[r, "\mod X_{i}"]
					&\Gamma(\A^{c_0}), \quad i \gqs 1
			\end{tikzcd}
		\end{equation}
		at the origin, so $c_{0}(p) \gqs c_0$.
		
		Now we put ourselves in the scenario of the inductive hypothesis 
		\ref{start INDUCT}.(F.0)-(F.4), albeit with an inductive parameter 
		$ 0 \lqs t \lqs s-1$, rather than $s-1$ of \op, and we add to the 
		hypothesis:
		
				(F.4 bis) The $i^{\th}$-block, $ 0 \lqs i \lqs t$, is defined by the block of functions 
				$\ux_i$ and has weight $\quotient{\ab^i}{\ab^t} = g_t^i$ (in notation of 
				\ref{indu hypo}.(F.4) .
				
	Quite possibly we arrive in case (A), \ref{case A}, for a value of 
	$H < \quotient{\ab^t}{\ab^{t+1}}$, 
	but, plainly should this occur then the invariant strictly decreases. If, however, we 
	were to continue in case (B), \ref{case B}, for every $H < \quotient{\ab^t}{\ab^{t+1}}$ by way of 
	changes of coordinates in the blocks $\ux_i$, $ 0 \lqs i \lqs t$, then this in no way changes monomials of the form
	\begin{equation}
		\ux_0^{D_{0}} \cdots \ux_{t+1}^{D_{t+1}}, 
		\quad \ab^0 \abs{D_{0}} + ... + \ab^{t+1} \abs{D_{t+1}} = \ab^0 d
	\end{equation}
	since the weight of the perturbation in $\ux_i$ will be
	\begin{equation}
		H 	\cdot 	\bigl(	\quoziente{\ab^i}{\ab^t}	\bigr) < 
			\quoziente{\ab^i}{\ab^{t+1}}. 
	\end{equation}
	Consequently were we to eliminate all 
	$H < \quotient{\ab^t}{\ab^{t+1}}$, modulo $\ux_i, \; i > t+1$ we would find that,$\mod \ux_i,\; i > t+1,$ the ideal at $p$ is exactly that generated at the origin by the image of $V$ in the origin obtained via the isomorphism
	\begin{equation}
		\begin{tikzcd}[sep=2cm]
		\Gamma(\A^m \minusO) = \Gamma(\A^{m}) 
		\ar[r, "\mod X_{i}"', "\sim"]
		&\Gamma(\A^{c_0 + ... + c_{t+1}}), \quad i > t+1 ;
		\end{tikzcd}
	\end{equation}
	so the claim follows from \ref{cor L}, as employed in the definition of the invariant in case (A), \ref{case A} .
	\qedhere \end{proof}
Suppose therefore that the extremal situation of \ref{claim 1inv} is attained (\ie the invariant did not decrease), then from our original blocks of coordinates, $\ux_i, \; 0 \lqs i \lqs s-1, \; \uz$ we will have performed a change of coordinates to blocks of the form
\begin{equation}\label{eq 2}
\begin{split}
		\uxi_0 &=\; \ux_0 + \ueps_0 (\ux_1, ..., \ux_{s-1}, \uz), \\
		\uxi_1 &=\; \ux_1 + \ueps_1 (\ux_2, ..., \ux_{s-1}, \uz), 
				\qquad \wt_{\ux}(\ueps_{i}) < \ab^i,  \\
		\quad  & \vdots \qquad \qquad \vdots 
				 \qquad \qquad \qquad \quad \text{ for } \wt(x_i) = \ab^i;	\\
		\uxi_{s-1} &= \; \ux_{s-1} + \ueps_{s-1} (\uz);
	\end{split}
\end{equation}
resulting in a filtration $F^\bullet_{\uxi}$ around $p$ in which the blocks $\uxi_{i}$, $0 \lqs i \lqs s-1$ have weights $\quotient{\ab^i}{\ab^s}$, $\uz$ has weight $1$, and around $p$ the ideal generated by $V$ belongs to $F_{\uxi}^{\quotient{\ab^0d}{\ab^s}}$. In particular
\begin{warning}\label{Warn 2bis}
We allow the possibility that the sub-induction \ref{indu/def INV Fs} may still not have terminated in case \ref{case A} and whence the invariant might even go up.
\end{warning}
To analyse this situation we replace the coordinates $x_{ij}$ around $p$ by the 
restriction to $U$ of the $\G_{m}$-equivariant global coordinate functions $X_{ij}$, $0 \lqs i \lqs s-1,$ $1 \lqs j \lqs c_i$ 
in the various block, so that \eqref{eq 2} becomes,
\begin{equation}\label{eq 3}
	\begin{split}
	\uxi_0 &= \restr{
				\bigl(
					\uX_0 - \ue_0 (X_1, ..., X_{s-1}, Z)
				\bigr)
					}{U} \;, 
	\\
	\uxi_1 &= \restr{
				\bigl(
					\uX_1 - \ue_1 (X_2, ..., X_{s-1}, Z) 
				\bigr)
					}{U} \;,
	 \\
	\quad \vdots& \qquad \qquad \vdots \qquad 
	\\
	\uxi_{s-1} &= \restr{\bigl(\uX_{s-1} - \ue_{s-1} (Z)
		\bigr)}{U} \; ;
	\end{split}
	\qquad \wt_X (\ue_{i}) < \ab^i,
\end{equation}
and we assert
\begin{claim}\label{claim 2inv}
	In the above notation and under the hypothesis (\cfr claim \ref{claim 1inv})
	that the first $2s$ terms in the invariant at $p$ are at least 
	$\bigl( d, \ell_0, g_1, \ell_1, ..., g_{s-1}, \ell_{s-1} \bigr)$
	 the coordinate change \eqref{eq 3} is global, \ie there are homogeneous functions
	  $\uEE_i$ on $\A_{k}^{m-1}$ of weight $\ab^i$ such that,
	\begin{equation}\label{eq claim2}
	\restr{\ue_{i}(X_{i+1}, ..., X_{s+1}, Z)}{U} = \uEE_i(X_{i+1}, ..., X_{s+1}, Z).
	\end{equation}
\end{claim}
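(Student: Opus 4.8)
The plan is to extract the claim from a closer look at the proof of \ref{claim 1inv}. The point exploited there is that, modulo the blocks $\ux_j$ with $j$ beyond the index currently under consideration, the ideal $I|_U$ at $p$ is \emph{exactly} the base change to the local ring at $p$ of the $\G_m$-homogeneous datum $V$; hence at every stage $t$ and every sub-inductive value $H$ the maximal-contact computation of \ref{cor L} carried out on $U$ is, up to a term supported on those negligible blocks, the base change of the one carried out on the affine space $\A^{m-1}_k$ equipped with the weighted $\G_m$-action for which $X_0,\dots,X_{s-1}$ have weights $\ab^0,\dots,\ab^{s-1}$ and $Z$ has weight $\ab^s$. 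Since the maximal-contact module of \ref{lem L} and its annihilator are canonically attached to the datum and stable under base change (\eqref{CBDF2 eq}), and since the coordinate corrections produced in the proof of \ref{cor L} are $\G_m$-homogeneous of the weight of the coordinate they correct (cf. \eqref{eq P3}), the corrections found on $U$ are forced to be the restrictions of those found on $\A^{m-1}_k$; taking $\uEE_i$ to be the latter, of weight $\ab^i$, yields \eqref{eq claim2}.

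Concretely I would fix the block index $i$ and write the total correction $\ue_i$ of \eqref{eq 3} as the telescoping sum, over $h\in\Theta_{s-1}$ and over the earlier stages, of the increments $X_{s-1}^{i}(H)-X_{s-1}^{i}(h)$ of \eqref{eq mad1}. By \ref{cor L}.(iii) each increment involves only coordinates genuinely occurring in $I|_U$; because $y\equiv 1$ on $U$ and, under the extremality hypothesis, the invariant-defining blocks must be liftings of the global blocks $X_0,\dots,X_{s-1}$ and of $Z$, the coordinate $y$ is not one of them, so each increment is a polynomial in $X_{i+1}|_U,\dots,X_{s-1}|_U,Z$ of bounded weight, hence an honest polynomial. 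The bookkeeping \eqref{eq mad3}, once the earlier blocks are known to be $\G_m$-homogeneous of weights $\ab^0,\dots,\ab^{s-1}$, identifies the $\A^{m-1}_k$-weight of each increment as $\ab^i$, so the sum is a single $\G_m$-homogeneous polynomial $\uEE_i$ of weight $\ab^i$ with $\restr{\ue_i}{U}=\uEE_i$.

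The hard part will be the error analysis concealed in ``up to a negligible term'': on $U$ one runs not the clean homogeneous algorithm of \ref{cor L} but its perturbation by the blocks $\ux_j$, $j>i$, and by Warning \ref{Warn 2bis} the sub-induction need not even have terminated, so one must check that these perturbations never contaminate the weight-$\ab^i$ leading term of the coordinate change. This is where the hypothesis that the first $2s$ entries of $\inv(I)(p)$ equal $(d,\ell_0,g_1,\ell_1,\dots,g_{s-1},\ell_{s-1})$ -- that is, by \ref{claim 1inv}, that no entry went up -- is essential: it forces the stage-$t$ sub-induction to stay in the range $H<\ab^t/\ab^{t+1}$, so that, exactly as in the proof of \ref{claim 1inv}, the weight of any perturbation of the blocks present at that stage stays strictly below the weight those blocks carry subsequently and hence cannot alter the leading coordinate change; together with the $\mm$-adic convergence recorded in \eqref{eq mad2}--\eqref{eq mad3} and \ref{F/P madic} when Case~\ref{case B} persists, this makes the telescoping sum for $\ue_i$ stabilise to a single $\G_m$-homogeneous polynomial in the global coordinates, which is \eqref{eq claim2}.
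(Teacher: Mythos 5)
Your proposal has a genuine gap, and it sits exactly where the paper's proof does its real work. The assertion that the computation on $U$ is "the base change of" the global homogeneous computation, so that the corrections are automatically restrictions of global homogeneous functions, begs the question: on the chart $U$ the global $\G_m$-grading is broken (there $y\equiv 1$), and only the weighted \emph{filtration} survives. A correction $\ue_i$ produced by the local procedure is homogeneous for the local $\uxi$-filtration, but a local monomial of $X$-weight $w<\ab^i$ is the restriction of a global homogeneous function of weight $\ab^i$ only if $\ab^i-w$ is a non-negative integer multiple of $\ab^s$ (the missing factor being a power of $Y$); a general local correction is a restriction of a \emph{sum} of global pieces whose weights differ by multiples of $\ab^s$, and nothing in your argument produces the required congruence. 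The bookkeeping \eqref{eq mad3} that you invoke only identifies weights for the local (rescaled) filtration, not for the global grading, and \ref{cor L}.(iii) tells you unused coordinates do not appear, not that the corrections are globally homogeneous of weight $\ab^i$. You also misuse Warning \ref{Warn 2bis}: the hypothesis is only that the first $2s$ entries did not drop, the sub-induction at the last stage may well continue, so one cannot confine the analysis to $H<\ab^t/\ab^{t+1}$ at every stage.

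What the paper actually does, and what is missing from your proposal, is a cancellation argument that manufactures the congruence from extremality. It extracts the top-$\uxi$-weight parts $\ueitop$ of the corrections, defines from them homogeneous functions $\FF_i$ on $\A_k^{m-1}$ as in \eqref{eq 5} and the negative-weight vector field $\De=\sum_i\FF_i\,\partial_{X_i}$ of \eqref{eq 6}; if the leading term \eqref{eq 4} vanished for all $\ff\in\V_d$ then $\De(\ff)=0\bmod Y$, and the non-degeneracy hypothesis \ref{Hypo H2} together with \ref{cor L}.(ii) would force $\De=0\bmod Y$, i.e. all $\FF_i\equiv 0$. Hence a leading term survives, and the hypothesis that the invariant does not decrease forces it to cancel against a term of $\uxi$-weight $\ab^0 d-n\ab^s$, which is precisely the identity $\ab^i=\wtx(\FF_i)+n\ab^s$ of \eqref{eq 7}; one then performs the \emph{global} change $X_{i,\text{new}}=X_i+Y^n\FF_i(X_{\gqs i+1},Z)$ and iterates, the process terminating because the integer $\min_i\{\ab^i-\wtxi(\uettop)\}$ strictly increases and is bounded by $\ab^0$. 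Without this use of \ref{Hypo H2} and the resulting weight identity, your telescoping of the increments \eqref{eq mad1} cannot conclude that the limit is the restriction of a single homogeneous $\uEE_i$ of weight $\ab^i$.
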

\begin{proof}
	We have filtrations in which the blocks $X_i$, $0 \lqs i \lqs s-1,$ $ X_s = \{Z,\, Y\}$, respectively $\uxi_i,\; \uz$, with weights 
	$a^i$, $0 \lqs i \lqs s-1, \; \ab^s$, may \emph{a priori }be different and so we will 
	employ the notation $\wtx$, resp. $\wtxi$, to avoid ambiguity. 
	In any case for $\ff \in \V_d$, we have from \eqref{eq 3}:
	\begin{equation}\label{eq eitop1}
		\begin{split}
		\restr{\ff}{U} = 
		\restr{\ff(X_0,..., X_{s-1},1,Z)}{U} = 
		\ff		\big( 
					\uxi_{0} 	+ 	\ue_{0} , ..., 
					\uxi_{s-1} 	+ 	\ue_{s-1}, 
					\underline{1}, 	\uz	
				\big) = \\
		\ff		\big(
					\uxi_{0} 		, ..., 
					\uxi_{s-1}		, 
					\underline{1}	, 
					\uz	
				\big) + 
		\sum_{i=0}^{s-1}	\Big( 
								\frac{\partial\ff}{\partial{X_i}}\,
								\ue_i
							\Big)
				\big(
					\uxi_{0} 		, ..., 
					\uxi_{s-1}		, 
					\underline{1}	, 
					\uz	
					\big) + 
		\text{ stuff,}
		\end{split}
	\end{equation}
	wherein
	$\frac{\partial \ff}{\partial_{X_i} }\,\ue_i 
		= \sum_{j=1}^{c_i} 
		\frac{\partial \ff}{\partial_{x_{ij}} }\,\varepsilon_{ij} $  and  stuff has smaller weight in the $\uxi$-filtration than the expected top weight in
\begin{equation}\label{eq 4}
	\left( \sum_{i=0}^{s-1} \frac{\partial \ff}{\partial{X_i}} \ueitop \right)
	\bigl(\uxi_{0} , ..., \uxi_{s-1}, \underline{1}, \uz	\bigr) 
\end{equation}
to wit: $(d \ab^0) - \min_{0 \lqs i \lqs s-1} 
		\left\lbrace
		\ab^i - \wtxi \bigl( \ueitop \bigr)
		\right\rbrace,$ 
where $\ueitop$ are the monomials in $\uxi, \uz$ in $\ue_i$ which have maximal $\uxi$-weight, 
\begin{equation}\label{eq eitop2}
\ueitop := \sum_{ D  }
				\lambda_{D} 	\; 
				\uxi_0^{D_0} 	\cdots 
				\uxi_{s-1}^{D_{s-1}} 
				\uz^{D_s} 	+ 
				\text{ stuff,}
\end{equation}
where, again, stuff is monomials with lower $\xi$-weight. Let us therefore define homogenous functions on the ambient space, $\A_k^{m-1}$ by way of the formula:
\begin{equation}\label{eq 5}
	\FF_i := 
	\sum\nolimits_{ D_i } 
			\lambda_{D_i} 		\; 
			X_0^{D_0} 			\cdots 
			X_{s-1}^{D_{s-1}} 
			Z^{D_s} ,
\end{equation}
and a homogeneous vector field,
\begin{equation}\label{eq 6}
	\De = \sum_{i=0}^{s-1} 
			\FF_i \, \frac{\partial}{\partial{X_i}} \;
				\quad \text{ of } \; 
				\wtx(\De) = -\min_{0 \lqs i \lqs s-1} 
				\{ \, \ab^i - \wtxi(\ueitop) \,\}.
\end{equation}
So that by construction and \eqref{eq 3}, \eqref{eq 4} vanishes if and only if the top weight term in the grading of $\Gamma(\OO_{U})$ which assigns to $\restr{X_i}{U}$ weight $\ab^i$, $0 \lqs i \lqs s-1$, and to $\restr{Z}{U}$ weight $\ab^s$ of every $\restr{\De(\ff)}{U}$ vanishes for every $\ff \in \V_d$. 
Thus, \emph{ a fortiori,} on the weighted projective hypersurface $\QQ$, defined by the function $Y=0$,
\begin{equation}
	\De(\ff) = 0 \mod Y,  \qquad \forall \, \ff \in \V_d.
\end{equation}
As such there are two cases: either $Z \neq \emptyset$, then since $\De$ acts trivially on $\HH^{0} \bigl( \QQ, \OO_{\QQ}(\ab^s) \bigr)$ by \eqref{eq 6},  $\De = 0 \mod Y$ by \ref{Hypo H2} and 
\ref{cor L}.(ii); or $Z = \emptyset$ and $\De = 0 \mod Y$ by the non-degeneracy hypothesis \ref{Hypo H2} and \ref{cor L}.(ii). In either case $\De = 0 \mod Y$, and whence all the $\FF_i \equiv 0$ by virtue of their definition \eqref{eq 5}, which in turn is nonsense (unless claim \ref{claim 2inv} is true with $\ue_i = \uEE_i = 0$). Thus the top weight term in \eqref{eq 4} is not zero for some $\ff \in \V_d$. However for such a $\ff$, according to our hypothesis that the invariant does not decrease, the top $\uxi-$weight term in \eqref{eq 4} must cancel with the top $\uxi-$weight of
\begin{equation}
	\ff \bigl( \uxi_{0}, ..., \uxi_{s-1}, \underline{1}, \uz \bigr) \mod F_{\uxi}^{\ab^0\,d},
\end{equation}
which in turn has weight, $\ab^0d - \ab^s n$, for some integer $n$. We therefore conclude, 
\begin{equation}\label{eq 7}
	\ab^0 \, d - \ab^s\,n = \ab^0\,d - \min_{0 \lqs i \lqs s-1} \{ \, \ab^i - \wtx(\FF_i) \,\} ,
\end{equation}
\ie for $ 0 \lqs i \lqs s-1$ where the minimum in \eqref{eq 7} is attained,
\begin{equation}
\ab^i = \wtx(\FF_i) + \ab^s\,n \; .
\end{equation}
	
Now consider the change of variables on $\PP(\underline{\ab}^0, ..., \underline{\ab}^s)$ defined by,
\begin{equation}
	X_{i, \text{new}} := \underline{X}_i + Y^n\, \FF_i(\underline{X}_{\gqs i+1}, \underline{Z}), \quad 
	0 \lqs i \lqs s-1,
\end{equation}
then in the \emph{new} coordinates the invariant, $\min_{0 \lqs i \lqs s-1} \{\, \ab^i - \wtxi(\uettop)\,\}$, 
of the coordinate change \eqref{eq 3} has increased and since it is an integer which is at most $\ab^0$ (\cfr \ref{f/d theta}), this process eventually terminates establishing the claim.
\qedhere \end{proof}
The practical upshot of \ref{claim 2inv} is when we come 
to compute the invariant at $p$ we can suppose 
not only that all the $p_{ij}$ are zero for 
$0 \lqs i \lqs s-1$, but that the filtration defined by 
$\wt \bigl( \restr{X_{ij}}{U}\bigr) = \quotient{\ab^i}{\ab^s}$, $\wt(\restr{Z}{U})=1$ is exactly that defined by
the inductive procedure \ref{indu/def INV Fs}, 
albeit for the moment we remain in the situation 
\ref{Warn 2bis}. However by claim \ref{claim 2inv} 
we can now just read the invariant at $p$ from the 
newton polyhedron, \cfr figure \ref{fig:NP} pg. \pageref{fig:NP}, calculated in the 
coordinates $\restr{X_{ij}}{U},\,\restr{Z}{U}$. 
As such if $Z = \emptyset$ then at worst $g_{s-1}$ 
goes down, whereas if $Z \neq \emptyset$ at worst 
$g_s$ must go down.
\qedhere \end{proof}

\section{The Relative Invariant}
\setcounter{equation}{0}
We proceed to construct the invariant relatively in a generality which is adequate for applications but only coincides with \S \ref{sec II INV} for complete local rings, to wit:

\begin{setup/notation}\label{S1 not/defn}
	Let 
	\begin{tikzcd}[cramped, sep=small]
		\pi: \UU = \mathrm{Spf}\,A \ar[r]
			&\BB = \spec k
	\end{tikzcd}
	be a map from an affine formal scheme to a Noetherian affine scheme, and suppose that the trace of $\UU$ is a regularly embedded section $\sigma$ of $\pi$ of co-dimension $m$. Furthermore if $M$ is the ideal of $\sigma$, suppose $\quotient{M}{M^2}$ is trivial, \ie $M = (x_1,...,x_m)$ is 
	the ideal of $\sigma$ (so 
	\begin{tikzcd}[cramped, sep=small]
		A
			&k \llbracket x_1, ..., x_m \rrbracket
			\ar[l, "\sim"'] )
	\end{tikzcd}
	and let $I$ be an other ideal of $\UU$ (so $M$-adically separated by definition), while for objects, over $\BB$, denote by a subscript in $\bb$ the fibre (as a formal scheme, \ie $M$-adically complete tensor product) over $\bb \in \BB$.
\end{setup/notation}

Plainly we begin with the multiplicity, \ie

\begin{fact}\label{F1 fact}
	For $\bb \in \BB$, define $d_{\bb}(I) \in \Z_{\gqs0} \cup \{\infty \}$ by,
	\[
	d_{\bb}(I) := 
	\sup \left\lbrace 
		\x \in \Z_{\gqs 0} \, \big| \, M_{\bb}^{\x} \supset I_{\bb}
	\right\rbrace;
	\]
	then $\bb \xmapsto{\; \; \; } d_{\bb}(I)$ is upper semi-continuous (often abbreviated to \usc ).
\end{fact}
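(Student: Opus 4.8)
The plan is to unwind the definition of \usc and reduce everything to the classical upper semi-continuity of fibre dimension of a coherent module on $\BB=\spec k$. Concretely, $\bb\mapsto d_{\bb}(I)$ is \usc precisely when $\{\bb\in\BB : d_{\bb}(I)\gqs n\}$ is closed for every $n$; and since $\{d_{\bb}(I)=\infty\}=\bigcap_{n}\{d_{\bb}(I)\gqs n\}$, it suffices to treat $n\in\Z_{\gqs 1}$. By the very definition of $d_{\bb}(I)$, the condition $d_{\bb}(I)\gqs n$ is the assertion $I_{\bb}\subseteq M_{\bb}^{n}$, equivalently that the image of $I$ vanishes in $A_{\bb}/M_{\bb}^{n}$.

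The first step is to replace this formal-scheme statement by one about an honest finite $k$-module. Since $A\cong k\llbracket x_1,\dots,x_m\rrbracket$ and $M=(x_1,\dots,x_m)$, the quotient $A/M^{n}$ is a \emph{free} $k$-module of finite rank $r$, a basis being the monomials of degree $<n$; it is killed by $M^{n}$, hence $M$-adically discrete, so $A_{\bb}/M_{\bb}^{n}=(A/M^{n})\otimes_{k}\kappa(\bb)$. Let $\bar I$ be the image of $I$ in $A/M^{n}$ and put $Q:=(A/M^{n})/\bar I$, a finite $k$-module (a quotient of $A/M^{n}$). Applying $-\otimes_{k}\kappa(\bb)$ to the surjection $A/M^{n}\twoheadrightarrow Q$ and using right exactness, the image of $I$ in $(A/M^{n})\otimes_{k}\kappa(\bb)$ vanishes if and only if $(A/M^{n})\otimes_{k}\kappa(\bb)\to Q\otimes_{k}\kappa(\bb)$ is an isomorphism, i.e.\ $\dim_{\kappa(\bb)}Q\otimes_{k}\kappa(\bb)=r$; and since $Q$ is a quotient of a free module of rank $r$, this is the same as the inequality $\dim_{\kappa(\bb)}Q\otimes_{k}\kappa(\bb)\gqs r$.

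The second and final step is the classical fact (a form of Nakayama's lemma) that for a finite module $Q$ over the Noetherian ring $k$ the function $\bb\mapsto\dim_{\kappa(\bb)}Q\otimes_{k}\kappa(\bb)$ is upper semi-continuous on $\spec k$, so its locus of values $\gqs r$ is closed; by the previous paragraph this locus is exactly $\{\bb : d_{\bb}(I)\gqs n\}$, and we are done. There is essentially no obstacle here: the only point requiring a little care is the bookkeeping with the $M$-adic completion, namely verifying that the fibre $A_{\bb}$ of the formal scheme and its quotient by $M_{\bb}^{n}$ are genuinely computed by the ordinary tensor product once one descends to the finite $k$-module $A/M^{n}$ — after which everything is standard commutative algebra.
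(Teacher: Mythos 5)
Your proof is correct and follows essentially the same route as the paper: in both cases the condition $d_{\bb}(I)\gqs n$ is rephrased as the vanishing of the image of $I$ in a finite free truncation, hence as the cokernel $Q$ having full fibre rank, and one concludes by upper semi-continuity of fibre dimension of a finite $k$-module. The only (harmless) difference is bookkeeping: the paper works one graded piece $\quotient{M^{e}}{M^{e+1}}$ at a time and invokes Noetherian induction, whereas you use the full quotient $A/M^{n}$ for each $n$ and handle the locus $d_{\bb}=\infty$ as an intersection of closed sets.
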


\begin{proof}
	Since $I$ is $M$-adically separated, it is either zero and $d_{\bb}(I)$ is identically $\infty$, or there is a smallest $e \in \Z_{\gqs 0}$ such that $I \subset M^{e}$. The former case is trivial, while in the latter case we have a non-trivial quotient of a free module, \ie
	\begin{equation}\label{W0 eq}
		\begin{tikzcd}
		I \ar[r] 
			& \quoziente{M^{e}}{M^{e+1}}	\ar[r]
				&Q \ar[r]
					&0
		\end{tikzcd}
	\end{equation}
	and the condition $d_{\bb}(I) \gqs e+1$ is equally the non-trivial
	closed condition,
	\begin{equation}
		\dim_{k(\bb)} Q_{\bb} \gqs \mathrm{rank} \big(\quoziente{M^{e}}{M^{e+1}}\big)
	\end{equation}
	so we conclude by Noetherian induction.
\qedhere \end{proof}

Next we proceed to the maximal contact space by way of

\begin{fact}\label{F2 fact}
	Suppose the multiplicity $d_{\bb}$ is identically $d 	\in \Z_{\gqs 0}$ and define the sub-module $V$ in $\quotient{M^{d}}{M^{d+1}}$ 
	to be $I$ modulo $M^{d+1}$, then the following is \usc,
	\begin{equation}\label{1 eq}
		\begin{tikzcd}[sep=0.6cm]
		b \ar[r, mapsto]
			&\lambda_{0}(b) := 
		\end{tikzcd}
		\begin{cases}
			\dim_{k(\bb)} 
			\big\lbrace
				\partial \in 	
					\big(
						\quoziente{M}{M^2} 
						\otimes k(\bb) 
					\big)^{\vee} 
				\, \big| \,
				\partial(V_{\bb}) = 0
			\big\rbrace,			& d > 0,  \\
			0\, ,					& d = 0,
		\end{cases}
	\end{equation}
	\end{fact}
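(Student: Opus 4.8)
The plan is to realise $\lambda_{0}(\bb)$ as the corank of the specialisation to $k(\bb)$ of a single matrix with entries in $k$, and then to invoke the classical lower semi-continuity of the rank of a specialised matrix, exactly in the spirit of \ref{F1 fact}. First I would dispose of the case $d=0$, where $\lambda_{0}\equiv 0$ and there is nothing to prove, and assume $d\gqs 1$. Write $W:=\quotient{M}{M^{2}}$, a free $k$-module of rank $m$, so that $\gr_{M}A=\Sym_{k}(W)$ and every graded piece $M^{j}/M^{j+1}=\Sym^{j}_{k}(W)$ is a \emph{free} $k$-module of finite rank. The point I would stress at the outset is that a $k$-linear functional $\partial\in W^{\vee}$ extends uniquely to the contraction derivation $\partial\colon\Sym^{j}_{k}(W)\to\Sym^{j-1}_{k}(W)$, that this assignment is $k$-linear in $\partial$ and in its argument, and that it strictly commutes with base change: after $\otimes_{k}k(\bb)$ it becomes the analogous contraction over $k(\bb)$ acting on $\Sym_{k(\bb)}(W\otimes_{k}k(\bb))$.

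Next, since $k$ is Noetherian and $M^{d}/M^{d+1}$ is finite over $k$, the submodule $V=(I\bmod M^{d+1})\subseteq M^{d}/M^{d+1}$ is a finite $k$-module; I would fix generators $f_{1},\dots,f_{r}$ of it. For any $\bb\in\BB$ the images $\bar f_{1},\dots,\bar f_{r}$ in $M^{d}_{\bb}/M^{d+1}_{\bb}$ span $V_{\bb}$ over $k(\bb)$ — whichever of the (coinciding) readings of the fibre $V_{\bb}$ one adopts, since the surjection $I\twoheadrightarrow V$ survives $\otimes_{k}k(\bb)$ — so a $\partial\in(W\otimes_{k}k(\bb))^{\vee}$ annihilates $V_{\bb}$ \ie it annihilates each $\bar f_{i}$. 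Consequently, letting $\Psi\colon W^{\vee}\to(M^{d-1}/M^{d})^{\oplus r}$ be the $k$-linear map $\partial\mapsto(\partial f_{1},\dots,\partial f_{r})$ between finite free $k$-modules, the base-change compatibility recorded above (together with $\bar f_{i}=f_{i}\otimes 1$ and $W^{\vee}\otimes_{k}k(\bb)=(W\otimes_{k}k(\bb))^{\vee}$) identifies the fibrewise problem with a single specialisation, giving $\lambda_{0}(\bb)=\dim_{k(\bb)}\Ker(\Psi\otimes_{k}k(\bb))=m-\mathrm{rank}_{k(\bb)}(\Psi\otimes_{k}k(\bb))$.

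Finally, after choosing bases $\Psi$ is an honest matrix over $k$, and for each $\ell$ the set $\{\bb\mid\lambda_{0}(\bb)\gqs\ell+1\}=\{\bb\mid\mathrm{rank}_{k(\bb)}(\Psi\otimes_{k}k(\bb))\lqs m-\ell-1\}$ is precisely the closed subscheme $V(\mathfrak{a}_{m-\ell})\subseteq\BB$ cut out by the ideal $\mathfrak{a}_{m-\ell}\subseteq k$ of $(m-\ell)\times(m-\ell)$ minors of that matrix; hence $\lambda_{0}$ is \usc. The only place I would genuinely slow down is the bookkeeping of the first paragraph — checking that the derivation action on $\gr_{M}A$ is defined over $k$ and commutes on the nose with passage to the residue field $k(\bb)$ — because that is exactly what collapses the a priori $\bb$-dependent linear-algebra problem computing $\lambda_{0}(\bb)$ into the specialisation of one fixed $k$-matrix; the rest is the standard semicontinuity of matrix rank and requires nothing beyond it.
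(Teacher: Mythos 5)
Your proposal is correct and is essentially the paper's argument in transposed form: the paper pairs $V$ against $\bigl(\quotient{M^{d-1}}{M^{d}}\bigr)^{\vee}$ to get a $k$-submodule $\Lambda'\subseteq\quotient{M}{M^2}$ and reads $\lambda_0(\bb)$ as the fibre dimension of the quotient $\Lambda''$, which is \usc, while you read it as the corank of the specialised matrix $\Psi$ (the transpose of that pairing on your chosen generators) and invoke semicontinuity via minors. Same reduction to a single $k$-linear object compatible with base change, same standard semicontinuity at the end, so no gap.
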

\begin{proof}
	Plainly, without loss of generality $d > 0$, while the action of 
	$\left(\quotient{M}{M^2} \right)^{\vee}$ by derivations affords a pairing,
	\begin{equation}
		\begin{tikzcd}[column sep=22pt,]
		V \otimes_{k} 
		\left( 
			\quoziente{M^{d-1}}{M^{d}} 
		\right)^{\vee}
		\ar[r]
			& \quoziente{M}{M^2} 
			\ar[r, phantom, ":"]
					&F \otimes \varphi \ar[r, mapsto]
							&\lbrace
								\partial \mapsto \varphi (\partial F) 
					 		\rbrace
		\end{tikzcd}
	\end{equation}
	whose image is a $k$-submodule,
	\begin{equation}\label{2 eq}
	\begin{tikzcd}
		\Lambda' \ar[r, hook]
			&\quoziente{M}{M^2}
	\end{tikzcd}
	\end{equation}
	such that the $k(\bb)$-vector spaces \eqref{1 eq} are the annihilators of the image of $\Lambda'_{\bb}$, so equivalently,
	\begin{equation}\label{W00 eq}
	\lambda_{0}(\bb)= \dim_{k(\bb)} \Lambda'' 
	\end{equation}
	where $\Lambda''$ is the quotient of \eqref{2 eq}.
\qedhere \end{proof}

Prior to the inductive definition of the relative invariant let us make a,
\begin{warning}\label{R1 remark}
	In practice one wishes to take $\UU$ to be the completion in 
	the diagonal of the product of $\BB$ with itself whenever the 
	latter is smooth over a field. 
	In such a scenario if $\bb \in \BB$, then $m$ in the sense of 
	\S\ref{sec II INV} for the local ring $\BB_{\bb}$ will be its 
	dimension, $m(\bb)$, which will only coincide with the ambient 
	dimension $m$ in the sense of \ref{S1 not/defn} if $\bb$ is closed.
\end{warning}

In any case if in addition
\begin{tikzcd}[cramped, sep=small]
	b \ar[r, hook]	&\lambda_0(\bb)
\end{tikzcd}
is constant on $\BB$ then generalising \ref{def blocks},

\begin{fact/defn}\label{D1 f/d}
	In the situation of the setup \ref{S1 not/defn}, a block of (relative, should there be danger of confusion) coordinates is a subset $X \subset M$ 
	of regular parameters whose image modulo $M^2$ is
	a subset of a $k$-basis. In particular whenever 
	\begin{tikzcd}[cramped, sep=small]
	b \ar[r, hook]	&\lambda_0(\bb)
	\end{tikzcd}
	is constant we have, possibly at the price of shrinking $\BB$ to ensure that the implied free $k$-module is trivial, \cfr hypothesis in \ref{cor L}, a block $X_0$ consisting of the lifting of \eqref{2 eq}, and of course, modulo the warning \ref{R1 remark},
	\begin{equation}\label{3 eq}
		\lambda_0(\bb) := m - c_0 .
	\end{equation}
\end{fact/defn}

\begin{indu hypo}\label{I1 ind hypo}
	Exactly as in \ref{indu hypo}, with exactly the same notation up to the following minor observations consistent with \ref{R1 remark}, 
	
	(MO.1) $I$ is to be understood in the sense of \ref{S1 not/defn}.

	(MO.2) The definition, \cfr \eqref{3 eq}, of $\lambda_i$, $0 \lqs i \lqs s-1$ is exactly as for the $\ell_i$ in \eqref{eq m-c=ell} but in light of the warning \ref{R1 remark} we will change the notation.
		
	(MO.3) By the definition of a relative block the graded algebra of the filtration has graded pieces free $k$-modules, and after clearing denominators to integers $a^0 > ... > a^{s-1} > a^{s}$, without common factors, defines a family, in the notation of \eqref{M1 eq}, $\rP(\ug, 1):= \P$ of relative weighted projective champs.

	(MO.4) The starting point/initial block is $X_0$ of \ref{D1 f/d} under the hypothesis that the functions $d(\bb)$ and $\lambda_0(\bb)$ of \ref{indu hypo}.(F.1)-(F.2) are identically constant and $\BB$ is sufficiently small to guarantee the triviality of $\Lambda''$ in \ref{F1 fact}
\end{indu hypo}

To which we must again adjoin,

\begin{s-induction}\label{SI1 s-induction}
	Define the set of sub-inductive parameters $\Theta_{s-1}$ exactly as in 
	\ref{f/d theta}, and for $H \in \Theta_{s-1}$ we suppose the sub-inductive hypothesis \ref{s-induction} under which we will say that $g_s(\bb) \gqs H$, $\forall\, \bb \in \BB$.
\end{s-induction}

With this in mind, we have

\begin{observation/defn}\label{O1 obs/d}
	We have a filtration $F_{s-1}^{\bullet}(H)$ 
	defined as in 
	\eqref{eq W-D} which for exactly the same reason, 
	\ref{lem W-D}, is independent of any choices and $V_{s-1}^{d}(H)$ is defined exactly as in 
	\eqref{eq VF}. Finally by way of notation let $\Dl$ be the global vector fields on the associated weighted projective champ, $\P$, of \ref{I1 ind hypo}.(MO.3) \ie 
	\begin{equation}\label{eq DELTA}
		\Dl := \coprod_{-n < 0} 
			\HH^{0}
			\bigl(
				\P, \T_{\P}(-n)
			\bigr) ,
	\end{equation}
	which in turn is a free $k$-module by the generalisation, \cite[I.c.3]{mcqSS}, of Serre's explicit calculation.
\end{observation/defn}

At this juncture \ref{F2 fact} easily generalises to,

\begin{fact}\label{F3 fact}
	Let everything be as in the sub-induction \ref{SI1 s-induction} so in particular $m_s := m - (c_0 + ... + c_{s-1}) > 0$, then the following function is \usc,
	\begin{equation}\label{4 eq}
		\begin{tikzcd}
		b \ar[r, mapsto]	
			&\lambda_{s}^{H}(\bb) 	:=
			\dim_{k(\bb)}
			\Big\lbrace
				\partial \in \Dl_{\bb} 		\, \big| \,
				\partial 	\bigl(
							V_{s-1}^{d}(H) 	\otimes k(\bb)
							\bigr) = 0
			\Big\rbrace
		\end{tikzcd}
	\end{equation}
\end{fact}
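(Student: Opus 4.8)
The plan is to repeat the argument of \ref{F2 fact} verbatim, with the free $k$-module $\Dl$ of \ref{O1 obs/d} playing the role of $(\quotient{M}{M^{2}})^{\vee}$; the only genuine work is the bookkeeping that guarantees that every module in sight is free and that its formation commutes with passage to residue fields, after which upper semi-continuity is a formal consequence of lower semi-continuity of ranks. Since the case $d=0$ gives $\Dl=0$ and $\lambda_{s}^{H}\equiv 0$, assume $d>0$. As \usc is local we may localise on $\BB$, so that (as in the hypothesis of \ref{cor L}) $V:=V_{s-1}^{d}(H)$, being $I$ modulo $F_{s-1}^{>\,Hg_{s-1}^{0}d}(H)$, \cfr \eqref{eq VF}, is a trivial free $k$-module whose formation commutes with base change; by the generalisation \cite[I.c.3]{mcqSS} of Serre's computation, and the Euler sequence, the same holds for each $\HH^{0}(\P,\OO_{\P}(e))$ and for $\Dl=\coprod_{n>0}\HH^{0}(\P,\T_{\P}(-n))$, where $\P=\rP(\ug,1)$ as in \ref{I1 ind hypo}.(MO.3) and $e$ runs over the weights of its graded coordinate algebra.

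Next I would package the data as a single map of free $k$-modules. Writing $w$ for the weight of $V$, the $\G_{m}$-equivariant action of global vector fields on sections gives $k$-bilinear pairings
\begin{equation*}
\HH^{0}(\P,\T_{\P}(-n))\times V\longrightarrow\HH^{0}(\P,\OO_{\P}(w-n)),\qquad(\partial,\ff)\longmapsto\partial(\ff),\qquad n>0,
\end{equation*}
which assemble into a $k$-linear map
\begin{equation*}
\mu:\Dl\longrightarrow\Hom_{k}\Bigl(V,\ \coprod\nolimits_{n>0}\HH^{0}(\P,\OO_{\P}(w-n))\Bigr)=:\Hom_{k}(V,W),\qquad\partial\longmapsto\bigl(\ff\mapsto\partial(\ff)\bigr).
\end{equation*}
All of $\Dl$, $V$, $W$ are free and the pairing is given by the explicit monomial formulae (equivalently, by the Euler sequence and $\G_{m}$-equivariance), so the formation of $\mu$ commutes with $-\otimes_{k}k(\bb)$; hence $\mu\otimes_{k}k(\bb)$ is exactly the evaluation map for $\P\otimes_{k}k(\bb)$ on $V\otimes_{k}k(\bb)$, and therefore $\lambda_{s}^{H}(\bb)=\dim_{k(\bb)}\Ker\bigl(\mu\otimes_{k}k(\bb)\bigr)$.

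Finally I would invoke the standard fact that for a $k$-linear map $\varphi:P\to Q$ of finitely generated free modules over the Noetherian ring $k$, the function $\bb\mapsto\dim_{k(\bb)}\Ker(\varphi\otimes_{k}k(\bb))=\mathrm{rank}\,P-\mathrm{rank}_{k(\bb)}(\varphi\otimes k(\bb))$ is \usc, since $\{\bb:\mathrm{rank}_{k(\bb)}(\varphi\otimes k(\bb))\gqs r\}$ is the open locus where some $r\times r$ minor of a representing matrix is invertible, so ranks are lower semi-continuous. (Equivalently, exactly as in \ref{F2 fact}, one sets $\Lambda':=\Im\bigl(\mu^{\vee}:\Hom_{k}(V,W)^{\vee}\to\Dl^{\vee}\bigr)$, $\Lambda'':=\quotient{\Dl^{\vee}}{\Lambda'}$, observes $\lambda_{s}^{H}(\bb)=\dim_{k(\bb)}\bigl(\Lambda''\otimes_{k}k(\bb)\bigr)$, and concludes by Noetherian induction.) The one point requiring care — the "hard part", such as it is — is the first paragraph: checking that $V$, $\Dl$, and the target $W$ are genuinely free and base-change-compatible, so that $\lambda_{s}^{H}$ is a bona fide fibrewise kernel dimension rather than a quantity liable to spurious jumps; everything after that is automatic.
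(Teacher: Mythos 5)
Your proposal is correct and follows essentially the same route as the paper's proof. The map $\mu:\Dl\to\Hom_k(V,W)$ you construct is the transpose of the pairing $V\otimes_k W^\vee\to\Dl^\vee$ the paper uses, your $\Lambda'$, $\Lambda''$ coincide with the paper's, and computing $\lambda_s^H(\bb)$ as $\dim_{k(\bb)}\Ker(\mu\otimes k(\bb))$ is dual to the paper's identification of it with $\dim_{k(\bb)}(\Lambda''\otimes k(\bb))$; the only material difference is that you spell out the freeness and base-change compatibility of $V$, $W$ and $\Dl$, which the paper leaves implicit via \ref{O1 obs/d} and the triviality hypothesis carried along from \ref{cor L}.
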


\begin{proof}
	As in the proof of \ref{F2 fact}, derivation gives a pairing,
	\begin{equation}\label{5 eq}
		V \otimes_{k} 
			\coprod\limits_{-n < 0} 
			\HH^{0}
			\bigl(
				\P, \OO_{\P}(da^{0}-n)
			\bigr)^{\vee}
			\rightarrow
				\Dl^{\vee} : 
		F \otimes
			\varphi	
			\mapsto
				\lbrace \partial \mapsto \varphi(\partial F) \rbrace,
	\end{equation}
	whose image $\Lambda'$ affords a short exact sequence of $k$-modules,
	\begin{equation}
	\begin{tikzcd}
				&\Lambda' 
				\ar[r]
					&\Dl^{\vee} 
					\ar[r]
							&\Lambda'' \ar[r]
							&0
	\end{tikzcd}
	\end{equation}
	such that the $k(\bb)$-vector spaces in \eqref{4 eq} are the annihilators of the image of $\Lambda'$, while the fibre dimensions, 
	\begin{equation}\label{W000 eq}
		\lambda_{s}^{H} (\bb) = \dim_{k(\bb)} \Lambda'' \otimes k(\bb).
	\end{equation}
	are plainly \usc .
\qedhere \end{proof}

From which we have the corollary,

\begin{corollary}\label{C1 corollary}
	Under the sub-inductive hypothesis \ref{SI1 s-induction},
	let $H' \in \Theta_{s-1}$ be the successor of $H$ and define, $g_{s}(\bb) > H$ to mean $g_{s}(\bb) \gqs H'$ and $g_{s}(\bb)=H$ its complement then,
		
		(i)
		the conditions $g_{s}(\bb)=H$, resp. $g_{s}(\bb) > H$, are open, resp. closed.

		(ii)
		On the open set of $\bb \in \BB$ such that $g_{s}(\bb)=H$ the function $\lambda_{s}^{H}$ is \usc 
\end{corollary}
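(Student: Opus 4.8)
The plan is to derive both parts formally from the upper semi-continuity of $\lambda_{s}^{H}$ already recorded in \ref{F3 fact}, once the case~A/case~B dichotomy of the sub-induction has been recast as a comparison between $\lambda_{s}^{H}(\bb)$ and the integer $m_{s}$.

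First I would note that, under the sub-inductive hypothesis \ref{SI1 s-induction}, the filtration $F_{s-1}^{\bullet}(H)$ and hence the $k$-module $V_{s-1}^{d}(H)$ are globally defined over $\BB$, and that for every $\bb \in \BB$ one has $\lambda_{s}^{H}(\bb) \lqs m_{s}$: indeed, by \ref{lem L} together with its stated stability under the base change $k \to k(\bb)$, the space $\LL\bigl(V_{s-1}^{d}(H)\otimes k(\bb)\bigr)$ embeds into $\HH^{0}\bigl(\P, \OO_{\P}(\underline{1})\bigr)^{\vee}\otimes k(\bb)$, a module free of rank $m_{s}$ (the weights $\neq -1$ contributing nothing), the hypotheses of \ref{lem L} being precisely the non-degeneracy propagated along the induction, \cfr \ref{indu hypo}.(F.3) and its relative form \ref{I1 ind hypo}.(MO.3). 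Next I would observe that, read fibrewise, the dichotomy \ref{case A}--\ref{case B} says exactly that at $\bb$ the sub-induction is in case~B --- equivalently, continues past $H$, \ie $g_{s}(\bb) \gqs H'$ --- precisely when $\lambda_{s}^{H}(\bb)$ attains the maximum $m_{s}$, and is in case~A --- equivalently $g_{s}(\bb) = H$ --- precisely when $\lambda_{s}^{H}(\bb) < m_{s}$. Since \ref{SI1 s-induction} forces $g_{s}(\bb) \gqs H$ everywhere, the loci $\{\,\bb \mid g_{s}(\bb) = H\,\}$ and $\{\,\bb \mid g_{s}(\bb) > H\,\}$ partition $\BB$ and are cut out respectively by $\lambda_{s}^{H} < m_{s}$ and $\lambda_{s}^{H} \gqs m_{s}$.

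Part (i) is then immediate: by \ref{F3 fact} the function $\lambda_{s}^{H}$ is \usc on $\BB$, so its superlevel set $\{\,\lambda_{s}^{H} \gqs m_{s}\,\} = \{\,g_{s} > H\,\}$ is closed, whence $\{\,g_{s} = H\,\}$ is open. For part (ii), on the open locus $\Omega = \{\,\bb \mid g_{s}(\bb) = H\,\}$ we are fibrewise in case~A, so by the very construction there --- corollary \ref{cor L} produces the new block $X_{s}$ with $c_{s} = m_{s} - \ell_{s}$ --- the next even entry $\ell_{s}$ of the invariant at $\bb$ equals $\lambda_{s}^{H}(\bb)$; and the restriction of a \usc function to a subspace (here the open $\Omega$) is again \usc, which is exactly the assertion.

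The only delicate point, and hardly a genuine obstacle, is being sure that $m_{s}$ really is the largest value $\lambda_{s}^{H}$ can take fibrewise, so that the \usc of \ref{F3 fact} is exploited in the right direction; this is where \ref{lem L} and the inductively maintained non-degeneracy hypothesis enter, while everything else is formal bookkeeping needing no new estimates.
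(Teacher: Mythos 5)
Your argument is correct and is essentially the argument the paper has in mind: the corollary is stated immediately after \ref{F3 fact} with the introductory phrase ``from which we have the corollary,'' no separate proof given, and the step you make explicit — that, fibrewise, case~A/case~B of the sub-induction is equivalent to $\lambda_s^H(\bb) < m_s$ versus $\lambda_s^H(\bb) = m_s$, the bound $\lambda_s^H \lqs m_s$ coming from \ref{lem L} and its stability under base change — is exactly the implicit translation needed. Both parts then follow, as you say, by noting that $\{g_s > H\}$ is the superlevel set $\{\lambda_s^H \gqs m_s\}$ of the \usc function of \ref{F3 fact}, hence closed, and that \usc survives restriction to the open complement.
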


Equally we have the relative version of the termination of the sub-induction, \ie

\begin{caseA}[Relative, \cfr \ref{case A}] \label{CA relative}
	At $\bb \in \BB$, $g_{s}(\bb)=H$ 
	(say $\BB'$, by way of notation, for the open in 
	\ref{C1 corollary}.(ii)) then we define a function $g_{s}$ to take the value $H$ at $\bb$, and define, $\lambda_{s}(\bb)$ to be $\lambda_{s}^{H}(\bb)$ of \eqref{4 eq}. 
	Now replace $\BB'$ by the constructible subset of $\bb \in \BB'$ on which $g_{s}(\bb) = H,$ and $\lambda_{s}(\bb)$ takes  the constant value  $m_s - c_s < m_s;$ form the fibre of $\pi$,
	\ref{S1 not/defn}, over (the new) $\BB'$; apply \ref{cor L}
	to get blocks $X_0,...,X_s$ of cardinality $c_0, ..., c_s$
	(thus around every $\bb \in \BB$ we replace $\BB'$ by a sufficiently small Zariski neighbourhood);
	and continue the induction \ref{I1 ind hypo} in $s$. 
\end{caseA}

\begin{caseB}[Relative, \cfr \ref{case B}] \label{CB relative}
	The complimentary closed set $\BB''$, \ie $g_s > H$, is non-empty, then at $\bb \in \BB''$ apply \ref{cor L} to get a Zariski neighbourhood of $\bb$, in $\BB''$, on which there are blocks $X_0,..., X_{s-1}$ such that after taking the fibre of $\pi$ over this open the sub-inductive hypothesis \ref{SI1 s-induction} holds at the successor of $H$.
\end{caseB}

In so much as this procedure now involves multiple base changes to the initial set up \ref{S1 not/defn}, we can usefully observe that
if case (B), \ref{CB relative}, does not occur at $\bb \in \BB$ \emph{ ad infinitum} then a posteriori we can simply replace $\BB$ in \ref{S1 not/defn} by a Zariski open neighbourhood of $\bb$ and drop the precision of restricting to an open neighbourhood of $\bb$ in case (A), \ref{CA relative}. 
Necessarily we also want to be able to do this should case (B), \ref{CB relative}, occur \emph{ad infinitum}, and this requires a little more care, to wit:

\begin{fact}\label{F4 fact}
	Suppose the hypothesis of the sub-induction \ref{SI1 s-induction} and let
	\begin{tikzcd}[cramped, sep=small]
		\BB^{\bullet} \ar[r, hook]
			&\BB
	\end{tikzcd}
	be the set of parameters where $g_{s} \gqs H$ for all $H \in \Theta_{s-1}$ then
		
		(i)
		$\BB^{\bullet}$ is closed.
		
		(ii)
		Every $\bb \in \BB^{\bullet}$ admits a Zariski open neighbourhood $\BB \supset \VV_{\bb} \ni \bb$ such that on replacing $\BB$ by $\VV_{\bb}$ in \ref{S1 not/defn} the precision of shrinking to an open neighbourhood of $\bb$ at every instance of case (B), \ref{CB relative}, as $H$ varies in $\Theta_{s-1}$, may be omitted.
		
		(iii)
		After base change of $\pi$ to the constructible set $\BB \cap \VV_b \ni \bb$ the blocks $X_0,..., X_{s-1}$ converge in the $M$-adic topology.
\end{fact}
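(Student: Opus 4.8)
The plan is to take the three items in order: (i) is essentially formal, (iii) is a base-changed repetition of the $M$-adic estimates already carried out in the absolute setting of \ref{F/P madic}, and (ii) — the uniform choice of $\VV_{\bb}$ — is the real content. For item (i), by construction $\BB^{\bullet}=\bigcap_{H\in\Theta_{s-1}}\{\bb\in\BB\mid g_s(\bb)\gqs H\}$; for each $H\in\Theta_{s-1}$ with predecessor $h(H)$ the set $\{g_s\gqs H\}$ equals $\{g_s>h(H)\}$, hence is closed by \ref{C1 corollary}.(i) (and when $h(H)=1$ it is all of $\BB$, since in the regime of the sub-induction $g_s$ exceeds $1$), so the arbitrary intersection $\BB^{\bullet}$ is closed.

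For item (ii), fix $\bb\in\BB^{\bullet}$ and enumerate $\Theta_{s-1}$ in increasing order $1<H_1<H_2<\cdots$. The only step in the whole procedure forcing a passage to a strictly smaller Zariski neighbourhood of $\bb$ is the instance of case (B), \ref{CB relative}, at each stage $H_k$: one feeds the blocks obtained at stage $H_{k-1}$, defined over some locally closed $\BB_{k-1}\ni\bb$, into \ref{cor L}, whose hypothesis — that the relative maximal–contact module be free and trivial — is guaranteed by \ref{F3 fact} together with the constancy of $\lambda_s^{H_k}$ along $\BB^{\bullet}$ near $\bb$ (u.s.c.\ plus the bound $\lambda_s^{H_k}\lqs m_s$ plus the equality $\lambda_s^{H_k}=m_s$ on $\BB^{\bullet}$, which holds there because $g_s>H_k$ puts us in case (B)), and one obtains new blocks over a Zariski open $\BB_k\ni\bb$ of $\BB_{k-1}$. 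Thus $\BB_0\supseteq\BB_1\supseteq\BB_2\supseteq\cdots$ is a descending chain whose terms, intersected with a fixed neighbourhood of $\bb$, lie in $\BB^{\bullet}$ and are open there; since $\BB^{\bullet}$ is Noetherian this chain stabilises, say $\BB_k=\BB_{k_0}$ for $k\gqs k_0$. Choosing a Zariski open $\VV_{\bb}\subseteq\BB$ whose trace on $\BB^{\bullet}$ is a neighbourhood of $\bb$ inside $\BB_{k_0}$, every instance of case (B) — for every $H\in\Theta_{s-1}$ — already takes place over $\VV_{\bb}$ and needs no further shrinking, which is the assertion.

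For item (iii), after base change of $\pi$ along $\BB\cap\VV_{\bb}\hookrightarrow\BB$ the argument is word for word that of \ref{F/P madic}: expanding $X_{s-1}^i(H)-X_{s-1}^i(h)$ in the monomials \eqref{eq mad2} and using the weight balance \eqref{eq mad3} gives $X_{s-1}^i(H)-X_{s-1}^i(h)\in M^{h e}$, where $e:=\min_E\{g_{s-1}^i-\wt(E)>0\}>0$ depends only on the discrete, base–independent weights of the filtration $F_{s-1}^{\bullet}$; since case (B) occurring \emph{ad infinitum} makes $\Theta_{s-1}$ unbounded, the exponent $h e\to\infty$, so the blocks $X_{s-1}^i$ form a Cauchy sequence — uniformly over $\BB\cap\VV_{\bb}$ — in the $M$-adic topology of the base–changed ring, and therefore converge.

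The main obstacle is item (ii): although $\Theta_{s-1}$ may be infinite, one must see that the successive shrinkings imposed by case (B) form a chain that stabilises over a Noetherian base, so that a single $\VV_{\bb}$ can be chosen to absorb all of them; this is precisely where Noetherianity of $\BB$ (equivalently of the closed subspace $\BB^{\bullet}$) enters, and it is the only genuinely non-formal point in the argument.
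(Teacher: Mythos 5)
Items (i) and (iii) are fine and agree with the paper: (i) is the closedness of each $\{g_s\gqs H\}$ from \ref{C1 corollary}, and (iii) is the absolute argument \eqref{eq mad2}--\eqref{eq mad3} repeated after base change. The problem is your item (ii), which is the real content of the statement. Your argument rests on the claim that the descending chain $\BB_0\supseteq\BB_1\supseteq\cdots$ of Zariski open neighbourhoods of $\bb$ must stabilise because the base is Noetherian. That is false: Noetherianity gives the descending chain condition on \emph{closed} subsets, equivalently the \emph{ascending} chain condition on opens, and a strictly decreasing chain of open neighbourhoods of a point is perfectly possible in a Noetherian space (e.g. complements of ever larger finite sets of closed points avoiding $\bb$ in $\A^1$). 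So nothing in your argument prevents the trivialising neighbourhoods demanded at each stage $H_k$ from shrinking indefinitely, and the intersection of infinitely many of them need not be a neighbourhood of $\bb$ at all. (A smaller quibble: upper semi-continuity plus constancy of the fibre dimension $\lambda_s^{H}$ does not by itself give the freeness and triviality of the maximal contact module required by \ref{cor L}; some shrinking is genuinely needed at each stage, which is exactly why one must show a single shrinking works for all $H$.)

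What is missing, and what the paper proves, is a uniformity in $H$ of the object whose trivialisation forces the shrinking. Namely, the rank $m_s$ module $\De(H)$ of \eqref{W0000 eq} is, by the better still in \ref{lem L} (valid precisely because case (A) never occurs on $\BB^{\bullet}$), naturally isomorphic to the dual of the quotient in the surjection \eqref{6 eq}, whose kernel is generated by the blocks $X_i$, $0\lqs i\lqs s-1$, and is therefore independent of $H$ by \eqref{eq W-D}. Hence all the $\De(H)$ are identified with the dual of one and the same vector bundle, and a single Zariski neighbourhood $\VV_{\bb}$ trivialising that fixed bundle serves for every $H\in\Theta_{s-1}$ simultaneously; this is what permits omitting the shrinking at every instance of case (B) and yields (ii) via \ref{cor L}. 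Your proof needs this identification (or some substitute uniform statement); the stabilisation-by-Noetherianity step cannot replace it.
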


\begin{proof}
	We have already proved in \ref{C1 corollary} that for any given $H$, $g_{s} \gqs H$ is a closed condition so not only is $\BB^{\bullet}$ closed, it is equal to $g_{s} \gqs h$ for $h$ sufficiently large. As such by base change we may suppose, without loss of generality, that $\BB^{\bullet} = \BB$ and case (A), \ref{CA relative}, never occurs. Now the reason why we may have to restrict to an open neighbourhood of $\bb$ is, in the notation of \ref{F3 fact} that the rank $m_s$ $k$-modules,
	\begin{equation}\label{W0000 eq}
		\De(H) := 
		\left\lbrace
			\partial \in \Dl 	\, \big| \,
			\partial	\bigl(
							V_{s-1}^{d}(H) 
						\bigr) = 0
		\right\rbrace \subset \Dl 
	\end{equation}
	may not be trivial. On the other hand for any $H$ we have a surjection,
	\begin{equation}\label{6 eq}
		\begin{tikzcd}
		\quoziente{M}{M^2} \ar[r, two heads]
			&\quoziente{F_{s-1}^{1}(H)}{F_{s-1}^{>1}(H)}
		\end{tikzcd}
	\end{equation}
	whose kernel (generated by the blocks $X_i$, $0 \lqs i \lqs s-1$) is by construction, \eqref{eq W-D}, independent of $H$. Consequently the quotient \eqref{6 eq} is a vector bundle independent of $H$, but by the better still in \ref{lem L}, $\De(H)$ is naturally isomorphic to its dual should 
	 case (A), \ref{CA relative}, never occur, so we get 
	 \ref{F4 fact}.(ii) by \ref{cor L}.
	Once this is established, (iii) is exactly as in the absolute case \eqref{eq mad2} - \eqref{eq mad3}.
\qedhere \end{proof}

\begin{definition/fact}\label{D2 defn}
	In the set up of  \ref{S1 not/defn} define the relative invariant,
	\begin{equation}
	\begin{tikzcd}
		\invUB(I): \BB \ar[r]
			&\Q_{\gqs 0}^{2m}	
	\end{tikzcd}	
	\end{equation}
	starting from the rules (S.0) \& (S.1) of \ref{start INDUCT} albeit with $d_{\bb}, \; \lambda_0(\bb)$ as defined in \ref{F1 fact} \& \ref{F2 fact}. 
	Subsequently if at $\bb \in \BB$ in the inductive procedure in $s$, every sub-induction terminates at a finite $H$ (\ie case (A), \ref{CA relative}), then define
	\begin{equation}\label{EQ1}
		\invUB(I)(\bb) := 
		\bigl(
			d(\bb),		\; 
			\lambda_0(\bb), \; ...,\;
			\lambda_{s-1}(\bb),		\; 
			g_{s}(\bb),		\; 
			\underline{0} 	\,
		\bigr)	\in \Q_{\gqs 0}^{2m};
	\end{equation}
	where $s$ is minimal for the property $\lambda_{s}(\bb)=0$. Finally if  case (B), \ref{CB relative}, occurs \emph{ad infinitum} at some $s \gqs 1$ put,
	\begin{equation}\label{EQ2}
		\invUB(I)(\bb) := 
		\bigl(
		d(\bb),		\; 
		\lambda_0(\bb), \; ...,\; 
		g_{s-1}(\bb),		\; 
		\lambda_{s-1}(\bb),		\;	
		\underline{0} 	\,
		\bigr)	\in \Q_{\gqs 0}^{2m}.
	\end{equation}
	Consequently for $m(\bb)$ as in \ref{R1 remark}, 
	$\defc = m - m_{\bb}$,
	and 
	$ 
		\bigl(					\,
			g_0= d ,			\; 
			\ell_0 ,		 	\; ...,	\;
			\ell_{t} ,			\; 
			g_{t} ,				\; 
			\underline{0}	 	\,
		\bigr)
	$
	the value of the invariant, $\inv_{\BB_{\bb}}(I_b)$ of \S\ref{sec II INV}, with $t$ minimal amongst even entries $\ell_{2i}$ such that $\ell_{2i}=0$, is
	\begin{equation}\label{CP422 eq}
		\invUB (I)(\bb) := 
		\begin{cases}
		\bigl(
		g_0, \ell_0 + \defc, ..., g_t, \ell_t + \defc, \underline{0}
		\bigr),
		& \text{if } g_t \neq 0\; , \\
		\bigl(
		g_0, \ell_0 + \defc, ..., \ell_{t-1} + \defc, \underline{0}
		\bigr),
		& \text{if } g_{t} = 0, \, t \gqs 1\; , \\
		\qquad \qquad \underline{0}
		& \text{if } t=0, \text{ and } g_0=0 .
		\end{cases}
	\end{equation} 
\end{definition/fact}
We have already encountered a similar difference in 
\ref{EF1 lemma}.(ii) and whence the difference merits a specific notation, to wit:
%
\begin{equation}\label{518bis}
\begin{split}
\DIF(\defc):=
&\begin{cases}
\bigl(	\, 0,
\, \defc,
\, ..., 0,
\,\underbrace{\defc}_{t^{\th}\text{-place}},
\, \underline{0}
\,	
\bigr), 
&\text{if } g_{t} \neq 0, 	t \gqs 1,\\
\bigl(	\, 0,
\, \defc,
\, ..., 0,
\,\underbrace{\defc}_{(t-1)^{\th}\text{-place}},
\, \underline{0}
\,	
\bigr), 
&\text{if } g_{t} = 0, 		t \gqs 1,\\
\bigl(	\,0,
\,...,
\,0,
\,\underline{0} 
\, 
\bigr),
&\text{ if } t=0, g_t=0,
\end{cases}
\end{split}
\end{equation}
Plainly the difference, \eqref{518bis}, between the invariants is minimal, but it is the relative invariant that has the good properties one would expect, for example:

\begin{fact}\label{WF1 fact}
	Let 
	\begin{tikzcd}[cramped, sep=small]
		\invUB: \BB 		\ar[r]
			&\Q_{\gqs 0}^{2m}
	\end{tikzcd}
	be as per \ref{D2 defn}, then
		
		(i) 
		As a function of formal neighbourhoods $\UU$, ideals on the same, and points on the base, $\invUB$ has self bounding denominators in the sense of \ref{MD1 defn}.
		
		(ii)
		The function $\invUB$ is upper semi-continuous in the Zariski topology.
\end{fact}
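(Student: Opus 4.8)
The plan is to reduce both statements to the corresponding properties of the absolute invariant (\ref{WF0 fact}, \ref{MF2}, \ref{MF1 fact}) together with the semi-continuity facts established along the way in this section (\ref{F1 fact}, \ref{F2 fact}, \ref{F3 fact}, \ref{C1 corollary}, \ref{F4 fact}), using the precise comparison \eqref{CP422 eq} between $\invUB(I)(\bb)$ and the absolute invariant $\inv_{\BB_\bb}(I_\bb)$. The key structural point is that the inductive construction of $\invUB$ mirrors, entry by entry, the inductive construction of $\inv$ in \S\ref{sec II INV}, so I would argue by induction on the inductive parameter $s$, at each stage verifying (i) that the newly produced pair of entries $(g_s(\bb),\lambda_s(\bb))$ has self bounding denominators as a function on the appropriate constructible stratum of $\BB$, and (ii) that the stratification of $\BB$ by the value of this pair is by locally closed sets with the order-decreasing direction being the closed one.

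For part (i): the first entry $d(\bb)$ takes values in $\Z_{\gqs 0}$ by definition (\ref{F1 fact}), giving \ref{MD1 defn}.(i). For the later entries, on the locus where the first $s$ entries are constant the function $g_s(\bb)$ takes values in the discrete set $\Theta_{s-1}$ of sub-inductive parameters (\ref{f/d theta}), and by \ref{MF2} — applied fibrewise, which is legitimate since the $\Theta_{s-1}$ and the bounds \eqref{EQQM3} depend only on the already-constructed data $\ug$ and not on the base point — the product $\ug\times g_s$ has self bounding denominators; the entry $\lambda_s(\bb)$ is integer-valued by construction (\ref{F3 fact}, \eqref{W000 eq}), so it automatically satisfies the self-bounding condition once multiplied by the relevant $D_i$. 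Finally the passage \eqref{CP422 eq} only adds the fixed integer $\defc$ to even entries and never touches odd entries, so it preserves self bounding denominators; this is exactly the same bookkeeping already done in \ref{EF1 lemma}.(ii).

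For part (ii): I would show by induction on $s$ that the locus $\{\bb\mid \pr_{\lqs 2s}\invUB(I)(\bb)\gqs v\}$ is closed for every value $v\in\Q_{\gqs 0}^{2s}$, which is precisely upper semi-continuity for the lexicographic order (because the value set is well-ordered on each such stratum by part (i) and \ref{MF1 fact}). The base case, $d(\bb)$ and $\lambda_0(\bb)$, is \ref{F1 fact} and \ref{F2 fact}. For the inductive step, on the locally closed stratum where the first $2(s-1)$ entries equal their maximal value, \ref{C1 corollary}.(i) says each condition $g_s(\bb)\gqs H$ is closed, hence $\{g_s\gqs H\}$ is closed for every $H\in\Theta_{s-1}$ and the infimum condition $g_s\gqs h$ for large $h$ (i.e. $\bb\in\BB^\bullet$) is closed by \ref{F4 fact}.(i); and on the open piece where $g_s=H$ exactly, $\lambda_s^H=\lambda_s$ is \usc by \ref{C1 corollary}.(ii). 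Combining, the condition that the pair $(g_s(\bb),\lambda_s(\bb))$ is $\gqs$ a given pair is closed, which closes the induction; the case (B) \emph{ad infinitum} branch \eqref{EQ2} is covered by the same argument since \ref{F4 fact}.(i),(iii) identifies that locus as closed with a well-defined limiting filtration. The main obstacle I anticipate is purely organisational rather than conceptual: keeping the multiple constructible base changes of \ref{CA relative}–\ref{CB relative} under control so that the semi-continuity statements, which are each proved on a shrunken $\BB$, glue to a genuine \usc function on the original $\BB$ — this is exactly what \ref{F4 fact}.(ii) is designed to permit, and the proof consists of invoking it at each stage to remove the shrinking.
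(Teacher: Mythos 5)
Your proposal is correct and follows essentially the same route as the paper: part (i) is exactly the paper's argument (the difference \eqref{CP422 eq} is integer valued, so \ref{MF2} transfers self bounding denominators to $\invUB$), and part (ii) combines \ref{F1 fact}, \ref{F2 fact}, \ref{F3 fact} and \ref{C1 corollary} through a stratum-by-stratum induction which the paper simply packages once and for all as the topological lemma \ref{lem Elem}/\ref{cor Elem}. Your hand-rolled induction, including the appeal to \ref{F4 fact} for the case (B) \emph{ad infinitum} locus, is just an unrolled version of that same argument.
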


The proof will require some topological trivialities, to wit:

\begin{lemma}\label{lem Elem}
	Let $X$ be a topological space,
	\[
	\begin{tikzcd}
	\uF := F_1 \times F_2 \,: X \ar[r]
			&\Z_{\gqs 0}^{n_1} \times \Z_{\gqs 0}^{n_2}
		\end{tikzcd}
	\]
	a function and equip each $\Z_{\gqs 0}^{n_i}$, respectively the aforesaid product, with the
	the lexicographic order then for $ \uf := f_1 \times f_2 \in \Z_{\gqs 0}^{n_1} \times \Z_{\gqs 0}^{n_2}$, 
	the set $X_{\gqs \uf}$, of those $x \in X$ such that $\uF(x) \gqs \uf$,
	is closed if the followings hold:
		
		(i)	
		$F_1$ is upper semi-continuous on $Y_0:=X$;
			
		(ii)	
			${Y}_{1}' := 
			\big\lbrace
				x \in Y_1 \, \big| \; \uF_2(x) \geq \uf_2 \,
			\big\rbrace
			$ 
			is closed in the constructible set 
		 $Y_1 := \lbrace x \in Y_0 \, \big| \, F_1(x) = f_1 \rbrace $.
	\end{lemma}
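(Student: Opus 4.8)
The plan is to reduce the statement to a finite Boolean combination of sets that are visibly closed or locally closed, invoking nothing beyond the order structure and the two hypotheses.

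First I would unwind membership in $X_{\gqs\uf}$. Because the order on $\Z_{\gqs 0}^{n_1}\times\Z_{\gqs 0}^{n_2}$ is lexicographic, $\uF(x)\gqs\uf$ holds exactly when either $F_1(x)>f_1$ in $\Z_{\gqs 0}^{n_1}$, or $F_1(x)=f_1$ and $F_2(x)\gqs f_2$; that is,
\[
X_{\gqs\uf}\;=\;\{x:F_1(x)>f_1\}\;\cup\;Y_1',
\]
with $Y_1'$ the set of hypothesis (ii). Since $\Z_{\gqs 0}^{n_1}$ with the lexicographic order is well ordered, $f_1$ has an immediate successor $f_1^{+}$ (increment the last coordinate), and $v>f_1\iff v\gqs f_1^{+}$ for every value $v$. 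Hence $\{x:F_1(x)>f_1\}=\{x:F_1(x)\gqs f_1^{+}\}=:A$, which is closed by the upper semi-continuity hypothesis (i); and $B:=\{x:F_1(x)\gqs f_1\}$ is closed for the same reason, so $Y_1=B\setminus A$ is locally closed, with $A\subseteq B$.

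The last step is to use hypothesis (ii): ``$Y_1'$ closed in $Y_1$'' for the subspace topology means $Y_1'=C\cap Y_1=C\cap B\cap(X\setminus A)$ for some closed $C\subseteq X$. Therefore
\[
X_{\gqs\uf}\;=\;A\cup\bigl(C\cap B\cap(X\setminus A)\bigr)\;=\;A\cup(C\cap B),
\]
the second equality because any point of $C\cap B$ already lying in $A$ is accounted for by the first term. This exhibits $X_{\gqs\uf}$ as a union of two closed subsets of $X$, hence closed.

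I do not expect a genuine obstacle: the content is purely set-theoretic. The two points deserving a line of care are (a) that the immediate successor $f_1^{+}$ really exists in the lexicographic order and does convert ``$>f_1$'' into the closed condition ``$\gqs f_1^{+}$'', and (b) that ``closed in the constructible (indeed locally closed) set $Y_1$'' must be read in the subspace topology, equivalently as $Y_1'=C\cap Y_1$ with $C$ closed in $X$ — after which everything collapses to the displayed Boolean identity.
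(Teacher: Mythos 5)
Your proof is correct and is essentially the paper's argument: the same lexicographic decomposition $X_{\gqs \uf}=\{F_1>f_1\}\cup Y_1'$, with upper semi-continuity of $F_1$ making $\{F_1\gqs f_1\}$ and $\{F_1>f_1\}=\{F_1\gqs f_1^{+}\}$ closed, and hypothesis (ii) handling the stratum $Y_1$. The only difference is cosmetic: the paper checks closedness by computing the closure of $Y_1'\cup(Y\setminus Y_1)$ inside the closed set $Y=\{F_1\gqs f_1\}$, whereas you write $Y_1'=C\cap Y_1$ with $C$ closed and finish with the Boolean identity $A\cup(C\cap B)$.
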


\begin{proof}
	By item (i) $Y_1$ is an open subset of 
	${Y}:= 
	\lbrace 
		x \in X \, \big | \, F_1(x) \geq f_1 
	\rbrace$,
	so $Y_1$ is constructible.
	Now, by construction 
	\begin{equation}\label{eq L1}
	X_{\gqs \uf} 	= {Y}_{1}' 	\cup 
	\left\lbrace 
		x \in X \, \big | \, F_1(x) > f_1 
	\right\rbrace 	= {Y}_{1}' 	\cup 
	\bigl( 
		{Y} \setminus Y_1 
	\bigr) \subseteq {Y}, 
	\end{equation}
	where the latter is closed in $X$, so it is sufficient to prove that 
	${Y}_{1}' \cup 
	\bigl( 
		{Y} \setminus Y_1 
	\bigr)$ is closed in ${Y}$. However its closure in $Y$ is
	\begin{equation}\label{eq L2}
		\overline{{Y}_{1}'} \cup \bigl( {Y} \setminus Y_1 \bigr)  =
		\big( \overline{{Y}_{1}'} \cap Y_1 \big)\cup \bigl( {Y} \setminus Y_1 \bigr) = 	{Y}_{1}' \cup \bigl( {Y} \setminus Y_1 \bigr),
	\end{equation}
	where $\big( \overline{{Y}_{1}'} \cap Y_1 \big) = {Y}_{1}'$ by item (ii), and we conclude.	
	\qedhere \end{proof}
We will apply this in the form:
\begin{corollary}\label{cor Elem}
	Let $X$ be a topological space, $F_i: X \longrightarrow \Z_{\gqs 0}^{n_i}$ functions, respectively $f_i \in \Z_{\gqs 0}^{n_i}$, for $n_i \in \Z_{> 0}$, $ 1 \lqs i \lqs N$, 
	such that if $N > r \gqs 0$, with $Y_{r} := \lbrace x \in X \; \big| \; F_i(x) = f_i ,\, 1 \lqs i \lqs r \rbrace $, $ Y_{0}:=X$, and for all $0 \lqs t \lqs r$ the function ${F}_{t+1}$ is \usc on the set $Y_t$, then $Y_{r}$ is constructible while
	\[
	\begin{tikzcd}
	\uF_{r+1}:= (F_1,...,F_{r+1}): X \ar[r]
		&Z_{\gqs 0}^{n_1+ ... + n_{r+1}} \quad \text{ is u.s.c. }
	\end{tikzcd}
	\]
	\end{corollary}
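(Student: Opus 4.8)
The plan is to induct on $r$, at each step invoking \ref{lem Elem} with the already-assembled tuple $\uF_r = (F_1,\dots,F_r)$ playing the role of ``$F_1$'' there and $F_{r+1}$ playing the role of ``$F_2$''. The base case $r=0$ is immediate: $Y_0 = X$ is closed, hence constructible, and $\uF_1 = F_1$ is \usc on $X$ by the $t=0$ instance of the standing hypothesis.

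For the inductive step I would assume $Y_{r-1}$ constructible and $\uF_r$ \usc on $X$, and first dispatch $Y_r$. Writing $Y_r = \{x \in Y_{r-1} \mid F_r(x) = f_r\}$ and using that $F_r$ is \usc on $Y_{r-1}$ (the $t=r-1$ instance of the hypothesis), both $\{F_r \gqs f_r\}$ and $\{F_r > f_r\} = \{F_r \gqs f_r^{+}\}$ --- where $f_r^{+}$ is the immediate successor of $f_r$ in the lexicographic order, obtained by incrementing the last coordinate --- are closed in $Y_{r-1}$; so $Y_r$ is their difference, hence locally closed in $Y_{r-1}$, and since $Y_{r-1}$ is constructible in $X$ and a locally closed subset of a constructible set is constructible, $Y_r$ is constructible. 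Next, to see that $\uF_{r+1} = \uF_r \times F_{r+1}$ is \usc, I would fix a threshold $\uf = \underline{c} \times c'$ with $\underline{c} \in \Z_{\gqs 0}^{n_1 + \dots + n_r}$, $c' \in \Z_{\gqs 0}^{n_{r+1}}$, and apply \ref{lem Elem}: its hypothesis (i) is the inductive assumption that $\uF_r$ is \usc on $X$, and its hypothesis (ii) requires $\{x \in Y_1 \mid F_{r+1}(x) \gqs c'\}$ to be closed in $Y_1 := \{\uF_r = \underline{c}\} = \{F_1 = c_1,\dots,F_r = c_r\}$, which is precisely the assertion that $F_{r+1}$ is \usc on that stratum. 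The conclusion of \ref{lem Elem} is that $\{\uF_{r+1} \gqs \uf\}$ is closed in $X$; letting $\uf$ range over all thresholds gives that $\uF_{r+1}$ is \usc, and closes the induction.

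The argument is purely formal; the three points that want a little care are the passage from strict to non-strict inequalities through the lexicographic successor (so that the single stratum $\{F_r = f_r\}$ is exhibited as locally closed), the closure of constructibility under intersection with locally closed sets, and --- the only genuine subtlety --- the fact that the hypothesis ``$F_{t+1}$ is \usc on $Y_t$'' must be read as holding for every choice of the preceding coordinate values, i.e. on each stratum $\{F_1 = c_1,\dots,F_t = c_t\}$, since this is exactly the stratified form in which the semicontinuity results \ref{F1 fact}, \ref{F2 fact} and \ref{F3 fact}/\ref{C1 corollary} are available. I do not expect any obstacle beyond this bookkeeping.
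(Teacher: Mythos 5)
Your proof is correct and follows essentially the same route as the paper: induction on $r$, applying \ref{lem Elem} with $\uF_r$ in the role of its first function and $F_{r+1}$ in the role of its second. You merely spell out details the paper leaves implicit (the locally-closed description of $Y_r$ via the lexicographic successor, and the reading of the hypothesis stratum by stratum), which is consistent with how the semicontinuity inputs \ref{F1 fact}--\ref{C1 corollary} are actually used.
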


\begin{proof}
	By induction on $r \in \Z_{\gqs 0}$, with the case $r=0$ being trivial. As such let $ r \gqs 1$, and suppose the proposition for $r-1$, then we may apply \ref{lem Elem} to
	\begin{equation}\label{eq L3}
		\begin{tikzcd}
		\uF_r \times F_{r+1}: X \ar[r]
			&\Z_{\gqs 0}^{n_1+...+n_r} \times \Z_{\gqs 0}^{n_{r+1}}.
		\end{tikzcd}
	\end{equation}
	to conclude by induction.
\qedhere \end{proof}

\begin{proof}[Proof of \ref{WF1 fact}]
The difference between $\INV$ and $\inv$ is given by \eqref{CP422 eq}, so in particular their difference is integer valued, thus self bounding denominators for $\inv$, \ref{MF2}, implies self bounding denominators for $\INV$ while the pre-requisites for deducing the \usc by way of \ref{cor Elem}
	have already been done in \ref{F1 fact}, \ref{F2 fact}, \ref{F3 fact} and \ref{C1 corollary}.
\qedhere \end{proof}

The particular case of \ref{WF0 fact}.(ii) where $\UU$ is a formal neighbourhood of the diagonal in an algebraic variety, \cfr \ref{WC1 constru}, suggests that the upper semi-continuity will demand a modified invariant, to wit:

\begin{definition}\label{EED1 def}
	Let $I$ be an ideal of a regular ring $A$ of characteristic zero with dimension $m$; $x \in \spec A$, while $A_{x}, \; I_x$ denote localisation of $A,\;I$ at $x$; $\defc_x := \dim A - \dim A_x $; and, \cfr \eqref{EQ2} \emph{et seq.}, $(g_0=d, \ell_0, ..., \ell_{t-1}, g_{t}, \underline{0})$ the value of the invariant $\inv_{A_x}(I_x)$ of \S \ref{sec II INV} wherein $t$ is minimal among entries $\ell_{2i}$ with $\ell_{2i}=0$, then the difference $\inv_A^{!}(I)(x) - \inv_{A_x}(I_x)$ is defined to be $\DIF(\defc_x)$ of \eqref{518bis}.
\end{definition}

While it is premature to assert the upper semi-continuity of $\inv_A^{!}$ we do have,
\begin{fact}\label{EEF1 fact}
	Let everything be as in \ref{EED1 def} and $y \in \spec A$, then the set,
	\[
		\bigl\lbrace
			b \in \bar{y} 
		\, \big| \, 
			\inv_{A}^{!}(I)(b) 	= 
			\inv_{A}^{!}(I)(y) 
		\bigr\rbrace
	\]
	contains a non-empty Zariski open subset of $\bar{y}$.
\end{fact}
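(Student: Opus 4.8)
The plan is to recognise $\inv_A^{!}(I)$, on a non-empty Zariski open of $\bar y$, as the relative invariant $\invUB(I)$ of \ref{D2 defn} attached to a well chosen instance of the relative set up \ref{S1 not/defn}, and then to conclude from the upper semi-continuity \ref{WF1 fact}.(ii) of that relative invariant together with the constructibility of its level sets. Since everything in sight is Zariski local on $\spec A$, I may assume $\spec A$ irreducible and write $\bar y=V(\pri)$ with $\pri$ prime; as $\car A=0$ the reduced ring $A/\pri$ is generically regular (the point at which the $J$-$2$/excellence hypothesis enters), so after replacing $\bar y$ by a suitable non-empty open $\BB_{0}$ I may further assume $\BB_{0}$ regular and $\bar y$ regularly embedded, over the corresponding open of $\spec A$, of codimension $m_{0}:=\mathrm{ht}\,\pri$. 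In the geometric case the natural choice of $\pi\colon\UU\to\BB$ is the diagonal construction alluded to in \ref{R1 remark} (\cfr \ref{WC1 constru}): $\BB=\BB_{0}$, $\UU$ the completion of $\spec A\times\spec A$ along the diagonal, $\pi$ the first projection, and $I$ pulled back along $\pi$; then the fibre $\UU_{b}$ over $b\in\BB$ is a complete regular local ring and, since the local homomorphism $A_{b}\to\UU_{b}$ is flat with regular closed fibre of dimension $\defc_{b}:=\dim A-\dim A_{b}$, one has $\UU_{b}\cong\what{A_{b}}\llbracket z_{1},\dots,z_{\defc_{b}}\rrbracket$ carrying $I$ to the pull-back of $\what{I_{b}}$.

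Granting such a set up, I first shrink $\BB_{0}$ to a non-empty open $\BB\ni y$ on which the relative invariant $\invUB(I)$ of \ref{D2 defn} is genuinely available: one runs the inductive construction \ref{I1 ind hypo}, \ref{CA relative}, \ref{CB relative}, using the upper semi-continuity statements \ref{F1 fact}, \ref{F2 fact}, \ref{F3 fact}, \ref{C1 corollary} and Noetherian induction to make each of the finitely many constancy and triviality requirements hold after one further shrinking, and \ref{F4 fact}.(ii) to absorb the ``shrink to a neighbourhood of $b$'' precision in the branch where case (B), \ref{CB relative}, recurs \emph{ad infinitum}; the generic point $y$ of $\bar y$ survives every such shrinking. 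On this $\BB$ the identity
\[
\invUB(I)(b)\;=\;\inv_A^{!}(I)(b),\qquad b\in\BB,
\]
results by combining \eqref{CP422 eq}, which expresses $\invUB(I)(b)$ as the invariant of \S\ref{sec II INV} of the fibre $\UU_{b}$ corrected by a $\DIF$-term of \eqref{518bis}, with \ref{EF1 lemma} and the invariance under completion \ref{WF0 fact}, by which that fibre invariant is in turn $\inv_{A_{b}}(I_{b})$ corrected by a second such term; matched against the definition \ref{EED1 def} of $\inv_A^{!}$ the two corrections telescope to the single one appearing there.

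It remains to argue topologically. By \ref{WF1 fact}.(ii) the function $\invUB(I)$ is upper semi-continuous on $\BB$ and, being assembled precisely as in \ref{cor Elem} out of the u.s.c.\ data of \ref{F1 fact}--\ref{C1 corollary}, has constructible level sets. Hence $\{\,b\in\BB\mid\invUB(I)(b)=\invUB(I)(y)\,\}$ is a constructible subset of the irreducible space $\bar y$ containing its generic point $y$, and therefore contains a non-empty Zariski open $\BB'\subseteq\bar y$; by the displayed identity $\inv_A^{!}(I)(b)=\inv_A^{!}(I)(y)$ for every $b\in\BB'$, which is the assertion. The main obstacle is the identification in the second paragraph: arranging the relative set up so that its fibres really recompute the \emph{modified} invariant $\inv_A^{!}$ — that is, verifying both the isomorphism $\UU_{b}\cong\what{A_{b}}\llbracket\uz\rrbracket$ compatibly with $I$ and the exact book-keeping of the two $\DIF$-shifts against \ref{EED1 def} — and, in the non-geometric excellent case, supplying the analogue of the diagonal picture, where one must instead fall back on the Dade-style argument underlying \ref{EEF2 fact}; everything else is either quoted verbatim from the relative theory above or is the elementary constructibility remark just made.
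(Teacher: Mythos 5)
Your geometric branch is fine as far as it goes (the diagonal/jet set up over an open of $\bar y$, the telescoping of the two $\DIF$-corrections against \ref{EED1 def}, and the endgame via upper semi-continuity of the relative invariant at the generic point), but in that case the statement is essentially already available through \ref{WFD1 f/d}; the paper introduces \ref{EEF1 fact} precisely for the non-geometric situation of an arbitrary (in practice excellent) regular ring $A$, where no smooth structure over a field, and hence no diagonal/jet construction, exists. For that case your proposal only says one should ``fall back on the Dade-style argument underlying \ref{EEF2 fact}'', and this is circular: Dade's criterion \ref{EEC2 claim} is the framework in which \ref{EEF1 fact} is consumed as hypothesis (ii), so it cannot also supply it. What is genuinely missing is the surrogate for the diagonal: the paper first proves the spreading-out claim \ref{EEC1 claim} (after shrinking, the canonical filtration of \ref{WF0 fact} at $y$ extends to the completion of $A$ along $\bar y$, with constant multiplicity and with the non-degeneracy \eqref{HP eq} over $k=A/\bar y$), and then, at each $b\in\bar y$, replaces your diagonal by the non-canonical splitting \eqref{2eq} of $\mathrm{Spf}\,A_{\{b\}}$ over $\spec k_{\{b\}}$ — available because both are complete local rings of characteristic zero — whose section has trace the pull-back of $\bar y$. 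Base change \ref{WF2 fact} to the quotient field $K$ of $k_{\{b\}}$ identifies $\inv_{A_y}(I_y)$ with $\invUB(I)(K)$, the ``better still'' of \ref{EEC1 claim} gives $\invUB(I)(K)=\invUB(I)(b)$, and the remaining comparison $\inv_{A_{\{b\}}}(I)=\inv_{A_{\{b\}}}\bigl(\pr^{*}\restr{I}{\UU_{b}}\bigr)$ is done by \ref{EF1 lemma}.(ii) together with an induction on $s$ in the definition of the invariant. None of these steps appears in your proposal, and they are the actual content of the proof; note also that the conclusion obtained this way is the stronger one that $\inv^{!}$ is constant on $\bar y\cap V$, not merely equal on a constructible set containing $y$.

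Two smaller points. First, your appeal to J-2/excellence to make $\bar y$ generically regular is neither available from the hypotheses of \ref{EED1 def} (the paper uses \ref{EEF1 fact} ``for any regular ring'' inside the proof of \ref{EEF2 fact}) nor needed: what the argument requires is only that $\quotient{M}{M^2}$ be generically free and $\bar y$ generically regularly embedded, which is automatic since $A$ is regular. Second, even in the geometric branch the identification $\UU_{b}\cong\what{A_{b}}\llbracket z_{1},\dots,z_{\defc_{b}}\rrbracket$ compatibly with the ideal is exactly the content of \eqref{new1} together with \ref{EF1 lemma}; stating it as a consequence of flatness with regular closed fibre skips the point that one needs a coefficient-field/Cohen-type splitting to realise the fibre as a power series ring over $\what{A_{b}}$ — which is the same mechanism the paper uses, via \eqref{2eq}, to handle the general case you left open.
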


Unsurprisingly the key point in which we will abuse notation slightly in order to emphasise its relation to the preceeding definitions is:

\begin{claim}\label{EEC1 claim}
	Let everything be as in \ref{EEF1 fact}, then there is an affine neighbourhood $V := \spec A' \ni y$ such that if $\what{A}'$ is the completion of $A'$ in $y$, and $A_{\{y\}}$ the completion of $A_y$ in the maximal ideal then there is a regular weighted filtration $F^p(I)$ of $\what{A}$ such that if $F_{\{y\}}^p$ is the filtration of $A_{\{y\}}$ of \eqref{eq WF0 F} associated to the pair $I_y,\,A_y$ then,
	\[
		F_{\{y\}}^p := F^p(I) \what{\otimes}_{\what{A}'} A_{\{y\}} 
	\] 
	Better still not only is the multiplicity $d$ of $I$ constant along $\bar{y} \cap V$, but if $a^{0}$ is the highest weight amongst the blocks of the filtration and,
	\[
		V_d:= \biggl( \frac{I+ F^{>da^{0}}(I)}{F^{>da^{0}}(I)} \biggl)
		\otimes_{\what{A}'} \, 
		k,
	\]
	where $k:= \quotient{A'}{\bar{y}}$, then $V_d$ enjoys the non-degeneracy condition \eqref{HP eq} of \ref{HP} for the associated $\PP_k(\ua)$. 
\end{claim}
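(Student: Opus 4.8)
The plan is to recognise \ref{EEC1 claim} as the relative construction of \ref{S1 not/defn}--\ref{D2 defn} carried out over the base $\BB:=\bar y\cap V$. Since the generic point $y$ of $\bar y$ has the field $k(y)$ as local ring, $\bar y$ is generically regular; as everything is of characteristic zero, after shrinking $V=\spec A'\ni y$ we may assume that $A'$ and $A'/\pri'$ (with $\pri'$ the ideal of $\bar y\cap V$) are regular, that $\pri'$ is generated by a regular sequence of length $m_y:=\dim A_y=m-\defc_y$ with $\pri'/\pri'^2$ a trivial free $A'/\pri'$-module, and that the $\pri'$-adic completion $\what{A}'$ of $A'$ acquires a compatible $A'/\pri'$-algebra structure making $\bar y\cap V\hookrightarrow\UU:=\mathrm{Spf}\,\what{A}'$ a regularly embedded section of $\pi\colon\UU\to\BB:=\spec(A'/\pri')$ of codimension $m_y$; thus we are in the situation of \ref{S1 not/defn} with $k=A'/\pri'$, and the fibre of $\pi$ over the generic point of $\BB$ is the $\mm_y$-adic completion of $A'_{\pri'}=A_y$, \ie $\UU_y=A_{\{y\}}=\what{A}'\,\what{\otimes}_{A'/\pri'}k(y)$.

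Next I would run the relative construction of \ref{F1 fact}--\ref{D2 defn} over $\BB$, shrinking $V$ finitely often. The multiplicity is upper semi-continuous (\ref{F1 fact}) and takes the value $d$ at the generic point $y$ (where $\UU_y=A_{\{y\}}$), so after shrinking $d_b(I)\equiv d$ along $\bar y\cap V$; likewise $\lambda_0$ is upper semi-continuous (\ref{F2 fact}), hence constant after shrinking, and, shrinking further, the implied free $k$-module is trivial so the block $X_0$ exists (\ref{D1 f/d}). Continuing through \ref{I1 ind hypo}--\ref{CB relative}, at each step $s$ the functions $g_s$ and $\lambda_s^H$ are upper semi-continuous on the relevant constructible locus (\ref{F3 fact}, \ref{C1 corollary}), hence constant after shrinking with the implied free $k$-modules trivial, while for a sub-induction running \emph{ad infinitum} \ref{F4 fact} supplies a Zariski neighbourhood of $y$ on which no further shrinking is needed and the blocks converge $M$-adically. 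Since $\ell_s=m_y-(c_0+\dots+c_s)$ strictly decreases there are at most $m_y$ proper steps, so finitely many shrinkings of $V$ suffice and we obtain a regular weighted filtration $F^\bullet(I)$ of $\what{A}'$.

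It remains to compare $F^\bullet(I)$ with the absolute filtration on the generic fibre and to propagate the non-degeneracy. Over $y$ the defect $\defc$ of \ref{R1 remark} vanishes, the ambient dimension $m_y$ being $\dim A_y$, so (\cfr \ref{R1 remark}, \ref{EF1 lemma}) the relative construction restricted to $\UU_y=A_{\{y\}}$ reproduces, step for step — and in the $M$-adic limit when the sub-induction does not terminate — the construction of \S\ref{sec II INV} for $(I_y,A_y)$: the multiplicities agree, the maximal contact spaces are computed by the same linear algebra (\ref{lem L}, \ref{cor L}) on the same weighted projective champs, and the sub-inductive combinatorics of \ref{f/d theta} coincide. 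Hence $F^\bullet(I)\,\what{\otimes}_{\what{A}'}A_{\{y\}}$ is the filtration $F^\bullet_{\{y\}}$ of \eqref{eq WF0 F} attached to $(I_y,A_y)$, and $d$ is constant along $\bar y\cap V$. Finally put $V_d:=\bigl((I+F^{>da^0}(I))/F^{>da^0}(I)\bigr)\otimes_{\what{A}'}k$, the relative section module \eqref{eq VF} at the final step: on the generic fibre $k\to k(y)$ it satisfies the non-degeneracy \ref{Hypo H2}, \ie \eqref{HP eq}, because the blocks of $F^\bullet(I)$ were chosen via \ref{cor L} precisely so that no weighted vector field of negative weight survives on the relevant champs, \cfr (F.3) of \ref{indu hypo}. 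The ``better still'' of \ref{lem L}, condition \eqref{CBDF2 eq}, which is maintained throughout the relative construction (\cfr the proof of \ref{F4 fact}), then shows that the coherent $k$-modules of negative-weight vector fields annihilating $V_d$ — and those annihilating its images on the sub-champs of \ref{Hypo H2} — have formation commuting with arbitrary base change $k\to k'$; vanishing at the generic point of $\BB$, they vanish on a nonempty open $\BB'\ni y$, and after this final shrinking they vanish over $k$ and hence, by base change, over every quotient $k'$, which is exactly \eqref{HP eq} for $\PP_k(\ua)$.

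The step I expect to be the main obstacle is this last one: keeping the base-change compatibility of the maximal contact modules (the ``better still'' of \ref{lem L}, \eqref{CBDF2 eq}) valid all the way through the relative induction — in particular past the sub-inductions that run \emph{ad infinitum} — so that vanishing at the single generic point of $\BB$ genuinely upgrades to vanishing after \emph{every} quotient of the shrunk base; and, relatedly, verifying that only finitely many shrinkings of $V$ are needed despite those sub-inductions, which is precisely what \ref{F4 fact} was arranged to provide.
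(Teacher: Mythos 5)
Your overall strategy --- run the relative machinery of \S V over the base $\bar y\cap V$, shrink finitely often via the semicontinuity statements, obtain the filtration from \ref{F4 fact}.(iii), and propagate the non-degeneracy by base change --- is in spirit what the paper does, but your very first reduction contains a genuine gap. You assert that, after shrinking, the $\pri'$-adic completion $\what{A}'$ ``acquires a compatible $A'/\pri'$-algebra structure'' making $\bar y\cap V$ a regularly embedded section, so that you are literally in the situation of \ref{S1 not/defn}. The existence of such a section, \ie a coefficient ring $k=A'/\pri'\hookrightarrow\what{A}'$ splitting $\what{A}'\twoheadrightarrow k$ with $\what{A}'\cong k\llbracket x_1,\dots,x_m\rrbracket$, is not justified and is in general unavailable: Cohen's theorem furnishes coefficient fields only for complete \emph{local} rings, whereas here $k$ is not a field unless $y$ is closed, and for a general excellent regular $A$ the quotient $k$ need not embed in the completion at all. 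The paper flags exactly this point (``we may not be in the hypothesis \ref{S1 not/defn}, \ie $k$ may not embed in $A$'') and routes around it: the statements actually needed --- constancy of the $\lambda_i(b)$, and hence implicitly of the $g_i(b)$, on a Zariski open subset of $\bar y$ --- rest only on the module-theoretic descriptions \eqref{W00 eq} and \eqref{W000 eq} (images and cokernels of pairings into $\quotient{M}{M^2}$, resp. $\Dl^{\vee}$), which make sense and are \usc without any $k$-algebra structure on $\what{A}'$; and the passage from the Zariski localisation $A_y$ to the formal localisation $A_{\{y\}}$, including the convergence of the blocks, is the section-free argument \eqref{W0000 eq} \emph{et seq.} in the proof of \ref{F4 fact}.(iii).

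In the geometric case your reduction can be repaired --- after shrinking, $\bar y$ is smooth over the ground field and formal smoothness lifts the identity of $A'/\pri'$ through the square-zero extensions $\what{A}'/(\pri')^{n+1}\twoheadrightarrow\what{A}'/(\pri')^{n}$, giving the desired section, \cfr the étale-local projection used in \ref{WA4 alternative} and \ref{EEA1} --- but that is precisely the case where the paper already has what it needs (\ref{WFD1 f/d}); \ref{EEC1 claim} is required for arbitrary excellent rings, where your reduction to \ref{S1 not/defn} fails and the argument must be made directly on the modules as above. The remainder of your outline (finitely many shrinkings because the $\ell_i$ strictly drop, non-degeneracy via base-change compatibility of the contact modules) is consistent with the paper's proof, so the fix is localized to replacing the assumed section by the observation that the relevant facts of \S V never used it.
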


\begin{proof}
	For obvious reason we don't worry about the difference between $A'$ and $A$ and simply understand $\spec A \ni y$ to be a sufficiently small Zariski neighbourhood of the same. Similarly we put $k = \quotient{A}{\bar{y}}$, and, of course suppose that for $M$ the maximal ideal of $\bar{y}$ in $A$, $\quotient{M}{M^2}$ is the trivial $k$-module. Now while we may not be in the hypothesis \ref{S1 not/defn}, \ie $k$ may not embed in $A$, \eqref{W00 eq} \& \eqref{W000 eq} apply as stated to deduce that $\lambda_i(b)$, and whence implicitly the $g_i(b)$, are constant for $b$ in a Zariski open subset of $\bar{y}$. A priori there remains the substantive difference between the Zariski localisation $\bar{A}_y$ and the formal localisation $A_{\{y\}}$ but this has been addressed in \eqref{W0000 eq} \emph{et seq.} in the proof of 
	\ref{F4 fact}.(iii)
\qedhere \end{proof}

We are now in position to give,

\begin{proof}[Proof of fact \ref{EEF1 fact}]
	We may without loss of generality suppose that the conclusion of \ref{EEC1 claim} holds. Consequently, on replacing $\spec A$ by $\spec A'$, it will sufficient to show the stranger statement that $\inv^{!}$ is constant.
	As such let $b \in \bar{y}$, and $A_{b}$, resp. $A_{\{b\}}$ the localisation, resp. formal localisation at $b$, then by \ref{WF0 fact} we have the identities,
	\begin{equation}\label{1eq}
		\inv_{A_{y}} (I_y) = \inv_{A_{\{y\}}} (I_{\{y\}}), \quad 
		\inv_{A_{b}} (I_b) = \inv_{A_{\{b\}}} (I_{\{b\}}).
	\end{equation}
	Furthermore since $A_{\{b\}}$ and $k_{\{b\}}$ are complete local rings we have (non canonically) a splitting,
	\begin{equation}\label{2eq}
		\begin{tikzcd}
		\UU = \mathrm{Spf} A_{\{b\}}
		\ar[d, "\pi"']
		\\
		B = \spec k_{\{b\}},
		\ar[u, bend right, "\sigma"']
		\end{tikzcd}
	\end{equation}
	in which the trace of $\sigma$ is the pull-back of $\bar{y}$.
	As such if $K$ is the quotient field of $k_{\{b\}}$, then the fibre (\emph{qua} formal scheme) $\UU_K$ is the (formal) base change
	\begin{equation}\label{bc1eq}
	\begin{tikzcd}
		A_{\{b\}} \ar[r, mapsto] 
			&A_{\{b\}} \what{\otimes}_{k(b)} K = A_{\{y\}} .		
	\end{tikzcd}
	\end{equation}
	Now (unsurprisingly) \ref{WF2 fact} the relative invariant is stable under base change, so from \eqref{1eq}, \eqref{bc1eq}, and \ref{D2 defn} we have,
	\begin{equation}\label{3eq}
		\inv_{A_y} (I_y) = \invUB (I)(K)
	\end{equation}
	while by the better still in \ref{EEC1 claim}, we have,
	\begin{equation}\label{3.biseq}
		\invUB (I)(K) = \invUB(I)(b).
	\end{equation}
	On the other hand by \ref{D2 defn} the latter is the invariant of $I$ restricted to the special fibre $\UU_{b}$ in \eqref{2eq} which itself is a product $\UU_{b} \times_{k(b)} B$ so we have a second projection $\pr$ to $\UU_{k(b)}$ and a second ideal $\pr^{*} \restr{I}{\UU_{b}}$, with
		\[
		\invUB 	\bigl( 
					\pr^{*} \restr{I}{\UU_{b}}
				\bigr)
		=
		\inv_{A_y}\bigl( I_y \bigr)
		\]
	by \eqref{3eq}, \eqref{3.biseq}, and the base change formula \ref{WF2 fact}, while \ref{EF1 lemma}.(ii), the difference 
	between \linebreak[4] $\inv_{A_{\{b\}}}(\pr^{*} \restr{I}{\UU_{b}})$ and $\invUB(\pr^{*} \restr{I}{\UU_{b}})$ is $\DIF(\defc)$ of \eqref{518bis}, with $\defc = \dim A_{b} - \dim A_{y}$. As such it will suffice to prove,
	\[
		\inv_{A_{\{b\}}}	\bigl(	I
							\bigr) 
		= 
		\inv_{A_{\{b\}}} 	\bigl( \pr^{*} \restr{I}{\UU_{b}}
							\bigr) 
	\]
	which in the presence of the better still in \ref{EEC1 claim}, follows by induction in $s$ in the definition of the invariant \ref{start INDUCT} - \ref{indu hypo}.
\qedhere \end{proof}
An equally useful property is stability under base change, \ie

\begin{fact}\label{WF2 fact}
	Let 
	\begin{tikzcd}[cramped, sep=small]
		\beta : \BB' \ar[r]	
				&\BB
	\end{tikzcd}
	be a map of schemes, and 
	\begin{tikzcd}[cramped, sep=small]
	\pi : \UU' \ar[r]	
	&\BB'
	\end{tikzcd}
	the base change of $\pi$, \ref{S1 not/defn}, \emph{qua} formal scheme with $I'$ the pull-back of $I$ then,
	\[
	\INV_{\quotient{\UU'}{\BB'}} = \beta^{*} \invUB
	\]
\end{fact}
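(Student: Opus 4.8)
The plan is to carry out a parallel induction on the parameter $s$ of Definition \ref{D2 defn}: at each stage I will show that the entire package of data attached to $\pi': \UU' \to \BB'$ with $I'$ --- the multiplicity $d$, the functions $\lambda_i$, the relative weighted projective champ $\P$ of \ref{I1 ind hypo}.(MO.3), the filtrations $F_{s-1}^\bullet$ together with their blocks $X_i$, the weights $\ug$, the discrete set $\Theta_{s-1}$ of sub-inductive parameters, and the dichotomy between cases (A) and (B) --- is obtained from the corresponding data for $\pi: \UU \to \BB$ with $I$ by base change along $\beta$. Since both relative invariants are then read off by the same rules \eqref{EQ1}--\eqref{EQ2} from this data, the identity $\INV_{\quotient{\UU'}{\BB'}} = \beta^{*}\invUB$ will follow (note that $\INV$ is defined \emph{directly} by \eqref{EQ1}--\eqref{EQ2}, so the formula \eqref{CP422 eq} and the discrepancy between the ambient $m$ and the $m(\bb)$ of \ref{R1 remark} --- which need not be comparable under $\beta$ --- play no role). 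The assertion being local on $\BB'$ and on $\BB$, I fix $\bb' \in \BB'$, set $\bb := \beta(\bb')$, and am free to shrink both bases around these points; in particular I may arrange, as in \ref{D1 f/d}, \ref{F3 fact} and \ref{F4 fact}, that all the $k$-modules that occur ($\Lambda''$ of \eqref{2 eq}, $\Dl$ of \eqref{eq DELTA}, the $\De(H)$ of \eqref{W0000 eq}, and the $Q$ of \eqref{W0 eq}) are trivial free.

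To begin, $A' = A\,\what{\otimes}_k\, k'$ is the $M$-adic completion of $A \otimes_k k'$, hence again of the form $k'\llbracket x_1,\dots,x_m\rrbracket$ with the same ambient $m$ (the trace of $\UU'$ is $\sigma \times_{\BB}\BB'$ of codimension $m$, and $\quotient{M'}{M'^2} = \quotient{M}{M^2}\otimes_k k'$ is trivial of rank $m$), while $I'$ is the pull-back of $I$; the fibre $\UU'_{\bb'}$ is $\UU_{\bb}$ base changed along the field extension $k(\bb) \hookrightarrow k(\bb')$. Multiplicity is a fibrewise notion, and in the presentation \eqref{W0 eq} the dimension $\dim_{k(\bb)} Q_{\bb}$ is unchanged by a field extension, so $d_{\bb'}(I') = d_{\bb}(I)$. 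Similarly $\lambda_0$ is the rank of the free module $\Lambda''$ of \eqref{W00 eq}: since $\Lambda''$ is free, the short exact sequence defining it via \eqref{2 eq} stays exact after $-\otimes_k k'$, so the $\Lambda'$, $\Lambda''$ attached to $\UU'$ are $\Lambda'\otimes_k k'$, $\Lambda''\otimes_k k'$, of the same rank; hence $\lambda_0(\bb') = \lambda_0(\bb)$ and the initial block $X_0$ of \ref{D1 f/d} for $\UU'$ may be taken to be the pull-back of that for $\UU$.

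For the inductive step, suppose the filtration $F_{s-1}^\bullet$, its blocks $X_0,\dots,X_{s-1}$ and the weights $\ug = (d,g_1,\dots,g_{s-1})$ for $\UU'$ are pull-backs of those for $\UU$. The strictly positive integers $d^t_i$, and with them the weights $g^i_t$ of \ref{indu hypo}.(F.4), are determined by the recursion \eqref{eq gg1}--\eqref{eq gg2} from $d$ and the cardinalities $c_i$ alone, and $\Theta_{s-1}$ of \ref{f/d theta} depends only on $\ug$; so this combinatorial data is literally the same on both sides. One has $\P' = \P \times_{\BB}\BB'$, and $\Dl' = \Dl\otimes_k k'$ by the explicit (Serre-type) computation \cite[I.c.3]{mcqSS}. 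In the sub-induction \ref{SI1 s-induction} the filtration $F_{s-1}^\bullet(h)$ is again a pull-back: its blocks are well defined modulo fixed pieces of the filtration by \ref{cor L}.(iii), a property stable under the base change of $\P$, and \ref{lem W-D} then applies verbatim; consequently $V_{s-1}^d(H)$ for $\UU'$ is $V_{s-1}^d(H)\otimes_k k'$, and, $\De(H)$ being free, its defining sequence \eqref{W0000 eq} stays exact after $-\otimes_k k'$, so $\De(H)$ and the \usc function $\lambda_s^H$ of \eqref{W000 eq} base change. Hence the constructible loci of \ref{C1 corollary}, on which the dichotomy between cases (A) and (B) is decided, for $\UU'$ are the $\beta$-preimages of those for $\UU$: case (A) occurs at $\bb'$ iff it occurs at $\bb$, with $g_s(\bb') = g_s(\bb)$ and $\lambda_s(\bb') = \lambda_s(\bb)$, and the new blocks produced by \ref{cor L} in \ref{CA relative} (resp. \ref{CB relative}) may again be chosen as pull-backs. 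Finally, if case (B) recurs \emph{ad infinitum}, the $M$-adic convergence of the blocks recorded in \ref{F4 fact}.(iii), equivalently the estimates \eqref{eq mad2}--\eqref{eq mad3}, commutes with the $M$-adic base change, so the limiting filtration for $\UU'$ is the pull-back of that for $\UU$.

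Putting these together, each entry of $\INV_{\quotient{\UU'}{\BB'}}(I')(\bb')$ prescribed by \eqref{EQ1} or \eqref{EQ2} equals the corresponding entry of $\invUB(I)(\bb) = \beta^{*}\invUB(I)(\bb')$, which is the claim. The one genuinely delicate point is that $\beta$, hence $k \to k'$, need not be flat, so images of $k$-linear maps need not a priori commute with $-\otimes_k k'$; this is exactly why the construction of the relative invariant repeatedly shrinks $\BB$ so that the relevant cokernels --- $Q$ in \ref{F1 fact}, $\Lambda''$ in \ref{F2 fact}/\ref{D1 f/d}, and $\Dl/\De(H)$ in \ref{F3 fact}/\ref{F4 fact} --- are free, freeness of the cokernel being precisely what keeps the defining short exact sequences exact under the base change, and this is the engine of the whole argument.
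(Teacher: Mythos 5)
Your argument is correct, but it is considerably heavier than what the paper does, and it slightly misplaces the engine. The paper's own proof of \ref{WF2 fact} is two sentences: the condition $I\subset M^{e}$ pulls back to $I'\subset (M')^{e}$, and the odd entries of $\INV$ are decided by the fibre dimensions of the cokernels in \eqref{W00 eq} and \eqref{W000 eq}, which are stable under \emph{arbitrary} base change simply because tensor products are right exact --- forming a cokernel commutes with $-\otimes_{k}k'$ with no flatness or freeness hypothesis whatsoever. So the point you flag as ``the one genuinely delicate point'' (non-flatness of $\beta$, compensated by shrinking $\BB$ until $Q$, $\Lambda''$, $\Dl$, $\De(H)$ are trivial free) is not what the statement hinges on: freeness is needed for the \emph{construction} (to choose trivial blocks as in \ref{D1 f/d}, \ref{CA relative}, \ref{F4 fact}), whereas the base-change stability of the numerical entries only needs right exactness of the tensor product applied to \eqref{2 eq} and \eqref{5 eq}. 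What your longer parallel induction buys is an explicit verification that the whole package --- filtrations, blocks, weights, $\Theta_{s-1}$, and the case (A)/(B) dichotomy of \ref{CA relative}--\ref{CB relative} --- is itself a pull-back, which is indeed the content implicitly underlying the paper's one-liner (and is the sort of statement used later, e.g.\ around \ref{EEC1 claim}); what the paper's route buys is that none of that bookkeeping needs to be rehearsed to get the equality of invariants, since each entry is read off from a fibre dimension of a cokernel and from the discrete data $\Theta_{s-1}$, which depends only on $\ug$ and so is unchanged.
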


\begin{proof}
	By way of notation let $M'$ be the pull-back of $M$, then the condition $I \subset M^e$ plainly implies $I' \subset (M')^{e}$. At which point we just need to check that the conditions that the dimension of the modules (since the odd entries of $\INV$ are determined by whether this is maximal or not) \eqref{W00 eq} \& \eqref{W000 eq} are stable under base change which is indeed the case since tensor products are right exact.
\qedhere \end{proof}

Of which a particularly pertinent corollary is,
\begin{corollary}\label{EC1 cor}
	Let 
	\begin{tikzcd}[cramped, sep=small]
	\pi : U \ar[r] &B
	\end{tikzcd}
	be a regular map of characteristic zero affine Noetherian schemes in which both $\Gamma(U)$ and $\Gamma(B)$ are regular Noetherian local rings, then if $I$ is an ideal on $B$, $J = \pi^{*}I$ and $\defc = \dim \Gamma(U) - \dim\Gamma(B)$ the invariants $\inv_{\Gamma(U)}$, $\inv_{\Gamma(B)}$ enjoy exactly the same relation as enunciated in items (i)-(ii) 
	of the particular case \ref{EF1 lemma} ($U = \spec B$, $B = \spec A$ in the notation of \op) \ie their difference is $\DIF(\defc)$ of \eqref{518bis}. 
	Better still if there exists a filtration $F^{\bullet}(I)$ on $B$ whose completion is \eqref{eq WF0 F}, then the same is true on $U$ and,
	\begin{equation} \label{e3}
	F^{\bullet}(\pi^{*}I) = \pi^{*}F^{\bullet}(I) .
	\end{equation}
\end{corollary}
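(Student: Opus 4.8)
The plan is to reduce the general regular morphism to the two cases already handled — completion, \ref{WF0 fact}, and adjunction of formal variables, \ref{EF1 lemma} — the ``missing'' intermediate case of a separable residue‑field extension being supplied by the base‑change stability built into \S\ref{sec I WPS}--\S\ref{sec II INV} (equivalently, by \ref{WF2 fact}). \textbf{First}, write $A:=\Gamma(B)$, $A':=\Gamma(U)$, $\pi^{\sharp}\colon A\to A'$ for the (local) structure map; by \ref{WF0 fact} applied to $(A,I)$ and to $(A',J)$, neither $\inv$, nor the filtration \eqref{eq WF0 F}, nor the asserted relation \eqref{e3} is affected on passing to $\mm$-adic completions, so I may assume $A$ and $A'$ are complete regular local $\Q$-algebras and $\pi^{\sharp}$ is faithfully flat with geometrically regular fibres, of relative dimension $\defc=\dim A'-\dim A\gqs 0$.

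\textbf{Next, factor the morphism.} Let $k\subset K$ be the residue fields of $A\subset A'$ (a separable extension, as $\car k=0$). Choose a coefficient field $k\hookrightarrow A$ and a regular system of parameters $\ux=(x_1,\dots,x_m)$, so $A=k\llbracket\ux\rrbracket$; by the Cohen structure theorem choose a coefficient field $\what{K}\hookrightarrow A'$ containing the image of $k$. Flatness of $\pi^{\sharp}$, together with regularity of the closed fibre $A'/\mm_A A'$ of dimension $\defc$, forces the images of $\ux$ to be part of a regular system of parameters of $A'$, which I complete to $(\ux,\uz)$, $\uz=(z_1,\dots,z_\defc)$, by lifts of a regular system of parameters of the fibre; hence $A'=\what{K}\llbracket\ux,\uz\rrbracket$ and $\pi^{\sharp}$ factors as
\[
	A=k\llbracket\ux\rrbracket\;\hookrightarrow\; A'':=A\,\what{\otimes}_k\what{K}=\what{K}\llbracket\ux\rrbracket\;\hookrightarrow\; A''\llbracket\uz\rrbracket=A',
\]
with $J=IA'$ the pull‑back of $IA''$.

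\textbf{Then treat the two factors.} For the second inclusion I invoke \ref{EF1 lemma} verbatim, with $A''$ in the role of its ``$A$'': this gives that the odd entries of $\inv_{A'}(J)$ and $\inv_{A''}(IA'')$ agree, that the even ones agree where the invariant is $0$ and otherwise differ by $\defc$ — i.e. the difference is $\DIF(\defc)$ of \eqref{518bis} — and that $F^{\bullet}(IA')=F^{\bullet}(IA'')\,\what{\otimes}_{A''}A'$. For the first inclusion, which is base change along the separable field extension $k\hookrightarrow\what{K}$, I claim the invariant and the filtration are literally unchanged and the blocks of $F^{\bullet}(I)$ may be chosen over $k$: at every stage of \ref{start INDUCT}--\ref{indu hypo} the multiplicity, the spaces $V_d$, the modules $\LL_{-b}$ and the blocks extracted by \ref{cor L} are manufactured from the free modules $\HH^{0}(\P,\OO_{\P}(e))$, whose explicit monomial bases, \cite[I.c.3]{mcqSS}, make their formation commute with arbitrary base change, the precise stability being the ``better still'' of \ref{lem L} together with the base‑change invariance of the non‑degeneracy hypothesis \ref{HP}; thus $\inv_{A''}(IA'')=\inv_A(I)$ and $F^{\bullet}(IA'')=F^{\bullet}(I)\,A''$, since there are no new variables and the $\ux$-blocks merely extend scalars. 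Composing the two steps, $\inv_{\Gamma(U)}(J)-\inv_{\Gamma(B)}(I)=\DIF(\defc)$, exactly as in \ref{EF1 lemma}.(i)--(ii), and the two filtration identities splice to $F^{\bullet}(\pi^{*}I)=\pi^{*}F^{\bullet}(I)$, which is \eqref{e3}.

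\textbf{The main obstacle.} Everything above is bookkeeping except the claim, in the third step, that the invariant is insensitive to a residue‑field extension; the genuine content there is that the entire apparatus of \S\ref{sec I WPS}--\S\ref{sec II INV} (free graded modules, the stability clauses of \ref{lem L} and \ref{HP}) was arranged precisely so that this holds, and one could alternatively deduce it as the degenerate, zero‑dimensional‑base, case of the base‑change formula \ref{WF2 fact}. A minor secondary point to double‑check is that ``regular morphism'' is used only through flatness plus regularity of the closed fibre — which is exactly what legitimises the factorisation in the second step.
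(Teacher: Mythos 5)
Your proposal is correct and follows essentially the same route as the paper: reduce to complete local rings by \ref{WF0 fact}, absorb the residue-field extension via the base-change stability of the (relative) invariant, \ref{WF2 fact}, and then apply \ref{EF1 lemma} to the resulting power-series extension, from which \eqref{e3} follows by faithful flatness of completion. The only cosmetic difference is that you factor the map explicitly as $k\llbracket\ux\rrbracket\hookrightarrow\what{K}\llbracket\ux\rrbracket\hookrightarrow\what{K}\llbracket\ux,\uz\rrbracket$ via Cohen structure, whereas the paper base-extends $\Gamma(B)$ by $k(b)\rightarrow k(u)$ and then completes $\Gamma(U)$, which amounts to the same decomposition.
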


\begin{proof}
	By \ref{WF0 fact} we may suppose that $\Gamma(B)$ is a complete local ring, so, \emph{inter alia} it has a coefficient field isomorphic to its residue field $k(b)$, and by \ref{lem W-D}, the invariant $\inv_{\Gamma(B)}(I)$ is equally the relative invariant for,
	\begin{equation}\label{e3 bis eq}
	\begin{tikzcd}
	\mathrm{Spf}\, \Gamma(B) \ar[d] \\
	\spec  k(b).
	\end{tikzcd}
	\end{equation}
	On the other hand if $u$ is the closed point of $U$, then we can, \ref{WF2 fact}, base extend \eqref{e3} to 
	\begin{tikzcd}[cramped, sep=small]
	k(u) \ar[r] &\Gamma(B) \otimes_{k(b)} k(u)
	\end{tikzcd}
	without changing the invariant. 
	At the same time we can complete $\Gamma(U)$ in either $u$ or $\pi^{*}b$, and since completion in $u$ is equally completion in $\pi^{*}b$ subsequently completed in $u$, neither operation changes the invariant. We may thus suppose that $\Gamma(B)$ and $\Gamma(U)$ are complete local rings with the same residue field, and since we are in characteristic zero $\pi$ remains regular (otherwise we'd need to suppose geometrically regular). In particular therefore, $\Gamma(U)$ is a power series ring over $\Gamma(B)$ and \ref{EF1 lemma} applies to give the relation between the invariants while \eqref{e3} follows from \ref{EF1 lemma} and the fact that completion is faithfully flat.
	\qedhere \end{proof}

Notice that \emph{en passant }we have proved
\begin{fact}\label{dim1 fact}
	Let
	\begin{tikzcd}[cramped, sep=small]
		\pi: U 	\ar[r]	&B
	\end{tikzcd}		
	be a regular map of regular affine schemes of dimension $N$, $n$ respectively and $I$ a sheaf of ideals on $B$ then,
	\begin{equation}\label{dim1eq}
		\inv_{\Gamma(U)}^{!} (\pi^{*}I) 	- 
		\inv_{\Gamma(B)}^{!}(I) 			= 
			\DIF(N-n).
	\end{equation}
\end{fact}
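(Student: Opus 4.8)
The plan is to derive \eqref{dim1eq} directly from \ref{EC1 cor} and \ref{EED1 def}, by reading off both sides at a single point and then doing a little bookkeeping of codimensions. First I would fix $u\in U$ with image $b:=\pi(u)\in B$ and localise $\pi$ at $u$: the induced map $\Gamma(B)_b\to\Gamma(U)_u$ is a local homomorphism of regular local rings of characteristic zero, it is flat, and its closed fibre is a localisation of the geometrically regular fibre of $\pi$ over $b$, hence geometrically regular; so $\Gamma(B)_b\to\Gamma(U)_u$ is again a regular map of regular local rings, of exactly the kind \ref{EC1 cor} governs. Since $(\pi^{*}I)_u$ is the pull-back of $I_b$ along this map, \ref{EC1 cor} gives
\[
\inv_{\Gamma(U)_u}\bigl((\pi^{*}I)_u\bigr)-\inv_{\Gamma(B)_b}(I_b)=\DIF\bigl(\dim\Gamma(U)_u-\dim\Gamma(B)_b\bigr),
\]
and, by items (i)-(ii) of \ref{EF1 lemma} (which \ref{EC1 cor} reproduces), the two invariants have the same odd entries and the same vanishing even entries, hence the same truncation index $t$ in the sense of \ref{EED1 def}.

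Next I would unwind \ref{EED1 def} at $u$ and at $b$ — writing $p:=\dim\Gamma(U)_u$ and $q:=\dim\Gamma(B)_b$ —
\[
\inv^{!}_{\Gamma(U)}(\pi^{*}I)(u)=\inv_{\Gamma(U)_u}\bigl((\pi^{*}I)_u\bigr)+\DIF(N-p),\qquad \inv^{!}_{\Gamma(B)}(I)(b)=\inv_{\Gamma(B)_b}(I_b)+\DIF(n-q),
\]
subtract, substitute the displayed identity, and collapse $\DIF(p-q)+\DIF(N-p)-\DIF(n-q)$ into $\DIF\bigl((p-q)+(N-p)-(n-q)\bigr)=\DIF(N-n)$. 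This is legitimate because $q\lqs p$ by going-down for the flat map, $p\lqs N$ and $q\lqs n$ trivially, and $N\gqs n$ (which is anyway forced by the well-formedness of \eqref{dim1eq}), so every argument of every $\DIF$ above, and $N-n$ itself, lies in $\Z_{\gqs 0}$; and because $\DIF$ is additive on non-negative integers as soon as all the tuples it produces insert their argument at the \emph{same} set of even slots, \cfr \eqref{518bis}.

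Checking that common placement is the one point I would treat with care, and it is exactly the coincidence recorded in the first paragraph: $\inv_{\Gamma(B)_b}(I_b)$ and $\inv_{\Gamma(U)_u}\bigl((\pi^{*}I)_u\bigr)$ share their odd part and their set of zero even entries, so all three $\DIF$'s in the display — and hence $\DIF(N-n)$ as well — are taken with respect to that common truncation index $t$, for which $\DIF(a)+\DIF(b)=\DIF(a+b)$ is immediate from \eqref{518bis}. Since $u\in U$ was arbitrary, the resulting pointwise equality $\inv^{!}_{\Gamma(U)}(\pi^{*}I)(u)-\inv^{!}_{\Gamma(B)}(I)(\pi(u))=\DIF(N-n)$ is precisely \eqref{dim1eq}; no computation beyond that already performed for \ref{EC1 cor} has entered, which is why it was flagged as proved \emph{en passant}.
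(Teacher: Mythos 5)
Your proposal is correct and follows essentially the same route as the paper: localise at a point $u$ over $b$, apply \ref{EC1 cor} (via \ref{EF1 lemma}) to get $\inv_{\OO_{U,u}}(\pi^{*}I_u)-\inv_{\OO_{B,b}}(I_b)=\DIF(\dim\OO_{U,u}-\dim\OO_{B,b})$, unwind \ref{EED1 def} on both sides, and conclude by additivity of $\DIF$. Your extra check that all three $\DIF$'s insert their argument at the same truncation index $t$ is exactly the point the paper compresses into the phrase ``additivity of $\DIF$'', so it is a welcome precision rather than a deviation.
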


\begin{proof}
	From the proof of \ref{EC1 cor} for any $u \in U$ over $b$,
	\[
		\inv_{\OO_{U,u}}(\pi^{*}I_u)	-
		\inv_{\OO_{B,b}}(I_b)	=
			\DIF 
			\bigl(
				\dim\OO_{U,u} - \dim\OO_{B,b}
			\bigr)
	\]
	while by definition \ref{EED1 def},
	\[
	\inv_{\Gamma(B)}^{!}	(I) 	-
	\inv_{\OO_{B,b}}		(I_b)	=
		\DIF 
		\bigl(
			n - \dim\OO_{B,b}
		\bigr)
	\]
	and similar for $U$, whence \eqref{dim1eq} by the additivity of $\DIF$.
\end{proof}

\section{Principalisation}\label{sec V APP}
\setcounter{equation}{0}
To begin with let us make

\begin{observation/defn}\label{WD1 obs/defn}
	Let $\CU$ be a regular Noetherian Deligne-Mumford, or indeed formal champ, of characteristic $0$, and $\cg$ a sheaf of ideals on $\CU$ then for
	\begin{tikzcd}[cramped, sep=small]
		x: \spec K \ar[r]
			&\CU
	\end{tikzcd}
	a geometric point (\ie $K$ is algebraically closed) the invariant
	$\inv_{\CU} (\cg)(x)$ is defined to be $\inv_{\OO_{\CU,x}}( \cg_{x})$ where $\cg_{x}$ is the stalk of $\cg$ in the strictly Henselian ring $\OO_{\CU,x}$. In particular therefore by \ref{WF0 fact} if,
	\begin{center}
		\begin{tikzcd}
		\spec K				\ar[r] \ar[rd, "x"']
		& \v		\ar[d]
		\\
		&\CU
		\end{tikzcd}
	\end{center}
	is any factorisation through an affine étale neighbourhood, with $y$ the image on $\v$ then,
	\begin{equation}\label{WD11 eq}
		\inv_{\CU}(\cg)(x) = \inv_{\OO_{\v,y}}(\cg_{y}),
	\end{equation}
\end{observation/defn}
and we will vary this construction in the obvious way for the variants
$\inv^{!}$, resp. $\inv^{\sharp}$. With this in mind we have the key,
\begin{fact}\label{WF3 fact}
	Let $\UU =\mathrm{Spf}A$ be the formal spectrum of a complete regular ring of characteristic zero, $I$ an ideal of $A$, $F^{\bullet}(I)$ as in \ref{WF0 fact} and 
	\begin{tikzcd}[cramped, sep=small]
		\rho: \wti{\UU} \ar[r]
			&\UU
	\end{tikzcd}
	the smoothed weighted blow up \cite[I.iv.3]{mp1} associated to the aforesaid weighted filtration, then for $\wti{I}$ the proper transform of $I$,
	at every closed geometric point $x$ of $\wti{\UU}$,
		\[
			\inv_{\wti{\UU}}(\wti{I})(x) < \inv_{\UU}(I).
		\]
\end{fact}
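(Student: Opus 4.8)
The plan is to deduce the statement from the weighted‑projective estimate \ref{prop INV gdown}, by analysing $\wti{\UU}$ in a formal neighbourhood of its exceptional divisor.

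\emph{Set‑up.} We may assume $I\neq A$, else $\inv_{\UU}(I)=\underline{0}$ and there is nothing to prove. Let $F^{\bullet}=F^{\bullet}(I)$ be the weighted filtration of \ref{WF0 fact}, carried by blocks $X_{0},\dots,X_{s-1}$ of cardinalities $c_{0},\dots,c_{s-1}$ and strictly decreasing weights $g_{s-1}^{0}>\dots>g_{s-1}^{s-1}$, together with the complementary weight‑one block $W$ of cardinality $\ell_{s-1}=m-(c_{0}+\dots+c_{s-1})\gqs 0$ (empty when the induction terminated in Case (A), \ref{case A}, non‑empty when the sub‑induction ran \emph{ad infinitum}, \ref{ad infinitum}); so, by \ref{step1}/\ref{step4},
\[
\inv_{\UU}(I)=\bigl(d,\ell_{0},g_{1},\ell_{1},\dots,g_{s-1},\ell_{s-1},\underline{0}\bigr),\qquad d=\mathrm{mult}(I)
\]
(with the obvious reading $(d,\ell_{0},\underline{0})$ when $s=1$). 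Clearing denominators in the weights furnishes integers $\ua$; I write $\PP:=\PP_{k}(\ua)$ for the associated weighted projective champ over the residue field $k$ of $A$, $V:=I\bmod F^{>dg_{s-1}^{0}}(I)\subseteq\HH^{0}\bigl(\PP,\OO_{\PP}(dg_{s-1}^{0})\bigr)$ for the leading ideal — non‑zero, of multiplicity $d$, and lying in $\ssym^{\bullet}(X_{0}\amalg\dots\amalg X_{s-1})$ by \ref{indu hypo}.(F.2) — and $\cexe$ for the sheaf of ideals it generates on $\PP$. Since $A$ is $\mm$‑complete, the underlying space of the formal champ $\wti{\UU}$ is the exceptional divisor $E:=\rho^{-1}(\text{closed point})$, so a closed geometric point $x$ of $\wti{\UU}$ is a geometric point of $E$; and by \cite[I.iv.3]{mp1} the smoothed weighted blow up in the centre cut out by $F^{\bullet}$ has $E$ a relative weighted projective champ over the regular formal scheme $Z$ defined by $X_{0}\cup\dots\cup X_{s-1}$, with fibre $\PP$. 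As $Z$ reduces to $\spec k$, such an $x$ determines a geometric point $p$ of $\PP$.

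\emph{Reduction to $\PP$.} At $x$ the champ $\wti{\UU}$ is regular of dimension $m$, $E=\{t=0\}$ is a regular Cartier divisor through $x$, $\OO_{\wti{\UU},x}/(t)=\OO_{E,x}$, and $\OO_{E,x}$ is a formal power series extension of $\what{\OO}_{\PP,p}$ in the $\ell_{s-1}$ parameters of $Z$. Writing $f\in I$ as $f=f_{0}+(\text{higher }F\text{-weight})$, with $f_{0}\in V$ its leading form (possibly zero), the proper transform is generated near $x$ by the elements $\wti{f}=t^{-dg_{s-1}^{0}}\rho^{*}f=f_{0}+t\,(\cdots)$, so that $\wti{I}_{x}\bmod(t)=\cexe_{p}\cdot\OO_{E,x}$ is the pull‑back of the leading ideal. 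Combining the completion‑invariance of \ref{WF0 fact}, the behaviour of the invariant under adjoining a formal power series extension, resp. a single regular parameter, from \ref{EF1 lemma} and \ref{EC1 cor}, and the fact — valid here because of the non‑degeneracy of $V$ recorded below — that restricting $\wti{I}_{x}$ to the regular divisor $E$ and then restoring the normal direction cannot lower the invariant, one obtains
\[
\inv_{\wti{\UU}}(\wti{I})(x)\ \lqs\ \inv_{\PP}(\cexe)(p)+\DIF\bigl(\ell_{s-1}\bigr),
\]
the $\DIF$‑term of \eqref{518bis} reconciling the ambient dimensions of $\wti{\UU}$, $E$ and $\PP$, the normal direction $t$ offsetting the ``ambient'' normalisation of the invariant on $\PP$ recorded in \ref{R1 remark} and \ref{EED1 def}.

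\emph{Invoking \ref{prop INV gdown}.} The triple $(\PP,V,d)$ satisfies the non‑degeneracy hypothesis \ref{Hypo H2}: the vanishing of $\LL_{i}(V)$ on each weighted projective sub‑champ $\RP_{i}\hookrightarrow\PP$ is precisely \ref{indu hypo}.(F.3) for the stage‑$(s-1)$ filtration together with its analogue \ref{HP} at the earlier stages of the inductive construction of \S\ref{sec II INV}. Hence, after \ref{lem sub-champ} has reduced matters to $\si=s-1$, \ref{prop INV gdown} gives, for our geometric point $p$,
\[
\inv_{\PP}(\cexe)(p)\ <\ \bigl(d,\ell_{0}-\ell_{s-1},g_{1},\ell_{1}-\ell_{s-1},\dots,g_{s-1},0,\underline{0}\bigr)\ =\ \inv_{\UU}(I)-\DIF\bigl(\ell_{s-1}\bigr),
\]
the right‑hand identity being a direct unwinding of the definitions. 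Since adding $\DIF(\ell_{s-1})$ preserves a strict inequality — it leaves the $g$‑entries fixed and merely shifts the $\ell$‑entries — combining this with the previous display yields $\inv_{\wti{\UU}}(\wti{I})(x)<\inv_{\UU}(I)$, which is the assertion.

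\emph{Main obstacle.} Everything of substance lies in the inequality of the second paragraph. One must extract from the chart description of the smoothed weighted blow up of \cite{mp1} that the proper transform localised at $x$ reduces, modulo the exceptional parameter, to the leading ideal pulled back from $\PP$, and that — given the non‑degeneracy of $V$ — the passage from $\OO_{\PP,p}$ through $\OO_{E,x}$ to $\OO_{\wti{\UU},x}$ changes the invariant by exactly the indicated $\DIF$‑term and no more; this is the single place where the geometry of the weighted blow up, rather than the purely linear‑algebraic content of \ref{prop INV gdown}, is used, and where the codimension bookkeeping of \ref{R1 remark}, \ref{EED1 def} and \ref{MD1 defn} must be carried out with care. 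Checking that \ref{Hypo H2} for $(\PP,V)$ is literally what the induction of \S\ref{sec II INV} maintains is a secondary, routine point.
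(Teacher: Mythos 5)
Your overall route is the paper's: restrict the proper transform to the exceptional divisor, identify that restriction with the ideal generated by the leading forms of $I$ along $F^{\bullet}(I)$, check the non-degeneracy \ref{Hypo H2} from the inductive hypothesis (F.3), and quote \ref{prop INV gdown}. The arithmetic at the end (adding $\DIF(\ell_{s-1})$ of \eqref{518bis} to both sides of a lexicographic inequality, and the identification of the bound in \ref{prop INV gdown} computed on the fibre with $\inv_{\UU}(I)-\DIF(\ell_{s-1})$) is fine. But the proof has a genuine gap exactly where you flag your ``main obstacle'': the displayed inequality $\inv_{\wti{\UU}}(\wti{I})(x)\lqs \inv_{\PP}(\cexe)(p)+\DIF(\ell_{s-1})$ --- the paper's $\inv_{\exe}\bigl(\wti{I}|_{\exe}\bigr)(x)\gqs\inv_{\wti{\UU}}(\wti{I})(x)$ --- is the entire content of \ref{WF3 fact} beyond \ref{prop INV gdown}, and you assert it by appeal to a blanket principle (``restricting to the regular divisor $E$ and then restoring the normal direction cannot lower the invariant'') together with chart computations and applications of \ref{EF1 lemma}/\ref{EC1 cor} that you do not carry out. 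No such monotonicity statement is available in the paper, and it is not credible entry-by-entry: \ref{EF1 lemma} and \ref{EC1 cor} govern adjoining smooth parameters (regular maps), not passing to a divisor cut out by the exceptional parameter $t$, and while the odd ($g$-type) entries behave well under restriction, the even ($\ell$-type) entries are controlled by the block cardinalities $c_i$, which a priori change when one restricts to a sub-champ.

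This is precisely the point the paper handles with items (ii-bis) and (iii-bis) of \ref{lem sub-champ}, applied to the inclusion $\exe\hookrightarrow\wti{\UU}$ and used in tandem with the refined items (i)--(iii) of \ref{prop INV gdown}: one does not prove that the divisorial invariant dominates the ambient one in general, but rather that the specific strict drop produced by \ref{prop INV gdown} on the divisor (some $g_i$ or $c_i$ comparison in its precise form) forces the ambient invariant at $x$ below $\inv_{\UU}(I)$, because an ambient $c_i(p)$ is at least the corresponding $c_i$ computed on the sub-champ whenever the earlier entries agree. You invoke \ref{lem sub-champ} only to reduce to $\si=s$ inside \ref{prop INV gdown}, which is internal to that proposition's own proof, and never where it is actually needed here. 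To close the gap you should replace the unproved monotonicity claim by this mechanism (or prove an equivalent comparison of the ambient and divisorial block data at $x$), and, in the \emph{ad infinitum} case, justify carefully that the restriction of $\wti{I}$ to $\exe$ is pulled back from the closed fibre $\PP$, i.e.\ that the limiting leading forms are constant along the centre --- a consequence of (F.2) for the limit filtration that deserves an explicit argument rather than a parenthesis.
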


\begin{proof}
	Upon clearing denominators the blocks of the filtration have weights $a^{i}$, and we have a
	$\big(\quoziente{A}{F^{>0}}\big)$-module,
	\[
		\bar{I} := I \mod F^{>da^0}
	\]
	such that if $\II$ is the resulting sheaf of ideals on the
	associated weighted projective champ, equivalently the exceptional divisor 
	\begin{tikzcd}[cramped, sep=small]
		\exe \ar[r, hook] 
			&\wti{\UU},
	\end{tikzcd}
	then,
	\[
		\restr{\wti{I}}{\exe} = \II.
	\]
	Consequently we can conclude by \ref{prop INV gdown}
	provided that
	\[
		\inv_{\exe}			\bigl( \,
								\wti{I} 	\, 
								\big|_{\exe}
							\, \bigr)	
							(x) \geq 
		\inv_{\wti{\UU}}	\bigl( \,
								\wti{I} 
							\, \bigr)
							(x)
	\]
	at closed geometric points $x$. As far as the odd entries of the invariant are concerned, \cfr the proof of \ref{prop INV gdown}, this is clear. There is, however, need for caution at the even entries which is provided by items (ii-bis) \& (iii-bis) of \ref{lem sub-champ}, which are satisfied for the inclusion
	\begin{tikzcd}[cramped, sep=small]
		\exe \ar[r, hook] &\UU
	\end{tikzcd}, \ie replace $\QQ$ by $\exe$ in \op and the values of $c_i$ on the ambient space by their value on $\UU$.
\qedhere \end{proof}

Plainly, therefore, the algebraicity or otherwise of the filtration
$F^{\bullet}$ of $\what{A}$ of \ref{WF0 fact} should \ref{ad infinitum}
occurs is the only obstruction to constructing a resolution of singularities for local rings from the invariant, and to address this problem we will proceed from varieties over a field to spectra of complete local rings by way of a particular instance of the relative invariant, to wit:
\begin{construction}\label{WC1 constru}
	Let $\quotient{\v}{K}$ be a smooth affine scheme of dimension $m$ over a field $K$ of characteristic $0$ and let $\CP_{V/{K}}^{n}$	be the sheaf of $n$-jets of \cite[16.7]{egaIV3}  then, for any map
	\begin{tikzcd}[cramped, sep=small]
		\tau : T \ar[r] &\v
	\end{tikzcd}
	from a scheme $T$, we have a formal scheme equipped with a projection,
	\begin{equation}\label{W2 eq}
		\begin{tikzcd}
		\v 
			&&\GP_{T} := 	\mathrm{Spf} 
							\left(
								\lim\limits_{\xleftarrow[\; n \;]{\null}} \tau^{*}\CP^n_{V/K}
							\right)
			\ar[ll, "\pr"] \ar[dd, "\pi"]
			\\
			\\
			&&T \ar[uu, bend right, "\sigma"']
		\end{tikzcd}
	\end{equation}
	whose trace is a regularly embedded section $\sigma$ - in fact $\GP_T$ is the completion of the graph of $\tau$. In particular if $T=\v$ and $I$ is an ideal on $\v$ then we have 
	\begin{equation}\label{new1}
		\INV_{\GP_{\v}/\v}(\pr^{*}I)(x) = \inv_{\Gamma(\v)}^{!}(I)(x)
	\end{equation}
	where the latter is the invariant of \ref{EED1 def}, so, to reiterate, their difference with the invariant $\inv_{\v}(I)(x)$ of \eqref{eq invIA} is $\DIF(\defc)$ of \eqref{518bis}, where
	\begin{equation}
		\defc = \mathrm{Tr}\deg_K K(x) = \dim\v - \dim \OO_{\v,x}.
	\end{equation}
\end{construction}

In light of \ref{WF1 fact} \& \ref{WF2 fact}, we therefore make,

\begin{fact/defn}\label{WFD1 f/d}
	Let everything be as in \ref{WC1 constru}, then for 
	\begin{tikzcd}[cramped, sep=small]
		\tau : T \ar[r] &\v
	\end{tikzcd}
	a map from a Noetherian scheme $T$, we define,
	\begin{equation}\label{W2bis eq}
		\begin{tikzcd}[row sep=4pt]
			\inv_{T}^{!}(I):T	 	
			\ar[r]
				&\Q_{\gqs 0}^{2m} 
				\; : \; 	
					t
					\ar[r, mapsto]
						&\INV_{\GP_T/T} (\pr^{*}I)(t)
		\end{tikzcd}
	\end{equation}
	so by \ref{WF2 fact} $\inv_{T}^{!}(I)$ is \usc$\,$and equal to
	$\tau^{*} \inv_{\v}^{!}(I) = \inv_{\Gamma(\v)}^{!}(I)$ of \ref{EED1 def}.
\end{fact/defn}

The next step is complete local rings and weighted blow ups of their spectra, by way of,

\begin{variant}[of \ref{WC1 constru}]\label{WC2 constru}
	Let $W$ be the spectrum of a complete local regular Noetherian ring (\eg the completion around a not necessarily closed point of $\v$ of \ref{WC1 constru}) then by 
	\cite[28.3]{commringmatsu} 
	we can choose a coefficient field $K$ and coordinates $x_1,...,x_m$ such that
	\begin{tikzcd}[cramped, sep=small]
		W \ar[r, "\sim"] &\spec K \llbracket x_1, ..., x_m \rrbracket.
	\end{tikzcd} Now consider the diagram of rings,
	\begin{equation}\label{W3 eq}
		\begin{tikzcd}
		K\llbracket y_1, ..., y_m \rrbracket 
				\ar[r]
				&K\llbracket x_1, ..., x_m, y_1, ..., y_m \rrbracket 
				\\
				&K\llbracket x_1, ..., x_m \rrbracket 
				\ar[u]
		\end{tikzcd}
	\end{equation}
	so, for example, in the situation that
	\begin{tikzcd}[cramped, sep=small]
		W \ar[r] &\v 
	\end{tikzcd}
	arises from completing $\v$ of \ref{WC1 constru} in a $K$-point, the ring in the top right hand corner is $\Gamma \bigl( \GP_W \bigr)$ in the notation of \eqref{W2 eq}. We are, however, at liberty to apply the functor $\spec$ to \eqref{W3 eq} to get,
	\begin{equation}\label{W4 eq}
		\begin{tikzcd}
						&&&\GP_W 	\ar[dl, hook', "i"']	
									\ar[dddl, bend left, "\pi_W"]
		\\
			W							
				&&P 			 	\ar[ll, "\pr"']	
									\ar[dd, "\pi"]		
		\\
		\\
				&&W					\ar[uu, "\sigma=\Delta", bend left]
		\end{tikzcd}
	\end{equation}
	wherein the distinctions with \eqref{W2 eq} in the case that $W$ comes from $\v$ are: 
	\begin{equation}\label{W4bis eq}
		\begin{split}
		\text{(a)}\; 
		&\text{$\GP_W$ is the completion of $P$ in the diagonal 
			$
			(x_i - y_i | 1 \lqs i \lqs m ).
			$
			} 
		\\
		\text{(b)}\; 
		&\text{Nevertheless both the projections of in \eqref{W2 eq} and \eqref{W4 eq} are projections}
		\\
		&\text{to schemes  with $\pr$ of \eqref{W4 eq} the continuous extension of $\pr$ of \eqref{W2 eq}.}
		\end{split}
	\end{equation}
\end{variant}

Now in the first instance we can profit from these observations to extend the definition of $\inv^{!}$ to ideals $\JJ$ of $W$, \ie exactly as in \eqref{W2bis eq} but for 
\begin{tikzcd}
	T \ar[r] &W,
\end{tikzcd}
\begin{equation}\label{W5 eq}
	\inv_{T}^{!}(\JJ) := \INV_{\GP_T/T}\bigl( i^{*}\pr^{*} \JJ \bigr)
\end{equation}
with for $i,\,\pr$ as in \eqref{W4 eq}, and, of course, this is compatible 
with \ref{EED1 def}, resp. \eqref{W2bis eq} 
by \ref{WF2 fact} if 
$T=W$, resp.
$\JJ$ were pulled back from $\v$. Consequently \ref{WF1 fact} applies and $\inv^{!}$ is \usc$\,$ on $W$ irrespectively of whether $\JJ$ is pulled back from something of finite type or not. The risk, however, is that we lose the possibility of having a good construction of the relative invariant, and whence the \usc$\,$ once we start making weighted blow ups of $W$. To get around this suppose a weighted centre with blocks $X_0,...,X_s$ and weights
$a_0,...,a_s$ 
is given to which we add (a possibly empty) block $Y$ to obtain a system of coordinates on $W$ and identify $\Gamma(W)$ with the completion in the origin of the ring of functions on,
\[
\begin{tikzcd}
	A:= \spec K[X_0,...,X_s,Y] \ar[r, "\sim"]
		&\A_K^{m}
\end{tikzcd}
\] 

Now form the smoothed weighted blow up 
\begin{tikzcd}[cramped, sep=small]
\rho: \buA \ar[r] &A
\end{tikzcd}
in the blocks $X_i$ with weights $a_i$ (so $\wti{\UU}$ of \ref{WF3 fact} would be the completion of $\buA$ in the exceptional divisor) to get a diagram,
\begin{equation}\label{W6 eq}
	\begin{tikzcd}[row sep=1.5cm, column sep=1cm]
	\buA
	&\GP_{\buA}:=\mathrm{Spf} 	
		\bigg(
			\lim\limits_{\xleftarrow[\; n \;]{\null}}
			\CP^n_{\buA/K}
		\bigg)
	\ar[l, "\pr"] \ar[d, "\pi"']
	\\
	&\buA			
	\ar[u, bend right, "\sigma = \text{ zero-section}"']
	\end{tikzcd}
\end{equation}
wherein it goes without saying that, even though the diagonal fails to be an embedding, the jets of \cite[16.7]{egaIV3} are well defined on any Deligne-Mumford champ because of their étale local nature. Finally observe that the smoothed weighted blow up 
\begin{tikzcd}[cramped, sep=small]
\rho: \buW \ar[r] 	&W
\end{tikzcd}
in the said blocks is just the base change,
\begin{equation}\label{W6bis eq}
	\begin{tikzcd}[row sep=1.5cm, column sep=1.5cm]
	\buA
	\ar[d, "\rho"']		
		&\buW
		\ar[d, "\rho"]
		\ar[l]
	\\
	A
		&W
		\ar[l]
	\end{tikzcd}
\end{equation}
so that on base changing the projection $\pi$ of \eqref{W6 eq} we get a diagram,
\begin{equation}\label{W7 eq}
	\begin{tikzcd}[row sep=2.5cm, column sep=2.5cm]
		\buW
		\ar[d, "\rho"']
			&\GP_{\buW}
			\ar[l, "\pr"]
			\ar[ld, "\pr"]
			\ar[d, "\pi_{\buW}"']
		\\
		\buA
			&\buW
			\ar[u, bend right, "\sigma"']
	\end{tikzcd}
\end{equation}
wherein the existence of the horizontal arrow results from the horizontal arrow in \ref{W4 eq}, equivalently \ref{W4bis eq}.(a).

At this juncture, in line with \ref{WD1 obs/defn}, the theory of the relative invariant applies étale locally to the (formally) representable map $\pi_{\buW}$ and we conclude,

\begin{fact/defn}\label{WFD2 f/d}
	Let $\II$ be a sheaf of ideals on the weighted smoothed blow up $\rho: \buW \xrightarrow{\quad} W$ of \eqref{W6bis eq} and define at a geometric (but not necessarily closed) point $w: \mathrm{pt} \xrightarrow{\qquad} \buW$,
	\begin{equation}\label{W8 eq}
	\inv_{\buW}^{!}\big(\II\big)(w) := \inv_{\GP_{\buW}/\buW} \big( \pr^{*} \II \big)(w)
	\end{equation}
	then in the moduli $|\buW|$ of $\buW$ (\ie the projectivisation of the graded algebra which defines the weighted rather than smoothed weighted blow up) $\inv_{\buW}^{!}$ is \usc.
	\end{fact/defn}

Better still the discussion also reveals that we can make numerous improvements to \ref{WF3 fact} to wit:

\begin{corollary}\label{WC0 cor}
	Let $A$ be a complete regular ring of characteristic zero, 
	$W = \spec A$, $I \neq A$ an ideal of $A$, $F^{\bullet}(I)$ the filtration of \ref{WF0 fact}, 
	\begin{tikzcd}[cramped, sep=small]
		\buW \ar[r] &W
	\end{tikzcd}
	the associated smoothed weighted blow up of \cite[I.iv.3]{mp1} and $x$ a closed point, then:
		
		(i)
		For all $\pri \in W$, \[\inv_{W}^{!}(I)(\pri) \lqs \inv_{W}^{!}(I)(x).\]
		
		(ii)
		If $\wti{I}$ is the proper transform of $I$ on $\buW$ then for all geometric point $w$ of $\buW$,
		\[ \inv_{\buW}^{!}(\wti{I})(w) < \inv_{W}^{!}(I)(x).\]
		
		(iii)
		If $X_0,...,X_s$ are the blocks defining the filtration $F^{\bullet}(I)$ then the sub-scheme $X_0=...=X_s=0$ is exactly
		\begin{equation}\label{W9 eq}		
			\what{Z} = 	\left\lbrace \,
						\pri \in W \, \big| \, 
						\inv_{W}^{!}(I)(\pri) = \inv_{\buW}^{!}(I)(x)
						\, \right\rbrace
		\end{equation}
\end{corollary}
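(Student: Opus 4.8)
The plan is to establish the three items in order, leaning on the upper semi-continuity of $\inv_W^{!}$ already secured by \ref{WF1 fact} through the construction of \ref{WC2 constru} (cf. the discussion following \eqref{W5 eq}). Item (i) is then immediate: $W=\spec A$ is local with unique closed point $x$, so every $\pri\in W$ specialises to $x$, and since $\{\,\inv_W^{!}(I)\gqs \inv_W^{!}(I)(\pri)\,\}$ is closed and contains $\pri$ it contains $x$. In particular $\inv_W^{!}(I)(x)$ is the maximum of $\inv_W^{!}(I)$ on $W$, and it equals $\inv_A(I)$ because $\defc_x=0$ in \ref{EED1 def}; this is the value on the right of \eqref{W9 eq}.

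For (ii) I would separate closed from non-closed geometric points of $\buW$. As $\rho\colon\buW\to W$ is proper and $W$ is local, the image of any closed point of $\buW$ is a closed point of $W$, hence is $x$; since $x\in\what{Z}$ this puts every closed geometric point $w$ on the exceptional divisor $\exe=\rho^{-1}(\what{Z})$. For such $w$, translating between $\buW$ and the formal object $\wti{\UU}$ of \ref{WF3 fact} by way of \ref{WD1 obs/defn} and the completion-invariance of \ref{WF0 fact} (and using $\defc=0$ at a closed point) gives $\inv_{\buW}^{!}(\wti{I})(w)=\inv_{\wti{\UU}}(\wti{I})(w)$, while $\inv_{\UU}(I)=\inv_A(I)=\inv_W^{!}(I)(x)$; thus \ref{WF3 fact} — whose proof runs through \ref{prop INV gdown} and the even-entry comparison of \ref{lem sub-champ} — yields $\inv_{\buW}^{!}(\wti{I})(w)<\inv_W^{!}(I)(x)$. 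A non-closed geometric point $w$ admits a closed point $w_{0}$ in the closure of its image ($\buW$ being quasi-compact over $W$), and the upper semi-continuity of $\inv_{\buW}^{!}$ on $|\buW|$ from \ref{WFD2 f/d} then gives $\inv_{\buW}^{!}(\wti{I})(w)\lqs \inv_{\buW}^{!}(\wti{I})(w_{0})<\inv_W^{!}(I)(x)$, which finishes (ii).

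Turning to (iii), the inclusion of $\{\pri:\inv_W^{!}(I)(\pri)=\inv_W^{!}(I)(x)\}$ into $\what{Z}$ falls out of (ii): $\rho$ is an isomorphism over $W\setminus\what{Z}$ carrying $\wti{I}$ to $I$, so for $\pri\notin\what{Z}$ and $w$ a geometric point above it one has $\inv_W^{!}(I)(\pri)=\inv_{\buW}^{!}(\wti{I})(w)<\inv_W^{!}(I)(x)$. For the reverse inclusion note that $\what{Z}=V(X_0,\dots,X_s)$ is irreducible (the $X_{ij}$ being part of a regular system of parameters, $A/(X_0,\dots,X_s)$ is regular local), with generic point $\eta$ and closed point $x$; by (i) and upper semi-continuity $\inv_W^{!}(I)(\eta)\lqs\inv_W^{!}(I)(\pri)\lqs\inv_W^{!}(I)(x)$ for every $\pri\in\what{Z}$, so it suffices to prove the single equality $\inv_W^{!}(I)(\eta)=\inv_W^{!}(I)(x)$. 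Here $\what{\OO_{W,\eta}}\cong\kappa(\eta)\llbracket \bar X_{0},\dots,\bar X_{s}\rrbracket$ is $c:=c_{0}+\dots+c_{s}$-dimensional, the images of the blocks form a regular system of parameters, the complementary coordinates become units, and $\defc_{\eta}=\dim\what{Z}=m-c$; one must check that the inductive construction of $F^{\bullet}(I)$ and $\inv_A(I)$ of \ref{start INDUCT}--\ref{indu/def INV Fs} runs verbatim for $(\what{\OO_{W,\eta}},\what{I_{\eta}})$ — the leading forms of $I$ involving at every stage only the blocks $X_0,\dots,X_s$, by \ref{indu hypo}.(F.2) — and so returns $\inv_A(I)$ with each even entry $\ell_i$ lowered by $\defc_{\eta}$. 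By \ref{EED1 def} this gives $\inv_W^{!}(I)(\eta)=\inv_A(I)=\inv_W^{!}(I)(x)$, and (iii) follows.

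The hard part is precisely this last verification, i.e. that localising at $\eta$ — or, more generally, at any prime containing the blocks — does not disturb the construction except through the count of free variables. It is the content of \ref{EEC1 claim}, and I would reuse its proof: the \'etale-locality of \ref{WF0 fact}, the power-series-extension formula of \ref{EF1 lemma} (and the map version \ref{EC1 cor}), and the base-change stability of the dimensions \eqref{W00 eq}, \eqref{W000 eq} together force the blocks, the weights $g_i$ and the shifted counts $\ell_i+\defc$ to be read off identically in $A$ and in $\what{\OO_{W,\eta}}$; one then proceeds by induction on $s$, observing that the perturbations of Case \ref{case A} and the $\mm$-adic convergence of Case \ref{case B} only ever involve $X_0,\dots,X_s$ and never the complementary coordinates. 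Everything else in the corollary is bookkeeping around \eqref{518bis}.
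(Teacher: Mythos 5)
Your items (i) and (ii) are exactly the paper's argument: (i) is the upper semi-continuity of $\inv^{!}$ on the local scheme $W$ (\ref{WFD1 f/d} as extended in \eqref{W5 eq} \emph{et seq.}), and (ii) is \ref{WF3 fact} on the completion along the exceptional divisor, propagated to all geometric points by the \usc of \ref{WFD2 f/d} together with properness of $\rho$; your easy inclusion in (iii) (a maximal point off the centre would survive the blow up and contradict (ii)) is also the paper's. Where you genuinely diverge is the hard inclusion of (iii): the paper disposes of it in one line, citing \ref{F/P madic}, the relative case (B) \ref{CB relative}, and lower semi-continuity of the $c_i$, whereas you use irreducibility of the centre plus \usc to reduce to the single comparison $\inv_{W}^{!}(I)(\eta)=\inv_{W}^{!}(I)(x)$ at the generic point $\eta$ of the centre, and then verify it directly in $\what{\OO}_{W,\eta}\cong\kappa(\eta)\llbracket X_0,\dots,X_s\rrbracket$. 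That reduction is legitimate and is a clean way to organise the bookkeeping with $\DIF(\defc_\eta)$ of \eqref{518bis} and \ref{EED1 def}; note that \ref{EEF1 fact} itself would only give constancy on a dense open of the centre, so your direct verification at $\eta$ really is carrying the weight, as you say.

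One point in that verification is under-justified as written. You appeal to \ref{indu hypo}.(F.2) for "the leading forms involve only the blocks", but (F.2) only controls $I$ modulo $F^{>dg^{0}}$ at the closed point; it does not exclude monomials $X^{E}Y^{Q}$ hidden in the tail with block-weight $\wt(E)<dg^{0}$ but $\wt(E)+\abs{Q}>dg^{0}$, and such a monomial would lower the weight, indeed possibly the multiplicity, of $I_\eta$ once the $Y$'s become units at $\eta$, so the construction would not "run verbatim". What rules this out is precisely the \emph{ad infinitum} occurrence of case (B): for every $H\in\Theta_{s-1}$ one has $I\subseteq F^{Hdg^{0}}(H)$ in coordinates converging, \eqref{eq mad1}--\eqref{eq mad3}, to the limit blocks, so letting $H\to\infty$ every monomial of (a generator of) $I$ in the limit coordinates has block-weight $\gqs dg^{0}$; this, together with the persistence of the non-degeneracy (F.3) under the base change $k\rightarrow\kappa(\eta)$ (the stability built into \ref{HP}/\ref{lem L} and recorded in the "better still" of \ref{EEC1 claim}), is exactly the content of \ref{F/P madic} and relative case (B) that the paper cites. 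So your proof is correct once this step is anchored to \ref{F/P madic} rather than to (F.2) alone; everything else is the same machinery as the paper, differently packaged.
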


\begin{proof}
	Since $A$ is a local ring, \ref{WC0 cor}.(i) is just the \usc of $\inv^{!}$ in \ref{WFD1 f/d} as extended to arbitrary ideals of $A$ in \eqref{W5 eq} \emph{et seq. }. Similarly we already know \ref{WC0 cor}.(ii) after completing the exceptional divisor 
	\begin{tikzcd}[cramped, sep=small]
		\cexe \ar[r, hook] &\buW
	\end{tikzcd}
	by \ref{WF3 fact} and since $\inv_{\buW}^{!}(\wti{I})$ is also \usc in the Zariski topology of the moduli by \ref{WFD2 f/d}, we have it everywhere since $\rho$ is proper. Consequently if in \ref{WC0 cor}.(iii) $\what{Z}$ were not contained in $X_0=...=X_s=0$, then by the \usc of \ref{WFD2 f/d} we would have the absurdity that the invariant would not go down. Conversely the inclusion of
	$X_0=...=X_s=0$ in $\what{Z}$ is essentially automatic from \ref{F/P madic},  case (B), \ref{CB relative}, and the lower semi-continuity of the $c_i$.
\qedhere \end{proof}

All of which can be combined to establish,

\begin{fact}\label{EE1 fact}
	Let $A$ be an excellent regular local ring of characteristic zero, $\what{A}$ its completion in the maximal ideal then for every ideal $I$ of $A$ there exists a filtration $F^{\bullet}(I)$ whose completion is the filtration $F^{\bullet}(\what{I})$ of $\what{A}$ afforded, \ref{WF0 fact}, by $\what{I}= \what{A} \otimes_A I$.
\end{fact}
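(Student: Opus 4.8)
The plan is to reduce, by a standard dévissage on the inductive parameter $s$, the algebraicity of the filtration $F^\bullet(\what I)$ to a purely \emph{relative} statement to which \cref{EEC1 claim} and \cref{F4 fact} apply. The excellence of $A$ enters exactly once: it guarantees that the canonically constructed data (multiplicities, the maximal-contact spaces, the weighted centres) which a priori live only over $\what A$ are already defined over a suitable étale, or even just Zariski, neighbourhood in $\spec A$. I would first set up notation: let $\what A$ be the completion, $F^\bullet(\what I)$ the filtration of \cref{WF0 fact}, given by blocks $X_0^\bullet,\dots,X_{s-1}^\bullet$ (and possibly a residual block $Y$) with weights $g^0>\dots>g^{s-1}$, and observe that the claim is trivially true (indeed $F^\bullet(\what I)=\mm^\bullet\otimes_A\what A$) in the degenerate cases $s=0$ or when the induction terminates at \cref{step1}, since then everything is determined modulo a fixed power of $\mm$, \cfr the proof of \cref{WF0 fact}. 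So I may assume $\ell_{s-1}>0$ and, inductively, that the truncated filtration $F_{s-1}^\bullet$ together with its blocks is already defined over $A$ (this is the inductive hypothesis; the base case $s=1$ is $\mm^\bullet$, visibly algebraic).

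The second and main step is to descend the passage from $F_{s-1}^\bullet$ to $F_s^\bullet$. Here the point is to apply \cref{EEC1 claim}: since $A$ is excellent and regular, after shrinking $\spec A$ to an affine neighbourhood $V=\spec A'$ of the closed point, the completion $A_{\{y\}}$ of $A_y$ in its maximal ideal coincides, as far as the invariant and the filtration are concerned, with a regular weighted filtration $F^p(I)$ already defined on $\what{A}'$, and moreover the relevant $V_d$ enjoys the non-degeneracy hypothesis \cref{HP}. Concretely: the graded ring $\gr_{s-1}\what A$ and the space $V_{s-1}^{da^0}$ are, by \cref{WF0 fact}, base-changed from $A$; hence the weighted projective champ $\rP(\ug,\underline 1)$, the module $\LL\bigl(V_{s-1}^{d}(H)\bigr)$ of maximal-contact vector fields, and its annihilator $Z$ of \cref{cor L} are all computed over $A$ already — this is the content of \cref{F3 fact} and \cref{F4 fact} read with $\BB$ a point. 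In \textbf{Case (A)}, \cref{case A}, the new blocks $X_s^i$ are liftings (well-defined modulo $F_{s-1}^{>h g_{s-1}^i}(h)$ by \cref{cor L}.(iii), hence canonically over $A$) and the sub-induction terminates at a finite $H$; so $F_s^\bullet$ is algebraic and we continue the outer induction in $s$, which after finitely many steps reaches termination. In \textbf{Case (B)} running \emph{ad infinitum}, \cref{ad infinitum}, the successive liftings $X_{s-1}^i(h)$, $h\in\Theta_{s-1}$, form an $\mm$-adic Cauchy sequence by the estimate \eqref{eq mad2}--\eqref{eq mad3}; but \cref{F4 fact}.(iii), equivalently \cref{EEC1 claim}, shows that this convergence already takes place over $A'$ (the filtration is pulled back from $\what{A}'$ rather than genuinely requiring $\what A$), because the relevant $k$-modules $\De(H)$ of \eqref{W0000 eq} are, after shrinking $V$, trivial and the quotient \eqref{6 eq} is a vector bundle independent of $H$. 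In either case the limiting filtration is defined on a Zariski open of $\spec A$, and its completion is $F^\bullet(\what I)$ by construction, which is what we want.

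The \textbf{main obstacle} is the honest bookkeeping in Case (B): one must check that the neighbourhood $V$ of the closed point produced by \cref{EEC1 claim} (and by the finitely many applications of \cref{F4 fact}.(ii) as $H$ ranges over the discrete set $\Theta_{s-1}$) can be chosen \emph{uniformly}, independent of $H$, so that the $\mm$-adic limit is genuinely a filtration of $A'$ and not merely of $\what{A}'$. This is exactly the reason \cref{F4 fact}.(i)--(ii) were proved: $\BB^\bullet$ (here the single closed point, or rather $\spec A$ after the reduction $\BB^\bullet=\BB$) is closed and equals $\{g_s\gqs h\}$ for $h$ large, so only finitely many sub-inductive parameters matter before stabilisation, and each contributes one shrinking. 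A secondary subtlety, already flagged in the proof of \cref{EEC1 claim}, is that here $k=\quotient{A}{\bar y}$ need not embed in $A$, so we are not literally in the set-up \cref{S1 not/defn}; but the only facts used — the constancy of $\lambda_i(b)$ hence of the $g_i$, and the base-change behaviour \eqref{W00 eq}, \eqref{W000 eq}, \eqref{W0000 eq} — hold verbatim, and the difference between the Zariski localisation $\bar A_y$ and the formal localisation $A_{\{y\}}$ is precisely what \cref{F4 fact}.(iii) disposes of. Once these are in place, assembling the steps gives the filtration $F^\bullet(I)$ on (a neighbourhood inside) $\spec A$ with completion $F^\bullet(\what I)$, completing the proof.
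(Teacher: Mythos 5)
There is a genuine gap, and it sits exactly where the statement is hardest, namely when the sub-induction runs \emph{ad infinitum} (Case (B), \ref{ad infinitum}). You assert that the $\mm$-adic limit of the blocks ``already takes place over $A'$'' by citing \ref{F4 fact}.(iii) and \ref{EEC1 claim}, but neither result does this. \ref{F4 fact}.(iii) gives $M$-adic convergence inside the \emph{formal} scheme $\UU$ of \ref{S1 not/defn}, i.e.\ in a ring that is complete along the relevant ideal; $A$ itself is not $\mm$-adically complete, so a Cauchy sequence of elements of $A$ has its limit only in $\what{A}$, and nothing forces the limiting blocks, or the limiting filtration, to be induced from $A$ (this is precisely the ``genuine issue'' flagged after \ref{FA1}). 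Likewise \ref{EEC1 claim} concerns completion along $\bar{y}$ for a non-closed point $y$ and is the engine for the generic constancy \ref{EEF1 fact}; it is not a descent statement from $\what{A}$ to $A$ at the closed point. A symptom of the problem is that excellence never actually does any work in your argument: you say it enters ``exactly once'' to spread data out to a neighbourhood, but no step invokes regularity of the formal fibres. (Your Case (A) reduction is fine, since there everything is determined modulo a fixed power of $\mm$, as in the proof of \ref{WF0 fact}; and the projection trick you gesture at is the geometric alternative \ref{WA4 alternative}, which needs an \'etale-local retraction onto the centre and is not available for a general excellent $A$.)

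The paper's route is different and supplies the missing mechanism: it never tries to algebraize the coordinate blocks at all. One puts $V=\spec A$, $W=\spec\what{A}$, forms the groupoid $R=W\times_V W$, and uses excellence exactly here: $A\to\what{A}$ is regular, hence so are the two projections $s,t$, so by \ref{dim1 fact} and \ref{EF1 lemma} the two pullbacks $s^{*}F^{p}(\what{I})$ and $t^{*}F^{p}(\what{I})$ are each (after completion) the canonical filtration of $\restr{\what I}{R}$ wherever the invariant is maximal, and are therefore canonically identified; this is a descent datum for the filtration (equivalently for the smoothed weighted blow up of \ref{WC0 cor}) along the faithfully flat map $W\to V$, and one concludes by \cite[expos\'e VIII, 1.1]{sga1}. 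The descended filtration of $A$ need not be presented by coordinates of $A$ — which is why the ideal-theoretic fppf descent, rather than your step-by-step algebraization of the construction, is what closes Case (B).
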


\begin{proof}
	Put $V = \spec A$, $W= \spec \what{A}$, 
	\begin{tikzcd}[cramped, column sep=12pt]
	R:= W \times_{V} W 
	\ar[r, shift left, close, "t"]
	\ar[r, shift right, close, "s"']
	&W
	\end{tikzcd} the resulting groupoid,
	\begin{tikzcd}[cramped, sep=small]
		\rho: \buW \ar[r]	&W	
	\end{tikzcd} 
	the smoothed weighted blow up of \ref{WC0 cor} associated to $F^{p}(\what{I})$.
	Now the fibre of $R$ over a point, which in turn is cut out by the pull-back to $R$ of any system of coordinates on V, is a point 
thus although $R$ may have many connected components their dimension is at most that of $\v$, which is equally that of $W$, and only one has this maximal value. Furthermore by hypothesis
	\begin{tikzcd}[cramped, sep=small]
		W \ar[r] &\v
	\end{tikzcd} 
	is regular so this is equally true of the source $s$ and sink $t$, so by \ref{dim1 fact},
	\begin{equation}\label{fix3eq1}
		s^{*}\inv_{W}^{!}(\what{I}) 	= 
		\inv_{R}^{!}(s^{*}\what{I}) 	=
		\inv_{R}^{!}(t^{*}\what{I})		=
		t^{*}\inv_{W}^{!}(\what{I}).
	\end{equation}
	Similarly by \ref{EF1 lemma}, we have that $s^{*} F^p(\what{I})$, resp. $t^{*} F^p(\what{I})$, are (after completion) the filtration of \ref{WF0 fact} at every point where 
	$ s^{*}\inv_{W}^{!}(\what{I}) $, resp.
	$ t^{*}\inv_{W}^{!}(\what{I}) $, is maximal. Consequently by \eqref{fix3eq1}, and of course, as implied therein, 
	$s^{*} \what{I} = t^{*} \what{I} = \restr{\what{I}}{R}$,
	$s^{*} F^p(\what{I})$ and $t^{*} F^p(\what{I})$ are the filtration of $\restr{I}{R}$ at every point where 
	$\inv_{R}^{!} 	\bigl( 
						\restr{I}{R}
					\bigr)$
	is maximal. Thus we have a canonical isomorphism between $s^{*}\buW$ and $t^{*}\buW$ which is uniquely determined by its value (\eg the identity is standard birational parlance) where $\rho$ is an isomorphism, whence 
	a descent datum for $F^p(\what{I})$ with respect to the faithfully flat map 
	\begin{tikzcd}[cramped, sep=small]
		W \ar[r] &V
	\end{tikzcd}
	 so we're done by \cite[exposé VIII, 1.1]{sga1}.
\qedhere \end{proof}

An alternative in the geometric case is to appeal directly to the relative invariant in a formal neighbourhood of the diagonal, to wit:

\begin{alternative}\label{WA4 alternative}
	Let $x$ be a not necessarily closed point of a smooth variety $\v/K$ over a field of characteristic zero, $I$ an ideal of $\v$ and $F^{\bullet}(\what{I})$ the canonical filtration, \ref{WF0 fact},
	of the completion of $\OO_{\v,x}$ in its maximal ideal determined by $I$, then $F^{\bullet}(\what{I})$ is algebraic, \ie
	\ref{EE1 fact} holds for $A=\OO_{\v, x}$.
\end{alternative}

\begin{proof}
	By way of notation put $\v_{\zar} = \spec \OO_{\v,x}$, $W= \spec \what{\OO}_{\v,x}$, and 
	\begin{tikzcd}[cramped, sep=small]
		\what{Z} \ar[r, hook] &W
	\end{tikzcd}
	as in \eqref{W9 eq} then from the compatibility of \ref{WFD1 f/d} and \eqref{W5 eq}, the sub-scheme $\what{Z}$ is by \ref{WFD1 f/d} the pre-image under
	\begin{tikzcd}[cramped, sep=small]
		W \ar[r] &\v
	\end{tikzcd}
	of,
	\[
		Z := \left\lbrace \,
			\pri \in \v_{\zar} \, \big| \,
			\inv_{\v}^{!}(I)(\pri) \gqs \inv_{\v}^{!}(I)(x)
		\, \right\rbrace
	\]
	so, as the notation suggests, if $I_Z$ is the ideal of the sub-scheme then,
	\[
		I_{\what{Z}} = I_Z \otimes_{\OO_{\v, x}} \what{\OO}_{\v,x}.
	\]
	It remains to find the blocks themselves rather than just the centre, $Z$, on which they are supported. To do this it is sufficient to do 
	\ref{F/P madic} 
	$I_Z$-adically rather than $\mm(x)$-adically. If, however, we denote by the subscript \emph{ét} strict Henselisation at $x$, then in $\v_{\et}$ we can choose a projection $\pi$ and a section $\sigma$,
	\[
	\begin{tikzcd}[row sep=1.3cm, column sep=1.3cm]
		\v_{\et} 	\ar[r, "\pi"']
		&Z_{\et}	\ar[l, bend right, "\sigma"', hook]	
	\end{tikzcd}
	\]
	such that $\sigma$ is the embedding of 
	\begin{tikzcd}
		Z_{\et} \ar[r, hook, "\sigma"] &\v_{et},
	\end{tikzcd}
	so $I_Z$-adic convergence of the blocks follows from 
	\ref{F4 fact}.(iii). As such we have \ref{EE1 fact} but for a filtration $F^{\bullet}_{\et}(I)$ of $\OO_{\v_{\et}}$.
	Now make $\v$ sufficiently small in the Zariski topology such that,
	\begin{equation}\label{W10bis enumerate}
	\begin{split}
	\text{(a)}\;
	&\text{$\inv_{\v}^{!}(I)(x)$ is the maximum of the invariant over $\v$.} 
	\\
	\text{(b)}\; 
	&\text{There is a filtration $F^{\bullet}_{\et}(I)$ on a geometrically irreducible étale } 
	\\
	&\text{neighbourhood $\v' \xrightarrow{\quad} \v$ of $x$
	satisfying 
	\ref{EE1 fact} after completing } \\
	&\text{in a point $x' \in \v'$ over $x$.}
	\end{split}
	\end{equation}
	In particular, therefore, the support of the graded algebra $\gr_{F_{\et}}^{\bullet}$ is the fibre $Z'$ over $Z$, which in turn is the locus where $\inv_{\v'}(I)$ is maximal. Similarly if we consider the groupoid,
	\begin{equation}\label{W10bis eq}
	\begin{tikzcd}
		R:= \v' \times_{\v} \v'
			\ar[r, shift left, "t"]
			\ar[r, shift right, "s"']
		&\v'
	\end{tikzcd}
	\end{equation}
	$s^{*}F_{\et}$ and $t^{*}F_{\et}$ define the canonical filtration at every point in $s^{*} Z' =t^{*} Z'$, equivalently the locus where $\inv_{R}(I)$ is maximal, so by \ref{WF0 fact} they are equal, and whence \ref{EE1 fact} in the Zariski topology.
\qedhere \end{proof}

In the geometric case, we already have upper semi- continuity of the invariant in \ref{WFD1 f/d}, while in general we appeal to:

\begin{fact}\label{EEF2 fact}
	Let $I$ be an ideal in an excellent regular ring $A$, then on $\spec A$, the function
$x\mapsto\inv_{A}^{!}(I)(x)$	
	of \ref{EED1 def} is \usc on $\spec A$. 
\end{fact}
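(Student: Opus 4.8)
The plan is to deduce upper semi-continuity from the generic-constancy statement \ref{EEF1 fact} by the standard device: on a Noetherian scheme a function with the lexicographic order is u.s.c. iff each of its upper level sets is closed, and a constructible subset of a Noetherian scheme is closed exactly when it is stable under specialisation — indeed already when stable under \emph{immediate} specialisations, since any specialisation chain refines into a saturated, hence immediate, one (finite length because $\dim A = m < \infty$). So it suffices to prove two things: (1) $x \mapsto \inv_A^!(I)(x)$ is a constructible function on $\spec A$; and (2) for every immediate specialisation $z \rightsquigarrow y$ in $\spec A$ one has $\inv_A^!(I)(y) \gqs \inv_A^!(I)(z)$.

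Step (1) is quick: by \ref{EEF1 fact} the restriction of $\inv_A^!(I)$ to every irreducible closed $\overline{\{y\}}$ is constant on a non-empty open of $\overline{\{y\}}$, and a function on a Noetherian space with this property takes finitely many values with constructible fibres (Noetherian induction on closed subsets); hence every $\{\,x : \inv_A^!(I)(x) \gqs v\,\}$ is a finite union of fibres, so constructible.

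For Step (2), fix the immediate specialisation $z \rightsquigarrow y$, say with primes $\mathfrak q \subsetneq \mathfrak p$; by excellence $A$ is catenary, so $\dim A_{\mathfrak p} = \dim A_{\mathfrak q} + 1$ and $\defc_y = \defc_z - 1$ in the notation of \ref{EED1 def}. I would run through the inductive construction \ref{start INDUCT}--\ref{indu hypo} of $\inv$ one stage at a time. Each stage produces one new pair of entries — the multiplicity and $\lambda_0$ at the outset, then $(g_s,\lambda_s)$ — and each of these is upper semi-continuous: this is \ref{F1 fact} for the multiplicity, \ref{F2 fact} for $\lambda_0$, and \ref{F3 fact} together with \ref{C1 corollary} for $(g_s,\lambda_s)$, read relatively as in \S\ref{sec V APP} via the local model of \ref{EEC1 claim} (the $g$'s are genuinely $\Z_{\gqs 0}$-valued for this purpose, by the self-bounding-denominator convention \ref{MF1 fact}). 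Now either at some stage the relevant local quantity strictly increases from $z$ to $y$ — necessarily by an integer $\gqs 1$ — or it stays constant there. In the first case: the multiplicity sits in the first, $\DIF$-invariant coordinate, so a strict increase there immediately gives $\inv_A^!(I)(y) > \inv_A^!(I)(z)$; for a later stage one must balance the integer jump of a $\lambda$-entry against the simultaneous drop of the $\defc$-correction by $1$, together with the fact that the subsequent $g$-entries cannot decrease when a $c_i$ drops (\ref{prop INV gdown}, \ref{lem sub-champ}), which again forces $\inv_A^!(I)(y) \gqs \inv_A^!(I)(z)$. In the second case, all stages stay constant near $z$, so the "good" locus for each stage — where the multiplicity is constant, the module $\Lambda''$ of \ref{F2 fact}/\ref{F3 fact} is a trivial free module, and the non-degeneracy hypothesis of \ref{HP} holds — is open in $\overline{\{z\}}$ and contains $y$; shrinking $\spec A$ to a neighbourhood $V$ of $z$ that still contains $y$, the relative-invariant apparatus of \S\ref{sec V APP} (exactly as in the proof of \ref{EEF1 fact}) identifies $\inv_A^!(I)\big|_{\overline{\{z\}}\cap V}$ with a relative invariant $\invUB$, which is u.s.c. in the Zariski topology by \ref{WF1 fact}.(ii); since $\overline{\{z\}}\cap V$ is irreducible with generic point $z$, this forces $\inv_A^!(I)(w) \gqs \inv_A^!(I)(z)$ for every $w \in \overline{\{z\}}\cap V$, in particular $w = y$. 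Finally, if the sub-induction runs \emph{ad infinitum} with everything constant, \ref{F/P madic} together with \ref{F4 fact}.(iii) gives genuine equality of filtrations, hence $\inv_A^!(I)(y) = \inv_A^!(I)(z)$.

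\textbf{The main obstacle} is Step (2), and inside it two points: the bookkeeping of the $\DIF$-correction (so that an integer jump of a $\lambda$-entry is not cancelled by the drop of $\defc$ by one, using \ref{EF1 lemma}-style additivity of $\DIF$ and the monotonicity in \ref{prop INV gdown}), and the verification that the neighbourhood $V$ supplied by \ref{EEC1 claim} can be chosen to contain the specialisation point $y$ as long as the invariant has not yet strictly jumped — i.e. that the loci governing the successive stages of the construction are open in $\overline{\{z\}}$. Once that is secured, the upper semi-continuity of the relative invariant, \ref{WF1 fact}.(ii), closes the argument.
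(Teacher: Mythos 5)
Your overall architecture — generic constancy along closures (\ref{EEF1 fact}) plus monotonicity under specialisation — is essentially the paper's route via Dade's criterion \ref{EEC2 claim}, and your Step (1), together with the reduction of closedness to (immediate) specialisations, is fine. The genuine gap is inside Step (2), which is where the real content lies. Your ``first case'' — some stage jumps by an integer and you then ``balance'' this jump against the drop of the $\DIF$-correction by $1$, citing \ref{prop INV gdown} and \ref{lem sub-champ} — has no supporting mechanism: those two results describe the invariant on the exceptional divisor of a smoothed weighted blow up, not the comparison of $\inv_{A_{\mathfrak q}}(I_{\mathfrak q})$ with $\inv_{A_{\mathfrak p}}(I_{\mathfrak p})$ for $\mathfrak q\subset\mathfrak p$. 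Moreover the relative apparatus you invoke ``via the local model of \ref{EEC1 claim}'' is centred at the generic point $z$ and only valid on a neighbourhood $V$ on which the invariant is constant along $\bar z$; precisely in your jumping case the special point $y$ need not lie in that locus, so there is no relative family over a base containing both points in which your stage-by-stage \usc statements (\ref{F1 fact}, \ref{F2 fact}, \ref{F3 fact}, \ref{C1 corollary}) can be read. As written, the case analysis never actually connects the two local rings.

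The paper's proof of condition (i) of \ref{EEC2 claim} supplies the missing mechanism, and it is where excellence is genuinely used (you invoke it only for catenarity, which is beside the point): complete $A$ at the special point $\mathfrak p$; on $\spec \what{A_{\mathfrak p}}$ the extension of $\inv^{!}$ of \eqref{W5 eq} \emph{et seq.} is \usc (\ref{WF1 fact}, \cfr \ref{WC0 cor}.(i)), so its value at the closed point dominates its value at any prime $\mathfrak q'$ lying over $\mathfrak q$; and because $A$ is excellent the map $A_{\mathfrak q}\rightarrow \bigl(\what{A_{\mathfrak p}}\bigr)_{\mathfrak q'}$ is regular, so \ref{EC1 cor} (equivalently \ref{dim1 fact}) says the two invariants differ exactly by $\DIF$ of the dimension gap, and the additivity of \eqref{518bis} makes the $!$-corrections at $y$ and $z$ cancel automatically — no balancing, and no case division, is needed. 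If you replace your Step (2) by this completion-plus-\ref{EC1 cor} argument, the rest of your proposal goes through and coincides with the paper's proof.
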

The strategy follows Villamayor's exposition \cite[6.13]{vill1} of Dade's unpublished Princeton thesis, in the case of the multiplicity,
from which we plagiarise,
\begin{claim}\label{EEC2 claim}
	Let \begin{tikzcd}[cramped, sep=small]
		f: \spec A \ar[r] &\Gamma 
	\end{tikzcd} 
	be a function to a discrete ordered group then $f$ is \usc iff,
	
		(i)
		$\forall\, y \in \spec A, \; x \in \bar{y} \Longrightarrow f(x) \gqs f(y)$.
		
		(ii) 
		$\forall\, y \in \spec A, $ the set $
		\lbrace  
			x \in \bar{y} 
			\big|  
			f(x) \lqs f(y) 
			\rbrace$ 
		contains a non-empty Zariski open subset.
\end{claim}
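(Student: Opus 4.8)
The plan is to prove the two implications of the stated equivalence separately, the forward direction being essentially formal and the reverse direction being the one that carries content. First suppose $f$ is \usc, \ie for every $\gamma \in \Gamma$ the set $\{x \mid f(x) \gqs \gamma\}$ is closed. For (i), fix $y$ and set $\gamma = f(y)$; then $y$ lies in the closed set $\{f \gqs \gamma\}$, hence so does $\bar y$, which is exactly the assertion $f(x) \gqs f(y)$ for all $x \in \bar y$. For (ii), again with $\gamma = f(y)$, the set $\{x \mid f(x) \gqs \gamma'\}$ for $\gamma'$ the successor of $\gamma$ in the discrete ordered group $\Gamma$ is closed and does not contain $y$, so its complement $\{x \mid f(x) \lqs \gamma\}$ is a Zariski open set containing $y$, and intersecting it with $\bar y$ gives a non-empty (it contains $y$) Zariski open subset of $\bar y$ on which $f \lqs f(y)$.

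Conversely, suppose (i) and (ii) hold; we must show $\{x \mid f(x) \gqs \gamma\}$ is closed for every $\gamma \in \Gamma$, equivalently that $\{x \mid f(x) < \gamma\}$ is open, equivalently that it is stable under generization: if $x \in \bar y$ and $f(x) < \gamma$ then $f(y) < \gamma$. But this is immediate from (i), since $f(y) \lqs f(x) < \gamma$. So the real work is closedness, which we attack by Noetherian induction on closed subsets of $\spec A$. Let $Z := \{x \mid f(x) \gqs \gamma\}$; assuming the intersection of $Z$ with every proper closed subset is closed, it suffices to treat each irreducible component, so \wloss $\spec A$ is irreducible with generic point $\eta$. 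If $f(\eta) \gqs \gamma$ then by (i) $f(x) \gqs \gamma$ for all $x$, so $Z = \spec A$ is closed. If $f(\eta) < \gamma$, then by (ii) applied to $y = \eta$ there is a non-empty Zariski open $U \subseteq \spec A$ on which $f(x) \lqs f(\eta) < \gamma$, so $Z$ is contained in the proper closed subset $\spec A \setminus U$, and closedness of $Z$ follows from the inductive hypothesis. (That $\spec A$ is Noetherian is part of the hypothesis that $A$ is excellent regular, so the induction is legitimate, and the finitely many irreducible components are themselves closed.)

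The one genuine subtlety — and the point where the discreteness of $\Gamma$ is used — is the appeal to "the successor of $\gamma$" in proving (ii)$\,\Rightarrow$ nothing and in the forward direction: in a discrete ordered group every element has an immediate successor, so an open half-line $\{>\gamma\}$ coincides with a closed half-line $\{\gqs \gamma'\}$ and vice versa, which is what lets us freely pass between the two formulations of \usc. In the application to $\inv^{!}$ one must remember that $\inv^{!}$ takes values in $\Q_{\gqs 0}^{2m}$ with the lexicographic order, which is \emph{not} discrete; this is precisely why the self-bounding-denominators apparatus of \ref{MD1 defn}--\ref{MF1 fact} was set up, so that via the order-isomorphism $f \mapsto F$ of \ref{MF1 fact} one may replace $\inv^{!}$ by an integer-valued invariant in $\Z_{\gqs 0}^{2m}$, to which the present claim literally applies. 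I expect the verification of hypotheses (i) and (ii) for $\inv^{!}$ itself — carried out in \ref{EEF1 fact} and in the generization statement that follows from \ref{EED1 def} and \ref{WF0 fact} — to be where the real content lies; the claim above is the soft topological packaging that turns those two inputs into \usc, and its proof is the short Noetherian-induction argument just sketched.
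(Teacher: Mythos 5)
Your proof is correct and takes essentially the paper's route: the forward direction is formal, and for the converse you make the same key move — on an irreducible closed subset, (i) and (ii) force $f$ to take its generic value on a dense open, so $\{\,f \gqs \gamma\,\}$ is pushed into a strictly smaller closed subset handled by the inductive hypothesis (your Noetherian induction versus the paper's induction on dimension of closed subspaces is an immaterial difference). The only blemish is the parenthetical assertion that openness of $\{\,f < \gamma\,\}$ is \emph{equivalent} to stability under generization — false for arbitrary subsets of a Noetherian space, only valid for constructible ones — but your actual argument never uses it, so the proof stands as written.
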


\begin{proof}
	Plainly given \ref{EEC2 claim}.(i) we can replace inequality by equality  in \ref{EEC2 claim}.(ii), while the conditions are clearly necessary, and we do the converse by induction on dimension of closed sub-spaces, $Y$, which without loss of generality are irreducible of positive dimension. However by \ref{EEC2 claim}.(ii), $Y=Y' \amalg Z$ where $f$ takes the value of its generic point, $f(y)$, on $Y'$. Furthermore by \ref{EEC2 claim}.(i) $\restr{f}{Z} \gqs f(y)$ and
	\begin{tikzcd}[cramped, sep=small]
		Z \ar[r, hook] &Y
	\end{tikzcd}
	is a closed subset of smaller dimension, so $f(x) > f(y)$ is closed and everything left over takes the value $f(y)$.
\qedhere \end{proof}

At which juncture we may proceed to,

\begin{proof}[Proof of fact \ref{EEF2 fact}]
	Since the invariant has self bounding denominators, \ref{MD1 defn}-\ref{MF1 fact}, we plainly need only verify items \ref{EEC2 claim}.(i)-(ii) with $f$ replaced by $inv_A^{!}$. Now (ii) is \ref{EEF1 fact} and is valid for any regular ring while (i) follows from \ref{EC1 cor} and the upper semi-continuity of $\inv^{!}$ for complete local rings, \eqref{W5 eq} \emph{et seq.}
\qedhere \end{proof}

At which point we can move rapidly towards a conclusion by way of,

\begin{fact/defn}\label{WFD3 f/d}
	Let $\v$ be a regular excellent affine scheme of dimension $m$, $I$ an ideal of $\v$;
	$\ui \in \Q_{\gqs 0}^{2m}$ the maximum value of $\inv_{\v}^{!}(I)$ over $\v$; 
	\begin{tikzcd}[cramped, sep=small]
		\mathcal{V} \ar[r] &\v
	\end{tikzcd} 
	the smoothed weighted blow up (whose existence and uniqueness is guaranteed by \ref{EE1 fact} and \ref{WF0 fact} associated to the canonical filtration $F^{\bullet}(I)$; while for $\uq \in \Q_{\gqs 0}^{2m}$ define a modification functor
	\begin{equation}
		M_{I,\uq}(\v) :=
		\begin{cases}
			\mathcal{V},	&\text{if } \ui = \uq, \\
			\v,				&\text{otherwise;}
		\end{cases} 
	\end{equation}
	and extend $M_{I,\uq}$ to arbitrary smooth affine $\v$, \ie a disjoint union of connected components $\coprod_{\alpha}\v_{\alpha}$ by way of,
	\begin{equation}\label{DS1 eq}
		M_{I,\uq}(\v) := \coprod_{\alpha} M_{I,\uq}(\v_{\alpha}).
	\end{equation}
\end{fact/defn}

Then by \ref{WF0 fact} the modification functor $M_{I,\uq}$ is étale local, \ie if 
\begin{tikzcd}[cramped, sep=small]
	\v' \ar[r] &\v
\end{tikzcd}
is étale and $I'$ the pull-back of $I$ to $\v$ then there is a fibre square,
\begin{equation}\label{W11 eq}
	\begin{tikzcd}[row sep=0.5cm, column sep=0.5cm]
		M_{I,\uq}(\v) 
		\ar[dd]	
			&&M_{I',\uq}(\v')
			\ar[dd]	\ar[ll]
	\\
	& \qedsymbol &
	\\
		\v
			&&\v'
			\ar[ll]
	\end{tikzcd}
\end{equation}

In particular if $\XX$ is a regular excellent Deligne-Mumford champ,
$\cg$ a sheaf of ideals on the same and $\uq$ the maximum at geometric points of $\inv_{\XX}^{!}(\cg)$, \eqref{WD11 eq} \emph{et seq.}, then for 
\begin{tikzcd}[cramped, sep=small]
	\v \ar[r] &\XX
\end{tikzcd}
an étale atlas and
\begin{tikzcd}[cramped, column sep=19pt]
	R = \v \times_{\XX} \v	
		\ar[r, shift left=2pt, close, "t" ]
		\ar[r, shift right=2pt, close, "s"']
		& V
\end{tikzcd}
the implied groupoid,
\begin{equation}\label{W11bis eq}
	\begin{tikzcd}[row sep=0.5cm, column sep=0.5cm]
		M_{\cg_{R}, \uq}(R) 
			\ar[rr, shift right, "s"']
			\ar[rr, shift left, "t"]
			\ar[dd]
				&&M_{\cg_{\v},\uq}(\v)
				\ar[dd]
	\\	\\
		R
		\ar[rr, shift right, "s"']
		\ar[rr, shift left, "t"]
			&&\v	
	\end{tikzcd}
\end{equation}
is a map of groupoids in which $M_{\cg_R, \uq}(R)$ (which we may abusively
consider  unique since it's  a modification) is equally the fibre of the rightmost vertical arrow over either $s$ or $t$ by \eqref{W11 eq}, \ie the $M_{\cg_{\v}, \uq}$ patch to a smoothed weighted blow up,
\begin{equation}\label{W11bisbis eq}
	\begin{tikzcd}[column sep=1.3cm]
	M_{\cg}(\XX) \ar[r] &\XX
	\end{tikzcd}
\end{equation}
depending only on $\cg$. We therefore get our first global results, to wit:

\begin{proposition}\label{WP1 prop}
	Let $\cg$ be a (coherent) sheaf of ideals on a regular excellent Deligne-Mumford champ of characteristic zero, and define inductively a sequence of smoothed weighted blow ups in regular weighted centres by,
	\[
		(\XX_0, \cg_0) := 
		(\XX, \cg) \quad \text{and} \quad  
		(\XX_{p+1}, \cg_{p+1}) := 
		(M_{\cg_{p}}\XX_{p}, \wti{\cg}_{p}),\;
			p > 0	
	\]
	where $\wti{\cg_{p}}$ is the proper transform of $\cg_{p}$, then for $p \gg 0$, $\cg_p$ is trivial. In particular if $\cg$ is the sheaf of ideals of an irreducible embedded sub-champ, $N+1$ the smallest $p$ such that $\cg_p$ is trivial and
	\begin{tikzcd}[cramped, sep=small]
		\YY_p 	\ar[r, hook]	&\XX_p
	\end{tikzcd}
	the sub-champ cut out by $\cg_p$ then 
	if at every closed point $\XX$ has the same dimension the chain,
	\begin{equation}\label{chain eq}
		\begin{tikzcd}
		\YY_0
				&\YY_1
				\ar[l]
						& \cdots
						\ar[l]
								&\YY_{N-1}
								\ar[l]
										&\YY_{N}
										\ar[l]
		\end{tikzcd}
	\end{equation}
	is a sequence of smoothed weighted blow ups in regular centres $Z_p$ contained in the singular locus of $\YY_p$, $p < N$, such that $\YY_{N}$ is regular.
	Otherwise, \ie the dimension of $\XX$ is not constant on closed points, the same conclusion \eqref{chain eq} holds provided for each $p$ one changes the invariant to
	\begin{equation}
		\wti{\inv}_{\XX_p}(\cg_{p})(x) =
		\begin{cases}
		\inv_{\XX}^{!}(\cg_{p})(x), &x \not\in \YY_p
		\\
		\inv_{\XX}^{!}(\cg_{p})(x) + \DIF\bigl(\dim \YY - d_p(x)\bigr)
		\end{cases}
	\end{equation}
	where $d_p(x)$ is the dimension of the connected component of $\YY_p \ni x$; and then blow up in strata where $\wti{\inv}$ rather than $\inv^{!}$ is maximal.
\end{proposition}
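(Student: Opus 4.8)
The plan is to feed the étale-local modification functor $M_{\cg}$ of \ref{WFD3 f/d} into the stated induction and to control the process by the top value of the invariant. The patching discussion preceding the statement already assembles the $M_{\cg_{\v}}$ into a global smoothed weighted blow up, so I will freely work on an étale atlas and descend via \eqref{WD11 eq}.

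\textbf{Termination (first assertion).} First I would replace $\inv^{!}$ by the associated $\Z_{\gqs0}^{2m}$-valued function afforded by the self bounding denominators \ref{MD1 defn}--\ref{MF1 fact}, which lands in the well-ordered lexicographic $\Z_{\gqs 0}^{2m}$. On the Noetherian $\XX_p$ the u.s.c.\ of \ref{EEF2 fact} forces $\inv^{!}(\cg_p)$ to take finitely many values, so its maximum $\ui_p$ is attained, and by construction the $p$-th step is the blow up in the regular centre $\what Z_p$ where $\ui_p$ is achieved, \ref{WFD3 f/d} and \ref{WC0 cor}.(iii). Off $\what Z_p$ the sheaf is untouched, whereas over the exceptional divisor \ref{WC0 cor}.(ii) gives that the proper transform has invariant strictly below $\ui_p$; combined with the Zariski u.s.c.\ of \ref{WFD2 f/d} and properness of $\rho$ this yields $\ui_{p+1}<\ui_p$. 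A strictly decreasing sequence in a well-ordered set is finite, and it can only stabilise at $\ui_p=\underline0$, i.e.\ $\mult(\cg_p)\equiv 0$, i.e.\ $\cg_p=\OO$. That is the claimed principalisation.

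\textbf{The resolution chain.} Now take $\cg$ integral, the ideal of an irreducible sub-champ $\YY=\YY_0$ of codimension $c$, and assume for the moment $\XX$ (hence each irreducible strict transform $\YY_p$) is equidimensional. Off $\YY_p$ one has $\cg_p$ trivial and $\inv^{!}=\underline 0$, so $\what Z_p\subseteq\YY_p$. The point I would prove is that at $x\in\YY_p$ the normalised invariant $\inv^{!}(\cg_p)(x)$ equals the single value $(1,\dim\YY,\underline0)$ precisely when $x$ is a regular point of $\YY_p$, and exceeds it otherwise: since $\dim C_x\YY_p=\dim\YY_p$ the linear span of the tangent cone has codimension $\lqs c$, so $\mult_x\gqs 1$ and, when $\mult_x=1$, the first block satisfies $\ell_0\gqs m-c$; equality $\ell_0=m-c$ then forces $T_x\YY_p$ to have the expected dimension, hence $x$ regular, where incorporating the $\DIF$-bookkeeping of \ref{EED1 def} as in \ref{EEF1 fact} gives exactly $(1,\dim\YY,\underline0)$. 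Hence while $\YY_p$ is singular the centre $\what Z_p$ — regular, being cut out in $\XX_p$ by blocks of a regular system of parameters (\ref{WC0 cor}.(iii)) — lies in $\mathrm{Sing}(\YY_p)$; and if $\YY_p$ is regular then $\inv^{!}(\cg_p)$ is constant on $\YY_p$ (its generic point is regular by reducedness and excellence), so $\what Z_p=\YY_p$ and $\cg_{p+1}=\OO$. Thus with $N+1$ the first index making $\cg_{N+1}=\OO$, the $\YY_p$, $p<N$, are singular and $\YY_N$ is regular, so \eqref{chain eq} is a chain of smoothed weighted blow ups in regular centres $Z_p\subseteq\mathrm{Sing}(\YY_p)$ ending in the regular $\YY_N$.

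\textbf{Varying dimension, and the main obstacle.} When $\dim\XX$ is not constant on closed points the reference value $(1,\dim\YY,\underline0)$ is not well defined because the ambient $m$ varies; here I would pass to $\wti\inv$, which adds $\DIF(\dim\YY-d_p(x))$ along $\YY_p$, and observe via \ref{dim1 fact} that this restores the regular value to the constant $(1,\dim\YY,\underline0)$ while altering $\inv^{!}$ only by a quantity locally constant on $\YY_p$, so $\wti\inv$ is still Zariski u.s.c.\ and still drops strictly under the blow up in its maximal stratum; the previous paragraph then runs verbatim with $\wti\inv$ replacing $\inv^{!}$. The routine part is the well-ordering argument of the termination step; the hard part will be the middle paragraph — the identification of the maximal stratum of the (normalised) invariant with a regular subscheme of $\mathrm{Sing}(\YY_p)$ — since it requires a careful tangent-cone analysis reconciled with the $\DIF$-corrections baked into $\inv^{!}$ and $\wti\inv$, and a clean argument that an integral $\YY_p$ on which the invariant is constant is regular.
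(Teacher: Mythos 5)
Your proposal is correct and follows essentially the same route as the paper: termination from the strict decrease of \ref{WC0 cor}.(ii) combined with self bounding denominators (and the \usc needed to speak of the maximal stratum), and the containment $Z_p \subseteq \mathrm{Sing}(\YY_p)$ by observing that a regular point of $\YY_p$ has invariant exactly $(1,\dim\YY,\underline{0})$, the minimal value, so its presence in the maximal stratum would force $\YY_p$ to be regular, contradicting the minimality of $N$. The tangent-space computation you flag as the hard part is precisely the one-line assertion the paper makes at this point, so your fleshing it out (together with the locally constant $\DIF$-correction in the non-equidimensional case) is a faithful expansion rather than a different argument.
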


\begin{proof}
	Since $\inv^{!}$ goes down, \ref{WC0 cor}.(ii), under the modification
	\begin{tikzcd}[cramped, sep=small]
		M_{\cg}\XX	\ar[r]	&\XX
	\end{tikzcd}
	while preserving excellence and the invariant has self bounding denominators, \ref{WF1 fact}.(i), the only thing to check is the in particular which in turn only requires checking that $Z_p$ is contained in $\mathrm{Sing}(\YY_p),\; p < N.$
	 If, however, there were a geometric, so without loss of generality closed, point $y \in Z_p$ where $\YY_p$ was regular then the value of $\wti{\inv}_{\XX}(\cg_p)$ at $y$ would be,
	 \[
	 	\bigl(
	 		\,	1,\, 	\dim(\YY),\, 	\underline{0}\,
	 	\bigr)
	 	\in 	\Q_{\gqs 0}^{2m}
	 \]
	 and since this is equally the minimum value of $\wti{\inv}_{\XX}(\cg_p)$ , $\YY_p$ would be regular contradicting the choice of $N$.
	\qedhere \end{proof}

Arguably the good way to think about \ref{WP1 prop} is in terms of resolving rational maps, which merits:

\begin{remark}\label{EER1 rmk}
	If in \ref{WP1 prop} $\XX$ were a projective variety, $X/K$, over a field $K$ and $\LB$ an ample line bundle then given a sheaf of ideals $\cg$ there is a $n \gg 0$, such that $\LB^n \otimes \cg$ is generated by global sections, and whence $\cg$ is the indeterminacy locus of rational map
	\begin{equation}\label{4eq}
	\begin{tikzcd}
		\varphi_{\cg}: X 	
		\ar[r,dashed] 
				&\PP 	\bigl(
						\HH^{0} ( X, \LB^n \otimes \cg)					
						\bigr)
	\end{tikzcd}
	\end{equation}
	while, since $X$ is, by hypothesis smooth, every rational map
	\begin{tikzcd}[cramped, sep=small]
		\varphi: X \ar[r, dashed] & \PP_{K}^{N}
	\end{tikzcd}
	determines a unique line bundle, $L_{\varphi}$, which is equal to $\varphi^{*} \OO(1)$ in codimension 2, and a space of sections,
	\[
		\varphi^{*}
		\HH^{0} 	\bigl(
					\PP_K^N, \OO(1)
					\bigr)
		\subset
		\HH^{0}		\bigl(
					X, L
					\bigr)
	\]
	which generates the indeterminacy locus, \ie $\cg_{\varphi}L_{\varphi}$, for some sheaf of ideals $\cg_{\varphi}$. Of course this relation between ideals and rational maps may fail even for $X/K$ a scheme of finite type, albeit it suffices to replace \eqref{4eq} by
	\begin{tikzcd}[cramped, sep=small]
		\varphi: \XX \ar[r, dashed] &\mathrm{Bl}_{\cg}\,X
	\end{tikzcd}
	to maintain it in absolute generality.
	In any case the relationship between ideals and rational maps is rather tight, so we can equally think about the modification functor $M_{\cg}\XX$ as a modification functor $M_{\varphi} \XX$ for $\varphi$ a rational map, so that \ref{WP1 prop} becomes the rather satisfactory statement:

	Let $\varphi$ be a rational map on a Deligne-Mumford champ $\XX$, and define inductively a sequence of rational maps by,
	\[
		\bigl(\XX_{0}, \varphi_{0} \bigr) 
		=
		\bigl(\XX, \varphi  \bigr) 
		\qquad
		\bigl(\XX_{p+1}, \varphi_{p+1} \bigr)
		=
		\bigl(M_{\varphi_p}\XX_{p}, \wti{\varphi}_{p} \bigr)
	\]
	where $\wti{\varphi}_p$ is the proper transform of $\varphi$, then for $p \gg 0$, $\varphi_p$ is everywhere defined.
\end{remark}

\section{Excellent Resolution}
\setcounter{equation}{0}
Of course the in particular in \eqref{chain eq} gives a resolution of singularities of anything admitting an embedding in something regular, but this is not a very satisfactory hypothesis so we improve it by way of

\begin{construction}\label{WFD4}
	Let $Y$ be a connected reduced excellent affine scheme of dimension $n$, $y \in Y$ a not necessarily closed point, and $\hO_{Y,y}$ the completion of $\OO_{Y,y}$ in the maximal ideal.
	Now choose a coefficient field $L$ and a presentation,
	\begin{equation}\label{W12 eq}
		\begin{tikzcd}
			0
			\ar[r]
					&I
					\ar[r]
							&A:= L\llbracket S_1,...,S_e \rrbracket
							\ar[r]
									&\hO_{Y,y}
									\ar[r]
											&0 
		\end{tikzcd}
	\end{equation}
	where,
	\begin{equation}\label{EEL1}
		e:=e_{Y}(y) = \dim_{k(y)} \quoziente{\mm(y)}{\mm^2(y)}
	\end{equation}
	is the embedding dimension, and observe that any $2$ such presentations are related by a commutative diagram of exact sequences,
	\begin{equation}\label{W12bis eq}
		\begin{tikzcd}[row sep=1.2cm, column sep=1.2cm]
			0
			\ar[r]
					&I
					\ar[r]
							&A:= L\llbracket S_1,...,S_e \rrbracket
							\ar[r]
									&\hO_{Y,y}
									\ar[r]
											&0
		\\
			0
			\ar[r]
					&I_0
					\ar[r]
					\ar[u]
							&A_0:= L_0\llbracket T_1,...,T_e \rrbracket
							\ar[r]
							\ar[u, "{\sim}" labl]
									&\hO_{Y,y}
									\ar[r]
									\ar[u, equal]
											&0\;.
		\end{tikzcd}
	\end{equation}
	
As such $\inv_{Y}(y):= \inv_{A}(I)$ is well defined, and for $m$ the maximum over all embedding dimensions we correct this to
\begin{equation}\label{D100eq}
	\inv_{Y}^{!}(y):= 	\bigl( 
							\inv_{Y}(y) + \DIF(m - e_{Y}(y))
						\bigr)
						\times \underline{0} \in \Q^{2m}
\end{equation} with an implies block of zeroes whenever $e_{Y}(y) < m$. 

At the same time in the complete local ring $\what{\OO}_{Y,y},$ or better, and equivalently since $Y$ is excellent, in the strict Henselisation $\OO_{Y,y}^{\hh}$, we have,   
\[
	d_{Y}(y) 	:= 	\min\limits_{\mathfrak{q}} 		
						\dim\frac{\what{\OO}_{Y,y}}{\mathfrak{q}}
				=
					\min\limits_{\mathfrak{q}} 	
						\dim\frac{{\OO}_{Y,y}^{\hh}}{\mathfrak{q}}
\]
where the minimum is taken over all the minimal primes in $\what{\OO}_{Y,y}$, or equivalently ${\OO}_{Y,y}^{\hh}$, which in turn affords the invariant,
	\begin{equation}\label{W12bisbis eq}
		\inv_{Y}^{\sharp}(y) 	:= 
			\bigl( 
				\delta_{Y}(y) 	:= e_{Y}(y) - d_{Y}(y) 
			\bigr) 
			\times \inv_{Y}^{!}(y) \; \in \Q_{\gqs 0}^{2m + 1} 	
	\end{equation} 
Similarly the smoothed weighted blow up,
	\begin{equation}\label{W13 eq}
		\begin{tikzcd}
		\what{\YY} \ar[r] & \spec \hO_{Y,y}
		\end{tikzcd}
	\end{equation}
associated to the canonical filtration $F^{\bullet}(I)$ of $\spec A$ is
not only independent of the presentation, but,
\end{construction}

\begin{fact}\label{exEF1 fact}
	If 
	\begin{tikzcd}[cramped, sep=small]
		\OO_{Y,y} \ar[r] &B
	\end{tikzcd}
	is a regular map of local rings and 
	\begin{tikzcd}[cramped, sep=small]
		\buW \ar[r] &\spec \what{B}
	\end{tikzcd}
	the result of performing \eqref{W12 eq}-\eqref{W13 eq} for $\what{B}$ then $\buW$ is the pull-back along
	\begin{tikzcd}[cramped, sep=small]
		\hO_{Y,y} \ar[r]  &\what{B}
	\end{tikzcd}
	of the modification \eqref{W13 eq}.
\end{fact}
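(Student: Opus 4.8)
The plan is to reduce everything to the already-established base-change properties of the (relative) invariant and the canonical filtration. First I would observe that the construction \eqref{W12 eq}-\eqref{W13 eq} for $\what{B}$ requires choosing a coefficient field $L'$ and a presentation $0\to I'\to A'=L'\llbracket S'_1,\dots,S'_{e'}\rrbracket\to\what{B}\to 0$ with $e'=e_B$ the embedding dimension of $B$. Since $\OO_{Y,y}\to B$ is a regular map of local rings and we are in characteristic zero, $B$ is geometrically regular over $\OO_{Y,y}$, so after completing we may choose the presentations compatibly: there is a commutative square of the form \eqref{W12bis eq} relating $A\to\hO_{Y,y}$ to $A'\to\what{B}$ in which $A\hookrightarrow A'$ exhibits $A'$ as a formal power series ring $A\llbracket z_1,\dots,z_\defc\rrbracket$ over $A$ with $\defc=e_B-e_{Y}(y)$ (equivalently the relative embedding dimension), and $I'=IA'$ is the pull-back of $I$. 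This is exactly the situation of Lemma \ref{EF1 lemma}, applied to the pair $(A,I)$ and its pull-back to $B=A\llbracket z_1,\dots,z_\defc\rrbracket$.

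Next, by Lemma \ref{EF1 lemma}.(iii) the canonical filtrations are related by $F^{\bullet}(\what{I'})=F^{\bullet}(\what{I})\otimes_{\what{A}}\what{A'}$ — here $\what{A}=A$, $\what{A'}=A'$ since both are already complete — so the blocks defining $F^{\bullet}(I')$ on $\spec A'$ are the pull-backs (along $A\hookrightarrow A'$) of the blocks defining $F^{\bullet}(I)$ on $\spec A$, together with the added coordinates $z_j$ which receive weight $1$ and hence do not enter the weighted centre. Consequently the weighted centre cut out by the blocks $X_0=\dots=X_s=0$ in $\spec A'$ is precisely the scheme-theoretic pull-back of the weighted centre in $\spec A$ along the (flat) map $\spec A'\to\spec A$. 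Since the smoothed weighted blow up of \cite[I.iv.3]{mp1} is functorial for flat base change in the base along which the weighted centre is pulled back — its defining graded algebra and the smoothing (the passage to the champ making the ambient space regular) commute with such base change — the blow up $\buW\to\spec\what{B}$ is the fibre product of $\what{\YY}\to\spec\hO_{Y,y}$ with $\spec\what{B}$ over $\spec\hO_{Y,y}$, which is the assertion.

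The main obstacle I anticipate is the first step: showing that the two presentations can be chosen compatibly, i.e. that $\OO_{Y,y}\to B$ regular forces $\what{B}$ to be a formal power series ring over $\hO_{Y,y}$ in a way compatible with embeddings into power series rings over coefficient fields. In characteristic zero this is standard — one invokes the Cohen structure theorem \cite[28.3]{commringmatsu} to pick coefficient fields $L\subseteq L'$ (possible since $\what{B}$ is a regular complete local $\hO_{Y,y}$-algebra, geometrically regular over it, hence formally smooth) and then lifts a regular system of parameters — but it does require a little care because $\hO_{Y,y}$ is itself only a quotient of a power series ring, not regular, so one works with the presentations \eqref{W12bis eq} rather than with $\hO_{Y,y}$ directly, and must check that the ideal $I$ pulls back correctly. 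Everything after that is formal, being a direct appeal to Lemma \ref{EF1 lemma} and the flat-base-change functoriality of the smoothed weighted blow up already used implicitly in \eqref{W6bis eq}.
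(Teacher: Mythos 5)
Your proposal is correct and follows essentially the same route as the paper: reduce, using regularity of $\OO_{Y,y}\rightarrow B$ in characteristic zero (at worst after a base extension of the coefficient field $L$, handled as in the proof of \ref{EC1 cor} via \ref{WF2 fact}), to the case where $\what{B}$ is a formal power series ring over $\hO_{Y,y}$, and then conclude by \ref{EF1 lemma}/\ref{EC1 cor}, the pull-back of the filtration giving the pull-back of the smoothed weighted blow up. The only difference is that you make explicit the compatibility of presentations and the flat base-change functoriality of the blow up, which the paper leaves implicit.
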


\begin{proof}
	Since the map is regular then at worst after a base extension of $L$ we may, as in the proof of \ref{EC1 cor}, suppose that $\what{B} = \hO_{Y,y} 	\llbracket 
							z_1,...,z_{e}
							\rrbracket $
	is a formal power series ring with coefficients in $\hO_{Y,y}$, so this follows by either  \ref{EC1 cor} or \ref{EF1 lemma}.
\qedhere \end{proof}

Nevertheless to make everything fit together in this generality we need to descend $\what{\YY}$ of \ref{W12bisbis eq} to a modification of $\spec \OO_{Y,y}$ and establish the upper semi-continuity of $\inv^{\sharp}$.
The latter is somewhat involved for arbitrary excellent rings so it seems useful to observe that the geometric case is rather trivial, to wit:
\begin{alternative}\label{EEA1}
	Let $Y/K$ be a reduced affine scheme of finite type over a field of characteristic zero with $\OO_{Y,y}^{\hh}$ the strict Henselisation at some point $y \in Y$ then
	at the minor price of base changing, by a separable extension of $L$, we can suppose the presentation \eqref{W12 eq} arises from an étale neighbourhood 
	\begin{tikzcd}[cramped, sep=small]
	Y'	\ar[r]	&Y
	\end{tikzcd}
	around a point 
	\begin{tikzcd}[cramped, sep=small]
		y' \xmapsto{\quad} y 
	\end{tikzcd}
	with $\quotient{K(y')}{K(y)}$ étale, \ie there is an embedding
	\begin{tikzcd}[cramped, sep=small]
	Y' \ar[r, hook]	&\v'
	\end{tikzcd}
	into a smooth $K$-scheme of dimension $e + \mathrm{Tr}\deg_{K}K(y) -n$ such that after completion in $y'$,
	\begin{equation}\label{W14 eq}
	\begin{tikzcd}
	0
	\ar[r]
			&I'
			\ar[r]
					&\Gamma(\v')
					\ar[r]
							&\OO_{Y',y'}
							\ar[r]
										&0
	\end{tikzcd}
	\end{equation}
	becomes \eqref{W12 eq} upon applying $K(y') \otimes_{L} -$. In particular after replacing $Y'$ and $Y$ by appropriately small affine neighbourhoods of themselves we recognise that \eqref{W13 eq} is algebraic indeed after base changing to $K(y')$ it is the formal fibre of the proper transform,
	\begin{equation}\label{W15 eq}
	\begin{tikzcd}
	\YY'	\ar[r]	&Y',
	\end{tikzcd}
	\end{equation}
	of $Y$ in the canonical modification 
	\begin{tikzcd}[cramped, sep=small]
	\mathcal{V}		\ar[r] 		&\v 
	\end{tikzcd}
	of \ref{WFD3 f/d} associated to $I'$.
\end{alternative}	


Now observe that the leading term $\delta_Y$ in $\inv_Y^{\sharp}$ is just,
	\begin{equation}\label{W16 eq}
	\dim_{k(y)} 
		\Omega_{Y/K} \otimes k(y) 	- 
	\dim \bar{y} 					- 
	\min_{Y_0} \dim \OO_{Y_0,y}^{\hh}
	\end{equation}
	where the minimum is taken over all the components $Y_0$ of the Henselian local ring at $y$, so \eqref{W16 eq} is equally,
	\begin{equation}\label{D22eq}
		\dim_{k(y)} 
			\Omega_{Y/K} \otimes k(y) 	- 
		\min_{Y_0} \dim Y_0
	\end{equation}
	where now the minimum is taken over components $Y_0 \ni y$ on étale neighbourhood of $y$. Consequently $\delta_Y$ is the difference of an upper semi-continuous function and a lower semi-continuous one so $\delta_{Y}$ is \usc. To conclude from here that $\inv_{Y}^{\sharp}$ is \usc we require by \ref{lem Elem} to establish that $\inv_{Y}^{!}$ is \usc where $\delta_{Y}$ is constant. To this end say $ \delta_{Y}(x)=\delta_{Y}(z)$, then we may as well say that we're on an étale neighbourhood $Y'$ of a constructible set $Z$, with generic point $z$, where $\Omega_{Y/K}$ has constant rank and around $Z$ we have an embedding 
	\begin{tikzcd}[cramped, sep=small]
		Y' \ar[r, hook] 	&M
	\end{tikzcd}
	into a smooth $K$-variety of dimension $e_{Y}(x)$ for $x$ any geometric point of $Z$. Consequently for $x \in Z$, closed or otherwise,
	\begin{equation}\label{D23eq}
		\inv_{Y}^{!}(x) = 	\inv_{M}^{!}(I_{Y'}) 	+ 
							\DIF(m - \dim M)		
	\end{equation}
	so it's upper semi continuous by \ref{WFD1 f/d}.
	We now proceed to the general case via,

\begin{claim}\label{EEC3 claim}
	Let everything be as in \ref{WFD4} then $\inv_Y^{\sharp}$ satisfies Dade's conditions \ref{EEC2 claim}. More precisely, for $z \in Y$:
	
	(i)
		if $\OO_{Y,x}$ is just a characteristic zero local ring and $f$ is either $\delta_{Y}$ or $\inv_Y^{\sharp}$ then for $\bar{z} \ni x$, $f(x) \gqs f(z)$.

		(ii)	
		If $Y$ is universally catenary and J-2, \ie every reduced closed sub-scheme 
			\begin{tikzcd}[cramped, sep=small]
				Z \ar[r, hook] 	&Y
			\end{tikzcd}
			contains a non-empty Zariski open subset where it is regular then there is a non-empty Zariski open subset of points $x \in \bar{z}$ where $\delta_{Y}(x) \lqs \delta_{Y}(z)$. 

		(iii)
		If $Y$ is excellent then there is a non-empty Zariski open subset of points $x \in \bar{z}$ where $\inv_{Y}^{\sharp}(x) \lqs \inv_{Y}^{\sharp}(z)$.

		In particular if $Y$ is excellent then both $\delta_{Y}$ and $\inv_{Y}^{\sharp}$ are \usc . 
\end{claim}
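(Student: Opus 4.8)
The plan is to verify the three items of Claim \ref{EEC3 claim} in turn, and then deduce the final assertion by feeding them into Dade's criterion \ref{EEC2 claim}, exactly as that criterion was set up. Since the invariant is built from pieces with self bounding denominators, \ref{MD1 defn}--\ref{MF1 fact}, and appending a fixed leading entry $\delta_Y$ and the difference correction $\DIF$ preserves this, the hypothesis of \ref{EEC2 claim} (values in a discretely ordered group, after the translation of \ref{MF1 fact}) is met, so it suffices to establish conditions (i) and (ii) of that claim for $f=\inv_Y^\sharp$; these are precisely items (i) and (iii) of the present claim.

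For item (i), the point is that specialisation can only increase the invariant. For $\delta_Y$ this is the content of the computation already carried out in the geometric discussion above: $\delta_Y = e_Y - d_Y$ with $e_Y$ (embedding dimension, or rank of $\Omega$) upper semi-continuous and $d_Y$ (the minimal dimension of a component of the Henselian local ring) lower semi-continuous, so $\delta_Y(x)\gqs\delta_Y(z)$ for $x\in\bar z$; and the same bookkeeping works over an arbitrary characteristic zero local ring since those are the only properties used. For $\inv_Y^\sharp = \delta_Y \times \inv_Y^!$ in the lexicographic order it then suffices to treat $\inv_Y^!$, and there the inequality for $x\in\bar z$ with $\delta_Y(x)=\delta_Y(z)$ follows by passing to a presentation \eqref{W12 eq} as in \ref{WFD4}, so that $\inv_Y^!$ becomes $\inv_A(I)$ for a complete regular $A$ up to the fixed shift $\DIF(m-e_Y(y))$, and then invoking \ref{EC1 cor} (the specialisation $\OO_{A,z}\to\OO_{A,x}$ being regular in characteristic zero, with relative dimension $\defc$) together with the definition \ref{EED1 def} of the $!$-variant; the $\DIF$ terms telescope by additivity, \cfr \ref{dim1 fact}.

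For item (ii) one runs Dade's argument for $\delta_Y$, following \cite[6.13]{vill1}: fix $z$, pass to an étale neighbourhood where $\bar z$ is the closure of a constructible $Z$, and on a dense open of $Z$ arrange that $\Omega_{Y/K}$ — here read as $\Omega$ over the prime ring, or simply the module of regular differentials of the Henselian local rings — has locally constant rank, so $e_Y$ is constant along that open; simultaneously, by J-2 (applied to the reduced components of $Y$ through $z$) each such component is regular on a dense open, so its dimension is the generic one there, whence $d_Y$ is constant and $\delta_Y(x)=\delta_Y(z)$ on a non-empty open of $\bar z$. Universal catenarity is what lets one identify the dimension of the local ring with the dimension of the component, so that $d_Y$ computed Henselian-locally agrees with the global component dimension. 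For item (iii), on the open set of $\bar z$ where $\delta_Y$ is already constant — which exists by (ii) once $Y$ is excellent, hence J-2 and universally catenary — we are reduced to showing $\inv_Y^!(x)\lqs\inv_Y^!(z)$ on a dense open; but there, after shrinking, $Y'$ embeds in a smooth $K$-scheme (or the analogous ambient regular scheme, using the constancy of the embedding dimension), so \eqref{D23eq}-style, $\inv_Y^!$ equals $\inv_M^!(I_{Y'})$ up to the fixed $\DIF$ shift, and this is \usc by \ref{WFD1 f/d}, hence in particular satisfies (ii) of \ref{EEC2 claim}. Combining (i) and (iii) with \ref{EEC2 claim} gives that $\inv_Y^\sharp$ is \usc; the \usc of $\delta_Y$ follows a fortiori (it is the leading coordinate), which is the final statement.

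The main obstacle I expect is item (ii): reconciling the Henselian-local computation of $d_Y$ with a global (constructible) statement along $\bar z$. The subtlety is that $d_Y(y)$ is defined via the minimal primes of $\OO_{Y,y}^{\hh}$, not of $\OO_{Y,y}$, so one must know that the branches through $y$ spread out consistently in an étale neighbourhood and that their dimensions are controlled; this is exactly where universal catenarity and J-2 enter, and it is why the hypothesis cannot be weakened — \cfr the remarks in the introduction about \cite[7.9.3]{egaIV1}. Everything else is either a direct appeal to results already proved (\ref{EC1 cor}, \ref{WFD1 f/d}, \ref{EEF1 fact}) or the formal bookkeeping of the $\DIF$ shifts, which is routine given their additivity.
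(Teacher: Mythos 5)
Your overall architecture (Dade's criterion \ref{EEC2 claim} plus self-bounding denominators, reduction of $\inv_Y^{\sharp}$ to $\inv_Y^{!}$ on the locus where $\delta_Y$ is constant) matches the paper, but each of the three verifications has a real gap. In item (i) the engine that compares $x$ with a generization $z$ is the upper semi-continuity of $\inv^{!}$ on the spectrum of the complete regular ring of a presentation at $x$, \ie \eqref{W5 eq} \emph{et seq.} (equivalently \ref{WC0 cor}.(i)), together with the identification \eqref{D30eq} of $\inv_Y(z)$ with the invariant of $I_{\what{Y}}$ at a point $\zeta$ of the completion lying over $z$; the latter needs the conormal sequence \eqref{D25eq}--\eqref{D26eq} for the universally finite differential module of $\what{Y}$ over a coefficient field and the constancy of its rank (forced by $\delta_Y(x)=\delta_Y(z)$) to see that the induced presentation \eqref{D29eq} at $\zeta$ is a base extension, \ref{WF2 fact}, of a minimal presentation at $z$. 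Your appeal to \ref{EC1 cor} for ``the specialisation $\OO_{A,z}\rightarrow\OO_{A,x}$'' cannot replace this: no such ring map exists (the map that does exist goes the other way and is not local), \ref{EC1 cor} compares an ideal with its pull-back under a regular map and never yields semicontinuity along specialisations, and the regularity actually used in this step (of $A_z\rightarrow(\what{A}_x)_{\zeta}$) is an excellence phenomenon, not a characteristic-zero one; you never invoke the complete-local-ring \usc at all.

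In item (ii) you obtain constancy of $e_Y$ along a dense open of $\bar{z}$ from constant rank of $\Omega_{Y/K}$, ``read over the prime ring'' in general; outside the geometric case no such finitely generated module exists (nor is there a coherent sheaf computing $e_Y$), which is exactly why the paper instead bounds $e_Y(x)\lqs e_{\bar{z}}(x)+c$ with the conormal module of $\bar{z}$ (made regular via J-2), with $c=e_Y(z)$, and uses universal catenarity for $d_Y(x)=d_Y(z)+\dim_x\bar{z}$, \eqref{DL1eq}--\eqref{DL3eq}. Most seriously, in item (iii) you shrink to an embedding of an étale neighbourhood into a smooth $K$-scheme (or ``analogous ambient regular scheme'') and quote \ref{WFD1 f/d}; that is precisely the geometric alternative \ref{EEA1}/\ref{EEA2 alternative}, and for a general excellent $Y$ there is no field $K$ and no ambient regular embedding — removing that hypothesis is the whole point of this section. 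The paper's proof of (iii) instead first algebraises the canonical smoothed weighted blow up over $\spec\OO_{Y,y}$ by descent, \ref{EEF3 fact}, and then detects the open locus of $\bar{z}$ where $\inv_Y^{!}$ stays maximal through the dimensions of the strata $\YY_t$ of \eqref{C2eq} cut out by the proper transforms of the blocks; none of this appears in your proposal, so as written your argument only covers the geometric case.
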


\begin{proof}
	Consider first the behaviour of $\delta_Y$ in \ref{EEC3 claim}.(i), and observe,
	\cite{germans}, that there is a good theory of the universal finitely generated module, $\Omega_{\what{Y}/k(x)}$, of $k(x)$-derivations, and:
	\begin{equation}\label{D24eq}
		e 	= e_{Y}(x) 
			= \dim_{k(x)} 
				\Omega_{\what{Y}/k(x)} \otimes k(x)
			= \dim \Omega_{X/k(x)} \otimes k(x)
	\end{equation}
	where 
	\begin{tikzcd}[cramped, sep=small]
		\what{Y}	
		\ar[r, hook ]	
				&X = 
				\spec k(x) \llbracket x_1, ..., x_e \rrbracket
	\end{tikzcd}
	is an embedding, afforded by \eqref{W12bis eq}, with ideal $I_Y$.
	Similarly for $\zeta \in \what{Y}$ any point lying over $z$ and 
	$Z = \bar{z}$ with $\what{Z}$ the formal fibre we have an exact sequence,
	\begin{equation}\label{D25eq}
		\begin{tikzcd}[column sep=20pt]
		0
		\ar[r]
			&\quotient{I_Z}{I_Z^2} \otimes k(\zeta)
			\ar[r]
				&\Omega_{\what{Y}/k(x)} \otimes k(\zeta)
				\ar[r]
					&\Omega_{\what{Z}/k(x)} \otimes k(\zeta)
					\ar[r]
						&0
		\end{tikzcd}
	\end{equation}
	from which we obtain,
	\begin{equation}\label{D26eq}
		e_Y(z) + \dim_x \bar{\zeta} = \dim_{k(\zeta)} \Omega_{\what{Y}/k(x)} \otimes k(\zeta).
	\end{equation}
	Now appeal to the Henselian description of $d_Y$ as in \ref{W16 eq}-\ref{D22eq} to obtain,
	\begin{equation}\label{D27eq}
		\delta_{Y}(z) = 
			\dim_{k(\zeta)} 
				\Omega_{\what{Y}/k(x)} \otimes k(\zeta) 	- 
				\min\limits_{Y_0 \supset Z} \dim Y_0
	\end{equation}
	where the minimum is taken over components of $\OO_{Y,x}^{\hh}$ containing $Z$, so it's certainly the case that,
	\begin{equation}\label{D28eq}
		\min\limits_{Y_0 \supset Z} \dim Y_0 \gqs d_{Y}(x)
	\end{equation}
	whence $\delta_{Y}(z) \lqs \delta_{Y}(x)$ by \eqref{D27eq} and the upper semi-continuity of the fibres of $\Omega_{\what{Y}/k(x)}$. Better still we've done \ref{EEC3 claim}.(ii) unless $\delta_{Y}(z) = \delta_{Y}(x)$ which requires not only an identity in \ref{D28eq} but that $\Omega_{\what{Y}/k(x)}$ has constant rank $e=e_Y(x)$ along $\what{Z}$, so \emph{inter alia} $\dim_{x}\bar{\zeta}$ is independent of $\zeta$ from \ref{D26eq}. In any case if $\{ \_ \}$ denotes completion of a local ring in its maximal ideal then we have a presentation,
	\begin{equation}\label{D29eq}
		\begin{tikzcd}
		0
		\ar[r]
			&I_{\what{Y}}
			\ar[r]
				&\OO_{X, \{\zeta\} }
				\ar[r]
					&\OO_{\what{Y}, \{\zeta\}}
					\ar[r]
						&0
		\end{tikzcd}
	\end{equation}
	which by \ref{D26eq} is a base extension 
	\begin{tikzcd}[cramped, sep=small]
		k(z)	\ar[r]	&k(\zeta)
	\end{tikzcd}
	of a presentation of the form \eqref{W12 eq} of 
	$\OO_{Y, \{ \zeta \}}$, so by \ref{WF2 fact},
	\begin{equation}\label{D30eq}
		\inv_{Y}(z) 			= 
		\inv_{\what{Y}} 
			\bigl( \, 
				\what{\zeta} 
			\, \bigr)				=
		\inv_{X}(I_{\what{Y}})(\zeta)		.
	\end{equation}
	Consequently from the definitions \ref{EED1 def} and \eqref{D100eq},
	\begin{equation}
		\inv_{Y}^{!}(z)			-
		\inv_{Y}^{!}(x)			=
		\inv_{X}^{!}
			\bigl(
				I_{\what{Y}}
			\bigr)				(\zeta)-
		\inv_{X}^{!}
			\bigl(
				I_{\what{Y}}
			\bigr) 					(x)+
		\DIF 	
			\bigl(
				\dim \OO_{X, \zeta} - e_Y(z)
			\bigr)
	\end{equation}
	while 
	$\dim \OO_{X, \zeta} = e_{Y}(z)$ 
	from \ref{D26eq} under the hypothesis of 
	$\delta_{Y}(z) = \delta_{Y}(x).$ 
	Consequently 
	$\inv_{Y}^{!}(x) \gqs \inv_{Y}(z)$
	from the \usc of $\inv_{X}^{!}(I_{\what{Y}}),$
	\ie \eqref{W5 eq} \emph{et seq.} , which in turn completes 
	\ref{EEC3 claim}.(i) by \ref{lem Elem}.
	
	As to item (ii) we again begin with $\delta_{Y}$, and without loss of generality we may suppose every point of $\bar{z}$ is regular. Now consider the co-normal sheaf, to $\bar{z}$, \ie
	\[
		\cC := \quotient{I_{\bar{z}}}{I_{\bar{z}}^{2}},
	\]
	then, again, if we restrict to a small enough neighbourhood of $z$ we may suppose that $\cC$ is a bundle of rank $c$. Consequently for any $x \in \bar{z}$ with maximal ideal $\mm(x)$ in $Y$, and $\mm_{z}(x)$ along $\bar{z}$ we have an exact sequence
	\[
		\begin{tikzcd}
		\cC \otimes k(x)
		\rar
			&\quotient{\mm(x)}{\mm(x)^2}
			\rar
				&\quotient{\mm_{\bar{z}}(x)}{\mm_{\bar{z}}(x)^2}
				\rar
					&0						
		\end{tikzcd}
	\]
	which for $x$ arbitrary, resp. the particular choice of $x=z$ gives,
	\begin{equation}\label{DL1eq}
		e_{Y}(x) \lqs e_{\bar{z}}(x) + c, \;\text{ resp. }\;
		e_{Y}(z) = c	
	\end{equation}
	Similarly since $\OO_Y$ is supposed universally catenary,
	there's no difficulty in taking $Y \ni z$ sufficiently small such that $\forall \, x \in \bar{z}$,
	\begin{equation}\label{DL2eq}
		d_{Y}(x)= d_{Y}(z) + \dim_{x} \bar{z},
	\end{equation}
	so that from \eqref{DL1eq} and \ref{DL2eq} we have,
	\begin{equation}\label{DL3eq}
		\delta_{Y}(x) - \delta_{Y}(z) 
			\lqs 
		\bigl( 		
				e_{\bar{z}}(x) -  \dim_{x} \bar{z}
		\bigr),	
	\end{equation}
	and we've already cut things down so that the right hand side of 
	\ref{DL3eq} is zero, so we get \ref{EEC3 claim}.(ii).
	Now to complete the proof of (ii) for $\inv_{Y}^{\sharp}$ requires a fact of independent utility, to wit:
	\begin{fact}\label{EEF3 fact}
		Let
		\begin{tikzcd}[cramped,sep=small]
			\what{\YY} 	\rar 	&\what{Y}
		\end{tikzcd}
		be the modification \eqref{W13 eq}, then if $\OO_{Y,y}$ is excellent there is a smoothed weighted blow up
		\begin{tikzcd}[cramped, sep=small]
			\YY \rar &\spec \OO_{Y,y}
		\end{tikzcd}
		such that $\what{\YY}$ is the formal fibre of $\YY$.
	\end{fact}

\begin{proof}
	Just as in \ref{EE1 fact} we aim to descend $\what{\YY}$ to $V:= \spec \OO_{Y,y}$, so let
	\[
		\begin{tikzcd}[cramped, column sep=19pt]
			R = \what{Y} \times_{V} \what{Y}	
			\ar[r, shift left=2pt, close, "t" ]
			\ar[r, shift right=2pt, close, "s"']
			& \what{Y}
		\end{tikzcd}
	\]
	be the groupoid afforded by 
	\begin{tikzcd}[cramped, sep=small]
		\what{Y} 	\rar 	&V.
	\end{tikzcd}
	Now in a variant of \ref{EE1 fact} choose a system of parameters at $y$, \ie functions $x_i$, $1 \lqs i \lqs \dim V$, such that the sub-scheme, $\bullet$, $x_i=0$ for all $i$, has dimension $0$ at $y$ then the fibre of $R$ over $\bullet$ has dimension $0$ and is cut out by $\dim V$ functions, so, again all $R,\,\what{\YY}$ and $V$ have the same dimension. Furthermore since $\what{Y}$ is a local ring we know by what we've already established in \ref{EEC3 claim}.(i) that  
	wherever $\inv_{\what{Y}}^{\sharp}$ is maximal, $\delta_{Y}$ is maximal, and for $\pri \in \what{Y}$ we can,
	\eqref{D24eq}-\eqref{D30eq},
	without loss of generality suppose that the presentation employed in calculating $\inv_{\what{Y}}^{\sharp}$ is just \eqref{W12 eq} completed at $\pri$. 
	Thus by \ref{dim1 fact}, 
	$\inv_{R}^{\sharp} 		= 
	s^{*} \inv_{Y}^{\sharp} = 
	t^{*} \inv_{Y}^{\sharp},$ 
	and 	$s^{*}\what{\YY}$, 
	resp. 	$t^{*}\what{\YY}$, have by \eqref{exEF1 fact} the same formal fibre at every point where $\inv_{R}^{\sharp}$ is maximal. Consequently profiting from the fact that each contains an everywhere dense subset of $R$ we may reasonably identify them, to obtain a descent datum, and again conclude by \cite[VIII.1.1]{sga1}.
\qedhere \end{proof}

If not perhaps any easier, the geometric case offers an,

\begin{alternative}\label{EEA2 alternative}
	Consider the following statement whose argument is reduced $K$-varieties over a field of characteristic 0.
	
\textbf{S}${(Y)}: \quad$ For $\ui = \ui(Y) \in \Q_{\gqs 0}^{2m+1}$ the 	maximum value of $\inv_{Y}^{\sharp}$ and $y \in Y$ there is a Zariski open neighbourhood $N_{y} \ni y$ and a smoothed weighted blow up 
\begin{tikzcd}[cramped, sep=small]
	\mathcal{N}_y 	\ar[r] 	&N_{y}
\end{tikzcd}
whose formal fibre is \eqref{W13 eq} if 
$\inv_{Y}^{\sharp}(y)= \ui $ 
and the identity otherwise. 
In particular therefore if 
\begin{tikzcd}[cramped, sep=small]
	Z \ar[r, hook] 	&Y
\end{tikzcd}
is the locus $\inv_{Y}^{\sharp}=\ui$, then,
	
	(C.1) 
	for $y \not\in Z$ we can take $N_y= Y \setminus Z$ and S$(Y)$ is trivially true.
	
	(C.2)
	If $\mathcal{N}_y$ exists then it is, by definition, unique.
	
	(C.3)
	By \eqref{W14 eq}-\eqref{W15 eq} there is an étale atlas 
	\begin{tikzcd}[cramped, sep=small]
		U 	\ar[r] 		&Y
	\end{tikzcd}
	such that $S(U)$ is true at every point of $U$, so without loss of generality, we have a smoothed weighted blow up 
	\begin{tikzcd}[cramped, sep=small]
	\CU		\ar[r] 		&U,
	\end{tikzcd}
	which is everywhere the modification of S$(Y)$.

As such we can argue exactly as in \eqref{W10bis eq}, \ie for 
\begin{tikzcd}[cramped, column sep=19pt]
	R:= U \times_{Y} U
	\ar[r, shift left=2pt, close,"t"]
	\ar[r, shift right=2pt, close, "s"']
				&U ,
\end{tikzcd}
$s^{*}\mathcal{U}$ is canonically isomorphic (even equal since its birational) to
$t^{*}\mathcal{U}$ by item (C.2) deduced from the statement, $S(R)$ at $R$, and whence conclude S$(Y)$.
\end{alternative}

Irrespectively can apply \ref{EEF3 fact} in the spirit of \ref{EEC1 claim} to complete the proof of \ref{EEC3 claim}. Specifically throwing away irrelevant closed sets without comment: we have, without loss of generality, an everywhere regular irreducible closed subscheme 
\begin{tikzcd}[cramped,sep=small]
Z = \{ \bar{z} \} \ar[r, hook] &Y,
\end{tikzcd}
and by \ref{EEF3 fact} a smoothed weighted blow up 
\begin{tikzcd}[cramped,sep=small]
\YY \ar[r, hook] &Y	
\end{tikzcd}
whose formal fibre is \eqref{W13 eq}. Now if $x \in Z$, we may from the \usc of $\delta_{Y}$ suppose $\delta_{Y}(x)=\delta_{Y}(z)$, and all of \eqref{D29eq} \emph{et seq.} holds. 
As such if the symbol $\;\what{\bullet}\;$ denotes the spectrum of completion in $x$ (rather than the formal scheme completion) and $X = \spec A$ after a choice of the presentation \eqref{W12 eq} at $x$ 
then we have embeddings,
\begin{equation}\label{C1eq}
	\begin{tikzcd}
	\what{Z} 
	\ar[r, hook]
			&\what{Y}
			\ar[r, hook] 
					&X
	\end{tikzcd}
\end{equation}
together with the fibre
\begin{tikzcd}[cramped, sep=small]
	\what{\YY}:= \YY \times_{Y} Y \ar[r] &\what{Y}	
\end{tikzcd}
of our modification, which if it were trivial, \ie $Y$ is regular at $Z$, there is nothing to do. 
Otherwise from $\delta_{Y}(x)=\delta_{Y}(z)$ and the independence of \eqref{W13 eq} from the presentation as employed in \eqref{D30eq}, 
we may identify 
\begin{tikzcd}[cramped, sep=small]
	\what{\YY} \ar[r] &\what{Z}
\end{tikzcd}
with the modification associated to the filtration $F^p(I_{\what{Y}})$ associated to the ideal $I_{\what{Y}}$ in the completed local ring of $\what{Y}$ at any component of $\what{Z}$. Similarly, without loss of generality, we may equally suppose that the blocks $X_0,...,X_s$ of the filtration defining $\YY$ are defined on $\YY$, and their proper transform $\wti{X}_i, \; 0 \lqs i \lqs s$, cut out a decreasing chain,
\begin{equation}\label{C2eq}
	\YY=\YY_s, \qquad \YY_{t-1}= \YY_{t} \cap (\wti{X}_t =0),
\end{equation}
with completions $\what{\YY}_{t}$ at $x$. However 
by \eqref{D30eq} one recognises from \ref{EEC1 claim} that for any $\what{z} \in \what{Y}$ over $z$,
\begin{equation}
	\inv_{\what{Y}}^{!}(x) 			= 
	\inv_{\what{Y}}^{!}(z) 
			\; \Longleftrightarrow \;
	\dim \what{\YY}_t(x) 			= 
	\dim \what{\YY}_t(z), 
			\; \forall\, 
			0 \lqs t \lqs s,
\end{equation}
which in turn is equivalent to the condition $\dim \YY_t(x) = \dim \YY_t(z)$, which is certainly true on a Zariski open subset of $Z$. \qedhere
\end{proof}

We can put all of this together to conclude,

\begin{summary/defn}\label{ESD1 sum/d}
	Let $Y$ be an excellent affine scheme of characteristic zero, $\ui(Y) \in \Q_{\gqs 0}^{2m+1}$ the maximum value of $\inv_{Y}^{\sharp}$ then,

		(E.1)
			By \ref{EEF3 fact} every point $y \in Y$ has a Zariski open neighbourhood $N_y$ and a smoothed	weighted blow up
			\begin{tikzcd}[cramped, sep=small]
				\mathcal{N}_y \ar[r] &N_y
			\end{tikzcd}
			whose formal fibre is \ref{W13 eq}.
		
		(E.2)
		These patch to a smoothed weighted blow up
			\begin{tikzcd}[cramped, sep=small]
				\YY \ar[r] &Y.
				\end{tikzcd}
				Indeed in the notation \ref{EEA2 alternative}.(C.1)-(C.3) of the conclusion of the proof of \ref{EEC3 claim}, if $x \in N_z$, $x \in \bar{z}$, and $\inv_{Y}^{\sharp}(x) = \inv_{Y}^{\sharp}(z)$, then the formal fibre of $\mathcal{N}_z$ at $x$ is that of $z$.
		
		(E.3)
		Better still if for $\ui(Y) \lqs \uq $ in $\Q_{\gqs 0}^{2m+1}$ and $Y$ connected we define
		\begin{equation}\label{W17 eq}
		M_{\uq}(Y) := 
		\begin{cases}
		\YY, 	& \text{ if } \ui(Y)=\uq 
		\\
		Y, 		& \text{ if } \ui(Y)<\uq 
		\end{cases}
		\end{equation}
	and extend to direct sums of connected components as in \eqref{DS1 eq}, then (since it is enough to check the formal fibres) the smoothed weighted blow up $M_{\uq}(Y)$ commutes with étale maps, \ie if
\begin{tikzcd}[cramped, sep=small]
Y' 	\ar[r] 		&Y
\end{tikzcd}
is étale or even just regular then, by \ref{exEF1 fact} and 
\ref{ESD1 sum/d}.(E.2) we have a fibre square
\begin{equation}\label{W17bis eq}
\begin{tikzcd}[row sep=0.5cm, column sep=0.5cm]
M_{\uq}(Y) 
\ar[dd]	
&&M_{\uq}(Y')
\ar[dd]	\ar[ll]
\\
& \qedsymbol &
\\
Y
&&Y'
\ar[ll]
\end{tikzcd} \;.
\end{equation}
In particular therefore, \cfr \eqref{W11bis eq}-\eqref{W11bisbis eq},
if $\YY$ is an excellent reduced Deligne-Mumford champ \ie all Henselian local rings are excellent, and some (whence any) atlas is J2, then there is a smoothed weighted blow up,
\begin{equation}\label{W18 eq}
\begin{tikzcd}
	M_{\uq}(\YY)	\ar[r]		&\YY
\end{tikzcd}
\end{equation}
supported in the singular locus whose fibre over an étale atlas is the blow up functor \eqref{W17 eq}, and which itself commutes with étale maps, \ie replace $Y \xrightarrow{\; \; \; }Y'$ by an étale map of champ 
\begin{tikzcd}[cramped, sep=small]
\YY' 	\ar[r] 		&\YY
\end{tikzcd}
in \eqref{W17bis eq}. Finally 
for $\inv_{\YY}^{\sharp}$ defined as in \eqref{W12bisbis eq}, let $\ui(\YY)$ be the maximum value of $\inv_{\YY}^{\sharp}$ and $M(\YY):= M_{\ui(\YY)}(\YY)$ then by construction, 
\[
	\ui(\YY)=0		
			\quad \Longleftrightarrow \quad
	\YY\, \text{ is regular.} 	
			\quad \Longleftrightarrow \quad
	M(\YY)=\YY.
\]
\end{summary/defn} 

All of which is easily assembled into a resolution algorithm, to wit:

\begin{proposition}\label{WP2 prop}
	For $\YY$ a reduced excellent Deligne-Mumford champ,
	define a sequence of smoothed weighted blow ups by,
	\begin{equation}
		\YY_0 = \YY, \qquad \YY_{p+1} = M(\YY_p), \quad p \gqs 0
	\end{equation}
	and let $N \gqs 0$ be the smallest integer such that
	\begin{tikzcd}[cramped, sep=small]
		\YY_{N+1} 	\ar[r]		&\YY_{N}
	\end{tikzcd}
	is the identity then the chain of smoothed weighted blow ups,
	\begin{equation}\label{BWY eq}
		\begin{tikzcd}
		\YY = \YY_0
				&\YY_1
				\ar[l]
							&\cdots
							\ar[l]
									&\YY_{N-1}
									\ar[l]
											&\YY_{N}
											\ar[l]				
		\end{tikzcd}
	\end{equation}
	is a resolution of singularities in the 2-category of Deligne-Mumford champs enjoying the functorial resolution properties (E.1)-(E.3) of \ref{ESD1 sum/d}, but for champ rather than just affine schemes.
\end{proposition}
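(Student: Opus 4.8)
The strategy is to run two facts that are already available and to let a termination argument do the rest. First, by \ref{ESD1 sum/d}.(E.1)--(E.3) each step $\YY_{p+1}=M(\YY_p)\to\YY_p$ is a smoothed weighted blow up in a regular centre $Z_p$ supported in $\mathrm{Sing}(\YY_p)$, commuting with \'etale (indeed regular) maps of champ; iterating a single functorial step is purely formal, so the whole chain \eqref{BWY eq} is a chain of such blow ups enjoying (E.1)--(E.3), and by the last display of \ref{ESD1 sum/d}, $M(\YY_p)=\YY_p$ holds if and only if $\YY_p$ is regular. Hence the entire content of the proposition is that $\ui(\YY_p):=\max\inv_{\YY_p}^{\sharp}$ eventually stabilises, equivalently $M(\YY_p)=\YY_p$ for $p\gqs N$. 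Since $\inv^{\sharp}$ has self bounding denominators --- combining \ref{WF1 fact}.(i) with the integrality of the leading term $\delta_{Y}$ --- \ref{MF1 fact} replaces comparison in the lexicographic order on $\Q_{\gqs 0}^{2m+1}$ by comparison of an associated $\Z_{\gqs 0}^{2m+1}$-valued function, whose target is well ordered for the lexicographic order; a strictly decreasing sequence there is finite. So it suffices to show $\ui(\YY_{p+1})<\ui(\YY_p)$ whenever $M(\YY_p)\neq\YY_p$, the target $\Q_{\gqs 0}^{2m+1}$ being the same at every stage because the embedding dimension stays bounded by $m:=\max_{y}e_{\YY}(y)$: a smoothed weighted blow up is \'etale locally covered by quotients of affine $m$-space, so a proper transform never needs more than $m$ coordinates.

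To prove the strict decrease I would restrict to an \'etale atlas and then complete at a point with a presentation as in \ref{WFD4}, which is legitimate since $\inv^{\sharp}$ is \'etale local, \ref{WD1 obs/defn}, and $M$ commutes with \'etale maps; this puts us in the setting of \ref{WF3 fact}--\ref{WC0 cor}. By the construction of $M$ in \ref{ESD1 sum/d} (whose local model is \ref{WC0 cor}.(iii), together with \ref{EEF3 fact}) the centre $Z_p$ is precisely the locus where $\inv_{\YY_p}^{\sharp}$ attains its maximum $\ui(\YY_p)$. Over the open complement $\YY_p\setminus Z_p$ the blow up $\rho$ is an isomorphism, so $\inv_{\YY_{p+1}}^{\sharp}=\inv_{\YY_p}^{\sharp}\circ\rho<\ui(\YY_p)$ there; and at a geometric point of the exceptional fibre over $Z_p$, \ref{WC0 cor}.(ii), reread through \ref{EED1 def}, \ref{W12bisbis eq} and the dimension bookkeeping of \ref{EF1 lemma}/\ref{dim1 fact}, gives that the invariant of the proper transform is strictly less than $\inv^{!}$ --- hence than $\inv^{\sharp}$ --- at the image point, the $\delta$-term not increasing because neither the embedding dimension nor the local dimension does along the blow up of a regular centre inside the singular locus. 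By the upper semi continuity of $\inv_{\YY_{p+1}}^{\sharp}$, \ref{EEC3 claim}, the maximum is attained, so $\ui(\YY_{p+1})<\ui(\YY_p)$, the process terminates, and for the minimal such $N$, $\YY_N$ is regular as noted.

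Finally the centres: by \ref{ESD1 sum/d} each $Z_p$ is regular and contained in $\mathrm{Sing}(\YY_p)$, so with $\YY_N$ regular the chain \eqref{BWY eq} is exactly a resolution of singularities by smoothed weighted blow ups in regular centres; functoriality in the form (E.1)--(E.3) is inherited stage by stage from \ref{ESD1 sum/d}.(E.3), composition of functorial operations being functorial.

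I expect the strict-decrease step to be the main obstacle. The facts \ref{WC0 cor} and \ref{EEF3 fact} are phrased for a complete regular ambient ring and an ambient ideal, so the real work is to transport them through the local embeddings of \ref{WFD4}, to control the correction terms $\DIF(\defc)$ relating $\inv$, $\inv^{!}$ and $\inv^{\sharp}$ (as in \ref{EF1 lemma}.(ii) and \ref{dim1 fact}) especially where $Z_p$ meets the locus on which $\delta_{Y}$ jumps, and to patch the resulting pointwise estimates over the atlas via \ref{ESD1 sum/d}.(E.3). Verifying that it is $\inv^{\sharp}$ itself, and not merely $\inv^{!}$, that strictly drops --- uniformly over the champ --- is the delicate point, but since every needed ingredient has already been assembled in \ref{WC0 cor}, \ref{EEF3 fact} and \ref{EEC3 claim}, this is assembly rather than new argument.
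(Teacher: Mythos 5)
Your proposal is correct and follows essentially the same route as the paper: reduce, via self bounding denominators (\ref{WF1 fact}.(i) plus \ref{MF1 fact}), to the single inequality $\ui(M(\YY))<\ui(\YY)$, and obtain that inequality from \ref{WC0 cor}.(ii) for $\inv^{!}$ read through the formal-fibre description \eqref{W13 eq}, together with the observation that the embedding dimension, and hence the leading $\delta$-term of $\inv^{\sharp}$, cannot increase under a smoothed weighted blow up. The extra bookkeeping you flag as the delicate point (transporting \ref{WC0 cor} through the presentations of \ref{WFD4} and the $\DIF$ corrections) is exactly what the paper disposes of in one line by the same reasoning, so no new argument is needed.
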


\begin{proof}
	From the definition \eqref{W12bisbis eq} and \ref{WF1 fact}.(i) $\inv_{\YY}^{\sharp}$ has self bounding denominators, 
	\ref{MD1 defn},
	so it suffices to check,
	\[
		\ui(M_{\YY}) < \ui(\YY)
	\]
	
	Plainly, however, the embedding dimension cannot increase under a smoothed weighted blow up and since \eqref{W13 eq} is the formal fibre around any point, this is immediate from the corresponding proposition, \ref{WC0 cor}.(ii), for $\inv^{!}$.
\qedhere \end{proof}

\bibliography{elvis}{}
\bibliographystyle{Gamsalpha}
\end{document}